
\documentclass[reqno,11pt]{amsart}
\usepackage{amsmath,amssymb,latexsym,soul,cite,mathrsfs}
\usepackage{color,enumitem,graphicx}
\usepackage[colorlinks=true,urlcolor=blue,
citecolor=red,linkcolor=blue,linktocpage,pdfpagelabels,
bookmarksnumbered,bookmarksopen]{hyperref}
\usepackage[english]{babel}
\usepackage[left=2.6cm,right=2.6cm,top=2.9cm,bottom=2.9cm]{geometry}
\usepackage[hyperpageref]{backref}

\pretolerance=10000

\newtheorem{theorem}{Theorem}
\newtheorem{lemma}[theorem]{Lemma}
\newtheorem{corollary}[theorem]{Corollary}
\newtheorem{proposition}[theorem]{Proposition}
\newtheorem{remark}[theorem]{Remark}
\newtheorem{definition}[theorem]{Definition}

\newtheorem{theoremletter}{Theorem}
\newtheorem{propositionletter}{Proposition}

\newenvironment{acknowledgement}{\noindent\textbf{Acknowledgments}}{}
\newtheoremstyle{tttheorem}
{}                
{}                
{\slshape}        
{}                
{\bfseries}       
{'}               
{ }               
{}                
\theoremstyle{tttheorem}
\newtheorem{theoremtio}{Theorem}


\newcommand{\dive}{\mathrm{div}}

\newcommand{\ud}{\mathrm{d}}
\newcommand{\loc}{\mathrm{loc}}
\newcommand{\vspan}{\mathrm{span}}
\newcommand*{\avint}{\mathop{\ooalign{$\int$\cr$-$}}}

\DeclareMathOperator{\Ric}{Ric}
\DeclareMathOperator{\spec}{spec}
\DeclareMathOperator{\trace}{tr}

\title[Asymptotics for singular solutions to fourth order systems]{Asymptotics for singular solutions to conformally invariant fourth order systems in the punctured ball}  
\thanks{Research supported in part by Fulbright Commission in Brazil grant G-1-00001, CNPq grant 305726/2017-0, and the Coordena\c c\~ao de Aperfei\c coamento de Pessoal de N\'ivel Superior - Brasil (CAPES) grant 88882.440505/2019-01}

\author[J.H. Andrade]{Jo\~{a}o Henrique\ Andrade}
\author[J.M. do \'O]{Jo\~ao Marcos do \'O*}

\address[J.H. Andrade]{
	\newline\indent 
	Department of Mathematics,
	Federal University of Para\'{\i}ba 
	\newline\indent 
	58051-900, Jo\~ao Pessoa-PB, Brazil
	\newline\indent
	and
	\newline\indent
	Department of Mathematics,
	Princeton University
	\newline\indent 
	08540, Princeton, New Jersey, USA}
\email{\href{mailto:jhsda@princeton.edu}{jhsda@princeton.edu}}
\email{\href{mailto:andradejh@mat.ufpb.br}{andradejh@mat.ufpb.br}}

\address[J.M. do \'O]{Department of Mathematics,
	Federal University of Para\'{\i}ba
	\newline\indent 
	58051-900, Jo\~ao Pessoa-PB, Brazil}
\email{\href{mailto:jmbo@pq.cnpq.br}{jmbo@pq.cnpq.br}}

\thanks{* Corresponding author.}
\subjclass[2000]{35J60, 35B09, 35J30, 35B40}
\keywords{Bi-Laplacian, Coupled systems, Critical exponent, Asymptotic analysis, Local behavior}
\begin{document}
	
	\begin{abstract}
		We study the asymptotic behavior for singular solutions to a critical fourth order system generalizing the constant $Q$-curvature equation. 
		Our main result extends to the case of strongly coupled systems, the celebrated asymptotic classification due to [L. A. Caffarelli, B. Gidas and J. Spruck, Comm. Pure Appl. Math. (1989)] and [N. Korevaar, R. Mazzeo, F. Pacard and R. Schoen, Invent. Math., (1999)].
		On the technical level, we use an involved spectral analysis to study the Jacobi fields' growth properties in the kernel of the linearization of our system around a blow-up limit solution. 
		Besides, we obtain sharp a priori estimates for the decay rate of singular solutions near the origin.
		Consequently, we prove that sufficiently close to the isolated singularity solutions behave like the so-called Emden--Fowler solution. Our main theorem positively answers a question posed by [R. L. Frank and T. K\"onig,  Anal. PDE (2019)] concerning the local behavior close to the isolated singularity for scalar solutions in the punctured ball.
	\end{abstract}
	
	\maketitle
	
	
	\begin{center}
		\footnotesize
		\tableofcontents
	\end{center}
	
	\section{Description of the results}\label{sec:introduction}
	
	In this paper, we study the local behavior for strongly positive singular $p$-map solutions $\mathcal{U}=(u_1,\dots,u_p): B_1^*\rightarrow \mathbb{R}^p$ to the {\it critical  fourth order} system,
	\begin{equation}\label{oursystem}\tag{$\mathcal{S}_p$}
	\Delta^{2} u_{i}=c(n)|\mathcal{U}|^{2^{**}-2}u_{i} \quad {\rm in} \quad B_1^*,
	\end{equation}
	where $B_1^*:=B_1^{n}(0)\setminus\{0\}\subset\mathbb{R}^n$ is the unit {\it punctured ball}, $n\geqslant5$, 
	$\Delta^{2}$ is the bi-Laplacian and $|\mathcal{U}|$ is the Euclidean norm, that is, $|\mathcal{U}|=(\sum_{i=1}^{p} u_i^2)^{1/2}$.
	System \eqref{oursystem} is strongly coupled by the {\it Gross--Pitaevskii nonlinearity}  $f_i(\mathcal{U})=c(n)|\mathcal{U}|^{2^{**}-2}u_i$ with associated potential $F(\mathcal{U})=(f_1(\mathcal{U}),\dots,f_p(\mathcal{U}))$, where $s\in(1,2^{**}-1)$ with $2^{**}=2n/(n-4)$ the {\it critical Sobolev exponent}, and $c(n)=[n(n-4)(n^2-4)]/{16}$ a normalizing constant.
	
	By a {\it classical solution} to \eqref{oursystem}, we mean a $p$-map $\mathcal{U}$ such that each component $u_i \in C^{4,\zeta}(B_1^*)$, for some $\zeta\in(0,1)$, and solves \eqref{oursystem} in the classical sense.
	We say that $\mathcal{U}$ is a {\it singular solution} to \eqref{oursystem}, if the origin is a {\it non-removable singularity} for $|\mathcal{U}|$, that is, $\lim_{|x|\rightarrow 0}|\mathcal{U}(x)|=\infty$; otherwise, it is called a {\it removable singularity} and $\mathcal{U}$ is a {\it non-singular solution} of \eqref{oursystem}. 
	We also call a $p$-map $\mathcal{U}$ solution {\it strongly positive} ({\it nonnegative}) when $u_i>0$ ($u_i\geqslant0$), and $\mathcal{U}$ is {\it superharmonic} in case $-\Delta u_i>0$ for all $i\in I:=\{1,\dots,p\}$.
	Notice that in general, by the maximum principle, superharmonic nonnegative solutions are {\it weakly positive}, that is, for any $i\in I$ either $u_i>0$ or $u_i\equiv 0$. 
	Moreover, by \cite[Corollary~50]{arXiv:2002.12491}, one has that the limit solutions are strongly positive.
	
	Our main result proves that solutions to \eqref{oursystem} have a local asymptotic profile near the isolated singularity given by an Emden--Fowler solution, that is, satisfying the blow-up limit system, 
	\begin{equation}\label{vectlimitequation}
	\Delta^{2} u_{i}=c(n)|\mathcal{U}|^{2^{**}-2}u_{i} \quad {\rm in} \quad \mathbb{R}^n\setminus\{0\}.
	\end{equation}
	
	\begin{theorem}\label{theorem1}
		Let $\mathcal{U}$ be a strongly positive singular solution to \eqref{oursystem}. 
		Assume that $\mathcal{U}$ is superharmonic.
		Then, there exist an Emden--Fowler solution $\mathcal{U}_{a,T}$ to \eqref{vectlimitequation} and $0<\beta^*_0<1$ such that 
		\begin{equation}\label{asymptotics}
		\mathcal{U}(x)=(1+\mathcal{O}(|x|^{\beta^*_0}))\mathcal{U}_{a,T}(|x|) \quad {\rm as} \quad x\rightarrow0.
		\end{equation}
	\end{theorem}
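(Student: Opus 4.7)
The plan is to recast \eqref{oursystem} on the Emden--Fowler cylinder, identify the admissible limit profiles as equilibria there, and upgrade a sequential blow-down convergence to the polynomial rate \eqref{asymptotics} by a spectral analysis of the linearization around the limit. First, set $r=|x|$, $t=-\log r$, $\theta=x/r$, and introduce
$$
\mathcal{V}(t,\theta)=r^{(n-4)/2}\mathcal{U}(r\theta),
$$
so that \eqref{oursystem} becomes an autonomous fourth order system on $(0,\infty)\times S^{n-1}$ whose radial periodic equilibria are precisely the Emden--Fowler profiles $\mathcal{V}_{a,T}(t)$ corresponding to $\mathcal{U}_{a,T}$. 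The superharmonicity hypothesis together with the a priori upper/lower bounds and asymptotic radial-symmetry results already in place (in particular via Corollary 50 of arXiv:2002.12491) give uniform control of $\mathcal{V}$ away from $0$ and $\infty$ and a sequence $t_k\to\infty$ along which $\mathcal{V}(\cdot+t_k,\cdot)\to\mathcal{V}_{a,T}$ in $C^4_{\loc}$.

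Setting $\mathcal{W}=\mathcal{V}-\mathcal{V}_{a,T}$, I write the linearized equation
$$
L_{a,T}\mathcal{W}=\mathcal{N}(\mathcal{W}),
$$
with $L_{a,T}$ the cylindrical linearization of the Gross--Pitaevskii nonlinearity at $\mathcal{V}_{a,T}$ and $\mathcal{N}(\mathcal{W})=\mathcal{O}(|\mathcal{W}|^2)$. Expanding $\mathcal{W}(t,\theta)=\sum_{j\geqslant 0}\mathcal{W}_j(t)Y_j(\theta)$ into spherical harmonics decouples the problem mode by mode into fourth order ODE systems with periodic coefficients, indexed by $\lambda_j=j(j+n-2)$. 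For $j=0$ the kernel of $L_{a,T}$ is spanned by the Jacobi fields $\partial_a\mathcal{V}_{a,T}$, $\partial_T\mathcal{V}_{a,T}$ together with the internal rotations arising from the vectorial character of the system; for $j=1$ the translation Jacobi fields are excluded by the fact that the singularity is pinned at the origin; for $j\geqslant 2$ a Floquet--Hill computation on the mode-$j$ linearized system produces indicial roots $\beta_j^{\pm}$ uniformly bounded away from $0$, so that
$$
\beta_0^\ast=\min\bigl\{|\beta_j^{\pm}|:j\geqslant 2\bigr\}\in(0,1).
$$

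To close the argument I choose the free parameters of the Emden--Fowler family so that the projection of $\mathcal{W}_0$ onto the kernel vanishes; what remains in mode zero then decays exponentially in $t$. For $j\geqslant 1$, a fixed-point / weighted-barrier argument applied to the right inverse of $L_{a,T}$ on spaces with weight $e^{\beta_0^\ast t}$, driven by the quadratic error $\mathcal{N}(\mathcal{W})$, yields $|\mathcal{W}_j(t)|\lesssim e^{-\beta_0^\ast t}$. Summing over modes gives $|\mathcal{W}(t,\theta)|\lesssim e^{-\beta_0^\ast t}$, which back in Euclidean variables is $|\mathcal{U}(x)-\mathcal{U}_{a,T}(|x|)|\lesssim|x|^{(4-n)/2+\beta_0^\ast}$, and dividing by $|\mathcal{U}_{a,T}(|x|)|\sim|x|^{(4-n)/2}$ is exactly \eqref{asymptotics}. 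The hard part will be the spectral step: the vectorial fourth order linearization at $\mathcal{V}_{a,T}$ carries a matrix-valued periodic potential coming from the Hessian of the Gross--Pitaevskii energy, and establishing the uniform gap $\beta_0^\ast>0$ for modes $j\geqslant 2$, together with exactly identifying the kernel in modes $j=0,1$ as the geometrically forced Jacobi fields, does not reduce to the scalar analysis of Frank--K\"onig and is the source of the \emph{involved spectral analysis} advertised in the abstract.
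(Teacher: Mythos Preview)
Your overall architecture matches the paper's: cylindrical coordinates, linearize at an Emden--Fowler profile, spherical-harmonic decomposition, Floquet analysis mode by mode, then upgrade sequential convergence to exponential decay. But several of the load-bearing assertions are either wrong or are precisely the hard content you have not supplied.

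First, your identification of the rate is incorrect. In the paper (Lemma~\ref{indicialset}) the Floquet exponents for $j\geqslant2$ satisfy $\beta_{a,j}>1$, not $\beta_{a,j}\in(0,1)$; the $j=1$ exponents are exactly $\pm1$. So $\beta_0^\ast$ cannot be $\min_{j\geqslant2}|\beta_j^\pm|$. In the paper the exponent $\beta_0^\ast\in(0,1)$ arises not from any single Floquet mode but from the iteration step in Simon's slide-back argument (Step~4 of Section~\ref{subsec:simpleconvergence}), where one gets $\beta_0^\ast=\ln2/(NT_a)$.

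Second, and more seriously, your $j=0$ step hides the main difficulty. The mode-$0$ ODE is fourth order, so its solution space is $4p$-dimensional; the geometric Jacobi fields $\partial_a\mathcal{V}_{a,T}$, $\partial_T\mathcal{V}_{a,T}$ (plus internal rotations) account for only $2p$ of these. The entire point of Proposition~\ref{isolatedindicialroot} and the band-structure analysis in Section~\ref{sec:linearanalysis} is to show that $0$ is an \emph{isolated} indicial root, so the remaining $2p$ Floquet solutions are genuinely exponential and the tempered deficiency space is exactly $D_{a,0}=\vspan\{\Phi_{a,0}^{\pm}\}$. You assert this without argument; the paper's ``involved spectral analysis'' is precisely this step, done via the Fourier--Laplace transform and analytic Fredholm theory on the twisted operator $\widetilde{\mathcal L}^a(\rho)$.

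Third, your mechanism for ``choosing the free parameters so that the projection of $\mathcal{W}_0$ onto the kernel vanishes'' is incomplete. The necksize $a$ is not free: it is determined by the Pohozaev invariant $\mathcal{P}_{\rm cyl}(\mathcal{V})$, which is constant along the flow and equals $\mathcal{P}_{\rm cyl}(\mathcal{V}_{a,T})$ for any subsequential limit (Claim~2 in Section~\ref{subsec:simpleconvergence}). The same invariant is what kills the coefficient of the linearly growing field $\Phi_{a,0}^-=\partial_a\mathcal{V}_{a,T}$ (Step~2 there). Without invoking Pohozaev you have no way to rule out a nontrivial $b_2\Phi_{a,0}^-$ component, and then nothing decays. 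Likewise, the $j=1$ fields are not ``excluded because the singularity is pinned''; they are genuine Jacobi fields with exponents $\pm1$ and are absorbed into the deformed Emden--Fowler family in the refined Theorem~\ref{theorem1'}\textcolor{blue}{'}, while for Theorem~\ref{theorem1} they simply contribute at order $e^{-t}$, which is already faster than $e^{-\beta_0^\ast t}$.
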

	
	The last convergence result combined with the arguments in \cite[Section~7]{MR1666838}, one can improve the decay of the remainder term in \eqref{asymptotics}, using deformed Emden--Fowler solutions (see Definition \ref{def:deformedfowler}).
	In this spirit, another version of the last theorem is the following refined asymptotics:
	
	\begin{theoremtio}\label{theorem1'}
		{\it Let $\mathcal{U}$ be a strongly positive singular solution to \eqref{oursystem}. 
			Assume that $\mathcal{U}$ is superharmonic.
			Then, there exist a deformed Emden--Fowler solution $\mathcal{U}_{a,T,0}$ to \eqref{vectlimitequation} and $\beta^*_1>1$ such that}
		\begin{equation*}
		\mathcal{U}(x)=(1+\mathcal{O}(|x|^{\beta^*_1}))\mathcal{U}_{a,T,0}(|x|) \quad {\rm as} \quad x\rightarrow0.
		\end{equation*}
	\end{theoremtio}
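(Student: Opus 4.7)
The plan is to upgrade the rough asymptotic of Theorem~\ref{theorem1} by absorbing the slowest-decaying correction into a better reference solution, following the scheme of \cite[Section~7]{MR1666838}. First, I would pass to cylindrical coordinates $t=-\log|x|$, writing $\mathcal{U}(x)=|x|^{-(n-4)/2}\mathcal{V}(t,\theta)$ so that \eqref{vectlimitequation} becomes an autonomous fourth-order system on $\mathbb{R}\times\mathbb{S}^{n-1}$ and the Emden--Fowler solutions $\mathcal{U}_{a,T}$ become $t$-periodic radial profiles $\mathcal{V}_{a,T}$. Theorem~\ref{theorem1} then asserts $\mathcal{V}(t,\theta)=\mathcal{V}_{a,T}(t)+\mathcal{W}(t,\theta)$ with $\mathcal{W}=\mathcal{O}(e^{-\beta^{*}_0 t})$, but $\beta^{*}_0<1$ only because certain bounded Jacobi modes obstruct a faster rate.

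Next, I would linearize around $\mathcal{V}_{a,T}$ to obtain $L_{a,T}\mathcal{W}=\mathcal{N}(\mathcal{W})$, where $L_{a,T}$ is the Jacobi operator on the cylinder and $\mathcal{N}$ is the quadratic remainder. Decomposing $\mathcal{W}$ in spherical harmonics and using the spectral analysis of $L_{a,T}$ already developed in the paper, each mode satisfies a one-dimensional ODE whose indicial roots as $t\to\infty$ govern its decay. The indicial roots lying in the strip $(0,1)$ come precisely from the geometric symmetries of \eqref{vectlimitequation}---namely the translations of the singular point---which are exactly the directions along which the deformed Emden--Fowler family of Definition~\ref{def:deformedfowler} is parameterized. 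Choosing the deformation parameter to cancel the leading asymptotic coefficient of $\mathcal{W}$ in those directions defines the correct base profile $\mathcal{U}_{a,T,0}$, and the resulting new remainder $\mathcal{W}'$ then lies in the $L^{2}(\mathbb{S}^{n-1})$-orthogonal complement of the obstructing Jacobi modes.

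A Fredholm argument on the cylinder, exploiting that $L_{a,T}$ is an isomorphism between suitably weighted Banach spaces after quotienting out the dangerous kernel directions, then yields $\|\mathcal{W}'(t,\cdot)\|_{L^{\infty}(\mathbb{S}^{n-1})}\lesssim e^{-\beta^{*}_1 t}$ where $\beta^{*}_1>1$ is the next indicial exponent above $1$; translating back to the ball gives the claim. The main technical obstacle will be this last step: the coupling through $|\mathcal{U}|^{2^{**}-2}$ means the spectral decomposition must be carried out simultaneously in all $p$ components, and one has to carefully distinguish directions corresponding to genuine geometric deformations of the singularity from internal fiber rotations that should not be absorbed into $\mathcal{U}_{a,T,0}$. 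Once this separation is achieved, the nonlinear iteration closes via a contraction argument for $\mathcal{N}(\mathcal{W})$ in the appropriate exponentially weighted norm on $[T_{0},\infty)\times\mathbb{S}^{n-1}$.
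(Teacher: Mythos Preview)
Your overall plan matches the paper's approach in Section~\ref{subsec:improvedconvergence}: starting from Theorem~\ref{theorem1}, one writes $\mathcal{V}=\mathcal{V}_{a,T}+\phi$, observes that $\mathcal{L}^a(\phi)=\psi(\phi)$ with $\psi(\phi)\in C^{0,\zeta}_{-2\beta}$ whenever $\phi\in C^{4,\zeta}_{-\beta}$, and uses the weighted right-inverse of Corollary~\ref{iteration} to iterate the decay rate until it crosses $\beta=1$, at which point a contribution from the deficiency space $D_{a,1}$ appears and is absorbed into the deformed profile. The final rate is $\beta^*_1=\min\{2,\beta_{a,2}\}$.

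There is, however, a conceptual slip in how you describe the obstruction. By Lemma~\ref{indicialset} the indicial set is $\{\dots,-\beta_{a,2},-1,0,1,\beta_{a,2},\dots\}$: there are \emph{no} indicial roots strictly inside $(0,1)$, and the Jacobi fields blocking the passage past $e^{-t}$ are the spatial-translation fields $\Phi^{\pm}_{a,1}$ of Remark~\ref{geoemtricjacobifields}, which decay exactly like $e^{-t}$ rather than being bounded. The rough rate $\beta^*_0<1$ from Theorem~\ref{theorem1} is not itself an indicial root; it arises from the Simon iteration and can be bootstrapped freely up to $1$. Also, your ``main technical obstacle'' about fiber rotations of $\Lambda^*\in\mathbb{S}^{p-1}_{+,*}$ is milder than you suggest: those variations produce zero-frequency Jacobi fields (bounded, indicial root $0$), so they are already absorbed in the choice of $\mathcal{V}_{a,T}=\Lambda^* v_{a,T}$, not in the passage to the deformed family; see the proof of Proposition~\ref{growthpropertiessystem}.
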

	
	Notice that when $p=1$, system \eqref{oursystem} reduces to the following fourth order critical equation,
	\begin{equation}\label{ourequation}\tag{$\mathcal{S}_1$}
	\Delta^2 u=c(n)u^{2^{**}-1} \quad {\rm in} \quad B_1^*.
	\end{equation}
	Consequently, the local models in the neighborhood of the isolated singularity are given by the solutions to the limit blow-up equation
	\begin{equation}\label{limitequation}
	\Delta^2 u=c(n)u^{2^{**}-1} \quad {\rm in} \quad \mathbb{R}^n\setminus\{0\}.
	\end{equation} 
	More accurately, we have the scalar version of Theorems~\ref{theorem1} and \ref{theorem1'}\textcolor{blue}{'}
	\begin{corollary}\label{theorem2}
		Let $u$ be a positive superharmonic singular solution to \eqref{ourequation}. Then, there exist an Emden--Fowler solution $u_{a,T}$ to \eqref{limitequation} and $\beta^*_0>0$ such that 
		\begin{equation*}
		u(x)=(1+\mathcal{O}(|x|^{\beta^*_0}))u_{a,T}(|x|) \quad {\rm as} \quad x\rightarrow0.
		\end{equation*}
		Moreover, one can find a deformed Emden--Fowler solution $u_{a,T,0}$ and $\beta^*_1>0$ such that 
		\begin{equation*}
		u(x)=(1+\mathcal{O}(|x|^{\beta^*_1}))u_{a,T}(|x|) \quad {\rm as} \quad x\rightarrow0.
		\end{equation*}
	\end{corollary}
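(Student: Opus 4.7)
The plan is to deduce the corollary directly by specializing Theorem~\ref{theorem1} and Theorem~\ref{theorem1'} to the case $p=1$. First, I would verify that when $p=1$ the system \eqref{oursystem} reduces to the scalar equation \eqref{ourequation}: the $1$-map $\mathcal{U}=(u)$ has Euclidean norm $|\mathcal{U}|=u$ for $u>0$, so the Gross--Pitaevskii nonlinearity becomes $c(n)|\mathcal{U}|^{2^{**}-2}u_1=c(n)u^{2^{**}-1}$, and the blow-up limit system \eqref{vectlimitequation} reduces to \eqref{limitequation} in the same way. Under this identification the hypotheses transfer: ``strongly positive'' becomes $u>0$, ``singular'' becomes the non-removability of the origin for $u$, and ``superharmonic'' becomes $-\Delta u>0$.

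Next I would observe that every Emden--Fowler solution $\mathcal{U}_{a,T}$ and every deformed Emden--Fowler solution $\mathcal{U}_{a,T,0}$ of \eqref{vectlimitequation}, specialized to $p=1$, is nothing but a scalar Emden--Fowler (respectively deformed Emden--Fowler) solution $u_{a,T}$ (respectively $u_{a,T,0}$) of \eqref{limitequation}; this is immediate from the definition recalled in the statement of Theorem~\ref{theorem1'} and from Definition~\ref{def:deformedfowler}, since in one component the vector-valued Emden--Fowler ODE for the Fowler change of variables reduces literally to its scalar counterpart.

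With these identifications in place, the first asymptotic expansion in Corollary~\ref{theorem2} follows by applying Theorem~\ref{theorem1} to the $1$-map $\mathcal{U}=(u)$, yielding $u(x)=(1+\mathcal{O}(|x|^{\beta^*_0}))u_{a,T}(|x|)$ as $x\to 0$ with $\beta^*_0\in(0,1)\subset(0,\infty)$. The second expansion follows by applying Theorem~\ref{theorem1'}, giving a deformed Emden--Fowler solution $u_{a,T,0}$ and $\beta^*_1>1>0$ such that $u(x)=(1+\mathcal{O}(|x|^{\beta^*_1}))u_{a,T,0}(|x|)$ as $x\to 0$.

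No substantive obstacle is expected, as the corollary is a direct translation of the vector-valued theorems to the scalar setting; the only point that requires a minimal verification is the reduction of the notions of Emden--Fowler and deformed Emden--Fowler solutions between the scalar and vectorial settings, which is essentially tautological for $p=1$.
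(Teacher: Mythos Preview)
Your proposal is correct and matches the paper's approach: the corollary is stated immediately after Theorems~\ref{theorem1} and \ref{theorem1'}\textcolor{blue}{'} as their scalar version, with no separate proof given, so specializing to $p=1$ is exactly what is intended. Your observation that $\Lambda^*\in\mathbb{S}^{p-1}_{+,*}$ forces $\Lambda^*=1$ when $p=1$, so that $\mathcal{U}_{a,T}=u_{a,T}$ and $\mathcal{U}_{a,T,0}=u_{a,T,0}$, is the only point requiring comment, and you handle it correctly.
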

	
	\begin{remark}
		There are a few observations we would like to quote:\\
		\noindent{\rm (i)} Theorems~\ref{theorem1} and \ref{theorem1'}\textcolor{blue}{'} are the first step in describing the local asymptotic behavior of singular solutions to more general fourth order systems in the inhomogeneous setting $($see \eqref{geometricsystem}$)$.\\
		\noindent{\rm (ii)} The higher asymptotic expansion recently proved in \cite{arXiv:1909.10131v1,arXiv:1909.07466v1} is believed to be true in our fourth order setting $($see \eqref{higherorderasymptotics}$)$. This refined asymptotic behavior is the first step to understand the moduli space of the $Q$-curvature equation on $\mathbb{S}^{n-1}\setminus \mathcal{Z}$, where $\mathcal{Z}$ is a finite set \cite{MR1712628}, which can be done by employing a gluing construction.
		As well as, it is possible to prove existence results for singular solutions on $\mathbb{S}^{n-1}\setminus \mathcal{Z}$, where $\mathcal{Z}$, even when the prescribed singular set $\mathcal{Z}$ has Hausdorff dimension strictly larger than zero \cite{arXiv:1911.11891} and less than a threshold, depending only on the dimension.\\
		\noindent{\rm (iii)} The same arguments used to prove the asymptotics in Theorem~\ref{theorem1} also implies the existence of singular solutions to \eqref{oursystem}. In fact, this follows as a by-product of our linear analysis and the implicit function theorem. 
	\end{remark}
	
	\begin{remark}
		The additional superharmonicity condition $-\Delta u_i>0$ for all $i\in I$ cannot be dropped. 
		Indeed, it has a geometric motivation involving the sign of the scalar curvature of the background metric \cite{MR3420504,MR3518237}.
		In the blow-up limit case \eqref{limitequation}, the Emden--Fowler solution satisfies this assumption automatically, for more details, see \cite[Theorem~2.1]{MR1769247}, \cite[Theorem~1.4]{MR4094467}, and \cite[Propositions~11 and 29]{arXiv:2002.12491}. 
		For local solutions, it is not possible to prove this superharmonicity property by using the same methods.
		This hypothesis is also important to start the sliding technique used to prove the lower bound estimate.
	\end{remark}
	
	Since our techniques rely on blow-up analysis, the first step to obtain the asymptotic behavior near the singularity is to classify the solutions to the limit equation, in both non-singular case and singular case. 
	When $p=1$, we summarize the classification obtained by C. S. Lin \cite[Theorem~1.5]{MR1611691} and R. L. Frank and T. K\"onig \cite[Theorem~2]{MR3869387} (see also \cite[Theorem~1.3]{MR4094467}), 
	
	\begin{theoremletter}\label{thm:frank-konig19}
		Let $u$ be a positive solution to \eqref{limitequation} and assume that\\
		\noindent{\rm (i)} the origin is a removable singularity. Then, there exist $x_0\in\mathbb{R}^n$ and $\mu>0$ such that $u$ is radially symmetric about $x_0$ and 
		\begin{equation}\label{sphericalfunctions}
		u_{x_0,\mu}(x)=\left(\frac{2\mu}{1+\mu^{2}|x-x_0|^{2}}\right)^{\frac{n-4}{2}}. 
		\end{equation}
		We call $u_{x_0,\mu}$ a fourth order spherical solution.
		
		\noindent{\rm (ii)} the origin is a non-removable singularity. Then, $u$ is radially symmetric about the origin. Moreover, there exist $a \in (0,a_0]$ and $T\in (0,T_a]$ such that
		\begin{equation}\label{emden-folwersolution}
		u_{a,T}(x)=|x|^{\frac{4-n}{2}}v_{a}(-\ln|x|+T).
		\end{equation}
		Here $a_0=[n(n-4)/(n^2-4)]^{n-4/8}$, $T_a\in\mathbb{R}$ is the fundamental period of the unique $T$-periodic bounded solution $v_a$ to the following fourth order Cauchy problem, 
		\begin{equation}\label{fowler4order}
		\begin{cases}
		v^{(4)}-K_2v^{(2)}+K_0v=c(n)v^{2^{**}-1}\\
		v(0)=a,\ v^{(1)}(0)=0,\ v^{(2)}(0)=b(a),\ v^{(3)}(0)=0,
		\end{cases}
		\end{equation}
		where $K_2,K_0$ are constants depending only on the dimension and $b(a)$ is the critical shooting parameter for any fixed $a \in (0,a_0]$. We call both $u_{a,T}$ and $v_{a,T}$ Emden--Fowler $($or Delaunay-type$)$ solutions and $a\in(0,a_0)$ its Fowler parameter, which can be chosen satisfying $a=\min_{t>0}v_a(t)$.
	\end{theoremletter}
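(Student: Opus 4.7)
The plan is to treat the removable and non-removable cases separately, in each reducing to a one-dimensional analysis via appropriate symmetry arguments.

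For part (i), I would first observe that $u$ extends to a positive smooth solution of $\Delta^{2}u=c(n)u^{2^{**}-1}$ on all of $\mathbb{R}^{n}$. I would then represent $u$ as a Riesz potential $u(x)=C_{n}\int_{\mathbb{R}^{n}}|x-y|^{4-n}u(y)^{2^{**}-1}\ud y$, justifying this via a standard argument that controls the decay at infinity (using positivity and the subcritical nature of the Kelvin transform). To this integral equation I would apply the method of moving spheres, in the spirit of Chen--Li--Ou and Li--Zhang, obtaining radial symmetry of $u$ about some $x_{0}\in\mathbb{R}^{n}$ together with the asymptotic behavior $u(x)\sim C|x|^{4-n}$ at infinity. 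Substituting the radial ansatz into the equation and matching the decay then yields the explicit form \eqref{sphericalfunctions}, indexed by the scaling parameter $\mu>0$.

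For part (ii), the proof splits into two steps: (a) radial symmetry about the origin, and (b) ODE classification. For (a), I would reformulate $\Delta^{2}u=c(n)u^{2^{**}-1}$ as a second order cooperative system by setting $w=-\Delta u$, giving $-\Delta u=w$ and $-\Delta w=c(n)u^{2^{**}-1}$ on $\mathbb{R}^{n}\setminus\{0\}$, where a preliminary argument (analysing the sign of the singular part near the origin) ensures $w>0$. Then a simultaneous moving plane argument applied to the pair $(u,w)$, adapting Caffarelli--Gidas--Spruck to the fourth order setting, yields radial monotonicity and hence radial symmetry about the origin. For (b), I would perform the Emden--Fowler change of variables $v(t)=r^{(n-4)/2}u(r)$, $t=-\log r$, converting the radial equation into the autonomous fourth order ODE appearing in \eqref{fowler4order}.

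The hard part is the ODE classification, since no direct phase-plane picture is available in the fourth order case. I would construct a conserved Hamiltonian $\mathcal{H}(v,v^{(1)},v^{(2)},v^{(3)})$ of Pohozaev type, whose invariance along trajectories confines bounded orbits to compact level sets. A shooting analysis in the parameter $b=v^{(2)}(0)$ would then show that for each $a\in(0,a_{0}]$ there exists a unique critical value $b(a)$ such that the trajectory launched from $(a,0,b(a),0)$ remains bounded and strictly positive for all $t\in\mathbb{R}$; any other choice of $b$ produces a trajectory which either blows up or crosses zero in finite time. Conservation of $\mathcal{H}$ combined with the autonomous structure then forces this bounded positive orbit to be periodic with fundamental period $T_{a}$, and the phase-shift parameter $T\in(0,T_{a}]$ recovers the full family $u_{a,T}$ described in \eqref{emden-folwersolution}.
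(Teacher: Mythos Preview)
This theorem is not proved in the paper at all: it is stated as Theorem~A (a ``theoremletter''), which the authors use exclusively for results quoted from the literature. Immediately before the statement they write that they ``summarize the classification obtained by C.~S.~Lin \cite[Theorem~1.5]{MR1611691} and R.~L.~Frank and T.~K\"onig \cite[Theorem~2]{MR3869387} (see also \cite[Theorem~1.3]{MR4094467})''. So there is nothing in the paper to compare your proposal against; the paper simply cites these references.

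Your outline is broadly in line with how those cited papers actually proceed: part~(i) is Lin's theorem, and the integral-equation/moving-spheres route you sketch is standard. For part~(ii), the radial symmetry step can indeed be run as moving planes on the cooperative system $(u,-\Delta u)$ once one knows $-\Delta u>0$; note that this superharmonicity is itself a nontrivial input in $\mathbb{R}^n\setminus\{0\}$ (the paper alludes to this in the remark following Theorem~A, citing \cite{MR1769247,MR4094467,arXiv:2002.12491}), and your phrase ``a preliminary argument \dots\ ensures $w>0$'' undersells it.

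The one place where your sketch has a genuine gap is the final ODE step. You write that ``conservation of $\mathcal{H}$ combined with the autonomous structure then forces this bounded positive orbit to be periodic''. In a four-dimensional phase space a single conserved quantity confines orbits to a three-dimensional level set, on which bounded trajectories can still be quasiperiodic or worse; the Hamiltonian alone does not force periodicity. Frank--K\"onig's actual argument is more delicate: they exploit specific monotonicity and sign properties of the flow (and of auxiliary quantities built from $v,v',v'',v'''$) to rule out non-periodic bounded orbits and to show uniqueness of the shooting parameter $b(a)$. If you want to reconstruct the proof rather than cite it, this is the step that needs real work.
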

	
	\begin{remark} 
		The cases $a=0$ and $a=a_0$ are called the limit cases. 
		Observe that these values correspond to the equilibrium solutions to \eqref{fowler4order}. 
		When $a=0$ gives rise to the {\it spherical solution} $v_{\rm sph}(t)=(\cosh (t-t_0))^{(n-4)/2}$, for some $t_0\in\mathbb{R}$, whereas for $a=a_0$, $v_{\rm cyl}(t)=a_0$ is called the {\it cylindrical solution}.
	\end{remark}
	
	On the limit blow-up case for $p>1$, the present authors in \cite[Theorems~1 and 2]{arXiv:2002.12491} used sliding techniques and ODE analysis to classify the solutions to \eqref{vectlimitequation}, this results are summarized as follows
	\begin{theoremletter}\label{thm:andrade-doo19}
		Let $\mathcal{U}$ be a nonnegative solution to \eqref{vectlimitequation} and assume that\\	
		\noindent{\rm (i)} the origin is a removable singularity. Then, $\mathcal{U}$ is weakly positive and radially symmetric about some $x_0\in\mathbb{R}^n$. Moreover, there exist $\Lambda \in\mathbb{S}^{p-1}_{+}=\{ x \in \mathbb{S}^{p-1} : x_i \geqslant 0 \}$ and a fourth order spherical solution given by \eqref{sphericalfunctions} such that
		\begin{equation*}
		\mathcal{U}=\Lambda u_{x_0,\mu}.
		\end{equation*}
		\noindent{\rm (ii)} the origin is a non-removable singularity. Then, $\mathcal{U}$ is strongly positive, radially symmetric about the origin and decreasing. Moreover, there exist $\Lambda^*\in\mathbb{S}^{p-1}_{+,*}=\{ x \in \mathbb{S}^{p-1} : x_i > 0 \}$ and an Emden--Fowler solution given by \eqref{emden-folwersolution} such that 
		\begin{equation*}
		\mathcal{U}=\Lambda^* u_{a,T}.
		\end{equation*}
	\end{theoremletter}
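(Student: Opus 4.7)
I would treat the two parts in parallel: in each case, first establish a symmetry of the full vector map $\mathcal{U}$, then show that all components are proportional to a single scalar profile, which reduces the classification to Theorem~\ref{thm:frank-konig19}.

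For the symmetry step, part (i) would proceed via the moving plane method in the style of Caffarelli--Gidas--Spruck. Since the singularity is removable, elliptic regularity extends $\mathcal{U}$ to a smooth nonnegative solution on all of $\mathbb{R}^n$; the Gross--Pitaevskii coupling is fully cooperative (nondecreasing in each $u_j$ on the nonnegative cone), so the reflected differences $u_i^{\lambda}-u_i$ satisfy a linear cooperative fourth order system admitting a simultaneous maximum principle. The Green's function representation of $\Delta^2$ on $\mathbb{R}^n$ supplies the decay at infinity needed to start the procedure, and one concludes that every component is symmetric about a common point $x_0$. For part (ii), the origin is non-removable, and I would instead use moving spheres centered at the origin, equivalently applying moving planes to the Kelvin transform of \eqref{vectlimitequation}, which preserves the system. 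This forces radial monotone symmetry of $\mathcal{U}$ about $0$. Strong positivity of each component then follows from the maximum principle argument of \cite{arXiv:2002.12491} (Corollary~50 there), the superharmonicity being inherited by the radial profile through the ODE.

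With the radial reduction in place, the system becomes $Lu_i = c(n)|\mathcal{U}(r)|^{2^{**}-2}u_i$, where $L$ is the radial fourth order operator, and every component solves the \emph{same} linear equation $Lw=V(r)w$ with common potential $V(r)=c(n)|\mathcal{U}(r)|^{2^{**}-2}$. To extract proportionality I would follow the ODE strategy of \cite{arXiv:2002.12491}: write $\mathcal{U}(r)=v(r)\Lambda(r)$ with $|\Lambda(r)|=1$, substitute into \eqref{vectlimitequation}, and show via a Wronskian or energy identity that $\Lambda'\equiv 0$. The residual scalar equation $Lv=c(n)v^{2^{**}-1}$ then falls under Theorem~\ref{thm:frank-konig19}, identifying $v$ with $u_{x_0,\mu}$ in (i) and with $u_{a,T}$ in (ii), while $\Lambda$ lies in $\mathbb{S}^{p-1}_{+}$ or $\mathbb{S}^{p-1}_{+,*}$ according to the positivity already established.

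The main obstacle is the proportionality step. Without the rigidity imposed by the critical homogeneous alignment of the nonlinearity with $u_i$, the radial fourth order system admits a large family of \emph{multi-mode} solutions in which the direction $\Lambda(r)$ genuinely rotates with $r$. Ruling these out requires combining the cooperative structure with a careful monotonicity argument on the ODE to prevent the angular component from drifting between successive extrema of $|\mathcal{U}|$. A secondary subtlety, specific to (ii), is upgrading weak positivity to strong positivity, where the superharmonicity hypothesis enters essentially through a Hopf-type maximum principle for the biharmonic operator applied to the second order factor $-\Delta u_i$.
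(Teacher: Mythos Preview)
This theorem is not proved in the present paper; it is quoted from the authors' companion work \cite[Theorems~1 and 2]{arXiv:2002.12491}, and the paper only records that the argument there proceeds by ``sliding techniques and ODE analysis.'' Your outline matches that description accurately: moving planes/spheres (via the Kelvin invariance of \eqref{vectlimitequation}) for the symmetry, followed by an ODE argument writing $\mathcal{U}(r)=v(r)\Lambda(r)$ and showing $\Lambda'\equiv 0$, then invoking Theorem~\ref{thm:frank-konig19} on the scalar profile. So at the level of strategy your plan is correct and coincides with the cited approach.

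Two minor corrections. First, in part (ii) you speak of superharmonicity as a hypothesis needed to upgrade to strong positivity, but for the blow-up limit system \eqref{vectlimitequation} superharmonicity is not assumed: it is a consequence, as the paper notes in the remark following Corollary~\ref{theorem2} (citing \cite[Propositions~11 and 29]{arXiv:2002.12491}). Second, your discussion of the ``main obstacle'' slightly overstates the difficulty: once radial symmetry is in hand, the components $u_i$ and $u_j$ solve the \emph{same} scalar linear ODE $Lw=V(r)w$ with identical asymptotics at the singularity (or at infinity, after Kelvin transform), so a Wronskian argument forces $u_i/u_j$ to be constant directly, without needing to track rotation of $\Lambda(r)$ between extrema. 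The genuinely delicate point in \cite{arXiv:2002.12491} is rather the sliding step itself in the fourth order setting, where the lack of a pointwise maximum principle forces one to work through the integral representation, exactly as in Section~\ref{sec:qualitativeproperties} of the present paper.
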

	
	For the geometric point of view, the works of R. Schoen and S.-T. Yau \cite{MR931204,MR929283} highlighted the importance of studying singular equations and describing their asymptotic behavior near their singular sets. 
	Indeed, a positive solution $u$ to \eqref{ourequation} produces a conformally flat metric $\bar{g}=u^{4/(n-4)}\delta_0$ such that $\bar{g}$ has constant $Q$-curvature equals $Q_g=n(n^2-4)/8$, where $\delta_0$ is the standard flat metric. Notice that \eqref{ourequation}
	is a particular case of a more general equation appearing in conformal geometry, namely the $Q$-curvature equation on a Riemannian manifold $(M^n,g)$, where $g$ is a smooth metric on $B_1$,
	\begin{equation}\label{qcurvature}
	P_g=c(n)u^{2^{**}-1} \quad {\rm in} \quad (B_1^*,g).
	\end{equation}
	Here 
	\begin{equation*}
	P_g=\Delta_{g}^{2} u-\dive(a_nR_{g}g-b_n\Ric_g)\ud u+\frac{n-4}{2}Q_gu
	\end{equation*}
	is the {\it Paneitz--Brason operator} (for more details, see \cite{MR2393291,MR904819,MR832360}), where
	\begin{equation*}
	Q_g=-\frac{1}{2(n-1)}\Delta R_g-\frac{2}{(n-2)^2}|\Ric_g|^2
	+\frac{n^3-4n^2+16n-16}{8(n-1)^2(n-2)^2}R_g^2,
	\end{equation*}
	and
	\begin{equation*}
	a_n=\frac{(n-2)^2+4}{2(n-1)(n-2)} \quad {\rm and} \quad b_n=\frac{4}{n-2}.
	\end{equation*}
	We emphasize that \eqref{qcurvature} naturally appears as the deformation law for the $Q$-curvature of two conformal metrics on a Riemannian manifold \cite{MR837196}. This operator satisfies the same conformal invariance enjoyed by $L_g$ in the case of the Yamabe equation. 
	In this fashion, the quantity $Q_g$ plays the same role played by the scalar curvature $R_g$ in the conformal Laplacian $L_g$, for this reason this quantity is called the {\it $Q$-curvature} \cite{MR3618119,MR2104700,MR2149088,MR3953754}. 
	To fix some notation, let us the following nonlinear geometric operator,
	\begin{equation*}
	\mathcal{N}_{g}(u)=P_gu-c(n)u^{2^{**}-1}.
	\end{equation*}
	
	\begin{remark}
		The geometrical interpretation for Theorem~\ref{theorem1} is summarized using the conformal metric $\bar{g}=u_{a,T}^{4/(n-4)}\delta_0$  as follows:\\
		\noindent{\rm (i)} If $a=0$, then $\bar{g}=g_{\rm sph}$ extends to the whole $B_1$ as the round metric;\\
		\noindent{\rm (ii)} If $a=a_0$, then $\bar{g}=g_{\rm cyl}$ is the cylindrical metric given by $g_{\rm cyl}=\ud t^2+\ud\theta^2$;\\
		\noindent{\rm (iii)} If $a\in(0,a_0)$, then $\bar{g}=g_{a,T}$ is the Emden--Fowler metric, interpolating between the spherical and the cylindrical metrics.\\
		In fact, $\lim_{a\rightarrow0}T_{a}=\infty$, in other words $v_{0}=v_{\rm sph}$ is the limit of $v_{a}(t)=\left(t+T_{a}\right)$ as $a\rightarrow0$.
		In the language of conformal geometry, these metrics represent the intermediate states of the evolution from the cylindrical metric to the limit singular metric, which is given by a bead of spheres along an axis.
	\end{remark}
	
	It is a general principle in nonlinear analysis that the existence and local asymptotic behavior for singular PDEs are strongly related to the spectral analysis of the linearized operator around a limit blow-up solution \cite{MR1356375,MR194163}.
	The set of techniques used to study the local behavior for singular solutions to PDEs are sometimes called {\it asymptotic analysis}; this has been a topic of intense study in recent years, motivated by some problems arising in conformal geometry.
	For instance, on the context of the fractional Laplacian \cite{MR3198648,MR3694655,arXiv:1804.00817} (see also \cite{MR2200258} for more general integral equations). 
	In addition, we quote the results in \cite{MR2165306,MR2214582,MR2737708} dealing with fully nonlinear operators related by the $\sigma_k$-curvature problem. We also mention the results in \cite{MR1436822,MR4123335} concerning the subcritical equation associated to \eqref{ourequation}, extending the results in \cite{MR605060,MR615628,MR875297}  to the fourth order setting. 
	In the case of strongly coupled systems, we refer to the works of \cite{arXiv:2002.12491,MR4002167}. 
	Finally, the three-dimensional \cite{MR4029726,MR3669775,MR3465087,MR3962197}, and four-dimensional \cite{MR1611691,MR3604948,MR3615163,arXiv:1804.09261v2,hyder-mancini-martinazzi} cases would be also worth studying (see Section~\ref{problemssection}).
	
	Now we compare our results with the existing literature on the second order case.
	The singular Yamabe problem, reduces to the study of a second order geometric PDE that is similar in spirit to \eqref{qcurvature},
	\begin{equation}\label{yamabeeq}
	L_g u=u^{2^{*}-1}, \quad \mbox{where} \quad L_g=-\Delta_g+\frac{n-2}{4(n-1)}R_g
	\end{equation}
	is the conformal Laplacian, and $2^{*}=2n/(n-2)$ for $n\geqslant3$. In the conformally flat case, we have
	\begin{equation}\label{secondorderequation}
	-\Delta u=u^{2^{*}-1} \quad \mbox{in} \quad B_1^*.
	\end{equation}
	In a milestone paper, L. A. Caffarelli et al. \cite[Theorem~1.2]{MR982351} developed a measure-theoretic version of the Alexandrov technique to prove that solutions to \eqref{secondorderequation} defined in the punctured ball are radially symmetry. Moreover, they provided a classification for global singular solutions to \eqref{secondorderequation} in the punctured space and obtained the local behavior in the neighborhood of the isolated singularity, proving that any singular solution converges to an Emden--Fowler solution \cite{lane,emden,fowler,MR0092663}.
	
	Later, N. Korevaar et al. \cite[Theorem~1]{MR1666838} gave a more geometric approach for proving \eqref{asymptotics}, which is based on the growth of the Jacobi fields for the linearized operator around an blow-up limit solution. In fact, they proved the following asymptotics,
	\begin{theoremletter}\label{thm:korevaar-mazzeo-pacard-schoen99}
		Let $u$ be a positive solution to \eqref{secondorderequation}. Then, there exist an Emden--Fowler solution $u_{a,T}$ to the blow-up limit of \eqref{secondorderequation} and $\beta^*_0>0$ such that 
		\begin{equation*}
		u(x)=(1+\mathcal{O}(|x|^{\beta^*_0}))u_{a,T}(|x|) \quad {\rm as} \quad x\rightarrow0.
		\end{equation*}
		Moreover, one can find a deformed Emden--Fowler solution $u_{a,0,x_0}$ and $\beta^*_1>1$ satisfying 
		\begin{equation*}
		u(x)=(1+\mathcal{O}(|x|^{\beta^*_1}))u_{a,0,x_0}(|x|) \quad {\rm as} \quad x\rightarrow0.
		\end{equation*}
	\end{theoremletter}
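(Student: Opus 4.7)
My plan for this second-order asymptotic result follows the Emden--Fowler/Jacobi-field strategy. First, I reduce to cylindrical coordinates via $u(x) = |x|^{-(n-2)/2} w(t,\theta)$ with $t = -\log|x|$, which transforms \eqref{secondorderequation} into the autonomous equation
\begin{equation*}
\partial_t^2 w + \Delta_{S^{n-1}} w - \tfrac{(n-2)^2}{4} w + w^{(n+2)/(n-2)} = 0 \quad \text{on } (0,\infty) \times S^{n-1}.
\end{equation*}
Combining the classification of Caffarelli--Gidas--Spruck for the blow-up limit with a Harnack chain along the cylinder, a standard rescaling/compactness argument produces a unique asymptotic profile: there exist parameters $(a,T)$ and a sequence $t_k \to \infty$ with $w(t + t_k, \theta) \to v_{a,T}(t)$ locally uniformly, and monotonicity of a Pohozaev-type energy along $t$ upgrades this to $w(t,\theta) - v_{a,T}(t) = o(1)$ as $t \to \infty$.

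Next, writing $w = v_{a,T} + \psi$, the perturbation satisfies $\mathcal{L}_{a,T}\psi = Q(\psi)$ with $\mathcal{L}_{a,T} = \partial_t^2 + \Delta_{S^{n-1}} - \tfrac{(n-2)^2}{4} + p\, v_{a,T}^{p-1}$ for $p=(n+2)/(n-2)$, and $Q$ quadratic in $\psi$. The spherical harmonic decomposition $\psi = \sum_{j \geq 0} \psi_j(t) Y_j(\theta)$ reduces $\mathcal{L}_{a,T}$ to a family of Hill operators with $T_a$-periodic potentials; differentiating the two-parameter Emden--Fowler family and the ambient Euclidean translations produces the kernel elements explicitly. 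I would show that mode $j=0$ carries a bounded two-dimensional kernel spanned by $\partial_a v_{a,T}$ and $\partial_T v_{a,T}$; mode $j=1$ admits a bounded Jacobi field decaying like $e^{-t}$, coming from Euclidean translations; while for $j \geq 2$ the Floquet characteristic exponents satisfy $\gamma_j \geq \gamma_* > 1$.

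With this dichotomy, for the first assertion I select $(a,T)$ so that the projection of $\psi$ onto the bounded $j=0$ kernel is annihilated, and then invert $\mathcal{L}_{a,T}$ on the orthogonal complement in a weighted space $e^{-\beta t} L^\infty$ with $0 < \beta < 1$. Since $Q(\psi)$ is quadratic, a contraction-mapping iteration upgrades the $o(1)$ decay to $\psi(t,\theta) = \mathcal{O}(e^{-\beta_0^* t})$ for some $\beta_0^* \in (0,1)$, which after reversing the substitution yields the advertised $|x|^{\beta_0^*}$ rate.

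To refine past the $j=1$ barrier I enlarge the reference family to the deformed Emden--Fowler solutions $u_{a,0,x_0}$, formally $u_{a,T}(|x - x_0|)$ modulo higher-order corrections, chosen so that tuning $x_0$ absorbs exactly the slow $j=1$ mode of $\psi$. After this adjustment only modes $j \geq 2$ persist, whose Floquet exponents exceed $1$, so a second iteration yields $\mathcal{O}(|x|^{\beta_1^*})$ with $\beta_1^* > 1$. The main obstacle is the spectral gap: one must explicitly compute the indicial roots mode by mode on the Delaunay background and verify that $\gamma_* > 1$ with no further accumulation between $j=1$ and $j=2$; the secondary subtlety is checking the nondegeneracy of the deformation map $x_0 \mapsto u_{a,T,x_0}$, that is, that its differential at $x_0 = 0$ parametrises precisely the slow Jacobi field on the $j=1$ mode so that the cokernel is eliminated.
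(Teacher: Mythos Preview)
This theorem is not proved in the paper: it is stated as Theorem~C, a background result quoted from Korevaar--Mazzeo--Pacard--Schoen \cite{MR1666838}, and no proof is supplied here. So there is no ``paper's own proof'' to compare against for this particular statement.

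That said, your outline is a faithful summary of the original KMPS strategy, and it is also precisely the template the present paper adapts to prove its new fourth-order analogues (Theorems~\ref{theorem1} and \ref{theorem1'}\textcolor{blue}{'}). The paper's Section~\ref{sec:linearanalysis} carries out the Jacobi-field/Floquet analysis you describe, Section~\ref{sec:qualitativeproperties} supplies the a priori bounds and Pohozaev invariant, and Section~\ref{sec:convergence} runs the Simon-type slide-back iteration and then the refinement via deformed Emden--Fowler solutions. One point of emphasis worth noting: in the paper's treatment (and in KMPS) the first convergence step is not obtained by a direct contraction-mapping inversion of $\mathcal{L}_{a,T}$ as you suggest, but rather by Simon's iterative ``slide-back'' argument (your adjustment of $(a,T)$ is done period-by-period, with the Pohozaev invariant pinning down $a$ and the translation $T$ corrected inductively); the weighted-space inversion enters only at the refinement stage. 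Your identification of the $j=0$ kernel as spanned by $\partial_a v_{a,T}$ and $\partial_T v_{a,T}$, the $j=1$ mode as the translation Jacobi field with rate $e^{-t}$, and the gap $\beta_{a,2}>1$ for $j\geqslant 2$ matches exactly what the paper establishes in Lemma~\ref{indicialset} and Proposition~\ref{growthpropertiessystem} (in the fourth-order setting).
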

	
	We should mention that F. C. Marques \cite[Theorem~1]{MR2393072} extended this asymptotics for the inhomogeneous case, at least for lower dimensions. 
	Namely, he showed that for non-flat background metrics the Emden--Fowler solutions are still the local models near the origin.
	On second order strongly coupled systems, R. Caju et al. \cite{MR4002167} (see also \cite{MR4085120}) classified the limit blow-up $p$-map solutions $\mathcal{U}=(u_1,\dots,u_p): B_1^*\rightarrow \mathbb{R}^p$ to the following second order analog to \eqref{oursystem},
	\begin{equation}\label{secondordersystem}
	-\Delta u_{i}=\frac{n(n-2)}{4}|\mathcal{U}|^{2^{*}-2}u_{i} \quad {\rm in} \quad B_1^* \quad {\rm for} \quad i\in I.
	\end{equation}
	They obtained an asymptotic classification, which is similar in spirit to Theorem~\ref{thm:andrade-doo19}. 
	Moreover, the same results were proved for non-flat background metrics with the same dimensions restrictions found in \cite[Theorem~1]{MR2393072}.
	\begin{theoremletter}\label{thm:caju-doo-santos19}
		Let $\mathcal{U}$ be a nonnegative singular solution to \eqref{secondordersystem}. Then, there exists $\beta^*_0>0$ such that 
		\begin{equation*}
		\mathcal{U}(x)=(1+\mathcal{O}(|x|^{\beta^*_0}))\mathcal{U}_{a,T}(|x|) \quad {\rm as} \quad x\rightarrow0,
		\end{equation*}
		where $\mathcal{U}_{a,T}$ satisfies the limit blow-up system,
		\begin{equation}\label{secondorderlimitsystem}
		-\Delta u_{i}=\frac{n(n-2)}{4}|\mathcal{U}|^{2^{*}-2}u_{i} \quad {\rm in} \quad \mathbb{R}^n\setminus\{0\}.
		\end{equation}
	\end{theoremletter}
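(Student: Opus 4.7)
The plan is to implement the Korevaar--Mazzeo--Pacard--Schoen blueprint from Theorem~\ref{thm:korevaar-mazzeo-pacard-schoen99}, adapted to the coupled vectorial setting and leaning on the second-order analog of the classification Theorem~\ref{thm:andrade-doo19} for the blow-up limits. The argument proceeds in four stages: (i) derive matching a priori two-sided bounds for $|\mathcal{U}|$ of Emden--Fowler type; (ii) perform a blow-up analysis converting \eqref{secondordersystem} into an autonomous system on the cylinder and extract a tangent Emden--Fowler solution $\Lambda^{*}u_{a,T}$; (iii) linearize around this tangent solution and diagonalize the linearization via spherical harmonics; (iv) invert the linearized operator in a weighted space and bootstrap exponential decay of the error in cylindrical coordinates.

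First, I would establish the sharp bound $C^{-1}|x|^{(2-n)/2}\leq |\mathcal{U}(x)|\leq C|x|^{(2-n)/2}$ near the origin. The upper bound proceeds by a Caffarelli--Gidas--Spruck style Moser iteration on the scalar quantity $|\mathcal{U}|$, exploiting the superharmonicity of each $u_{i}$ together with the Pohozaev identity associated with the conformal invariance of \eqref{secondordersystem} to control the spherical $L^{2^{*}}$ energy. The lower bound uses the sliding technique from \cite{arXiv:2002.12491}: one slides downward-translated spherical solutions $\Lambda u_{x_{0},\mu}$ against $\mathcal{U}$ until first contact, where the contact configuration combined with the blow-up hypothesis $|\mathcal{U}|\to\infty$ forces the required lower growth rate.

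Next, I introduce cylindrical coordinates $\mathcal{V}(t,\theta)=r^{(n-2)/2}\mathcal{U}(x)$ with $t=-\ln r$ and $\theta=x/|x|$, under which \eqref{secondordersystem} becomes a uniformly elliptic, translation-invariant system on $\mathbb{R}\times\mathbb{S}^{n-1}$, and stage (i) gives $\mathcal{V}\in L^{\infty}$. A diagonal blow-up at times $t_{k}\to\infty$ produces, along a subsequence, a $C^{2}_{\mathrm{loc}}$ limit solving the autonomous limit system; applied in its second-order form, Theorem~\ref{thm:andrade-doo19} forces this limit to be $\Lambda^{*}v_{a,T}$ for some $\Lambda^{*}\in\mathbb{S}^{p-1}_{+,*}$ and Fowler parameters $(a,T)$. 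The Pohozaev invariant $\mathcal{P}[\mathcal{U}](r)$ is conserved in $r$ and pins down $a$, so every subsequential limit shares the same Fowler parameter and, by a standard continuity argument, the same translation $T$.

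The central step is spectral. Writing $\mathcal{V}=\Lambda^{*}v_{a,T}+\mathcal{W}$ and projecting onto the basis $\{Y_{j}(\theta)\}_{j\geq 0}$ of $\mathbb{S}^{n-1}$-spherical harmonics together with the orthogonal splitting $\mathbb{R}^{p}=\mathbb{R}\Lambda^{*}\oplus(\Lambda^{*})^{\perp}$, the linearization $\mathcal{L}_{a,T}\mathcal{W}=\mathcal{Q}(\mathcal{W})$ decouples into a countable family of scalar periodic-coefficient ODEs. Floquet--Hill theory yields indicial exponents whose geometric Jacobi modes, namely time translation, dilation, Fowler parameter deformation, and the $SO(p)$-rotation of $\Lambda^{*}$ inside $\mathbb{S}^{p-1}_{+,*}$, can be absorbed by adjusting the parameters $(a,T,\Lambda^{*})$, while the remaining modes admit indicial roots with strictly positive real part $\beta^{*}_{0}>0$. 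The main obstacle will be the rotational block coming from $(\Lambda^{*})^{\perp}$: its linearized operator involves the off-diagonal coupling induced by the Gross--Pitaevskii nonlinearity and must be shown to be non-degenerate, which requires the strict positivity $\Lambda^{*}\in\mathbb{S}^{p-1}_{+,*}$ guaranteed by Theorem~\ref{thm:andrade-doo19}(ii), together with a Sturmian comparison against the scalar Jacobi equation. Once the spectral gap is secured, a contraction in the weighted space $e^{\beta^{*}_{0}t}L^{\infty}$ applied to the fixed-point equation $\mathcal{W}=\mathcal{L}_{a,T}^{-1}\mathcal{Q}(\mathcal{W})$ upgrades $\mathcal{W}=o(1)$ to $|\mathcal{W}(t)|\leq Ce^{-\beta^{*}_{0}t}$, yielding the stated polynomial decay upon reverting to the original coordinates.
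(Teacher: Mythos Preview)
Theorem~\ref{thm:caju-doo-santos19} is quoted in this paper as a known result from \cite{MR4002167}; the paper does not supply its own proof, so there is no ``paper's proof'' to compare against directly. That said, the paper proves the fourth-order analog (Theorem~\ref{theorem1}) following essentially the same Korevaar--Mazzeo--Pacard--Schoen blueprint you outline, so a comparison with that argument is meaningful.

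Your four-stage plan is correct in spirit and matches both \cite{MR4002167} and the structure of Sections~\ref{sec:linearanalysis}--\ref{sec:convergence} here. Two points deserve comment. First, in stage~(ii) you assert that after the Pohozaev invariant fixes $a$, ``a standard continuity argument'' pins down $T$ as well. This is premature: the Pohozaev functional determines only the necksize $a$, while the translation parameter $T$ may a priori drift along different subsequences $t_{k}\to\infty$. Establishing that $T$ is uniquely determined is precisely the content of the convergence step, and it requires the Jacobi-field analysis you postpone to stages~(iii)--(iv). You do recover this correctly later when you say the geometric modes (including time translation) are ``absorbed by adjusting the parameters,'' so the gap is one of exposition rather than substance.

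Second, your stage~(iv) proposes a direct contraction in $e^{\beta_{0}^{*}t}L^{\infty}$ for the fixed-point equation $\mathcal{W}=\mathcal{L}_{a,T}^{-1}\mathcal{Q}(\mathcal{W})$. The paper (for the fourth-order case) instead runs Simon's slide-back iteration (Subsection~\ref{subsec:simpleconvergence}): one shows inductively that $\eta(\tau+NT_{a}+\delta)\leqslant\tfrac12\eta(\tau)$ for a suitable correction $\delta$, which simultaneously constructs the limiting translation $\sigma$ and the exponential decay. Your contraction route is viable, but it requires a genuine Lyapunov--Schmidt reduction: you must invert $\mathcal{L}_{a,T}$ only on the complement of the zero-indicial-root Jacobi fields and then solve for the kernel parameters $(a,T,\Lambda^{*})$ separately, since $\mathcal{L}_{a,T}$ is not invertible on the full weighted space when $\beta_{0}^{*}>0$ crosses the indicial root at $0$. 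The Simon technique packages this bookkeeping into the iterative correction $\delta$ and avoids an explicit right inverse; your approach is cleaner conceptually but demands more care about the deficiency space $D_{a,0}$ (cf.\ Subsection~\ref{subsec:fredholm}).
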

	This theorem is the vectorial case of some classical asymptotic results due to L. A. Caffarelli et al. \cite[Theorem~1.2]{MR982351} and N. Korevaar et al. \cite[Theorem~1]{MR1666838}. 
	On the classification for non-singular solutions to \eqref{secondorderlimitsystem}, we refer the reader to \cite{MR2603801,MR2558186}.
	
	On the scalar case of \eqref{oursystem}, T. Jin and J. Xiong \cite[Theorems~1.1 and 1.2]{arxiv:1901.01678} used a Green identity for the poly-Laplacian and some localization methods to study an integral equation equivalent to \eqref{ourequation}, proving asymptotic radial symmetry and sharp global estimates for singular solutions to \eqref{ourequation}, which can be stated as follows
	\begin{theoremletter}\label{thm:kin-xiong19} 
		Let $u$ be a positive superharmonic solution to \eqref{ourequation}. Then, there exist $C_1,C_2>0$ such that 
		\begin{equation}\label{sharpasymp}
		C_1|x|^{\frac{4-n}{2}}\leqslant u(x)\leqslant C_2|x|^{\frac{4-n}{2}}.
		\end{equation}
		Moreover, $u$ is asymptotically radially symmetric and 
		\begin{equation*}
		u(x)=(1+\mathcal{O}(|x|))\overline{u}(x) \quad {\rm as} \quad x\rightarrow0,
		\end{equation*}
		where $\overline{u}(r)=\avint_{\partial B_{1}} u(r \theta)\ud\theta$ is the spherical average of $u$.
	\end{theoremletter}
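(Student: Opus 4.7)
The plan is to split the fourth-order equation into a coupled second-order system and reduce the problem to an integral equation, in the spirit of the classical strategy for the conformal Laplacian. Setting $v := -\Delta u$, the superharmonicity of $u$ gives $v>0$ in $B_1^*$, and $v$ itself is superharmonic since $-\Delta v=c(n)u^{2^{**}-1}>0$. Applying Brezis--Lions theory to the pair $(u,v)$ of non-negative superharmonic functions, combined with a Pohozaev-type integration by parts that exploits the critical exponent, yields $u^{2^{**}-1}\in L^1_{\loc}(B_1)$. Iterating the Green's representation for $-\Delta$ then produces an integral identity
\begin{equation*}
u(x) \;=\; c(n)\int_{B_1}G_2(x,y)\,u(y)^{2^{**}-1}\,\ud y \;+\; H(x),
\end{equation*}
where $G_2(x,y)\sim|x-y|^{4-n}$ is the iterated Green kernel and $H$ collects the smooth boundary contributions together with a distributional term of the form $a|x|^{4-n}+b|x|^{2-n}$ concentrated at the origin.

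The sharp two-sided bound \eqref{sharpasymp} follows from a scaling argument performed on this integral equation. For $r\in(0,1/4)$, set $u_r(x):=r^{(n-4)/2}u(rx)$ on the fixed annulus $A=\{1/4<|x|<4\}$; each $u_r$ satisfies an integral equation of the same form with a uniformly controlled kernel, so standard $L^p$-to-$L^\infty$ estimates for such integral operators provide a uniform $C^0$-bound on $A$ provided the rescaled nonlinear density is uniformly integrable. If the upper bound failed along some sequence $r_k\downarrow 0$, the rescaled solutions $u_{r_k}$ would subconverge in $C^{4}_{\loc}(\mathbb{R}^n\setminus\{0\})$ to a non-trivial solution of \eqref{limitequation}, which by Theorem~\ref{thm:frank-konig19}(ii) must be an Emden--Fowler solution; the uniform integrability of its nonlinear density then contradicts the blow-up assumption. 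The matching lower bound follows because superharmonicity of the spherical average $\bar u$ forces $r\mapsto r^{n-2}\bar u(r)$ to be monotone, and reinserting the upper bound into the integral equation gives $\bar u(r)\gtrsim r^{(4-n)/2}$.

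For the asymptotic radial symmetry, the natural tool is the method of moving spheres applied directly to the integral equation. For $x_0$ near the origin and $\lambda>0$, introduce the Kelvin transform
\begin{equation*}
u^{x_0,\lambda}(x):=\left(\frac{\lambda}{|x-x_0|}\right)^{n-4} u\!\left(x_0+\frac{\lambda^2(x-x_0)}{|x-x_0|^2}\right),
\end{equation*}
which preserves the kernel $G_2$. The sharp upper bound just obtained enables one to start the sliding with small $\lambda$; one then moves $\lambda$ up to the critical radius $\bar\lambda(x_0)$ and, using the $C^1$-regularity of $H$ near the origin, quantifies the failure of rigidity to convert the Kelvin comparison inequality into the angular oscillation estimate $|u(x)-\bar u(|x|)|=\mathcal{O}(|x|)\,\bar u(|x|)$.

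The main obstacle is the careful bookkeeping of the distributional term in $H$ at the origin, which has no analogue in the second-order case. Both the scaling argument and the moving spheres must be carried out while respecting the decomposition between the integral self-reproducing part and the singular harmonic-type contribution, since the latter could a priori produce a spurious $|x|^{4-n}$ component that would dominate the desired $|x|^{(4-n)/2}$ bound. The integral formulation of the moving spheres, adapted from earlier work on poly-harmonic integral equations, is precisely what preserves this decomposition under the Kelvin inversion and rules out the dominant distributional contribution.
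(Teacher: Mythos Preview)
This statement is Theorem~E, which the paper quotes from Jin--Xiong rather than proving directly; the paper's own contribution is the $p$-map generalization in Section~\ref{sec:qualitativeproperties} (Proposition~\ref{estimates}), which follows the Jin--Xiong strategy closely. Your overall framework---integral representation via an iterated Green kernel, followed by moving spheres on the integral equation---matches that approach, and your sketch for asymptotic radial symmetry via Kelvin comparison is essentially the correct mechanism.

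However, both the upper and lower bound arguments contain genuine gaps. For the \emph{upper bound}, your blow-up step is circular: if the estimate fails along $r_k\downarrow 0$, then by definition $u_{r_k}$ is \emph{unbounded} on the fixed annulus, so no $C^4_{\loc}$-subconvergence is available. The paper instead performs a point-selection (doubling) argument: assuming $|x_k|^{\gamma}u(x_k)\to\infty$, one locates nearby maxima $\bar{x}_k$ of a weighted quantity, rescales around $\bar{x}_k$ by $u(\bar{x}_k)^{-1/\gamma}$, and obtains a \emph{bounded} sequence converging to an entire solution on $\mathbb{R}^n$. By Theorem~\ref{thm:frank-konig19}(i) the limit is a spherical bubble (not an Emden--Fowler solution as you wrote), and the contradiction comes from running the integral moving-spheres comparison against this bubble, not from integrability of the nonlinear density.

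For the \emph{lower bound}, superharmonicity of $\bar{u}$ yields monotonicity of $r^{n-1}\bar{u}'(r)$, which controls only the $r^{2-n}$ scale of the Laplacian's fundamental solution---the wrong exponent. Reinserting the upper bound $u\leqslant C|x|^{-\gamma}$ into the integral equation bounds the convolution term from \emph{above} by $C|x|^{-\gamma}$, again the wrong direction. The paper's route is substantively different and cannot be bypassed: one shows via barriers that if $\liminf_{x\to 0}|x|^{\gamma}u(x)=0$ then in fact $\lim_{x\to 0}|x|^{\gamma}u(x)=0$, and then the Pohozaev invariant (Lemma~\ref{lm:pohozaevidentity} and Remark~\ref{negativepohozaev}) forces $\mathcal{P}_{\rm sph}(u)=0$, hence the singularity is removable. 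The Pohozaev functional---not monotonicity---is what pins down the sharp exponent $(4-n)/2$ from below.
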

	
	\begin{remark}
		The convergence to the average in Theorem \ref{thm:kin-xiong19} is also true for higher order equations
		\begin{equation}\label{higherequation}
		(-\Delta)^k u=u^{\frac{n+2k}{n-2k}} \quad {\rm in} \quad B_1^*,
		\end{equation}
		where $n\geqslant 2k$, $k\in\mathbb{N}$ and $(-\Delta)^k$ denotes the poly-Laplacian operator, with additional positivity condition $(-\Delta)^j u>0$ for all $j=1,\dots,k-1$. 
		We also emphasize that when $k$ is odd, \eqref{higherequation} becomes an integral equation, and a more involved analysis is required.
	\end{remark}
	
	The strategy to prove Theorem~\ref{theorem1} relies on asymptotic analysis.  
	Roughly speaking, this is a combination of classification results, some a priori estimates, and linear analysis.
	The first step is to show that the Jacobi fields (these are the elements in the kernel of the linearization of \eqref{oursystem} around an Emden--Fowler limit solution) satisfy suitable growth properties
	
	\begin{proposition}\label{growthpropertiessystem}
		The following properties hold for the projected linearized operator (see Lemma~\ref{linearizedoperator}):\\
		\noindent{\rm (i)} For $j=0$, the homogeneous equation $\mathcal{L}^{a}_{0}(\Phi)=0$ has a solutions basis with $2p$ elements, which are either bounded or at most linearly growing as $t\rightarrow\infty$;\\
		\noindent{\rm (ii)} For each $j\geqslant1$, the homogeneous equation $\mathcal{L}^{a}_{j}(\Phi)=0$ has a solutions basis with $4p$ elements, which are exponentially growing/decaying as $t\rightarrow\infty$.
	\end{proposition}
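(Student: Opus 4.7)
My plan is to conjugate the linearization of \eqref{vectlimitequation} at $\mathcal{U}_{a,T}=\Lambda^{*}u_{a,T}$ to the cylinder $(t,\theta)\in\mathbb{R}\times\mathbb{S}^{n-1}$ via the Emden--Fowler change of variables $t=-\log|x|$, decompose the vector linearization into a radial piece (parallel to $\Lambda^{*}$) and a tangential piece (on the orthogonal $(p-1)$-plane), and then reduce each piece, mode by mode in spherical harmonics on $\mathbb{S}^{n-1}$, to a fourth order linear ODE in $t$ with $T$-periodic coefficients to which Floquet theory applies.

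First, owing to the Gross--Pitaevskii structure, the linearization splits as the scalar operator $\mathcal{L}^{\mathrm{rad}}=\Delta^{2}-c(n)(2^{**}-1)u_{a,T}^{2^{**}-2}$ acting on $\phi=\Phi\cdot\Lambda^{*}$, together with $p-1$ copies of $\mathcal{L}^{\mathrm{tang}}=\Delta^{2}-c(n)u_{a,T}^{2^{**}-2}$ acting componentwise on $\Psi=\Phi-\phi\Lambda^{*}$. Passing to cylindrical coordinates and conjugating by $|x|^{(n-4)/2}$ turns $\Delta^{2}$ into a constant coefficient fourth order operator in $t$ plus lower order terms involving $-\Delta_{\mathbb{S}^{n-1}}$; expanding in spherical harmonics of degree $j$, with eigenvalue $\lambda_{j}=j(j+n-2)$, yields for each $j\geqslant 0$ a scalar fourth order ODE in $t$ whose coefficients are periodic of period $T$ through the factor $v_{a}(t+T)^{2^{**}-2}$. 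The $4p$-dimensional solution space decouples as one radial block (dimension $4$) plus $p-1$ tangential blocks (dimension $4$ each).

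Floquet theory then reduces the whole question to analyzing the monodromy $M^{a}_{j}$; since each scalar ODE is self-adjoint in divergence form, its $4\times 4$ monodromy is symplectic, and its characteristic multipliers come in reciprocal pairs $(\mu,1/\mu)$. For the mode $j=0$ I would invoke the symmetries of the Emden--Fowler family to exhibit explicit kernel elements: $\partial_{t}v_{a}$ lies in $\ker\mathcal{L}^{a,\mathrm{rad}}_{0}$ and is $T_{a}$-periodic, forcing the multiplier $1$ to be a double eigenvalue of the radial monodromy. The symplectic pairing then supplies a second bounded (or, via a nontrivial Jordan block coming from the period dependence on $a$, at worst linearly growing) solution, while the remaining pair of multipliers lies off the unit circle and yields two exponential solutions. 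For each tangential direction orthogonal to $\Lambda^{*}$, the function $u_{a,T}$ itself solves $\Delta^{2}\phi=c(n)u_{a,T}^{2^{**}-2}\phi$, since $\Delta^{2}u_{a,T}=c(n)u_{a,T}^{2^{**}-1}$, and this Jacobi field is the infinitesimal rotation of the internal direction $\Lambda^{*}$; the same symplectic argument then produces two bounded or at most linearly growing solutions per tangential direction. Summing radial and tangential contributions yields $2+2(p-1)=2p$ bounded or at most linearly growing solutions, establishing (i).

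For the modes $j\geqslant 1$ I must show that every multiplier of $M^{a}_{j}$ lies off the unit circle. My plan is to start at $a=a_{0}$ (the cylindrical solution $v_{\mathrm{cyl}}\equiv a_{0}$), where $\mathcal{L}^{a_{0}}_{j}$ has constant coefficients and its characteristic polynomial is an explicit quartic whose coefficients depend only on $n$ and $\lambda_{j}$; a direct indicial analysis parallel to that in \cite{MR3869387} shows that for $n\geqslant 5$ and $j\geqslant 1$ all four roots are real and nonzero, so the multipliers are strictly bounded away from $|\mu|=1$. Then I would follow the multipliers along the family $a\in(0,a_{0}]$ by continuity; a crossing of the unit circle would produce a nontrivial bounded $j\geqslant 1$ Jacobi field at the Emden--Fowler background, which I would rule out via a Pohozaev-type identity on the cylinder together with the absence of any infinitesimal conformal symmetry of \eqref{vectlimitequation} that generates such a mode. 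The main obstacle is exactly this no-crossing step: it is the \emph{involved spectral analysis} flagged in the abstract, and it constitutes the conceptual heart of the proof of Proposition~\ref{growthpropertiessystem}.
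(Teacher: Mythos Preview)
Your radial/tangential splitting along $\Lambda^{*}$ and $(\Lambda^{*})^{\perp}$ is correct and is precisely how the paper reduces Proposition~\ref{growthpropertiessystem} to the scalar analysis (Corollary~\ref{scalargrowthproperties}); you are in fact more careful than the paper's own write-up of the proof in recording that the tangential block carries the potential $c(n)v_{a}^{2^{**}-2}$ rather than $\widetilde{c}(n)v_{a}^{2^{**}-2}$, so that the periodic tangential Jacobi field is $v_{a}$ itself (coming from rotations of $\Lambda^{*}$ in $\mathbb{S}^{p-1}$) and not $v_{a}'$.

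There is, however, a real gap in your $j=0$ argument. The symplectic structure of the monodromy forces the multiplier $1$ to have \emph{even} algebraic multiplicity, but it does not exclude multiplicity four: nothing you have written rules out a second reciprocal pair on the unit circle (or a $4\times 4$ Jordan block at $1$). Exhibiting one periodic kernel element only shows $1$ is a multiplier; the assertion that ``the remaining pair of multipliers lies off the unit circle'' is exactly the content that needs proof. The paper does \emph{not} settle this by symplectic algebra or by continuity from $a=a_{0}$. Instead it proves directly that $0$ is an isolated indicial root (Proposition~\ref{isolatedindicialroot}) via the Fourier--Laplace transform and an analysis of the spectral bands $\mathfrak{B}_{k}(a,j)$ of the quasi-periodic problem $\mathcal{L}^{a}_{j,\alpha}$: using the Rayleigh quotient and the variational characterization of $v_{a}$ one bounds $\sigma_{0}(a,0,0)$ explicitly and shows $\mathfrak{B}_{k}(a,0)\subset[0,\infty)$ for $k\geqslant 2$, so that only one band touches zero. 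Corollary~\ref{cor:asymptoticexpansion} then yields the two-dimensional deficiency space $D_{a,0}$.

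For $j\geqslant 1$ your continuity-from-$a_{0}$ plus Pohozaev route is a genuinely different strategy, and the no-crossing step you flag as the main obstacle is indeed the heart of the matter. The paper bypasses it: for $1\leqslant j\leqslant n$ it uses the explicit \emph{positive} geometric Jacobi fields $\phi^{\pm}_{a,j}=e^{\pm t}(\pm v_{a}'+\gamma v_{a})+\mathcal{E}_{\pm}$ arising from translation of the singular point, whose positivity places the bottom of the spectrum above zero; for $j>n$ it uses the comparison $\sigma_{k}(a,j,0)>\sigma_{0}(a,0,\alpha)+J_{0}\lambda_{j}+\lambda_{j}^{2}$ together with the lower bound $\sigma_{0}(a,0,0)\geqslant\check{c}(n)$ to get $\mathfrak{B}_{k}(a,j)\subset(0,\infty)$ directly for every $a\in(0,a_{0})$, with no deformation argument.
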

	Inspired by \cite{MR4002167,MR2737708}, we use the spectral analysis of the linearized operator to prove the last proposition.
	The issue is that not all the Jacobi fields are generated by variations of some parameters in the classification of the Emden--Fowler solutions. 
	To overcome this problem, we show that the spectrum of the linearized operator is purely absolutely continuous.
	More precisely, it is the union of spectral bands separated by gaps away from the origin.
	Therefore, the zero frequency deficiency space is generated only by the geometric Jacobi fields.
	We should also mention that fourth order Jacobi fields likewise in Proposition~\ref{growthpropertiessystem} also appear in the study of stability and non-degeneracy properties for Willmore hypersurfaces \cite{MR2785762,MR3780139,MR3612543,MR2119722}.
	
	In addition, we also need to show that solutions to \eqref{oursystem} are radially symmetric, and satisfy upper and lower bounds estimate near the isolated singularity, which can be enunciated as
	\begin{proposition}\label{estimates}
		Let $\mathcal{U}$ be a strongly positive superharmonic solution to \eqref{oursystem}. 
		Then, $\mathcal{U}$ is radially symmetric about the origin. Moreover, either the origin is a removable singularity or there exist $C_1,C_2>0$ satisfying 
		\begin{equation}\label{estimatesorigin}
		C_1|x|^{\frac{4-n}{2}}\leqslant |\mathcal{U}(x)|\leqslant C_2|x|^{\frac{4-n}{2}} \quad {\rm for} \quad 0<|x|<1/2.
		\end{equation}
	\end{proposition}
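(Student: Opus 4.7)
I would address the three assertions---radial symmetry, upper and lower bounds---in order, using the superharmonicity to reduce the problem to a cooperative second order system. Setting $v_i = -\Delta u_i > 0$, the pair $(\mathcal{U},\mathcal{V})$ satisfies
\begin{equation*}
-\Delta u_i = v_i, \qquad -\Delta v_i = c(n)\, |\mathcal{U}|^{2^{**}-2} u_i, \qquad i \in I,
\end{equation*}
a cooperative system with all $2p$ components positive. This reduction is the bridge that allows the classical second order toolkit---maximum principle, moving planes, sliding barriers---to apply to the fourth order problem.

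For the radial symmetry, I would run the method of moving planes in the style of Caffarelli--Gidas--Spruck simultaneously on all $2p$ components of the cooperative system above. On the punctured ball one combines the reflection with a Kelvin transform so as to start the sliding from infinity after inversion. Exactly as in the authors' companion paper \cite{arXiv:2002.12491} and in the second order system analysis of \cite{MR4002167}, the coupling via $|\mathcal{U}|^{2^{**}-2}$ is monotone in each $u_i$, which preserves the reflection inequalities for the full $2p$-component system; letting the reflection planes accumulate at the origin would then deliver the (asymptotic) radial symmetry claim.

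For the upper bound I would argue by contradiction, combining a blow-up analysis with the fourth order Pohozaev conservation law. Define $\Psi(r) = \max_{|x|=r} |x|^{(n-4)/2} |\mathcal{U}(x)|$ and suppose $\limsup_{r \to 0^+} \Psi(r) = +\infty$ along some $r_k \to 0$. The scale invariant rescaling
\begin{equation*}
\widetilde{\mathcal{U}}_k(y) = \frac{r_k^{(n-4)/2}}{\Psi(r_k)}\, \mathcal{U}(r_k y)
\end{equation*}
satisfies \eqref{oursystem}, and standard $C^{4,\zeta}$ elliptic regularity applied component by component to the decoupled second order system extracts a nontrivial limit $\mathcal{U}_\infty$ solving \eqref{vectlimitequation} on $\mathbb{R}^n \setminus \{0\}$. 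By Theorem \ref{thm:andrade-doo19}, $\mathcal{U}_\infty$ is either a spherical bubble $\Lambda u_{x_0,\mu}$ or an Emden--Fowler profile $\Lambda^* u_{a,T}$. The fourth order Pohozaev integral over $\partial B_\rho$---assembled from $x \cdot \nabla u_i$, $u_i$, $\Delta u_i$ and their derivatives and summed over $i$---is $\rho$-independent for solutions of \eqref{oursystem}, vanishes on a spherical bubble, and is a strictly negative constant on an Emden--Fowler profile. Comparing these values before and after the rescaling, the vanishing normalization $\Psi(r_k)^{-1}$ forces a contradiction in both alternatives.

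For the lower bound I would adapt the sliding technique of Korevaar--Mazzeo--Pacard--Schoen, extended to cooperative systems as in \cite{MR4002167}, to the pair $(\mathcal{U},\mathcal{V})$. Using the upper bound already obtained, $|\mathcal{U}(x)| \leqslant C_2 |x|^{(4-n)/2}$ on $B_{1/2}^*$. For a small Fowler parameter $a$ and a fixed direction $\Lambda^* \in \mathbb{S}^{p-1}_{+,*}$, the barrier $\Lambda^* u_{a,T}$ is narrow and strictly below $|\mathcal{U}|$ near $|x| = 1/2$, while the non-removability of the singularity allows pushing it under $|\mathcal{U}|$ as $|x| \to 0$. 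Sliding the shift parameter $T$ (or contracting $a$) until the barrier first touches the graph of $|\mathcal{U}|$, the strong maximum principle for the cooperative $(u_i, v_i)$ system together with the Hopf lemma would force equality, yielding $C_1 |x|^{(4-n)/2} \leqslant |\mathcal{U}(x)|$. The main obstacle is precisely this sliding step: the fourth order structure requires monotonicity to be tracked on all $2p$ inequalities of the decoupled system simultaneously, and the coupling $|\mathcal{U}|^{2^{**}-2}$ demands that the barrier be proportional to a single fixed direction in $\mathbb{S}^{p-1}_{+,*}$ in order to preserve the cooperative structure throughout the slide---this is exactly where the superharmonicity hypothesis becomes indispensable.
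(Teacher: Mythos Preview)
Your overall route differs substantially from the paper's, and two of the three steps contain genuine gaps.

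\textbf{On the upper bound.} Your rescaling $\widetilde{\mathcal{U}}_k(y)=\Psi(r_k)^{-1}r_k^{(n-4)/2}\mathcal{U}(r_ky)$ does \emph{not} satisfy \eqref{oursystem}: the extra factor $\Psi(r_k)^{-1}$ is not a symmetry of the equation, and a direct computation shows $\Delta^2\widetilde{u}_{ki}=c(n)\Psi(r_k)^{2^{**}-2}|\widetilde{\mathcal{U}}_k|^{2^{**}-2}\widetilde{u}_{ki}$, so no nontrivial limit solving \eqref{vectlimitequation} arises this way. Even if you repair the scaling (choosing a local maximum point $\bar{x}_k$ and setting $w_{ki}(y)=u_i(\bar{x}_k)^{-1}u_i(\bar{x}_k+u_i(\bar{x}_k)^{-2/(n-4)}y)$, which is what the paper does), the Pohozaev comparison you sketch does not close: the limit bubble has $\mathcal{P}_{\rm sph}=0$, but you do not know the sign of $\mathcal{P}_{\rm sph}(\mathcal{U})$ at this stage---that is established only \emph{after} the upper bound, in the removable-singularity classification. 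The paper obtains the contradiction differently: it first passes to the integral formulation \eqref{integralsystem}, then shows the Kelvin-transform inequality $|\mathcal{U}_{x,\mu}|\leqslant|\mathcal{U}|$ survives the blow-up, which is incompatible with the explicit bubble profile \eqref{sphericalfunctions} for all $\mu>0$.

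\textbf{On the lower bound.} Your sliding-barrier argument needs $|\mathcal{U}(x)|\to\infty$ as $|x|\to0$ in order to slide an Emden--Fowler profile underneath near the origin. But non-removability only gives $\limsup_{|x|\to0}|\mathcal{U}|=\infty$; without further work the barrier can protrude through $|\mathcal{U}|$ along a sequence where $|\mathcal{U}|$ stays bounded. The paper fills this gap in a different order: it first proves $\limsup=\infty\Rightarrow\liminf=\infty$ (via the superharmonicity and a maximum-principle argument on annuli), then shows $\liminf_{|x|\to0}|x|^{\gamma}|\mathcal{U}|=0\Rightarrow\lim=0$, and finally uses a barrier of the form $M|x|^{-\kappa}+\varepsilon|x|^{4-n-\kappa}$ on the integral equation---together with the Pohozaev invariant---to conclude that $\lim=0$ forces removability. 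The lower bound then follows by contrapositive; no sliding of Emden--Fowler solutions occurs.

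\textbf{On the overall strategy.} The paper deliberately avoids the cooperative second order decoupling you propose and instead converts \eqref{oursystem} to the integral system \eqref{integralsystem} via the biharmonic Green function; all three steps (asymptotic radial symmetry, upper bound, lower bound) are then run with the integral moving-spheres method and Li--Li--Ma type kernel estimates. Your cooperative-system moving-planes route for the symmetry is plausible in principle (it is close to Lin's original approach in the scalar case), but note that the statement actually proved is only \emph{asymptotic} radial symmetry, $|\mathcal{U}(x)|=(1+\mathcal{O}(|x|))|\overline{\mathcal{U}}|(x)$, not exact symmetry.
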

	
	The main ingredients in the proof of Proposition~\ref{estimates} are the blow-up method based on the classification result for non-singular solutions to \eqref{vectlimitequation} in Theorem~\ref{thm:frank-konig19}, and a removable singularity theorem relying on the sign of the Pohozaev invariant associated to \eqref{oursystem}. 
	The difficulties in our argument are numerous. 
	The lack of maximum principle causes one of those due to the fourth order operator on the left-hand side of \eqref{oursystem}. 
	To handle the problem with the lack of maximum principle, we apply a Green identity to convert \eqref{oursystem} into an integral system \cite{arxiv:1901.01678} (see also \cite{MR3694645,MR3626036,arXiv:1810.02752v6}). 
	Then, we prove that singular solutions satisfy an upper and lower bound near the isolated singularity; these arguments are based on an integral form of the moving spheres technique. 
	We also need to deal with the nonlinear effects imposed by the coupling term on the right-hand side of \eqref{oursystem}.
	This idea is to use Theorem~\ref{thm:andrade-doo19} combined with some decoupling techniques from \cite{MR4085120,MR2558186,MR2603801,MR2237439}, which allows a comparison involving the norm of a $p$-map solution and each component. 
	
	The next proposition is a reformulation of Theorem~\ref{theorem1} in terms of the a priori estimates, which proof is a combination of Propositions~\ref{growthpropertiessystem} and \ref{estimates},
	\begin{proposition}\label{convergence}
		Let $\mathcal{U}$ be a strongly positive superharmonic solution to \eqref{oursystem} satisfying \eqref{estimatesorigin}.
		Then, there exist $\beta^*_0>0$ and an Emden--Fowler solution $\mathcal{U}_{a,T}$ such that
		\begin{equation*}
		\mathcal{U}(x)=(1+\mathcal{O}(|x|^{\beta^*_0}))\mathcal{U}_{a,T}(|x|) \quad {\rm as} \quad x\rightarrow0.
		\end{equation*}
	\end{proposition}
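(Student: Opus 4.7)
The plan is to combine the two main inputs already stated, Propositions~\ref{estimates} and \ref{growthpropertiessystem}, through the standard Emden--Fowler (cylindrical) change of coordinates. Setting $t = -\ln|x|$, $\theta = x/|x|$ and
$$\mathcal{V}(t,\theta) := |x|^{(n-4)/2}\,\mathcal{U}(x),$$
a direct computation turns \eqref{oursystem} into an autonomous fourth order system for $\mathcal{V}$ on the half-cylinder $[0,\infty)\times\mathbb{S}^{n-1}$. The two-sided estimate \eqref{estimatesorigin} gives $c_{1}\leqslant |\mathcal{V}(t,\theta)|\leqslant c_{2}$ uniformly in $(t,\theta)$, and standard interior elliptic regularity for the autonomous system then yields uniform $C^{4,\zeta}$ bounds on every slab of bounded length.

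First I would extract a candidate limit. For any sequence $t_{k}\to\infty$, the shifted maps $\mathcal{V}_{k}(t,\theta):=\mathcal{V}(t+t_{k},\theta)$ are precompact in $C^{4,\zeta}_{\loc}(\mathbb{R}\times\mathbb{S}^{n-1})$ and converge, along a subsequence, to a bounded classical solution $\mathcal{V}_{\infty}$ on the full cylinder. Pulling back via $t=-\ln|x|$, this $\mathcal{V}_{\infty}$ gives a strongly positive solution of the limit system \eqref{vectlimitequation} satisfying \eqref{estimatesorigin}, so Theorem~\ref{thm:andrade-doo19}(ii) forces $\mathcal{V}_{\infty}(t,\theta)=\Lambda^{*} v_{a,T}(t)$ for some $\Lambda^{*}\in\mathbb{S}^{p-1}_{+,*}$ and admissible Fowler parameters $(a,T)$. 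This identifies the candidate Emden--Fowler profile $\mathcal{U}_{a,T}$.

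Next I would linearize around this profile. Writing $\mathcal{V}(t,\theta)=\mathcal{U}_{a,T}(t)+\Phi(t,\theta)$ and Taylor expanding the Gross--Pitaevskii nonlinearity yields
\begin{equation*}
\mathcal{L}^{a}\Phi = Q(\Phi),\qquad |Q(\Phi)|\leqslant C|\Phi|^{2},
\end{equation*}
where $\mathcal{L}^{a}$ is exactly the projected linearized operator studied in Proposition~\ref{growthpropertiessystem}. Expanding in spherical harmonics $\Phi(t,\theta)=\sum_{j\geqslant 0}\Phi_{j}(t)Y_{j}(\theta)$ decouples this into the countable family of ODE systems $\mathcal{L}^{a}_{j}\Phi_{j}=Q_{j}$. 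By Proposition~\ref{growthpropertiessystem}(ii) each $\mathcal{L}^{a}_{j}$ with $j\geqslant 1$ has a stable/unstable dichotomy with indicial rates uniformly bounded away from $0$; for $j=0$, the kernel is exhausted by the $2p$ geometric Jacobi fields coming from variation of $(a,T,\Lambda^{*})$. Adjusting the parameters $(a,T,\Lambda^{*})$ absorbs the bounded and linearly growing $j=0$ modes, while a three-annulus / indicial-root iteration, fed by the a priori smallness $|\Phi|\to 0$ that already follows from the subsequential convergence, forces each $\Phi_{j}$ into the stable subspace of $\mathcal{L}^{a}_{j}$. Summing in $j$ using the uniform spectral gap produces $|\Phi(t,\theta)|\leqslant C e^{-\beta^{*}_{0}t}$ for some $\beta^{*}_{0}>0$ determined by the smallest positive indicial root, and undoing $t=-\ln|x|$ gives the desired remainder $\mathcal{O}(|x|^{\beta^{*}_{0}})$.

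The main obstacle is the uniqueness of the limit orbit and the rigorous passage from subsequential convergence to a quantitative decay rate. Compactness alone does not preclude drift along the center-type directions of the linearized flow; ruling this out requires precisely the absolutely continuous spectral structure established in Proposition~\ref{growthpropertiessystem}, which guarantees that all non-decaying modes are geometric and can be reabsorbed into the choice of $(a,T,\Lambda^{*})$. The remaining technical nuisance is that the nonlinearity is only quadratic in $\Phi$ near an Emden--Fowler profile, so the iteration must be carried out carefully to ensure that the error feedback $Q(\Phi)$ never dominates the linear gain supplied by the spectral gap; this is where the sharp two-sided bound \eqref{estimatesorigin} is indispensable to start the bootstrap.
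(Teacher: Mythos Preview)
Your broad outline---pass to cylindrical coordinates, extract a subsequential Emden--Fowler limit via compactness, linearize, and exploit the spectral dichotomy of Proposition~\ref{growthpropertiessystem}---matches the paper's strategy. However, there is a genuine gap in how you handle the zero-frequency modes, and it is exactly the place where the paper invokes a tool you never mention: the Pohozaev invariant.

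Concretely, subsequential convergence alone does not pin down the limit profile. Different sequences $t_k\to\infty$ could a priori converge to Emden--Fowler solutions with different parameters $(a,T,\Lambda^*)$, so for any \emph{fixed} choice of these parameters the remainder $\Phi=\mathcal{V}-\mathcal{V}_{a,T}$ need not satisfy $|\Phi|\to 0$ along the full flow; your appeal to ``a priori smallness $|\Phi|\to 0$ that already follows from the subsequential convergence'' is circular. The paper resolves this in two stages. First, the Pohozaev functional $\mathcal{P}_{\rm cyl}(\mathcal{V})$ is conserved in $t$, and since $\mathcal{P}_{\rm cyl}(\mathcal{V}_{a,T})$ depends only on $a$, this forces every subsequential limit to have the \emph{same} necksize $a$ (Claim~2 of the paper's proof). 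Second---and this is the point your sketch glosses over---the linearly growing $j=0$ Jacobi field is $\partial_a v_{a,T}$, and you cannot ``absorb'' a linearly growing mode simply by re-choosing $a$: the paper instead uses the Pohozaev identity again, comparing $\mathcal{P}_{\rm cyl}(0,\mathcal{V}_k)$ with $\mathcal{P}_{\rm cyl}(\mathcal{V}_{a,T})$ to order $\eta_k$, to show directly that the coefficient $b_2$ of that mode vanishes (Step~2 of Claim~3). Only the bounded mode $\partial_T v_{a,T}$ survives, and \emph{that} one is killed by Simon's iterative slide-back: one constructs a sequence of corrections $s_k$ to the translation parameter satisfying $2\eta(\tau+NT_a+s_k)\leqslant\eta(\tau)$, and the accumulated shift $\sigma=\sum s_k$ converges, yielding $|\mathcal{V}_\sigma-\mathcal{V}_{a,T}|\leqslant Ce^{-\beta_0^* t}$ with $\beta_0^*=\ln 2/(NT_a)$.

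In short: your three-annulus iteration handles the $j\geqslant 1$ modes correctly, but for $j=0$ you need (i) the Pohozaev invariant to fix $a$ and eliminate the linearly growing direction, and (ii) Simon's inductive translation-adjustment to deal with the bounded direction. Without (i), the center-type drift you flag as ``the main obstacle'' is not actually ruled out by anything in your argument.
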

	The proof of Proposition~\ref{convergence} follows by Simon's technique \cite{MR758452,MR963501} (or slide back technique). 
	This method is based on the classification theorem for singular global solutions to \eqref{vectlimitequation} and exemplifies the deep connections between the fields of geometric analysis and nonlinear PDEs. 
	In the context of geometry, it arises in the study of properties for the Jacobi operator of a minimal hypersurface. 
	It is not surprising that, for problems arising in geometry, the low-frequency Jacobi fields of the linearized operator have an explicitly geometric interpretation \cite{MR1371233,MR1147718,MR1010168}. 
	Indeed, exploring this property is the key part of method, since for this geometric Jacobi  fields refined estimates are available.
	Let us also remark that the exponents $\beta^*_0,\beta^*_1,\dots$ are related to the indicial roots of the linearized operator.
	More precisely, $\beta^*_0$ depends on the decaying rate of the Jacobi fields and on the period of the one-dimensional limit solution.
	
	Besides their applications in conformal geometry, strongly coupled fourth-order systems also appear in several mathematical physics branches. 
	For instance, in hydrodynamics, for modeling the behavior of deep-water and Rogue waves in the ocean \cite{dysthe,lo-mei}. 
	It also gleams in the Hartree--Fock theory for Bose--Einstein double condensates \cite{MR2040621,PhysRevLett.78.3594}. 
	
	Here the division for the rest of the paper. 
	In Section~\ref{sec:preliminaries}, we introduce the cylindrical transformation, the Pohozaev-type invariant, and the linearized operator around an Emden--Fowler solution. 
	In section~\ref{sec:linearanalysis}, we use linear analysis to prove Proposition~\ref{growthpropertiessystem}.
	As a consequence, we use Fredholm theory to prove existence of solutions to \eqref{oursystem}.
	In Section \ref{sec:qualitativeproperties}, using the asymptotic symmetry and an upper bound estimate based on a moving sphere method, we prove Proposition~\ref{estimates}.
	Furthermore, we obtain a removable singularity theorem, which implies the lower bound estimate. In Section \ref{sec:convergence}, we apply Simon's technique to proof Proposition~\ref{convergence}, and consequently Theorem \ref{theorem1}. 
	Whence, we obtain a refined asymptotics involving the deformed Emden--Fowler solutions.
	Finally, we complete this manuscript with some final considerations concerning possible extension for Theorems~\ref{theorem1} and \ref{theorem1'}\textcolor{blue}{'}.

	\section{Preliminaries}\label{sec:preliminaries}
	This section aims to introduce some necessary background for developing our asymptotic analysis methods.
	
	\subsection{Kelvin transform}\label{sec:kelvintransform}
	The moving spheres technique we will use later is based on the {\it fourth order Kelvin transform} for a $p$-map. 
	
	For $\Omega\subseteq\mathbb{R}^n$ a domain, before we define the Kelvin transform, we need to establish the concept of {\it inversion through a sphere} $\partial B_{\mu}(x_0)$, which is a map $\mathcal{I}_{x_0,\mu}:\Omega\rightarrow\Omega_{x_0,\mu}$ given by $\mathcal{I}_{x_0,\mu}(x)=x_0+K_{x_0,\mu}(x)^2(x-x_0)$, where $K_{x_0,\mu}(x)=\mu/|x-x_0|$ and $\Omega_{x_0,\mu}:=\mathcal{I}_{x_0,\mu}(\Omega)$ is the domain of the Kelvin transform (for more details, see \cite[Section~2.7]{arXiv:2002.12491}).
	In particular, when $x_0=0$ and $\mu=1$, we denote it simply by $\mathcal{I}_{0,1}(x)=x^{*}$ and $K_{0,1}(x)=x|x|^{-2}$. For easy reference, let us summarize some well-known facts about the inversion map.
	\begin{proposition}
		The map $\mathcal{I}_{x_0,\mu}$ has the properties:\\
		\noindent {\rm (i)} It maps $B_{\mu}(x_0)$ into its complement $\Omega\setminus \bar{B}_{\mu}(x_0)$, such as $x_0$ into $\infty$;\\
		\noindent {\rm (ii)} It let the boundary $\partial B_{\mu}(x_0)$ invariant, that is, $\mathcal{I}_{x_0,\mu}(\partial B_{\mu}(x_0))=\partial B_{\mu}(x_0)$.
	\end{proposition}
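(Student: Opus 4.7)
The plan is to verify both properties by direct computation using the explicit formula $\mathcal{I}_{x_0,\mu}(x)=x_0+(\mu^{2}/|x-x_0|^{2})(x-x_0)$, and in particular by computing the distance from $x_0$ to the image point.

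First, I would record the key identity
\begin{equation*}
|\mathcal{I}_{x_0,\mu}(x)-x_0|=\frac{\mu^{2}}{|x-x_0|^{2}}|x-x_0|=\frac{\mu^{2}}{|x-x_0|},
\end{equation*}
which is immediate from the definition. This single formula encodes both properties.

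For (i), if $x\in B_{\mu}(x_0)\setminus\{x_0\}$ then $|x-x_0|<\mu$, so the identity gives $|\mathcal{I}_{x_0,\mu}(x)-x_0|>\mu$, showing $\mathcal{I}_{x_0,\mu}(x)\in\Omega\setminus\bar{B}_{\mu}(x_0)$. Sending $|x-x_0|\to 0$ in the same identity yields $|\mathcal{I}_{x_0,\mu}(x)-x_0|\to\infty$, so $x_0$ is mapped to $\infty$ in the one-point compactification (this is the conventional extension of $\mathcal{I}_{x_0,\mu}$ to a homeomorphism of $\mathbb{R}^{n}\cup\{\infty\}$). For (ii), if $x\in\partial B_{\mu}(x_0)$ then $|x-x_0|=\mu$, so $K_{x_0,\mu}(x)=1$ and the formula collapses to $\mathcal{I}_{x_0,\mu}(x)=x_0+(x-x_0)=x$; hence every point of $\partial B_{\mu}(x_0)$ is fixed, which in particular gives the invariance $\mathcal{I}_{x_0,\mu}(\partial B_{\mu}(x_0))=\partial B_{\mu}(x_0)$.

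There is essentially no obstacle: the proof is purely algebraic and follows by unwinding the definition of $K_{x_0,\mu}$ and $\mathcal{I}_{x_0,\mu}$. The only minor point worth noting, for later use in the moving spheres argument, is the involutive nature of the map, namely $\mathcal{I}_{x_0,\mu}\circ\mathcal{I}_{x_0,\mu}=\mathrm{id}$ on $\Omega\setminus\{x_0\}$, which follows from the same distance identity applied twice; this in particular makes the statement symmetric in $B_{\mu}(x_0)$ and its exterior.
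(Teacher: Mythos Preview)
Your proof is correct; the paper does not actually supply a proof for this proposition, stating it only as a summary of well-known facts about the inversion map. Your direct computation via the distance identity $|\mathcal{I}_{x_0,\mu}(x)-x_0|=\mu^{2}/|x-x_0|$ is the standard argument and is exactly what one would write if a proof were required.
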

	
	The following definition is a generalization of the Kelvin transform for fourth order operators applied to $p$-maps.
	
	\begin{definition}
		For any $\mathcal{U}\in C^4(\Omega,\mathbb{R}^p)$, let us consider the fourth order Kelvin transform through the sphere with center at $x_0\in\mathbb{R}^n$ and radius $\mu>0$  defined on $\mathcal{U}_{x_0,\mu}:\Omega_{x_0,\mu}\rightarrow\mathbb{R}^p$ by 
		\begin{equation*}
		\mathcal{U}_{x_0,\mu}(x)=K_{x_0,\mu}(x)^{n-4}\mathcal{U}\left(\mathcal{I}_{x_0,\mu}(x)\right).
		\end{equation*}
	\end{definition}
	
	\begin{proposition}
		System \eqref{oursystem} is invariant under the action of Kelvin transform, {\it i.e.}, if $\mathcal{U}$ is a non-singular solution to \eqref{oursystem}, then  $\mathcal{U}_{x_0,\mu}$ is a solution to 
		\begin{equation*}
		\Delta^{2}(u_{i})_{x_0,\mu}=c(n)|\mathcal{U}_{x_0,\mu}|^{2^{**}-2}(u_{i})_{x_0,\mu} \quad {\rm in} \quad (B_1^*)_{x_0,\mu} \quad {\rm for} \quad i\in I,
		\end{equation*}
		where $\mathcal{U}_{x_0,\mu}=((u_1)_{x_0,\mu},\dots,(u_p)_{x_0,\mu})$.
	\end{proposition}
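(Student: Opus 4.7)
The plan is to reduce the statement to the component-wise conformal covariance of the bi-Laplacian under sphere inversion, and then to check that the critical nonlinearity on the right-hand side transforms with exactly the matching conformal weight. This is ultimately why $2^{**}=2n/(n-4)$ is the \emph{critical} exponent: the bi-Laplacian and the power nonlinearity scale identically, so the coupled system is preserved by $\mathcal{I}_{x_0,\mu}$.

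First, I would recall (or derive directly by the chain rule applied to $v(x)=K_{x_0,\mu}(x)^{n-4} u(\mathcal{I}_{x_0,\mu}(x))$) the classical identity
\begin{equation*}
\Delta^{2}\bigl(K_{x_0,\mu}(x)^{n-4} u(\mathcal{I}_{x_0,\mu}(x))\bigr) = K_{x_0,\mu}(x)^{n+4}\,(\Delta^{2}u)(\mathcal{I}_{x_0,\mu}(x)),
\end{equation*}
valid for any $u\in C^{4}$. This is the fourth order analogue of the familiar identity $\Delta(K^{n-2}u\circ \mathcal{I})=K^{n+2}(\Delta u)\circ \mathcal{I}$; one proof is to write the inversion as a composition of a translation, a dilation and the standard inversion $\mathcal{I}_{0,1}$, for each of which the identity is either trivial or is the standard Kelvin-type formula. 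Applying this identity componentwise gives
\begin{equation*}
\Delta^{2}(u_i)_{x_0,\mu}(x) = K_{x_0,\mu}(x)^{n+4}\,(\Delta^{2}u_i)(\mathcal{I}_{x_0,\mu}(x)) \quad \text{for every } i\in I.
\end{equation*}

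Next, I compute how the nonlinearity transforms. From the definition of the Kelvin transform and the pointwise homogeneity of the Euclidean norm,
\begin{equation*}
|\mathcal{U}_{x_0,\mu}(x)|^{2^{**}-2}(u_i)_{x_0,\mu}(x)
= K_{x_0,\mu}(x)^{(n-4)(2^{**}-1)}\,|\mathcal{U}(\mathcal{I}_{x_0,\mu}(x))|^{2^{**}-2}u_i(\mathcal{I}_{x_0,\mu}(x)).
\end{equation*}
The key algebraic check is $(n-4)(2^{**}-1)=(n-4)\cdot\frac{n+4}{n-4}=n+4$, so the conformal weight on the right exactly matches the weight coming from the bi-Laplacian above.

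Putting the two computations together, and using that $\mathcal{U}$ satisfies \eqref{oursystem} at the point $\mathcal{I}_{x_0,\mu}(x)\in B_1^{*}$,
\begin{equation*}
\Delta^{2}(u_i)_{x_0,\mu}(x)
= K_{x_0,\mu}(x)^{n+4}\,c(n)|\mathcal{U}(\mathcal{I}_{x_0,\mu}(x))|^{2^{**}-2}u_i(\mathcal{I}_{x_0,\mu}(x))
= c(n)|\mathcal{U}_{x_0,\mu}(x)|^{2^{**}-2}(u_i)_{x_0,\mu}(x),
\end{equation*}
for $x\in (B_1^{*})_{x_0,\mu}$, which is the claim. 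The only nontrivial step is the fourth order covariance identity itself; the rest is algebra. I do not expect a real obstacle here, since the non-singularity hypothesis on $\mathcal{U}$ ensures that all derivatives appearing in the chain rule computation are well-defined on $(B_1^{*})_{x_0,\mu}$, and the domain of the transformed solution is by definition $(B_1^{*})_{x_0,\mu}=\mathcal{I}_{x_0,\mu}(B_1^{*})$.
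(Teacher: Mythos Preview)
Your proof is correct and follows essentially the same approach as the paper: both rely on the fourth order Kelvin covariance identity $\Delta^{2}u_{x_0,\mu}(x)=K_{x_0,\mu}(x)^{n+4}(\Delta^{2}u)(\mathcal{I}_{x_0,\mu}(x))$ and then check that the nonlinearity scales with the matching weight. The paper's proof is terser, simply citing the identity and leaving the exponent check $(n-4)(2^{**}-1)=n+4$ implicit, whereas you spell it out.
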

	
	\begin{proof}
		It is a direct consequence of the formula
		\begin{equation*}
		\Delta^2 u_{x_0,\mu}(x)=K_{x_0,\mu}(x)^{n+4}\Delta^{2}u\left(\mathcal{I}_{x_0,\mu}(x)\right)=K_{x_0,\mu}(x)^{8}(\Delta^2 u)_{x_0,\mu}(x),
		\end{equation*}
		which is obtained by a simple computation (see \cite[Appendix~A]{arXiv:2002.12491}).
	\end{proof}
	
	\subsection{Cylindrical transformation} 
	This subsection is devoted to convert singular solutions to \eqref{oursystem} into non-singular solutions in a cylinder. 
	Then, the local behavior of singular solutions to \eqref{oursystem} near the origin reduces to understand the asymptotic global behavior for tempered solutions to a fourth order ODE defined on a cylinder. 
	More accurately, they are distributional solutions where the test functions are taken in the Schwartz space (the function space of all infinitely differentiable functions that are rapidly decreasing as $t\rightarrow0$ along with all their partial derivatives); the elements in this space also have a well-defined Fourier transform. 
	Let us introduce the so-called {\it cylindrical transformation} (see also \cite{arXiv:2002.12491}). 
	First, let us consider ${\mathcal{C}}_{0,1}=(0,1)\times\mathbb{S}^{n-1}$ the cylinder and $\Delta^2_{\rm sph}$ the bi-Laplacian written in spherical (polar) coordinates,
	\begin{align*}
	\Delta^2_{\rm sph}&=\partial_r^{(4)}+ \frac{2(n-1)}{r}\partial_r^{(3)}+\frac{(n-1)(n-3)}{r^2}\partial_r^{(2)}-\frac{(n-1)(n-3)}{r^3}\partial_r&\\\nonumber
	&+\frac{1}{r^4}\Delta_{\sigma}^2+\frac{2}{r^2}\partial^{(2)}_r\Delta_{\sigma}+\frac{2(n-3)}{r^3}\partial_r\Delta_{\sigma}-\frac{2(n-4)}{r^4}\Delta_{\sigma},&
	\end{align*}
	where $\Delta_{\sigma}$ denotes the Laplace--Beltrami operator on $\mathbb{S}^{n-1}$.
	Then, we can rewrite \eqref{oursystem} as the nonautonomous nonlinear equation,
	\begin{equation*}
	\Delta^2_{\rm sph}u_i=c(n)|\mathcal{U}|^{2^{**}-2}u_i \quad {\rm in} \quad {\mathcal{C}}_{0,1}.
	\end{equation*}
	In addition, we apply the Emden--Fowler change of variables (or logarithm cylindrical coordinates) given by $\mathcal{V}(t,\theta)=r^{\gamma}\mathcal{U}(r,\sigma)$, where $r=|x|$, $t=-\ln r$, $\theta=x/|x|$ and $\gamma=({n-4})/{2}$, which sends the problem to the (half-)cylinder $\mathcal{C}_0=(0,\infty)\times \mathbb{S}^{n-1}$.  
	Using this coordinate system and performing a lengthy computation (see \cite{MR4094467,MR4123335}), we arrive at the following fourth order nonlinear PDE system on the cylinder,
	\begin{equation}\label{sphevectfowler}
	\Delta^2_{\rm cyl}v_i=c(n)|\mathcal{V}|^{2^{**}-2}v_i \quad {\rm on} \quad {\mathcal{C}}_0.
	\end{equation}
	Here $\Delta^2_{\rm cyl}$ is the bi-Laplacian written in cylindrical coordinates given by
	\begin{equation}\label{cylbi-Laplacian}
	\Delta^2_{\rm cyl}=\partial_t^{(4)}-K_2\partial_t^{(2)}+K_0+\Delta_{\theta}^{2}+2\partial^{(2)}_t\Delta_{\theta}-J_0\Delta_{\theta},
	\end{equation}
	where $K_0,K_2,J_0$ are constants depending only in the dimension, which are defined by
	\begin{equation*}
	K_0=\frac{n^2(n-4)^2}{16}, \quad K_2=\frac{n^2-4n+8}{2} \quad {\rm and} \quad J_0=\frac{n(n-4)}{2}.
	\end{equation*}
	Furthermore, the superharmonicity condition $-\Delta u_i>0$ is equivalent to
	\begin{equation*}
	-\partial_t^{(2)}v_i+2\partial_t v_i+\sqrt{K_0}-\Delta_{\theta}v_i>0.
	\end{equation*}
	Along this lines, let us introduce the cylindrical transformation defined as follows
	\begin{equation*}
	\mathfrak{F}:C_c^{\infty}(B_1^*,\mathbb{R}^p)\rightarrow C_c^{\infty}(\mathcal{C}_0,\mathbb{R}^p) \quad \mbox{given by} \quad
	\mathfrak{F}(\mathcal{U})=r^{\gamma}\mathcal{U}(r,\sigma).
	\end{equation*}
	
	\begin{remark}
		The transformation $\mathfrak{F}$ is a continuous bijection with respect to the Sobolev norms $\|\cdot\|_{\mathcal{D}^{2,2}(B_1^*,\mathbb{R}^p)}$ and $\|\cdot\|_{H^{2}(\mathcal{C}_0,\mathbb{R}^p)}$, respectively. 
		Furthermore, this transformation sends singular solutions to \eqref{ourequation} into solutions to \eqref{sphevectfowler}, and by density, we get that $\mathfrak{F}:\mathcal{D}^{2,2}(B_1^*,\mathbb{R}^p)\rightarrow H^{2}(\mathcal{C}_0,\mathbb{R}^p)$.
	\end{remark}
	
	In the geometric language, this change of variables corresponds to a restriction of the conformal diffeomorphism between the entire cylinder $\mathcal{C}_{\infty}:=\mathbb{R}\times\mathbb{S}^{n-1}$ and the punctured space, namely,  $\varphi:(\mathcal{C}_{\infty},g_{{\rm cyl}})\rightarrow(\mathbb{R}^n\setminus\{0\},\delta_0)$ defined by $\varphi(t,\sigma)=e^{-t}\sigma$. 
	Here $g_{{\rm cyl}}=\ud t^2+\ud\sigma^2$ stands for the cylindrical metric and ${\ud\theta}=e^{-2t}(\ud t^2+\ud\sigma^2)$ for its volume element obtained via the pullback $\varphi^{*}\delta_0$, where $\delta_0$ is the standard flat metric.
	Notice that, in the blow-up limit case, solutions are rotationally symmetric, which transform \eqref{sphevectfowler} into the following ODE system
	\begin{equation}\label{fowlersystem}
	v_i^{(4)}-K_2v_i^{(2)}+K_0v_i=c(n)|\mathcal{V}|^{2^{**}-2}v_i \quad {\rm in} \quad \mathbb{R},
	\end{equation}
	which can be taken with suitable initial conditions in order to become a well-posed Cauchy problem.
	
	\begin{remark}
		Although we know that asymptotic symmetry and the lower and upper bound estimates hold in the higher order setting, it is not trivial to compute the expressions for coefficients to the poly-Laplacian in both polar and logarithm cylindrical coordinates, which is an obvious difficulty to obtain refined asymptotics for the poly-Laplacian system related to \eqref{oursystem}.
	\end{remark}

	\begin{remark}
		Our choice for the symbol $\Delta^2_{\rm cyl}=\Delta^2_{\rm sph}\circ\mathfrak{F}^{-1}$ is an abuse of notation, since the cylindrical background metric is not flat, we should have $P_{\rm cyl}=\Delta_{\rm sph}^2\circ\mathfrak{F}^{-1}$ $($resp. $\widetilde{P}_{\rm cyl}=\Delta_{\rm sph}^2\circ\widetilde{\mathfrak{F}}^{-1}$$)$, where $P_{\rm cyl}$ stands for the Paneitz--Branson operator of this metric in the new logarithmic cylindrical coordinate system.
	\end{remark}
	
	\subsection{Pohozaev invariant}
	In the next step, we define a type homological invariant associated to \eqref{oursystem}. This invariant is the main ingredient to provide a removable singularity theorem and one of the features for developing the convergence method in Section \ref{sec:qualitativeproperties}.
	The existence of a Pohozaev-type invariant is closely related to a conservation law for the Hamiltonian energy of the ODE system \eqref{fowlersystem}, which is based on \cite{MR0192184,MR1666838,MR2393072,MR4123335,arXiv:2002.12491}.
	
	Initially, let us introduce a vectorial energy which is conserved in time for all $p$-map solutions $\mathcal{V}$ to system \eqref{sphevectfowler}, which now also depends on the angular variable (see \cite{MR3869387,MR4094467,MR4123335}). 
	
	\begin{definition} 
		For any $\mathcal{V}$ solution to \eqref{fowlersystem}, let us consider its {\it Hamiltonian Energy} given by
		\begin{equation}\label{vectenergy}
		\mathcal{H}(t,\theta,\mathcal{V}):=\mathcal{H}_{\rm rad}(t,\theta,\mathcal{V})+\mathcal{H}_{\rm ang}(t,\theta,\mathcal{V}),
		\end{equation}
		where
		\begin{align*}
		&\mathcal{H}_{\rm rad}(t,\theta,\mathcal{V})=
		-\langle \mathcal{V}^{(3)}(t,\theta),\mathcal{V}^{(1)}(t)\rangle+\frac{1}{2}|\mathcal{V}^{(2)}(t,\theta)|^{2}+\frac{K_2}{2}|\mathcal{V}^{(1)}(t,\theta)|^{2}-\frac{K_0}{2}|\mathcal{V}(t,\theta)|^2+\widehat{c}(n)|\mathcal{V}(t,\theta)|^{2^{**}}, \\&
		\mathcal{H}_{\rm ang}(t,\theta,\mathcal{V})=|\Delta_{\theta}\mathcal{V}(t,\theta)|^2+2|\partial^{(2)}_t\nabla_{\theta}\mathcal{V}(t,\theta)|^2-J_0|\nabla_{\theta}\mathcal{V}(t,\theta)|^2,\quad \mbox{and} \quad \widehat{c}(n)={2^{**}}^{-1}c(n).
		\end{align*}
		
	\end{definition}
	A standard computation shows that the Hamiltonian energy is invariant on the variable $t$, that is, ${\partial_t}\mathcal{H}(t,\theta,\mathcal{V})=0$ for all solutions $\mathcal{V}$ to \eqref{sphevectfowler}. 
	Hence, we can integrate \eqref{vectenergy} over the cylindrical slice to define another conserved quantity as follows
	
	\begin{definition} 
		For any $\mathcal{V}$ solution to \eqref{sphevectfowler}, let us define its {\it cylindrical Pohozaev integral} by 
		\begin{align*}
		\mathcal{P}_{\rm cyl}(t,\mathcal{V})&=\displaystyle\int_{\mathbb{S}_t^{n-1}}\mathcal{H}(t,\theta,\mathcal{V})\ud\theta.&
		\end{align*}
		Here $\mathbb{S}_t^{n-1}=\{t\}\times\mathbb{S}^{n-1}$ is the cylindrical ball with volume element given by $\ud\theta=e^{-2t}\ud\sigma$, where $\ud\sigma_r$ is the volume element of the Euclidean ball of radius $r>0$. 
	\end{definition}
	
	Since that by definition $\mathcal{P}$ also does not depend on $t$, let us consider the {\it cylindrical Pohozaev invariant} $\mathcal{P}_{\rm cyl}(\mathcal{V}):=\mathcal{P}_{\rm cyl}(t,\mathcal{V})$. Hence, applying the inverse of cylindrical transformation, we recover the classical {\it spherical Pohozaev integral} defined by $\mathcal{P}_{\rm sph}(r,\mathcal{U}):=\left(\mathcal{P}_{\rm cyl}\circ\mathfrak{F}^{-1}\right)\left(t,\mathcal{V}\right)$, which satisfies the following Pohozaev-type identity:
	
	\begin{lemma}\label{lm:pohozaevidentity}
		Let $\mathcal{U},\widetilde{\mathcal{U}}\in C^{4}(B_1^*,\mathbb{R}^p)$ and $0<r_1\leqslant r_2<1$. 
		Then, it follows 
		\begin{align*}
		&\sum_{i=1}^p\int_{B_{r_2}\setminus B_{r_1}}\left[\Delta^2u_i\langle x,\nabla \widetilde{u}_i\rangle+\Delta^2\widetilde{u}_i\langle x,\nabla u_i\rangle-\gamma\left(\widetilde{u}_i\Delta^2u_i+u_i\Delta^2\widetilde{u}_i\right)\right]\ud x&\\\nonumber
		&=\sum_{i=1}^p\left[\int_{\partial B_{r_2}}q(u_i,\widetilde{u}_i)\ud\sigma_{r_2}-\int_{\partial B_{r_1}}q(u_i,\widetilde{u}_i)\ud\sigma_{r_1}\right].&
		\end{align*}
		Here $\gamma=(n-4)/2$ is the Fowler scaling exponent, and
		\begin{equation}\label{pohozaeverrorfunction}
		q(u_i,\widetilde{u}_i)=\sum_{j=1}^{3}\bar{l}_j(x,D^{(j)}u_i,D^{(4-j)}\widetilde{u}_i)+\sum_{j=0}^{3}\tilde{l}_j(D^{(j)}u_i,D^{(3-j)}\widetilde{u}_i),
		\end{equation}
		where both $\bar{l}_j,\tilde{l}_j$ are linear in each component for $j=0,1,2,3$. 
		Moreover,
		\begin{equation*}
		\bar{l}_3(D^{(3)}u_i,\widetilde{u}_i)=\gamma\int_{\partial B_{r_2}}\widetilde{u}_i\partial_{\nu}\Delta u_i\ud\sigma_{r_2}.
		\end{equation*}
		where $\nu$ is the outer normal vector to $\partial B_{r_2}$.
	\end{lemma}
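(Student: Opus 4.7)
The identity is bilinear in $(u_i,\widetilde{u}_i)$ and the sum over $i$ plays no active role, so the plan is to fix one index, drop it, and prove the scalar identity for arbitrary $u,\widetilde{u}\in C^4(B_1^*)$; the vector version then follows by summing. The strategy is to show that the integrand
\[
I(u,\widetilde{u}):=\Delta^{2}u\,\langle x,\nabla\widetilde{u}\rangle+\Delta^{2}\widetilde{u}\,\langle x,\nabla u\rangle-\gamma\bigl(\widetilde{u}\Delta^{2}u+u\Delta^{2}\widetilde{u}\bigr)
\]
is a divergence of a vector field depending polynomially on $x$ and on $u,\widetilde{u}$ together with their derivatives up to order three. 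Once this is established, the divergence theorem applied to the annulus $B_{r_{2}}\setminus B_{r_{1}}$ produces the boundary integrals over $\partial B_{r_{2}}$ and $\partial B_{r_{1}}$ (the latter with a sign change from the inward-pointing normal), and the integrand of those boundary integrals is precisely the $q(u,\widetilde{u})$ described in \eqref{pohozaeverrorfunction}.

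The concrete computation proceeds by iterated integration by parts. For the two higher-order terms I would use the Green-type identity
\[
\int_{\Omega}\phi\,\Delta^{2}u\,\mathrm{d}x=\int_{\Omega}\Delta u\,\Delta\phi\,\mathrm{d}x+\int_{\partial\Omega}\bigl[\phi\,\partial_{\nu}\Delta u-\Delta u\,\partial_{\nu}\phi\bigr]\mathrm{d}\sigma
\]
with $\phi=\langle x,\nabla\widetilde{u}\rangle$ (resp.\ $\phi=\langle x,\nabla u\rangle$), and then apply the commutator identity $\Delta\bigl(\langle x,\nabla\widetilde{u}\rangle\bigr)=\langle x,\nabla\Delta\widetilde{u}\rangle+2\Delta\widetilde{u}$. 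The symmetric cross-term $\Delta u\,\langle x,\nabla\Delta\widetilde{u}\rangle+\Delta\widetilde{u}\,\langle x,\nabla\Delta u\rangle=\langle x,\nabla(\Delta u\,\Delta\widetilde{u})\rangle$ can be integrated by the identity $\int_{\Omega}\langle x,\nabla g\rangle\,\mathrm{d}x=-n\int_{\Omega}g\,\mathrm{d}x+\int_{\partial\Omega}(x\cdot\nu)g\,\mathrm{d}\sigma$, so the two bilaplacian terms together contribute $(4-n)\int\Delta u\,\Delta\widetilde{u}\,\mathrm{d}x$ modulo boundary. The $\gamma$-terms, integrated by parts once in the usual way, contribute $-2\gamma\int\Delta u\,\Delta\widetilde{u}\,\mathrm{d}x$ modulo boundary. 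The interior contributions therefore cancel precisely when $\gamma$ is the Fowler scaling exponent calibrated to the bi-Laplacian (this is the origin of the choice $\gamma=(n-4)/2$ and reflects the conformal invariance of $\Delta^{2}$ at the critical exponent).

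What remains is the bookkeeping of the boundary contributions, which I view as the only delicate step: each integration by parts produces a surface integral involving $u,\widetilde{u}$ and their derivatives together with the position vector $x$. Terms that acquire an explicit $x$-factor (from the multiplier $\langle x,\nabla\cdot\rangle$ or from the divergence identity for $\langle x,\nabla g\rangle$) are collected into the $\bar{l}_{j}(x,D^{(j)}u,D^{(4-j)}\widetilde{u})$ pieces of \eqref{pohozaeverrorfunction}; the remaining terms, coming from the $\gamma u$ part of the multiplier, go into the $\tilde{l}_{j}(D^{(j)}u,D^{(3-j)}\widetilde{u})$ pieces. In particular, tracking the $\partial_{\nu}\Delta u$ boundary contribution produced by the Green identity applied to $-\gamma\widetilde{u}\,\Delta^{2}u$ yields exactly $\bar{l}_{3}(D^{(3)}u,\widetilde{u})=\gamma\int_{\partial B_{r_{2}}}\widetilde{u}\,\partial_{\nu}\Delta u\,\mathrm{d}\sigma_{r_{2}}$, matching the specific claim in the statement. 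The main obstacle is simply the careful classification of every boundary piece by its multilinear structure in $(x,D^{(j)}u,D^{(k)}\widetilde{u})$; once this is done, linearity in each component is evident from the construction, and the proof is complete.
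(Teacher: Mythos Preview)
Your approach is exactly the standard one and is what the references the paper cites actually do; the paper itself does not prove this lemma but simply refers to \cite[Proposition~3.3]{arXiv:1503.06412} and \cite[Proposition~A.1]{arxiv:1901.01678}. Your outline---reduce to the scalar case, apply the Green identity for $\Delta^{2}$, use the commutator $\Delta(\langle x,\nabla\widetilde{u}\rangle)=\langle x,\nabla\Delta\widetilde{u}\rangle+2\Delta\widetilde{u}$, and reduce $\int\langle x,\nabla(\Delta u\,\Delta\widetilde{u})\rangle$ via the divergence theorem---is correct and gives strictly more detail than the paper provides.

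One caveat worth recording: with the sign exactly as printed in the statement (a $-\gamma$ in front of the zeroth-order piece of the multiplier), your two interior contributions $(4-n)\int\Delta u\,\Delta\widetilde{u}$ and $-2\gamma\int\Delta u\,\Delta\widetilde{u}$ sum to $(4-n-2\gamma)\int\Delta u\,\Delta\widetilde{u}$, which vanishes at $\gamma=(4-n)/2$, not $(n-4)/2$. The generator of the conformal dilation for $\Delta^{2}$ is $x\cdot\nabla+\tfrac{n-4}{2}$, so the standard Pohozaev multiplier carries a $+\gamma$; the discrepancy is therefore almost certainly a sign typo in the lemma's integrand rather than a flaw in your method. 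When you write out the full computation you should simply track this sign carefully and note the correction.
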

	
	\begin{proof}
		See the proof in \cite[Proposition~3.3]{arXiv:1503.06412} and \cite[Propositon~A.1]{arxiv:1901.01678}.
	\end{proof}
	
	\begin{remark}
		Using the last lemma, we present a explicit formula for the spherical Pohozaev functional
		\begin{equation}\label{pohozaevsphericalautonomous}
		\mathcal{P}_{\rm sph}(r,\mathcal{U})=\sum_{i=1}^p\int_{\partial B_r}\left[q(u_i,u_i)-\frac{1}{s+1}r|\mathcal{U}|^{s+1}\right]\ud\sigma_r,
		\end{equation}
		where $q$ is defined by \eqref{pohozaeverrorfunction}.
	\end{remark}
		
	Consequently, using the Pohozaev identity in Lemma~\ref{lm:pohozaevidentity}, we observe that $\mathcal{P}_{\rm sph}(r,\mathcal{U})$ does not depend on $r$, which can also be stated in cylindrical coordinates.
	\begin{lemma}
		Let $\mathcal{U}$ be a strongly positive solution to \eqref{oursystem} and $0<r_1\leqslant r_2<1$. Then, $\mathcal{P}_{\rm sph}(r_2,\mathcal{U})-\mathcal{P}_{\rm sph}(r_1,\mathcal{U})=0$.
	\end{lemma}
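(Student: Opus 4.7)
The plan is to apply the Pohozaev identity of Lemma~\ref{lm:pohozaevidentity} with the diagonal choice $\widetilde{\mathcal{U}}=\mathcal{U}$ and then substitute the system \eqref{oursystem} into the bulk integral, relying on the critical Sobolev exponent relation $\tfrac{n}{s+1}=\gamma=\tfrac{n-4}{2}$ to eliminate all volume contributions.

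First, setting $\widetilde{u}_i = u_i$ collapses the two symmetric pieces in the identity, so that its left-hand side reads
\begin{equation*}
2\sum_{i=1}^p\int_{B_{r_2}\setminus B_{r_1}}\bigl[\Delta^2 u_i\,\langle x,\nabla u_i\rangle - \gamma u_i\,\Delta^2 u_i\bigr]\,\ud x,
\end{equation*}
while its right-hand side is $\sum_i\bigl[\int_{\partial B_{r_2}}q(u_i,u_i)\,\ud\sigma_{r_2}-\int_{\partial B_{r_1}}q(u_i,u_i)\,\ud\sigma_{r_1}\bigr]$. Next, I use \eqref{oursystem} to replace $\Delta^2 u_i$ by $c(n)|\mathcal{U}|^{s-1}u_i$. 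Invoking the elementary coupling identities $\sum_i u_i\,\langle x,\nabla u_i\rangle = \tfrac12\langle x,\nabla|\mathcal{U}|^2\rangle$, $\sum_i u_i^2 = |\mathcal{U}|^2$, and the chain rule $\nabla|\mathcal{U}|^{s+1}=\tfrac{s+1}{2}|\mathcal{U}|^{s-1}\nabla|\mathcal{U}|^2$, the bulk integrand reorganizes into
\begin{equation*}
\tfrac{2c(n)}{s+1}\,\langle x,\nabla|\mathcal{U}|^{s+1}\rangle \;-\; 2\gamma c(n)\,|\mathcal{U}|^{s+1}.
\end{equation*}

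Now I apply the divergence identity $\int_A\langle x,\nabla f\rangle\,\ud x = \int_{\partial A}f(x\cdot\nu)\,\ud\sigma - n\int_A f\,\ud x$ to the annulus $A=B_{r_2}\setminus B_{r_1}$, where $x\cdot\nu=r_2$ on $\partial B_{r_2}$ and $x\cdot\nu=-r_1$ on $\partial B_{r_1}$. This produces the boundary contribution $\tfrac{2c(n)}{s+1}\bigl[\int_{\partial B_{r_2}}r|\mathcal{U}|^{s+1}\,\ud\sigma_{r_2}-\int_{\partial B_{r_1}}r|\mathcal{U}|^{s+1}\,\ud\sigma_{r_1}\bigr]$ plus a residual volume term whose coefficient involves $\tfrac{n}{s+1}$ and $\gamma$. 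The decisive point, and really the only nontrivial step, is that these two constants coincide thanks to the criticality of the exponent $s+1=2^{**}=\tfrac{2n}{n-4}$, so the residual bulk cancels exactly and only boundary terms survive.

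Finally, equating the simplified bulk with the right-hand side and rearranging, the nonlinear boundary pieces assemble into the term $-\tfrac{1}{s+1}r|\mathcal{U}|^{s+1}$ appearing in the explicit expression \eqref{pohozaevsphericalautonomous} for $\mathcal{P}_{\rm sph}$, while the terms in $q(u_i,u_i)$ account for the linear part. This yields $\mathcal{P}_{\rm sph}(r_2,\mathcal{U}) - \mathcal{P}_{\rm sph}(r_1,\mathcal{U}) = 0$. I do not expect a genuine obstacle: once the Pohozaev identity of Lemma~\ref{lm:pohozaevidentity} is in place, the conservation law is an algebraic consequence of the critical exponent identity $\tfrac{n}{s+1}=\gamma$. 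The only care required is in tracking constants to ensure that the boundary coefficient extracted from the computation is exactly the $\tfrac{1}{s+1}$ appearing in the definition of $\mathcal{P}_{\rm sph}$; this is a direct bookkeeping verification.
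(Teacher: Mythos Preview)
Your proposal is correct and follows essentially the same route as the paper: apply the Pohozaev identity of Lemma~\ref{lm:pohozaevidentity} with $\widetilde{\mathcal{U}}=\mathcal{U}$, substitute \eqref{oursystem}, and exploit the critical exponent relation $\tfrac{n}{s+1}=\gamma$ to kill the bulk term. The paper's proof is terser---it simply multiplies the equation by $x\cdot\nabla u_i$, integrates, and invokes Lemma~\ref{lm:pohozaevidentity}---whereas you spell out the divergence step and the coupling identities $\sum_i u_i\nabla u_i=\tfrac12\nabla|\mathcal{U}|^2$ explicitly; but the mechanism is identical.

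One caution on your bookkeeping remark: as written, your bulk integrand carries a $-\gamma$ (inherited from the sign in Lemma~\ref{lm:pohozaevidentity} as stated), so after the divergence step the residual volume coefficient is $-\tfrac{n}{s+1}-\gamma$, not $\gamma-\tfrac{n}{s+1}$. The cancellation you want requires the combination $\gamma-\tfrac{n}{s+1}=0$, which is the one that actually arises from the standard Rellich--Pohozaev derivation (multiply by $x\cdot\nabla u_i+\gamma u_i$). This is exactly the ``tracking constants'' issue you flag at the end; when you carry it out, make sure the sign of the $\gamma$-term matches the derivation rather than the displayed formula, and the conclusion then follows as you indicate.
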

	
	\begin{proof}
		According to \eqref{oursystem}, we have that $\Delta^{2} u_{i}-c(n)|\mathcal{U}|^{2^{**}-2}u_{i}=0$ in $ B_1^*$ for any $i\in I$, which, by
		multiplying by the factor $x\cdot\nabla u_i$, and integrating over $B_{r_2}\setminus B_{r_1}$, implies
		\begin{align*}
		0=\int_{B_{r_2}\setminus B_{r_1}}\left(\Delta^{2} u_{i}-c(n)|\mathcal{U}|^{2^{**}-2}u_{i}\right)(x\cdot\nabla u_i) \ \ud x:=I_{1,i}-I_{2,i}.
		\end{align*}
		Applying Lemma~\ref{lm:pohozaevidentity} on the left-hand side of the last equation, we get $\mathcal{P}_{\rm sph}(r_2,\mathcal{U})-\mathcal{P}_{\rm sph}(r_1,\mathcal{U})=0$ for all $i\in I$, which by summing over $i\in I$ finishes the proof.
	\end{proof}
	
	\begin{definition}
		For any $\mathcal{U}$ solution to \eqref{oursystem}, let us define its {\it spherical Pohozaev invariant} by
		\begin{equation*}
		\mathcal{P}_{\rm sph}(r,\mathcal{U}):=\mathcal{P}_{\rm sph}(\mathcal{U}).
		\end{equation*} 
	\end{definition}
	
	\begin{remark}\label{negativepohozaev}
		For easy reference, we summarize the following properties:\\
		\noindent{\rm (i)} There is a natural relation between these two invariants, $\mathcal{P}_{\rm sph}(\mathcal{U})=\omega_{n-1}\mathcal{P}_{\rm cyl}(\mathcal{V})$, where $\omega_{n-1}$ is the $(n-1)$-dimensional surface area of the unit sphere;\\ 
		\noindent{\rm (ii)} In the scalar case, one can similarly define $\mathcal{P}_{\rm sph}(u)=(\mathcal{P}_{\rm cyl}\circ\mathfrak{F}^{-1})(v)$, where
		\begin{equation*}
		\mathcal{P}_{\rm cyl}(v)=\displaystyle\int_{\mathbb{S}^{n-1}_t}\mathcal{H}(t,\theta,v)\ud\theta.
		\end{equation*}
		Hence, when $R=\infty$ one can check that, if the non-singular solution is ${\mathcal{U}}_{x_0,\mu}=\Lambda u_{x_0,\mu}$ for some $\Lambda\in\mathbb{S}_+^{p-1}$ and $u_{x_0,\mu}$ a spherical solution from Theorem~\ref{thm:frank-konig19}, we obtain $\mathcal{P}_{\rm sph}({\mathcal{U}}_{x_0,\mu})=\mathcal{P}_{\rm sph}(u_{x_0,\mu})=0$. 
		Also, if the singular solution has the form $\mathcal{U}_{a,T}=\Lambda u_{a,T}$ for some $\Lambda\in\mathbb{S}^{p-1}_{+,*}$ and $u_{a,T}$ an Emden--Fowler solution from Theorem~\ref{thm:andrade-doo19}, then, a direct computation shows $\mathcal{P}_{\rm sph}({\mathcal{U}}_{a,T})=\mathcal{P}_{\rm sph}(u_{a,T})<0$.
	\end{remark}
	
	\subsection{Linearized operator}
	Now we study the linearized operator around limit blow-up solution. The heuristics is that when the linearized operator around a blow-up limit is Fredholm, its indicial roots determine the rate in which singular solutions to the nonlinear problem \eqref{oursystem} converge to a blow-up limit solution near the isolated singularity. 
	Here, we borrow some ideas from \cite[Section~2]{MR3663325} (see also \cite{MR1936047,MR1356375}). 
	
	First, let us consider the following nonlinear operator acting on $p$-maps,
	\begin{equation*}
		\mathcal{N}(\mathcal{U}):=\Delta^2u_i-f_i(\mathcal{U}).
	\end{equation*}
	Then, using the cylindrical transformation and the homogeneity of the Gross--Pitaevskii nonlinearity, we obtain
	\begin{equation}\label{nonlinearoperator}
		\mathcal{N}_{\rm cyl}(\mathcal{V}):=\Delta^2_{\rm cyl}v_i-f_i(\mathcal{V}).
	\end{equation}
	In what follows, we drop the subscript since we often will be using the operator written in cylindrical coordinates.
	
	\begin{lemma}\label{linearizedoperator}
		The linearization of $\mathcal{N}: H^{4}(\mathcal{C}_0,\mathbb{R}^p)\rightarrow L^{2}(\mathcal{C}_0,\mathbb{R}^p)$ around an Emden--Fowler solution $\mathcal{V}_{a,T}$ is given by $\mathcal{L}^a(\Phi)=(\mathcal{L}^a_i(\Phi))_{i\in I}$. Here
		\begin{equation}\label{linearization}
			\mathcal{L}_i^a(\Phi)=\Delta^2_{\rm cyl}\phi_i-n\Lambda_i\langle\Lambda,\Phi\rangle v_{a,T}^{2^{**}-2}+\frac{\widetilde{c}(n)}{n}\left(nv_{a,T}^{2^{**}-2}+4-n\right)\phi_i,
		\end{equation}
		where $\widetilde{c}(n)=c(n)(2^{**}-1)$, or $\ud\mathcal{N}_{\rm cyl}\left[\mathcal{V}_{a,T}\right]\left(\Phi\right)=\mathcal{L}^a\left(\Phi\right)$ is the Fr\'echet derivative of $\mathcal{N}$.
	\end{lemma}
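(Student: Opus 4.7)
The plan is to compute $\ud\mathcal{N}_{\rm cyl}[\mathcal{V}_{a,T}](\Phi)$ directly from the definition of the Fr\'echet derivative, exploiting the linear-plus-polynomial structure of $\mathcal{N}_{\rm cyl}$ in \eqref{nonlinearoperator}. First I would split the $i$-th component as $\mathcal{N}_{\rm cyl}(\mathcal{V})_i=\Delta^2_{\rm cyl}v_i-f_i(\mathcal{V})$ with $f_i(\mathcal{V})=c(n)|\mathcal{V}|^{2^{**}-2}v_i$. Since $\Delta^2_{\rm cyl}$ is a constant-coefficient linear differential operator (see \eqref{cylbi-Laplacian}), it coincides with its own linearization at every base point and contributes the term $\Delta^2_{\rm cyl}\phi_i$ to $\mathcal{L}^a_i(\Phi)$; no work is required on this piece.

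The substantive step is to linearize the Gross--Pitaevskii nonlinearity. I would write $|\mathcal{V}|^{2^{**}-2}=\langle\mathcal{V},\mathcal{V}\rangle^{(2^{**}-2)/2}$ and treat it as a smooth function of $\mathcal{V}$ on the open cone $\{\mathcal{V}:|\mathcal{V}|>0\}$. By Theorem~\ref{thm:andrade-doo19} the Emden--Fowler profile is strongly positive, so $v_{a,T}$ is bounded and bounded away from $0$ on $\mathcal{C}_0$, and $f_i$ is smooth and uniformly Lipschitz on a small neighborhood of $\mathcal{V}_{a,T}$, which yields Fr\'echet differentiability $H^{4}(\mathcal{C}_0,\mathbb{R}^p)\to L^{2}(\mathcal{C}_0,\mathbb{R}^p)$. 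Applying the chain and product rules gives
\begin{equation*}
\ud f_i[\mathcal{V}](\Phi)=c(n)(2^{**}-2)|\mathcal{V}|^{2^{**}-4}\langle\mathcal{V},\Phi\rangle v_i+c(n)|\mathcal{V}|^{2^{**}-2}\phi_i.
\end{equation*}
Specializing to $\mathcal{V}_{a,T}=\Lambda v_{a,T}$ with $\Lambda\in\mathbb{S}^{p-1}_{+,*}$, the three identities $|\mathcal{V}_{a,T}|=v_{a,T}$, $\langle\mathcal{V}_{a,T},\Phi\rangle=v_{a,T}\langle\Lambda,\Phi\rangle$, and $(\mathcal{V}_{a,T})_i=\Lambda_i v_{a,T}$ collapse the cross term to a scalar multiple of $\Lambda_i\langle\Lambda,\Phi\rangle v_{a,T}^{2^{**}-2}$ and the diagonal term to a scalar multiple of $v_{a,T}^{2^{**}-2}\phi_i$, reproducing the two types of zero-order terms appearing in \eqref{linearization}.

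The remaining work is numerical bookkeeping to match the coefficients stated in the lemma. Using $2^{**}-2=8/(n-4)$, the definition $c(n)=n(n-4)(n^2-4)/16$, and the abbreviation $\widetilde{c}(n)=c(n)(2^{**}-1)$, I reduce the coefficient of $\Lambda_i\langle\Lambda,\Phi\rangle v_{a,T}^{2^{**}-2}$ to a pure constant and regroup the pointwise zero-order $\phi_i$-term in the form $\frac{\widetilde{c}(n)}{n}(nv_{a,T}^{2^{**}-2}+4-n)$, carrying the constant-coefficient parts of $\Delta^2_{\rm cyl}$ together with the diagonal piece of $\ud f_i[\mathcal{V}_{a,T}]$. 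I expect this last arithmetic to be the only real obstacle: the chain rule computation is standard, positivity of $v_{a,T}$ makes both the smoothness of the nonlinearity and the Fr\'echet differentiability check routine, and the coupling structure enters only through the single scalar $\langle\Lambda,\Phi\rangle$, so the $p$-map case requires no conceptual change beyond the scalar one.
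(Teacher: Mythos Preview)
Your approach is essentially the same as the paper's: compute the directional derivative $\frac{\ud}{\ud t}\big|_{t=0}\mathcal{N}(\mathcal{V}_{a,T}+t\Phi)$, linearize $f_i(\mathcal{V})=c(n)|\mathcal{V}|^{2^{**}-2}v_i$ by the chain and product rules to obtain $c(n)\bigl[(2^{**}-2)|\mathcal{V}|^{2^{**}-4}\langle\mathcal{V},\Phi\rangle v_i+|\mathcal{V}|^{2^{**}-2}\phi_i\bigr]$, and then specialize to $\mathcal{V}_{a,T}=\Lambda v_{a,T}$ using Theorem~\ref{thm:andrade-doo19}. The paper carries out exactly these steps (its equation \eqref{linearizedcylvect}) and then states that the classification ``simplifies'' this to \eqref{linearization} without displaying the arithmetic.

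One small caution on your last paragraph: you propose to obtain the constant piece $\frac{\widetilde{c}(n)}{n}(4-n)\phi_i$ by ``carrying the constant-coefficient parts of $\Delta^2_{\rm cyl}$'' into the zero-order term. That is not what the paper does---in \eqref{linearization} the full operator $\Delta^2_{\rm cyl}$ is kept intact, so no part of $K_0$ is moved. The displayed coefficients in \eqref{linearization} are meant to be a purely algebraic repackaging of $-c(n)(2^{**}-2)$ and $-c(n)$ via $\widetilde{c}(n)=c(n)(2^{**}-1)$ and $2^{**}=\frac{2n}{n-4}$; your plan to peel off pieces of $\Delta^2_{\rm cyl}$ would produce a different (though equivalent) expression. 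Aside from this bookkeeping point, your argument matches the paper's.
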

	
	\begin{proof} 
		By definition, we have that $\mathcal{L}_i^a(\Phi):=\mathcal{L}_i[\mathcal{V}_{a,T}](\Phi)$, where
		\begin{align}\label{linearizedcylvect}
			\mathcal{L}_i[\mathcal{V}_{a,T}](\Phi)
			&=\frac{\ud}{\ud t}\Big|_{t=0}\mathcal{N}(\mathcal{V}_{a,T}+t\Phi)&\\\nonumber
			&=\Delta^2_{\rm cyl}\phi_i-c(n)\left[\left(2^{**}-2\right)|\mathcal{V}_{a,T}|^{2^{**}-4}\langle\mathcal{V},\Phi\rangle v_i+|\mathcal{V}|^{2^{**}-2}\phi_i\right].&
		\end{align}
		In fact, writing $f_i(\mathcal{V})=c(n)|\mathcal{V}|^{2^{**}-2}v_i$ and using that $f_i$ is $(2^{**}-1)$-homogeneous, we find
		\begin{align*}
			\mathcal{N}(\mathcal{V}_{a,T}+t\Phi)-\mathcal{N}(\mathcal{V}_{a,T})
			&=\Delta^2_{\rm cyl}\mathcal{V}_{a,T}+t\Delta^2_{\rm cyl}\Phi-f_i\left({\mathcal{V}_{a,T}}+t\Phi\right)-\Delta^2_{\rm cyl}\mathcal{V}_{a,T}+f_i\left({\mathcal{V}_{a,T}}\right)&\\
			&=t\Delta^2_{\rm cyl}\Phi+f_i\left({\mathcal{V}_{a,T}}\right)-f_i\left({\mathcal{V}_{a,T}}+t\Phi\right)&\\
			&=t\Delta^2_{\rm cyl}\Phi-tc(n)\left[\left(2^{**}-2\right)|\mathcal{V}_{a,T}|^{2^{**}-4}\langle\mathcal{V},\Phi\rangle v_i+|\mathcal{V}|^{2^{**}-2}\phi_i\right]+\mathcal{O}\left(t^2\right),&
		\end{align*}
		which by taking $t\rightarrow0$ implies \eqref{linearizedcylvect}.
		Finally, using the classification in Theorem~\ref{thm:andrade-doo19}, we can simplify \eqref{linearizedcylvect} and obtain \eqref{linearization}.
	\end{proof}
	
	\subsection{Jacobi fields}\label{subsec:jacobifields}
	Unfortunately, the linearized operator is not Fredholm in general, since it does not have a closed range \cite[Theorem~5.40]{MR1348401}. 
	This issue is caused by the existence of nontrivial elements on its kernel; these are called the Jacobi fields \cite{MR1936047}.  
	Therefore, we need to introduce suitable weighted Sobolev and H\"{o}lder spaces in which the linearized operator has a well-defined right-inverse, up to a discrete set on the complex plane.
	For more details, see also \cite[Section~2]{MR1763040}.
	
	\begin{definition}
		Given $k,p,q\geqslant1$ and $\beta\in\mathbb{R}$, for any $\mathcal{V} \in L_{\loc}^{q}(\mathcal{C}_0,\mathbb{R}^p)$ define the following weighted Lebesgue norm
		\begin{equation*}
			\|\mathcal{V}\|_{L_{\beta}^{q}(\mathcal{C}_0,\mathbb{R}^p)}^{q}=\int_{0}^{\infty} \int_{\mathbb{S}^{n-1}}e^{-2\beta t}|\mathcal{V}(t, \theta)|^{q} \ud\theta\ud t.
		\end{equation*}	
		Also, let us define the weighted Lebesgue space by 
		\begin{equation*}
			L_{\beta}^{q}(\mathcal{C}_0,\mathbb{R}^p)=\left\{\mathcal{V}\in L_{\loc}^{q}(\mathcal{C}_0) : \|\mathcal{V}\|_{L_{\beta}^{q}(\mathcal{C}_0,\mathbb{R}^p)}<\infty\right\}.
		\end{equation*}	
		Similarly consider the Sobolev spaces $W_{\beta}^{k,q}(\mathcal{C}_0,\mathbb{R}^p)$ of $p$-maps with $k$ weak derivatives in $L^{q}$ having finite weighted norms.
		Here we also denote the Hilbert space $W^{k,2}_{\beta}(\mathcal{C}_0,\mathbb{R}^p)=H^{k}_{\beta}(\mathcal{C}_0,\mathbb{R}^p)$ and $W^{k,q}(\mathcal{C}_0)=W^{k,q}(\mathcal{C}_0,\mathbb{R})$. Notice that when $\beta=0$, we recover the classical Sobolev spaces of $p$-maps. 
	\end{definition}
	
	\begin{definition}
		Given $m,p\geqslant1$, $\beta\in\mathbb{R}$ and $\zeta\in(0,1)$,
		for any $u \in \mathcal{C}_{\loc}^{0,\beta}(\mathcal{C}_0,\mathbb{R}^p)$ define the following norm 
		\begin{equation*}
			\|\mathcal{V}\|_{C_{\beta}^{0,\zeta}(\mathcal{C}_0,\mathbb{R}^p)}=\sup_{T>1} \sup \left\{\frac{e^{-\beta t_{1}} |\mathcal{V}\left(t_{1}, \theta_{1}\right)|-e^{-\beta t_{2}}|\mathcal{V}\left(t_{2}, \theta_{2}\right)|}{\ud_{\rm cyl}\left(\left(t_{1}, \theta_{1}\right),\left(t_{2}, \theta_{2}\right)\right)^{\zeta}}:\left(t_{1}, \theta_{1}\right),\left(t_{2}, \theta_{2}\right) \in\mathcal{C}_{T-1,T+1}\right\}.
		\end{equation*}
		Also, let us define the $($zeroth order$)$ weighted Hold\"{e}r space by 
		\begin{equation*}
			C_{\beta}^{0,\zeta}(\mathcal{C}_0,\mathbb{R}^p)=\left\{\mathcal{V}\in \mathcal{C}_{\loc}^{0,\beta}(\mathcal{C}_0,\mathbb{R}^p) : \|\mathcal{V}\|_{\mathcal{C}_{\loc}^{0,\beta}(\mathcal{C}_0,\mathbb{R}^p)}<\infty\right\}.
		\end{equation*}
		One can similarly define higher order weighted Hold\"{e}r spaces $C_{\beta}^{m,\zeta}(\mathcal{C}_0,\mathbb{R}^p)$.
	\end{definition}
	
	\begin{remark}
		The spaces defined above are the suitable functional spaces to obtain the asymptotic results in Theorem~\ref{theorem1}, since $v\in {W^{k,q}_{\beta}(\mathcal{C}_0)}$ is equivalent to $v\in {W^{k,q}(\mathcal{C}_0)} $ together with the decay $v=\mathcal{O}(e^{-\beta t})$ as $t\rightarrow\infty$. Additionally, by regularity theory, we can indistinguishably work with both the Sobolev or the H\"{o}lder spaces. 
	\end{remark}
	
	\begin{definition}\label{def:jacobifield}
		The {\it Jacobi fields} in the kernel of $\mathcal{L}^a:H^{4}_{\beta}(\mathcal{C}_0,\mathbb{R}^p)\rightarrow L^{2}_{\beta}(\mathcal{C}_0,\mathbb{R}^p)$, are the solutions $\Phi\in H^{4}_{\beta}(\mathcal{C}_0,\mathbb{R}^p)$ of the following fourth order linear system,
		\begin{equation}\label{jacobisystem}
			\mathcal{L}^a(\Phi)=0 \quad {\rm on} \quad \mathcal{C}_0.
		\end{equation}
	\end{definition}
	
	\subsection{Fourier eigenmodes}
	We study the kernel of linearized operator around an Emden--Fowler solution by decomposing into its {\it Fourier eigenmodes}, that is, a {\it separation of variables} technique. 
	First, let us consider $\{\lambda_j,\chi_j(\theta)\}_{j\in\mathbb{N}}$ the eigendecomposition of the Laplace--Beltrami operator on $\mathbb{S}^{n-1}$ with the normalized eigenfunctions, 
	\begin{equation}\label{spectrallaplacian}
		\Delta_{\theta}\chi_j(\theta)+\lambda_{j}\chi_j(\theta)=0.
	\end{equation}
	Here the eigenfunctions $\{\chi_j(\theta)\}_{j\in\mathbb{N}}$ are called {\it spherical harmonics} with associated sequence of eigenvalues $\{\lambda_j\}_{j\in\mathbb{N}}$ given by $\lambda_j=j(j+n-2)$ counted with multiplicity $\mathfrak{m}_j$, which are defined by  
	\begin{equation*}
		\mathfrak{m}_0=1 \quad \mbox{and} \quad \mathfrak{m}_j=\frac{(2j+n-2)(j+n-3)!}{(n-2)!j!}.
	\end{equation*}
	In particular, we have $\lambda_0=0, \quad \lambda_1=\dots=\lambda_n=n-1$, $\lambda_{j}\geqslant 2n$, if $j>n$ and $\lambda_j\leqslant\lambda_{j+1}$.
	Moreover, these eigenfunctions are the restrictions to $\mathbb{S}^{n-1}$ of homogeneous harmonic polynomials in $\mathbb{R}^n$. Here we denote by $V_j$ the eigenspace spanned by ${\chi_j(\theta)}$. Using \eqref{spectrallaplacian}, it is easy to observe that the eigendata of the bi-Laplace--Beltrami operator $\Delta^2_{\theta}$ is given by $\{\lambda_j^2,\chi_j(\theta)\}_{j\in\mathbb{N}}$. Namely, for all $j\in\mathbb{N}$, it follows
	\begin{equation}\label{spectralbilaplacian}
		\Delta^2_{\theta}\chi_j(\theta)-\lambda_j^2 \chi_j(\theta)=0.
	\end{equation}
	
	\subsubsection{Scalar case}
	When $p=1$, the nonlinear operator \eqref{nonlinearoperator} becomes
	\begin{equation*}
		\mathcal{N}(v):=\Delta^2_{\rm cyl}v-c(n)v^{2^{**}-1}
		\quad
		\mbox{and}
		\quad
		\mathcal{L}^a(\phi)=\Delta^2_{\rm cyl}\phi-\widetilde{c}(n)v_{a,T}^{2^{**}-2}\phi.
	\end{equation*}
	Furthermore, using the decomposition \eqref{cylbi-Laplacian} combined with \eqref{spectrallaplacian} and \eqref{spectralbilaplacian}, we get
	\begin{equation*}
		\mathcal{L}^a(\phi)=\partial_t^{(4)}\phi-K_2\partial_t^{(2)}\phi+K_0\phi+\Delta_{\theta}^{2}\phi+2\partial^{(2)}_t\Delta_{\theta}\phi-J_0\Delta_{\theta}\phi-\widetilde{c}(n)v_{a,T}^{2^{**}-2}\phi,
	\end{equation*}
	which by projecting on the eigenspaces $V_j$ gives us
	\begin{equation}\label{jacobiequationproj}
		\mathcal{L}^{a}_j(\phi)=\phi^{(4)}-(K_2+2\lambda_j)\phi^{(2)}+ \left[K_0+\lambda_j(\lambda_j+J_0)-\widetilde{c}(n)v_{a,T}^{2^{**}-2}\right]\phi.
	\end{equation}
	Moreover, for any $\phi\in L^{2}(\mathbb{S}^{n-1})$, we write
	\begin{equation*}
		\phi(t,\theta)=\sum_{j=0}^{\infty}\phi_j(t)\chi_j(\theta), \quad \mbox{where} \quad \phi_j(t)=\int_{\mathbb{S}^{n-1}}\phi(t,\theta)\chi_j(\theta)\ud\theta.
	\end{equation*}
	In other terms, $\phi_j$ is the projection of $\phi$ on the eigenspace $V_j$. 
	Thus, to understand the kernel of $\mathcal{L}^{a}$, we consider the induced family of ODEs $\mathcal{L}^{a}_{j}(\phi_j)=0$ for $j\in\mathbb{N}$. More accurately, for each $a\in(0,a_0)$, it follows
	\begin{equation*}
		\ker(\mathcal{L}^a)=\bigoplus_{j\in\mathbb{N}}\ker(\mathcal{L}_{j}^{a}) \quad \mbox{and} \quad \operatorname{spec}(\mathcal{L}^a)=\bigcup_{j\in\mathbb{N}}\overline{\operatorname{spec}(\mathcal{L}_{j}^{a})}.
	\end{equation*} 
	
	\begin{remark}\label{geoemtricjacobifields}
		For $p=1$ and $j=0,1,\dots,n$ some (low-frequency) Jacobi fields are generated by the variation of a two-parameter family of Emden--Fowler solutions. In particular,  when $j=0$, they are given by
		\begin{equation*}
			\phi^+_{a,0}(t)=\partial_T\big|_{T=0} v_{a,T}(t) \quad \mbox{and} \quad \phi^-_{a,0}(t)=\partial_a\big|_{a=0} v_{a,T}(t).
		\end{equation*}
		However, the other two Jacobi fields cannot be directly constructed as variations of some family of solution to the limit equation. 
		One can show that they are not on the basis of this zero-frequency case.
		When $j=1,\dots,n$, the same construction can be performed; in this case we have that $\phi^\pm_{a,1}=\cdots=\phi^\pm_{a,n}$ with exponential growth/decay. More explicitly, let $\{{\bf e}_{j}\}^n_{j=1}$ be the standard basis of $\mathbb{R}^{n}$; thus, $\chi_j(\theta)=\langle{\bf e}_{j},\theta\rangle$ is the eigenfunction associated to $\lambda_j=n-1$. Taking $x_0=\tau{\bf e}_{j}$ in \eqref{asymptoticsjacobifields1} and \eqref{asymptoticsjacobifields2} provides
		\begin{equation*}
			v_{a,\tau{\bf e}_{j}}(t,\theta)=v_{a}(t)+\tau e^{-t}\chi_{j}(\theta)\left(-v_a^{(1)}+\gamma v_a\right)+\mathcal{O}\left(e^{-2 t}\right) \quad \mbox{as} \quad t\rightarrow\infty,
		\end{equation*}
		which by differentiating with respect to $\tau$ implies
		\begin{equation*}
			\phi_{a,j}^{-}(t,\theta)=e^{-t}\left(-v_a^{(1)}+\gamma v_a\right)+\mathcal{O}\left(e^{-2 t}\right) \quad \mbox{as} \quad t\rightarrow\infty.
		\end{equation*}
		Moreover, a direct computation gives us $\mathcal{L}^a_j(\phi^{-}_{a,j})=0$.
		Similarly, we can start differentiating the translation to obtain
		\begin{equation*}
			\phi_{a,j}^{+}(t,\theta)=e^{-t}\left(v_a^{(1)}+\gamma v_a\right)+\mathcal{O}\left(1\right) \quad \mbox{as} \quad t\rightarrow\infty.
		\end{equation*}
		Also notice that by Remark~\ref{signjacobifields}, whenever one has a parameter family of solutions to a nonlinear equation, the derivatives with respect to each parameter provides a solution to its linearized equation. 
	\end{remark}
	
	\subsubsection{System case}
	For $p>$1 and $\Phi\in L^{2}(\mathcal{C}_0,\mathbb{R}^p)$, we write
	\begin{equation}\label{Fourierdecomposition}
		\Phi(t,\theta)=\sum_{j=0}^{\infty}\Phi_j(t)\chi_j(\theta), \quad \mbox{where} \quad \Phi_j(t)=\int_{\mathbb{S}^{n-1}}\Phi(t,\theta)\chi_j(\theta)\ud\theta.
	\end{equation}
	Hence, for all $i\in I$ and $j\in\mathbb{N}$, we decompose \eqref{jacobisystem} as
	\begin{equation}\label{jacobiequhationprojsystem}
		\mathcal{L}^{a}_{ij}(\Phi)=\phi_i^{(4)}-(K_2+2\lambda_j)\phi_i^{(2)}+ \left[K_0+\lambda_j(\lambda_j+J_0)-\widetilde{c}(n)v_{a,T}^{{2^{**}-2}}\right]\phi_i-n\Lambda^*_i\langle\Lambda^*,\Phi\rangle v_{a,T}^{2^{**}-2}.
	\end{equation}
	Whence, to understand the kernel of \eqref{jacobiequhationprojsystem}, we again consider the induced equations $\mathcal{L}^{a}_{ij}(\phi_j)=0$. Therefore, the study of the kernel of $\mathcal{L}^{a}$ reduces to solving infinitely many ODEs. 
	In other terms, for each $a\in(0,a_0)$, it follows
	\begin{equation*}
		\ker(\mathcal{L}^a)=\bigoplus_{i\in I}\bigoplus_{j\in\mathbb{N}}\ker(\mathcal{L}_{ij}^{a}) \quad \mbox{and} \quad \operatorname{spec}(\mathcal{L}^a)=\bigcup_{i\in I}\bigcup_{j\in\mathbb{N}}\overline{\operatorname{spec}(\mathcal{L}_{ij}^{a})}.
	\end{equation*} 
	In Fourier analysis, it is convenient to divide any $\Phi\in L^2(\mathcal{C}_0,\mathbb{R}^p)$ into its frequency modes by
	\begin{equation*}
		\pi_0[\Phi](t,\theta)=\Phi_0(t)\chi_0(\theta), \
		\pi_1[\Phi](t,\theta)=\sum_{j=1}^{\mathfrak{m}_1}\Phi_j(t)\chi_j(\theta), \ \mbox{and} \ \pi_l[\Phi](t,\theta)=\sum_{j=\mathfrak{m}_l+1}^{\mathfrak{m}_{l+1}}\Phi_j(t)\chi_j(\theta).
	\end{equation*}
	In particularly, the projections $\pi_0,\pi_1$ and $\sum_{l=2}^{\infty}\pi_l$ are called respectively the {\it zero-frequency}, {\it low-frequency}, and {\it high-frequency modes}.
	
	\subsection{Indicial roots}
	A general principle in nonlinear analysis states that the asymptotic behavior of the linearized $\mathcal{L}^a$ on the cylinder $\mathcal{C}_0$ is directly related to its indicial roots, at least when the equilibrium point is hyperbolic, that is, zero is not in spectrum of the linearization around a limit solution. Heuristically, this is a version of Poincar\'e--Bendixon theory, or, more generally, Hartman--Grobman theory \cite{MR0171038}. 
	Inspired by \cite[Definition~5.0.1]{pacard2008}, we present the following definition:
	
	\begin{definition}
		A real number $\beta\in\mathbb{R}$ is called an indicial root of the linearized operator $\mathcal{L}_{j}^a: H^4(\mathcal{C}_0,\mathbb{R}^p)\rightarrow L^2(\mathcal{C}_0,\mathbb{R}^p)$, if there exist a non-zero function $\Phi\in C^{4,\zeta}(\mathcal{C}_0,\mathbb{R}^p)$ and $\beta'<\beta$ such that
		\begin{equation*}
			\liminf_{t\rightarrow\infty}\|\Phi(t,\theta)\|_{L^{\infty}\left(\mathbb{S}_t^{n-1},\mathbb{R}^p\right)}>0 \quad \mbox{and} \quad \lim_{t\rightarrow\infty}e^{-\beta't}\mathcal{L}_{j}^a(\Phi(t,\theta)e^{\beta t})=0.
		\end{equation*}
		Denote by $\mathfrak{I}^a_{j}$, the indicial roots of $\mathcal{L}_{j}^a$ for all $j\in\mathbb{N}$.
		Moreover, $\mathfrak{I}^{a}=\cup_{j\in\mathbb{N}}\mathfrak{I}^a_{j}$. 
	\end{definition}
	
	\begin{remark}
		When the linearized operator has constant coefficients, the indicial roots are the solutions to the {\it characteristic equation}. However, if this operator has periodic coefficients, the indicial exponents are the real part of the eigenvalues of the monodromy matrix, and they are called the {\it Floquet exponents}. One could also compare these exponents with the definition of indicial roots for Fuchsian operators.
	\end{remark}
	
	\section{Linear Analysis}\label{sec:linearanalysis}
	The objective of this section is to prove Proposition~\ref{growthpropertiessystem}. 
	More precisely, we show the linear stability for the linearized operator by studying its spectrum. Consequently, we can control the asymptotics to the global solutions by the growth of the Jacobi fields, which can be computed using Floquet theory (or Bloch wave theory). Namely, we prove that $\operatorname{spec}(\mathcal{L}^a)$ is a disjoint union of nondegenerate intervals, and $0\in\mathfrak{I}_a$ is an isolated point.
	The strategy is to use the integral decomposition in Proposition~\ref{integraldecomposition} to study the spectral bands of the Jacobi operator. We proceed by applying the Fourier--Laplace transform together with some results from holomorphic functional analysis \cite{MR194163,MR1356375} (see also \cite{MR1009163,MR2028503}).
	For complex numbers, we denote $\rho=\alpha+i\beta$, where $\Re(\rho)$,$\Im(\rho)$ stands for its real and imaginary parts, respectively.
	
	\subsection{Fourier--Laplace transform}\label{subsection:spectralanalysis}
	Following \cite[Section~4]{MR1356375} (see also \cite[Section~3]{MR4001465}), we consider the Fourier--Laplace transform, which is the suitable transformation to invert the linearized operator in the frequency space, that is, with respect to the Fourier parameter. 
	We can use the real parameter $\alpha=\Re z$ for $\rho\in\mathfrak{R}_a$ to move the weight of the Sobolev space and invert this transform, up to some region in the complex plane. 
	Before, we need to introduce some background notation and tools.
	Here we recall that $T_a\in\mathbb{R}$ is the fundamental period of the Emden--Fowler solution $v_a$ given by \eqref{fowler4order}.
	
	\begin{definition}
		Let $\widetilde{\beta}\in\mathbb{R}$ and $\Phi\in H_{\widetilde{\beta}}^{k}({\mathcal{C}_{\infty}},\mathbb{R}^p)$, where $\Phi$ is extend to be zero on the region ${\mathcal{C}}_{\infty}\setminus\mathcal{C}_0$. We define the Fourier--Laplace transform , given by
		\begin{equation}\label{laplace-Fouriertransform}
			\mathcal{F}_{a}(\Phi)(t,\theta,\rho)=\sum_{l\in\mathbb{Z}}e^{-il\rho}\Phi\left(t+l T_{a},\theta\right),
		\end{equation}
		where $\rho \in \mathfrak{R}_a:=\{\alpha+i\beta\in \mathbb{C} : \beta<-\widetilde{\beta}T_{a}\}\subset\mathbb{C}$ for some $\widetilde{\beta}\in\mathbb{R}$.
		For the sake of simplicity, we fix the notation $\widehat{\Phi}(t,\theta,\rho):=\mathcal{F}_{a}(\Phi)(t,\theta,\rho)$.
	\end{definition}
	
	Due to the periodicity properties of the linearized operator, it makes sense to define the following spaces, which is sometimes referred to as {\it the space of Bohr $\alpha$-quasi-periodic $p$-maps}.
	
	\begin{definition}
		For $\mathcal{C}_{0,T_a}:=[0,T_a]\times\mathbb{S}^{n-1}$ and $\alpha\in\mathbb{R}$, let us introduce $L_{\alpha}^{2}\left(\mathcal{C}_{0,T_a},\mathbb{C}^p\right)$ defined as the $L^{2}$-completion of $\mathcal{C}_{\alpha}^{0}\left(\mathcal{C}_{0,T_a},\mathbb{C}^p\right)$, where
		\begin{equation*}
			{C}_{\alpha}^{0}\left(\mathcal{C}_{0,T_a},\mathbb{C}^p\right):=\left\{\Phi \in {C}^{0}\left(\mathcal{C}_{0,T_a},\mathbb{C}^p\right):\Phi\left(T_{a}, \theta\right)=e^{i\alpha t} \Phi(0,\theta)\right\}.
		\end{equation*}
	\end{definition}
	
	The main proposition of this subsection is the direct integral decomposition of $L^{2}(\mathcal{C}_0,\mathbb{R}^p)$ in terms of the parameter $\alpha\in\mathbb{R}$ in the Fourier--Laplace transform, which will later provide the correct framework to invert the linearized operator.
	
	\begin{proposition}\label{integraldecomposition}
		For any $a\in[0,a_0]$, it follows 
		\begin{equation*}
			L^{2}\left(\mathcal{C}_0,\mathbb{R}^p\right)=\int_{\alpha\in[0,2\pi]}^{\oplus}L_{\alpha}^{2}\left(\mathcal{C}_{0,T_a},\mathbb{C}^p\right)\ud\alpha.
		\end{equation*}
	\end{proposition}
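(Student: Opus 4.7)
The plan is to recognize the claimed identity as the standard Bloch--Floquet direct-integral decomposition associated to the $\mathbb{Z}$-action by $T_a$-translations in the $t$-variable, applied to the half-cylinder (via zero-extension) $\mathcal{C}_0 \hookrightarrow \mathcal{C}_\infty$. The Fourier--Laplace transform $\mathcal{F}_a$ of \eqref{laplace-Fouriertransform}, restricted to real Fourier parameter $\rho=\alpha\in[0,2\pi]$, will provide the unitary isomorphism implementing the decomposition, and the proof reduces to Plancherel on the abelian group $\mathbb{Z}$ applied slab-by-slab.

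First I would partition $\mathcal{C}_0$ into the fundamental slabs $\mathcal{C}_l := (lT_a,(l+1)T_a)\times\mathbb{S}^{n-1}$ for $l\in\mathbb{Z}_{\geqslant 0}$, obtaining the orthogonal slab decomposition
\[
\|\Phi\|^2_{L^2(\mathcal{C}_0,\mathbb{R}^p)} = \sum_{l\geqslant 0}\|\Phi_l\|^2_{L^2(\mathcal{C}_{0,T_a},\mathbb{C}^p)},\qquad \Phi_l(t,\theta):=\Phi(t+lT_a,\theta),
\]
so $\{\Phi_l\}\in\ell^2(\mathbb{Z}_{\geqslant 0};L^2(\mathcal{C}_{0,T_a},\mathbb{C}^p))$. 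Restricting $\widehat{\Phi}=\mathcal{F}_a(\Phi)$ to the fundamental slab $\mathcal{C}_{0,T_a}$ recovers the discrete Fourier series $\widehat{\Phi}(t,\theta,\alpha)=\sum_{l\geqslant 0}e^{-il\alpha}\Phi_l(t,\theta)$. Second I would verify that this transform lands in the right fibre: reindexing $l\mapsto l+1$ yields the quasi-periodicity $\widehat{\Phi}(T_a,\theta,\alpha)=e^{i\alpha}\widehat{\Phi}(0,\theta,\alpha)$, so $\widehat{\Phi}(\cdot,\cdot,\alpha)\in L^2_\alpha(\mathcal{C}_{0,T_a},\mathbb{C}^p)$ for a.e.\ $\alpha$. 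Third I would apply Plancherel on $\mathbb{Z}$ pointwise in $(t,\theta)$ and integrate over $\mathcal{C}_{0,T_a}$, obtaining the isometry
\[
\|\Phi\|^2_{L^2(\mathcal{C}_0,\mathbb{R}^p)}=\frac{1}{2\pi}\int_0^{2\pi}\|\widehat{\Phi}(\cdot,\cdot,\alpha)\|^2_{L^2(\mathcal{C}_{0,T_a},\mathbb{C}^p)}\ud\alpha,
\]
together with the inversion formula $\Phi_l(t,\theta)=\frac{1}{2\pi}\int_0^{2\pi}e^{il\alpha}\widehat{\Phi}(t,\theta,\alpha)\ud\alpha$, which delivers surjectivity of $\mathcal{F}_a$ onto the direct integral.

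The hard part is not analysis but bookkeeping: one must confirm that the quasi-periodicity built into the definition of $L^2_\alpha(\mathcal{C}_{0,T_a},\mathbb{C}^p)$ is exactly the boundary identification produced by the $T_a$-translation action, pin down the normalization constant absorbed into the measure $\ud\alpha$ on $[0,2\pi]$, and check weak measurability of $\alpha\mapsto\widehat{\Phi}(\cdot,\cdot,\alpha)$ so that the right-hand side is a bona fide Hilbert direct integral rather than just a formal symbol. Each of these is immediate for compactly supported $\Phi\in C_c^\infty(\mathcal{C}_0,\mathbb{R}^p)$, for which the series collapses to a finite sum, and then the extension to $L^2(\mathcal{C}_0,\mathbb{R}^p)$ is done by density, using the Plancherel identity itself as the continuity estimate that propagates the isometric isomorphism to the completion.
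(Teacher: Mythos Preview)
Your proposal is correct and follows essentially the same route as the paper: both arguments use the Fourier--Laplace transform $\mathcal{F}_a$ restricted to real parameter $\alpha$ as the unitary, establish a Parseval--Plancherel identity by reducing to Fourier series on $\mathbb{Z}$ (the paper's Claim~3 with $\beta=0$), verify the quasi-periodicity that identifies the fibres $L^2_\alpha(\mathcal{C}_{0,T_a},\mathbb{C}^p)$ (the paper's Claim~4), and recover the inversion formula (the paper's Claim~2). Your framing as Bloch--Floquet theory via the slab decomposition is cleaner and more conceptual than the paper's presentation, which proceeds through the same ingredients but also records the weighted version of Plancherel (Claim~3 for general $\beta$) needed later in the Fredholm analysis.
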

	
	\begin{proof}
		We divide the proof into a sequence of claims:
		
		\noindent{\bf Claim 1:} If $\rho\in\mathfrak{R}_a$, then the Fourier--Laplace $\mathcal{F}_a(\rho):L^2(\mathcal{C}_{\infty},\mathbb{R}^p)\rightarrow L^2(\mathcal{C}_{\infty},\mathbb{C}^p)$ transform is well-defined.
		
		\noindent In fact, since $\Phi\in H_{\widetilde{\beta}}^{k}\left(\mathcal{C}_0,\mathbb{R}^p\right)$ we know that $|\Phi(t, \theta)|=\mathcal{O}(e^{\widetilde{\beta} t})$, which yields
		\begin{align*}
			\left|\mathcal{F}_{a}(\Phi)(t,\theta,\rho)\right|
			=\sum_{l\in\mathbb{Z}}\left|e^{-i(\alpha+i\beta)l}\Phi\left(t+l T_{a},\theta\right)\right|
			=\sum_{l\in\mathbb{Z}}e^{\beta l}\left|\Phi\left(t+l T_{a},\theta\right)\right|\leqslant Ce^{\widetilde{\beta}t}\sum_{l\in\mathbb{Z}}e^{(\alpha+\widetilde{\beta}T_a)l},
		\end{align*}
		where we used that each choice of $\rho\in\mathbb{C}$ only gives finitely many zeros, and since $\rho\in\mathfrak{R}_a$, we use the growth property to conclude that all the exponents in the series are negative. Therefore, the last sum must converge uniformly in $\mathfrak{R}_a$.
		We can rephrase this conclusion like $\mathcal{F}_{a}$ is analytic whenever $\Phi\in H_{\beta}^{k}\left(\mathcal{C}_0,\mathbb{R}^p\right)$.
		
		In the next claim, we invert the Fourier--Laplace transform by using a contour integral in some inversion branch on the complex plane.
		
		\noindent{\bf Claim 2:} Let $\Phi\in H_{\beta}^{k}\left(\mathcal{C}_0,\mathbb{R}^p\right)$ and $\rho\in\mathfrak{R}_a$. 
		For each $t$ choose $\bar{t}\in[0, T_{a})$ such that $t=\bar{t} \mod T_{a}$, that is, there exists $l_0\in\mathbb{Z}$ satisfying $t=\bar{t}+l_0T_a$. Then, we get
		\begin{equation*}
			\Phi(t,\theta)=\frac{1}{2\pi} \int_{\alpha=0}^{2\pi} e^{il_0T_{a}(\alpha+i \beta)}\widehat{\Phi}(\bar{t},\alpha+i\beta, \theta) \ud\alpha.
		\end{equation*}
		\noindent Indeed, since $z=\alpha+i\beta$ and $\rho\in\mathfrak{R}_a$, we obtain
		\begin{align*}
			\frac{1}{2\pi}\int_{\alpha=0}^{2\pi}e^{\widetilde{\beta}} \widehat{\phi_i}(\bar{t},\theta,z)\ud\alpha &=\frac{1}{2\pi}\int_{\alpha=0}^{2\pi}e^{il_0\rho} \sum_{l\in\mathbb{Z}}e^{-il\rho} \phi_i\left(\bar{t}+lT_{a},\theta\right)\ud\alpha&\\
			&=\sum_{l\in\mathbb{Z}}\frac{1}{2\pi} \int_{\alpha=0}^{2\pi}e^{i(\alpha+i\beta)(l_0-l)} \phi_i\left(\bar{t}+lT_{a},\theta\right)\ud\alpha=\phi_i(t, \theta)&
		\end{align*}
		for all $i\in I$, which implies the proof of the claim.
		
		In the sequel, we show in which way the change 
		on the parameter $\beta\in\mathbb{R}$ on this inversion influences the weight norm of the transformed function. In other words, the next claim is a type of Parseval--Plancherel identity for $p$-maps. 
		
		\noindent{\bf Claim 3:} For each $\theta\in \mathbb{S}^{n-1}$ and $\rho=\alpha+i\beta$ for $\alpha,\beta\in\mathbb{R}$, it follows
		\begin{equation}\label{claim3}
			\left\|\widehat{\Phi}(t,\theta,\rho)\right\|_{L^{2}\left(\mathcal{S}_a,\mathbb{C}^p\right)}^{2} \simeq 2 \pi\left\|\widehat{\Phi}(t,\theta)\right\|^2_{L^2_{\beta/T_{a}}(\mathbb{R},\mathbb{R}^p)},
		\end{equation}
		where $\mathcal{S}_a=[0,T_a]\times[0,2\pi]$. 
		
		\noindent As a matter of fact, for all $i\in I$, it holds
		\begin{align*}
			&\int_{0}^{T_{a}}\int_{0}^{2 \pi}\left|\widehat{\phi}_i(t,\theta,\rho)\right|^2\ud\alpha\ud t&\\
			&=\int_{0}^{T_{a}}\int_{0}^{2 \pi}\left(\sum_{l\in\mathbb{Z}}e^{-il\alpha}e^{l\beta} \phi_i\left(t+lT_{a},\theta\right)\right)\left(\sum_{l\in\mathbb{Z}}e^{-il\alpha}e^{l\beta} \phi_i\left(t+lT_{a},\theta\right)\right)\ud\alpha\ud t&\\
			&=\int_{0}^{T_{a}}\int_{0}^{2 \pi}\sum_{l\in\mathbb{Z}}\sum_{\ell=-l}^l {{l}\choose{\ell}}e^{i(\ell-l)\alpha}e^{(\ell+l)\beta}  \phi_i\left(t+lT_{a},\theta\right)\phi_i\left(t+lT_{a},\theta\right)\ud\alpha\ud t&\\
			&=\int_{0}^{T_{a}}\int_{0}^{2 \pi}\sum_{l\in\mathbb{Z}}e^{2\beta l}\left|\phi_i\left(t+lT_{a},\theta\right)\right|^2\ud\alpha\ud t&\\
			&\simeq 2\pi\int_{\mathbb{R}}\left|e^{\beta t/T_a}\phi_i(t,\theta)\right|^2\ud t.&
		\end{align*}
		
		Next, we prove that $\widehat{\Phi}$ is a section of the flat bundle $\mathbb{T}_a^n=\mathbb{S}_a^1\times\mathbb{S}^{n-1}$ with holonomy $\rho\in\mathbb{C}$ around the $\mathbb{S}^{1}$ loop, where we identify $\mathbb{S}_a^1=\mathbb{R}/T_a\mathbb{Z}$.
		
		\noindent{\bf Claim 4:} For each $\theta \in \mathbb{S}^{n-1}$, we have
		\begin{equation}\label{Claim4}
			\left\|\widehat{\Phi}(t,\theta,\rho)\right\|_{L^{2}\left(\mathcal{S}_a,\mathbb{C}^p\right)}^{2} =2\pi\left\|\widehat{\Phi}(t,\theta)\right\|^2_{L^2(\mathbb{R},\mathbb{R}^p)}.
		\end{equation}
		
		\noindent Indeed, by taking $\beta=0$ in \eqref{claim3} and using \eqref{laplace-Fouriertransform}, we get 
		\begin{equation*}
			{\Phi}\left(t+T_{a}, \theta\right)=\mathcal{F}_{a}^{-1}\left(e^{i\rho} \mathcal{F}_{a}\left(\Phi\right)\right)(t,\theta),
		\end{equation*}
		which clearly concludes the proof of the claim.
		
		Finally, the proof of the proposition is a consequence of Claims 2,3, and 4.
	\end{proof} 
	
	\begin{remark}
		We would like to stress that the dependence of the inversion on the parameter $\alpha>0$ will allows us to change the growth rate of the solution you produce later from the Green's function of the twisted operator.
	\end{remark}
	
	\subsection{Spectral analysis}
	Now inspired by \cite[Section~4.2]{MR194163}, we study the geometric structure of the spectrum of the linearized operator around an Emden--Fowler solution.
	The idea is to construct a {\it twisted operator}, which
	captures the periodicity property of this linear operator, it is unitarily equivalent to the linearized operator, and a Fredholm theory is available.  
	In this direction, let us first introduce the suitable domain of definition for this operator.
	
	\begin{definition}
		For each $\alpha\in\mathbb{R}$ and $k\in\mathbb{N}$, let us define the set of {\it quasi-periodic} $p$-maps $H_{\alpha}^{k, 4}\left(\left[0, T_{a}\right],\mathbb{C}^p\right)$ to be the completion of the space of $C^{\infty}\left(\left[0, T_{a}\right],\mathbb{C}^p\right)$ under the  $H^{k}$-norm with boundary conditions given by
		\begin{equation*}
			\Phi^{(j)}\left(T_{a}\right)=e^{iT_{a}\alpha} \Phi^{(j)}(0) \quad \mbox{for} \quad j=0,1, \ldots, k-1.
		\end{equation*}
		We also denote by $\mathcal{L}^a_{ij,\alpha}$ the restriction of $\mathcal{L}^a_{ij}$ to $H_{\alpha}^{m}\left(\left[0, T_{a}\right]\right)$.
	\end{definition}
	
	Initially, for any $\rho\in\mathfrak{R}_a$ we use the inversion of the Fourier--Laplace transform to define $\widehat{\mathcal{L}}^a(\rho)=\mathcal{F}_a\circ\mathcal{L}^a\circ\mathcal{F}_a^{-1}$, or equivalently $\widehat{\mathcal{L}}^a(\rho)(\widehat{\Phi})=\widehat{\mathcal{L}^a(\Phi)}$, which by \eqref{Claim4} yields 
	\begin{equation*}
		\widehat{\mathcal{L}}^a(\rho)(e^{i\rho}\widehat{\Phi})(t,\theta,\rho)=e^{i\rho}\widehat{\mathcal{L}}^a(\rho)(e^{i\rho}\widehat{\Phi})(t,\theta,\rho) \quad \mbox{and} \quad e^{-i\rho}\widehat{\mathcal{L}}^a(\rho)(e^{i\rho}\widehat{\Phi})=\widehat{\mathcal{L}}^a(\rho)(\widehat{\Phi}).
	\end{equation*}
	Using the last relation, we set $\widetilde{\mathcal{L}}^a(\rho):H^{k+4}\left(\mathbb{T}_a^n,\mathbb{C}^p\right)\rightarrow H^{k}\left(\mathbb{T}_a^n,\mathbb{C}^p\right)$, given by
	\begin{equation}\label{twistedoperator}
		\widetilde{\mathcal{L}}^a(\rho)(\widehat{\Phi})=e^{i\rho}\mathcal{F}_a\circ\mathcal{L}^a\circ\mathcal{F}_a^{-1}\left(e^{-i\rho t}\widehat{\Phi}\right).
	\end{equation}
	
	\begin{remark}
		Notice that $\widehat{\mathcal{L}}^{a}$ has the same coordinate expression as $\mathcal{L}^a$. Thus, their Fourier eigenmodes decomposition  $\widehat{\mathcal{L}}_{j}^{a}$ and $\widetilde{\mathcal{L}}_{j}^{a}$ are also unitarily equivalent. 
		Moreover, by Claim 3 of Proposition~\ref{integraldecomposition} one has that ${\mathcal{L}}_{j,\alpha}^{a}$ coincides with the restriction of $\widehat{\mathcal{L}}_{j}^{a}(\alpha)$ to $[0,T_a]$. 
		Furthermore, $\widetilde{\mathcal{L}}^a(\rho)$ acts on the same functional space for all $\rho\in\mathbb{C}$.
	\end{remark}
	
	This motivates the following definition:
	
	\begin{definition}
		For each $a\in(0,a_0)$, $j\in\mathbb{Z}$, and $\alpha\in\mathbb{R}$, let us denote by $\sigma_{k}(a,j,\alpha)$ the eigenvalues of ${\mathcal{L}}_{j,\alpha}^{a}$. 
		In addition, since for each $a\in(0,a_0)$, $j\in\mathbb{Z}$, one has ${\mathcal{L}}_{j,0}^{a}={\mathcal{L}}_{j,2\pi}^{a}$, it follows that $\sigma_{k}(a,j,\cdot): \mathbb{S}^{1}\rightarrow\mathbb{R}$. Therefore, let us define the {\it $k$-th spectral band} of $\mathcal{L}^a_{j}$ by
		\begin{equation*}
			\mathfrak{B}_k(a,j)=\{\sigma_k\in\mathbb{R} : \sigma_k=\sigma_k(a,j,\alpha) \ {\rm for \ some} \ \alpha\in[0,2\pi/T_a]\}.
		\end{equation*}
	\end{definition}
	
	\begin{remark}
		Notice that $\spec(\mathcal{L}^a)=\spec(\widetilde{\mathcal{L}}^a)=\cup_{j,k\in\mathbb{N}}\mathfrak{B}_k(a,j)$.
	\end{remark}
	
	\begin{remark}\label{rem:quasiperiodic}
		The eigenfunction $\Phi_k$ corresponding to the eigenvalue $\sigma_{k}(a, j,\alpha)$ satisfies 
		\begin{equation*}
			\Phi\left(t+2\pi/T_{a}\right)=e^{i\alpha} {\Phi}(t)=e^{(2\pi-\alpha)i}\Phi(t) \quad \mbox{and} \quad  \widehat{\Phi}(t+2\pi)=e^{-i\alpha}\widehat{\Phi}(t). 
		\end{equation*}
		Furthermore, $\sigma_{k}(a, j, 2\pi-\alpha)=\sigma_{k}(a,j,\alpha)$, since
		$\mathcal{L}^a_{j}$ has real coefficients; thus, we can restrict $\sigma_k:\mathbb{S}^{1}\rightarrow\mathbb{R}$ to the half-circle corresponding to $\alpha\in[0,\pi]$.
	\end{remark}
	
	Now we have conditions to state and prove the most important result of this section.
	
	\begin{proposition}\label{isolatedindicialroot}
		For any $a\in[0,a_0]$, $0\in\mathfrak{I}_a$ is an isolated indicial root of $\mathcal{L}^a$.
	\end{proposition}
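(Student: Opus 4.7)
The plan is to combine the Fourier decomposition from Subsection~2.6 with the growth information furnished by Proposition~\ref{growthpropertiessystem}. The key translation is that an indicial root $\beta\in\mathfrak{I}^a_j$ corresponds exactly to a Floquet exponent of the periodic-coefficient ODE $\mathcal{L}^a_j(\Phi)=0$, or equivalently to a value $\rho=i\beta T_a$ at which $0$ lies in the spectrum of the twisted operator $\widetilde{\mathcal{L}}^a_j(\rho)$ from \eqref{twistedoperator}. Since $\mathfrak{I}^a=\bigcup_{j\in\mathbb{N}}\mathfrak{I}^a_j$, the proof reduces to analyzing each Fourier mode separately and then verifying that indicial roots from different modes do not accumulate at $0$.

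First I would dispatch the zero-frequency mode $j=0$. The geometric Jacobi fields obtained by differentiating the family $\mathcal{V}_{a,T}$ with respect to the translation parameter $T$ and the Fowler parameter $a$ are $T_a$-periodic solutions of $\mathcal{L}^a_0(\Phi)=0$, which already shows $0\in\mathfrak{I}^a_0$. By Proposition~\ref{growthpropertiessystem}(i) the full basis of $\mathcal{L}^a_0(\Phi)=0$ consists of $2p$ elements that are bounded or at most linearly growing. Both the bounded and the linearly growing ones contribute only to a Jordan block attached to the Floquet exponent $0$ rather than to any distinct exponent, so the only indicial root produced by the zero-frequency mode is $\beta=0$ itself, and no other indicial roots from $j=0$ can approach it.

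Next I would treat the higher-frequency modes $j\geq 1$. Proposition~\ref{growthpropertiessystem}(ii) asserts that $\mathcal{L}^a_j(\Phi)=0$ admits a basis of $4p$ solutions, each exponentially growing or decaying; hence every Floquet exponent for $\mathcal{L}^a_j$ with $j\geq 1$ is strictly non-zero. To make the gap around $0$ uniform, for large $j$ I would perform a leading-order analysis of \eqref{jacobiequhationprojsystem}: the terms $(K_2+2\lambda_j)\phi_i^{(2)}$ and $\lambda_j(\lambda_j+J_0)\phi_i$ dominate, forcing the non-zero exponents to grow like $\sqrt{\lambda_j}\to\infty$. For each of the finitely many remaining $j$, I would invoke continuity of Floquet exponents in the parameter $a\in[0,a_0]$ together with compactness, which reduces the verification to checking that $0$ is excluded at the endpoints $a=0$ (spherical limit) and $a=a_0$ (cylindrical limit), where $\mathcal{L}^a_j$ has constant coefficients and its characteristic polynomial is explicitly solvable.

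The most delicate point will be ensuring that no low-$j$ Floquet exponent crosses through $0$ for some intermediate $a\in(0,a_0)$, since continuity alone does not preclude this. To rule it out I would exploit the explicit low-frequency Jacobi fields of Remark~\ref{geoemtricjacobifields}, whose Floquet exponents are exactly $\pm 1$ and which account for $2p$ of the $4p$ basis elements when $j=1,\dots,n$; the remaining $2p$ exponents I would control via the spectral band description $\spec(\mathcal{L}^a)=\bigcup_{j,k}\mathfrak{B}_k(a,j)$ and the non-vanishing Pohozaev invariant $\mathcal{P}_{\rm sph}(\mathcal{U}_{a,T})<0$ from Remark~\ref{negativepohozaev}, which obstructs a degeneracy of the monodromy matrix to an eigenvalue equal to $1$ along the Emden--Fowler family. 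This rules out a spectral band crossing through $0$ and completes the proof that $0$ is isolated in $\mathfrak{I}^a$.
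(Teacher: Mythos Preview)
Your proposal has a circularity problem. You invoke Proposition~\ref{growthpropertiessystem} to establish Proposition~\ref{isolatedindicialroot}, but in the paper's logical flow the implication runs the other way: Proposition~\ref{isolatedindicialroot} is proved first and is then used (via the Fredholm theory of Subsection~\ref{subsec:fredholm} and Corollary~\ref{cor:asymptoticexpansion}) to show that the zero-frequency deficiency space is two-dimensional, which is exactly the content of Proposition~\ref{growthpropertiessystem}(i). Without an independent proof that the two geometric Jacobi fields span all tempered solutions of $\mathcal{L}^a_0(\Phi)=0$, you cannot conclude that $0$ is the only indicial root coming from $j=0$; a priori the remaining two elements of the four-dimensional solution space of this fourth order ODE might contribute additional indicial roots arbitrarily close to $0$.

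Your handling of $j\geq 1$ also has a real gap that you yourself flag: continuity in $a$ together with the endpoint computations does not exclude a Floquet exponent passing through $0$ at some intermediate $a\in(0,a_0)$. The proposed fix via the Pohozaev invariant does not work as stated; $\mathcal{P}_{\rm sph}(\mathcal{U}_{a,T})<0$ is a statement about the nonlinear solution and gives no direct obstruction to the monodromy of the \emph{linearized} operator acquiring eigenvalue $1$. The paper avoids this entirely by a direct spectral argument: it relates $\mathcal{L}^a_j$ to $\mathcal{L}^a_0$ through the identity $\mathcal{L}^a_j-\mathcal{L}^a_0=-2\lambda_j\partial_t^{(2)}+J_0\lambda_j+\lambda_j^2$, obtains the quantitative lower bound $\sigma_k(a,j,0)>\sigma_0(a,0,\alpha)+J_0\lambda_j+\lambda_j^2$, and then bounds $\sigma_0(a,0,0)$ from below using the variational characterization of $v_a$ together with H\"older's inequality. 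This yields $\mathfrak{B}_k(a,j)\subset(0,\infty)$ for all $j\geq 1$ uniformly in $a$, without any continuity or crossing argument.
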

	
	\begin{proof}
		The proof follows by estimating the end points of the spectral bands of $\mathcal{L}^{a}$, and it will be divided into a sequence of claims:
		
		\noindent{\bf Claim 1:} For any $a\in(0,a_0)$ and $j,k\in\mathbb{N}$, the band $\mathfrak{B}_{k}(a, j)$ is a nondegenerate interval.
		
		\noindent In fact, each $\mathcal{L}^a_{j}$ is a fourth order ordinary differential operator such that the ODE system $\mathcal{L}_{a,j}\Phi=\sigma_{k}(a,j,\alpha)\Phi$ has a $4p$-dimensional solution space. Suppose that 
		$\mathfrak{B}_{k}(a,j)$ reduces to a single point, then $\sigma_{k}$ would be constant on $[0,2\pi]$ and $\mathcal{L}_{a,j}\Phi=\sigma_{k}(a,j,\alpha) \Phi$ would have an infinite dimensional solution space, which is contradiction.
		
		\noindent{\bf Claim 2:} For any $a\in(0,a_0)$ and $j,k\in\mathbb{N}$, it follows that
		\begin{equation*}
			\mathfrak{B}_{2k}(a, j)=\left[\sigma_{2 k}(a, j, 0),\sigma_{2k}(a, j, \pi)\right] \quad \mbox{and} \quad 
			\mathfrak{B}_{2k+1}(a, j)=\left[\sigma_{2k+1}(a, j, \pi), \sigma_{2k+1}(a, j, 0)\right].
		\end{equation*}
		
		\noindent This is a consequence of Floquet theory, since $\mathfrak{B}_{2k}$ are nondecreasing for any $k\in\mathbb{Z}$, whereas $\mathfrak{B}_{2k+1}$ are all nonincreasing. 
		Therefore, we conclude
		\begin{equation*}
			\sigma_{0}(a,j,0) \leqslant \sigma_{0}(a,j,\pi) \leqslant \sigma_{1}(a,j,\pi) \leqslant \sigma_{1}(a,j,0) \leqslant \ldots
		\end{equation*}
		
		\noindent{\bf Claim 3:} For any $a\in(0,a_0)$ and $j,k\in\mathbb{N}$, we find the lower bound
		\begin{equation}\label{spectralestimate}
			\sigma_{k}(a, j, 0)>\sigma_{0}(a, 0,\alpha)+J_0\lambda_{j}+\lambda_{j}^{2}.
		\end{equation}
		
		\noindent As a matter of fact, we can relate  $\mathfrak{B}_{k}(a,0)$ to $\mathfrak{B}_{k}(a,j)$ since
		\begin{equation*}
			\mathcal{L}^a_{j}-\mathcal{L}^a_{0}=-2 \lambda_{j}\partial_t^{(2)}+J_0 \lambda_{j}+\lambda_{j}^{2}, 
		\end{equation*}
		which for $\Phi$ an eigenvalue of $\mathcal{L}^a_{j,\alpha}$ implies
		\begin{equation}\label{bands1}
			\sigma_{k}(a,j,\alpha)\Phi=\mathcal{L}^a_{0}\Phi-2 \lambda_{j}\Phi^{(2)}+\left(J_0 \lambda_{j}+\lambda_{j}^{2}\right)\Phi.
		\end{equation}
		Using the decomposition $\Phi=\sum_{l=0}^{\infty}c_{l}\Phi_{l}$, where $\mathcal{L}^a_{0} \Phi_{l}=\sigma_{l}(a,0,\alpha)\Phi_{l}$ we can reformulate \eqref{bands1} as
		\begin{equation*}
			\sum_{l\in\mathbb{N}}c_{l}\sigma_{k}(a,j,\alpha)\Phi_{l}=\sum_{l\in\mathbb{N}} c_{l}\left[\sigma_{l}(a,0,\alpha)\Phi_{l}-2 \lambda_{j}\Phi^{(2)}_{l}+\left(J_0 \lambda_{j}+\lambda_{j}^{2}\right)\Phi_{l}\right],
		\end{equation*}
		which provides
		\begin{equation*}
			2\lambda_{j} \Phi^{(2)}_{l}=-\left[\sigma_{k}(a,j,\alpha)-\sigma_{l}(a,0,\alpha)-J_0 \lambda_{j}-\lambda_{j}^{2}\right]\Phi_{l}.
		\end{equation*}
		Finally, noticing that the last equation admits quasi-periodic solutions, if, and only if,
		\begin{equation*}
			\sigma_{k}(a, j, 0)>\sigma_{0}(a, 0,\alpha)+J_0\lambda_{j}+\lambda_{j}^{2},
		\end{equation*}
		we conclude the proof of the claim.
		
		\noindent{\bf Claim 4:} For any $a\in(0,a_0)$ and $j,k\in\mathbb{N}$, it follows that $\mathfrak{B}_{k}(a, j) \subset(0, \infty)$.
		
		\noindent This is the most delicate part; thus, we separate the proof into some steps. First, by the classification in Theorem~\ref{thm:andrade-doo19} (ii), we can reduce our analysis to the case $p=1$.
		
		\noindent{\bf Step 1:} For each $a\in\left(0, a_0\right]$, it follows
		\begin{equation*}
			\check{c}(n)\left(\frac{1}{T_{a}} \int_{0}^{T_{a}}v_{a}^{2^{**}}\ud t\right)^{1-2/2^{**}}\leqslant \sigma_{0}(a,0,0)<0,
		\end{equation*}
		where $\check{c}(n)=c(n)-\widetilde{c}(n)=-{n\left(n^{2}-4\right)}/{2}<0$. Moreover, either $\sigma_{1}(a,0,0)=0$ or $\sigma_{2}(a,0,0)=0$.
		
		\noindent  In fact, we start by the upper bound. Using the Rayleigh quotient of $\widetilde{\mathcal{L}}^a_{0}$, we get 
		\begin{equation}\label{rayleighquotient}
			\sigma_{0}(a,0,0)=\inf_{\phi\in H^4(\mathbb{T}_a^n)}\frac{\int_{0}^{T_a}\phi\widetilde{\mathcal{L}}^a_{0}\phi\ud t}{\int_{0}^{T_a}\phi^2\ud t}.
		\end{equation}
		Since $v_{a}$ is a periodic, it can be taken as a test function on the right-hand side of \eqref{rayleighquotient}; this provides
		\begin{align*}
			{\mathcal{L}}^a_{0}v_a
			&=v_a^{(4)}-K_2v_a^{(2)}+ K_0v_a-\widetilde{c}(n)v_{a,T}^{2^{**}-1}&\\
			&=v_a^{(4)}-K_2v_a^{(2)}+ K_0v_a-{c}(n)v_{a,T}^{2^{**}-1}+\check{c}(n)v_{a,T}^{2^{**}-1}&\\
			&=\check{c}(n)v_{a,T}^{2^{**}-1},&
		\end{align*}
		where we used that ${\mathcal{L}}^a_{0}$ and $\widetilde{\mathcal{L}}^a_{0}$ have the same coordinate expression. Hence, since $\check{c}(n)<0$, the estimate \eqref{spectralestimate} is a consequence of \eqref{rayleighquotient}.
		
		To prove the lower bound estimate, we observe that by \cite{MR4094467} combined with the classification in Theorem~\ref{thm:frank-konig19}, provides the variational characterization
		\begin{equation*}
			v_a=\inf_{\phi\in H_0^4([0,T_a])}\frac{\int_{0}^{T_a}\left(\left|\phi^{(2)}\right|^2-K_2\left|\phi^{(1)}\right|^2+K_0\left|\phi\right|^2\right)\ud t}{\left(\int_{0}^{T_a}\phi^{2^{**}}\ud t\right)^{2/2^{**}}}.
		\end{equation*} 
		Moreover, since $v_a$ satisfies \eqref{fowler4order}, we find
		\begin{equation}\label{estimate1}
			\frac{\int_{0}^{T_a}\left(\left|\phi^{(2)}\right|^2-K_2\left|\phi^{(1)}\right|^2+K_0\left|\phi\right|^2\right)\ud t}{\left(\int_{0}^{T_a}\phi^{2^{**}}\ud t\right)^{2/2^{**}}}\geqslant c(n)\left(\int_{0}^{T_a}v_a^{2^{**}}\ud t\right)^{1-2/2^{**}},
		\end{equation} 
		for all $\phi\in H_0^4([0,T_a])$.
		
		On the other hand, using the H\"{o}lder inequality, we get 
		\begin{equation}\label{estimate2}
			\int_{0}^{T_a}\phi^{2}\ud t\leqslant T_a^{1-2/2^{**}}\left(\int_{0}^{T_a}\phi^{2^{**}}\ud t\right)^{2/2^{**}}.
		\end{equation}
		Then, for all $\phi\in H_0^4([0,T_a])$ a combination of \eqref{estimate1} and \eqref{estimate2} yields
		\begin{align*}
			&\int_{0}^{T_a}\phi{\mathcal{L}}^a_{0}\phi\ud t&\\
			&=\int_{0}^{T_a}\left(\phi^{(4)}-K_2\phi^{(2)}+ K_0\phi-\widetilde{c}(n)v_{a,T}^{2^{**}-1}\phi\right)\ud t&\\
			&\geqslant c(n)\left(\int_{0}^{T_a}v_a^{2^{**}}\ud t\right)^{1-2/2^{**}}\left(\int_{0}^{T_a}\phi^{2^{**}}\ud t\right)^{2/2^{**}}-\widetilde{c}(n)\int_{0}^{T_a}\phi^{2^{**}}\ud t&\\
			&=\left(\int_{0}^{T_a}\phi^{2^{**}}\ud t\right)^{2/2^{**}}\left[c(n)\left(\int_{0}^{T_a}v_a^{2^{**}}\ud t\right)^{1-2/2^{**}}-\widetilde{c}(n)\left(\int_{0}^{T_a}\phi^{2^{**}}\right)^{1-2/2^{**}}\ud t\right]&\\
			&\geqslant T_a^{2/2^{**}-1}\left[c(n)\int_{0}^{T_a}\phi^2\ud t\left(\int_{0}^{T_a}v_a^{2^{**}}\ud t\right)^{1-2/2^{**}}-\widetilde{c}(n)\int_{0}^{T_a}\phi^2\ud t\left(\int_{0}^{T_a}\phi^{2^{**}}\right)^{1-2/2^{**}}\ud t\right].
			&
		\end{align*}
		In particular, taking $\phi\in H_0^4([0,T_a])$ such that $\|\phi\|_{H_0^4([0,T_a])}=\|v_a\|_{H_0^4([0,T_a])}$, we obtain
		\begin{equation*}
			\int_{0}^{T_a}\phi{\mathcal{L}}^a_{0}\phi\ud t\geqslant \check{c}(n)T_a^{2/2^{**}-1}\|\phi\|^2_{L^2([0,T_a])}\|v_a\|^{2^{**}-2}_{L^{2^{**}}([0,T_a])},
		\end{equation*}
		which directly implies the lower bound estimate. 
		
		Finally, since $\phi^+_{a,0}=\partial_a v_{a}$ is a periodic solution to $\mathcal{L}^a_{0}(\phi^+_{a,0})=0$, we have that there is an eigenfunction with associated eigenvalue $\lambda=0$ subjected to periodic boundary conditions provided by $\alpha=0$. 
		Besides, this eigenfunction has two nodal domains within the interval $\left[0,T_a\right]$, which is associated either to $\sigma_{1}(a,0)$ or to $\sigma_{2}(a, 0)$. 
		
		In the remaining steps, we provide a more precise localization of the spectral bands of $\mathcal{L}^a$:
		
		\noindent{\bf Step 2:} For any $a\in(0,a_0)$, it follows that $\mathfrak{B}_{k}(a, 0) \subset(0, \infty)$ for each $k\geqslant3$ and $\mathfrak{B}_{k}(a, 0)\subset[0,\infty)$ for each $k\geqslant2$.
		
		\noindent This a direct consequence of Claim 2 and Step 1.
		
		\noindent{\bf Step 3:} For any $a\in(0,a_0)$ and $j,k\in\mathbb{N}$, it follows that $\mathfrak{B}_{k}(a,j) \subset(0, \infty)$.
		
		\noindent In fact, when $j>n$ we have $\lambda_{j}>2n$, which by Claim 3 implies
		\begin{equation*}
			\sigma_{k}(a, j, 0)>\sigma_{0}(a, 0,0)+n^{3} \quad \mbox{for all} \quad k\in\mathbb{N}.
		\end{equation*}
		On the other hand, since $0<v_a(t)<1$, for all $t\in\mathbb{R}$, by the lower bound estimate, we find that $\sigma_{0}(a, 0,0)\geqslant\check{c}(n)$ and
		\begin{equation*}
			\sigma_{k}(a, j, 0)>\sigma_{0}(a, 0,0)+n^{3} \geqslant n^{3}+\check{c}(n)>0.
		\end{equation*}
		When $1\leqslant j \leqslant n$, it follows from the construction for the geometric Jacobi fields constructed in Remark~\ref{geoemtricjacobifields}, since
		\begin{equation*}
			\mathcal{L}^a_{j}\left(\phi_{a,j}^{\pm}\right)=0 \quad \mbox{and} \quad \phi_{a,j}^{\pm}=e^{\pm t}\left(\pm v^{(1)}_{a}+\gamma v_{a}\right)+\mathcal{E}_{\pm},
		\end{equation*} 
		where $\mathcal{E}_{+}(t)=\mathcal{O}(1)$ and $\mathcal{E}_{-}(t)=\mathcal{O}\left(e^{-2t}\right)$ as $t\rightarrow\infty$ are positive periodic solutions to $\mathcal{L}^a_{j}$.
		
		The last claim relates the spectral bands $\mathfrak{B}_{k}(a, j)$ and the indicial roots $\mathfrak{I}_j^a$.
		
		\noindent{\bf Claim 5:} The ODE $\mathcal{L}^a_{j}(\Phi)=0$ admits a quasi-periodic solution, if and only if, for some $k\in\mathbb{N}$, $0\in \mathfrak{B}_{k}(a,j)$  
		
		\noindent Indeed, we have that $\Phi=\mathcal{F}_{a}^{-1}\left(e^{-i\alpha t} \widehat{\Phi}\right)$ solves $\mathcal{L}_{a,j}\Phi=0$ since 
		\begin{equation*}
			0=\mathcal{L}^a_{j,\alpha}\widehat{\Phi}=e^{i \alpha t}\mathcal{F}_{a}\left(\mathcal{L}^a_{j}\left(\mathcal{F}_{a}^{-1}\left(e^{-i \alpha t} \widehat{\Phi}\right)\right)\right) \quad \mbox{and} \quad 
			\mathcal{L}^a_{j}\left(\mathcal{F}_{a}^{-1}\left(e^{-i \alpha t} \widehat{\Phi}\right)\right)=0.
		\end{equation*}
		Therefore, by Remark~\ref{rem:quasiperiodic}, the proof of the claim follows.
	\end{proof}
	
	\subsection{Fredholm theory }\label{subsec:fredholm}
	In this subsection, we investigate the spectrum of the linearized operator. 
	Indeed, our final goal is to conclude that $\mathcal{L}^a$ is Fredholm, which follows by showing that $\mathfrak{I}_a\subset\mathbb{R}$ is a discrete set. 
	The last assessment is not trivial to prove; in fact, we need to use the results concerning the Fourier--Laplace transform from Subsection~\ref{subsection:spectralanalysis} to find a right-inverse for the linearized operator. 
	Our strategy is based on some results from holomorphic functional analysis (see \cite{MR2028503,MR1009163}).
	
	\begin{definition}
		Let $H$ be a Hilbert space and $\mathcal{L}:H\rightarrow H$ be a linear operator. We say that $\mathcal{L}$ is {\it Fredholm} if $\mathcal{L}$ is bounded and has a closed finite-dimensional kernel and closed range.
	\end{definition}
	
	To invert the twisted operator, we use the analytic Fredholm theorem in \cite[Theorem~8.92]{MR2028503}. 
	
	\begin{propositionletter}\label{analyticfredholm}
		Let $H$ be a Hilbert space, $\mathfrak{R}\subseteq\mathbb{C}$ be a domain, and $\mathcal{F}:\mathfrak{R}\rightarrow\mathcal{L}(H)$ be a 
		meromorphic map such that $\mathcal{F}(\rho):H\rightarrow H$. Then, either\\
		\noindent{\rm (i)} $({\rm Id}-\mathcal{F}(\rho))^{-1}$ does not exist for all $\rho\in\mathfrak{R}$, or\\
		\noindent{\rm (ii)} $({\rm Id}-\mathcal{F}(\rho))^{-1}$ exists for $\rho\in\mathfrak{R}\setminus\mathfrak{D}$, where $\mathfrak{D}\subseteq\mathfrak{R}$ is a discrete set. Moreover, the map $\rho\mapsto({\rm Id}-\mathcal{F}(\rho))^{-1}$ is analytic and if $\rho\in\mathfrak{D}$, then $\mathcal{F}(\rho)\phi=\phi$ has a finite-dimensional solution space.
	\end{propositionletter}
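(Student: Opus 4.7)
The plan is to reduce invertibility of $\mathrm{Id}-\mathcal{F}(\rho)$ to the nonvanishing of a single holomorphic scalar function via a local finite-rank approximation of $\mathcal{F}$, and then to apply the identity principle for holomorphic functions on the connected domain $\mathfrak{R}$. First I would remove the (necessarily discrete) pole set $\mathfrak{P}$ of $\mathcal{F}$ from $\mathfrak{R}$, so that on $\mathfrak{R}_0:=\mathfrak{R}\setminus\mathfrak{P}$ the map $\mathcal{F}$ is holomorphic; the poles will later be absorbed into the exceptional set $\mathfrak{D}$. The argument then proceeds first in a disc around an arbitrary $\rho_0\in\mathfrak{R}_0$, and is globalized by a clopen argument.

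Locally, using compactness of $\mathcal{F}(\rho_0)$ (which is tacit in the standard analytic Fredholm hypothesis and is the relevant setting for the twisted operator $\widetilde{\mathcal{L}}^a(\rho)$ of the previous subsection), I would approximate it by a finite-rank operator $F_0=\sum_{i=1}^{N}\langle\cdot,e_i\rangle f_i$ with $\|\mathcal{F}(\rho_0)-F_0\|<1/2$. By continuity there is an open disc $U\ni\rho_0$ on which $\|\mathcal{F}(\rho)-F_0\|<1$, so that $A(\rho):=\mathrm{Id}-(\mathcal{F}(\rho)-F_0)$ is invertible with holomorphic inverse by a Neumann series, and the factorization
\[
\mathrm{Id}-\mathcal{F}(\rho)=A(\rho)\bigl(\mathrm{Id}-A(\rho)^{-1}F_0\bigr)
\]
reduces invertibility to that of $\mathrm{Id}-G(\rho)$, where $G(\rho):=A(\rho)^{-1}F_0$ has rank at most $N$. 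Writing $\phi-G(\rho)\phi=\psi$ and testing against each $e_j$ yields a finite linear system $(I_N-M(\rho))c=\hat{\psi}$, with $M(\rho)_{ji}=\langle A(\rho)^{-1}f_i,e_j\rangle$ holomorphic in $\rho$, $c_j=\langle\phi,e_j\rangle$, and $\hat{\psi}_j=\langle\psi,e_j\rangle$; solvability for every $\psi\in H$ is thus equivalent to $d(\rho):=\det\bigl(I_N-M(\rho)\bigr)\neq 0$. Since $d$ is a holomorphic function on the disc $U$, the identity principle gives the local dichotomy: either $d\equiv 0$, so $\mathrm{Id}-\mathcal{F}(\rho)$ is nowhere invertible on $U$, or $\mathfrak{D}_U:=d^{-1}(0)\cap U$ is a discrete subset of $U$. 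In the second case, Cramer's rule for the finite-dimensional inverse makes $(\mathrm{Id}-\mathcal{F}(\rho))^{-1}$ holomorphic on $U\setminus\mathfrak{D}_U$, and at each $\rho_{*}\in\mathfrak{D}_U$ the identity $\ker(\mathrm{Id}-\mathcal{F}(\rho_{*}))=\ker(\mathrm{Id}-G(\rho_{*}))\subseteq\mathrm{Ran}\,G(\rho_{*})$ shows the kernel has dimension at most $N$.

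The only delicate point, which I expect to be the main technical obstacle, is gluing these local dichotomies into a global one. Let $S\subset\mathfrak{R}_0$ denote the set of $\rho$ admitting a neighborhood on which $\mathrm{Id}-\mathcal{F}$ is nowhere invertible; the local dichotomy shows that both $S$ and $\mathfrak{R}_0\setminus S$ are open, so by connectedness of $\mathfrak{R}_0$ (which is inherited from the domain $\mathfrak{R}$ since $\mathfrak{P}$ is discrete) either $S=\mathfrak{R}_0$, giving alternative (i), or $S=\emptyset$. In the latter case, covering $\mathfrak{R}_0$ by countably many discs $\{U_k\}$ and setting $\mathfrak{D}:=\mathfrak{P}\cup\bigcup_k\mathfrak{D}_{U_k}$ produces a closed, locally finite subset of $\mathfrak{R}$; coherence on overlaps $U_k\cap U_{\ell}$ is automatic because membership in $\mathfrak{D}_{U_k}$ is equivalent to the intrinsic condition that $\mathrm{Id}-\mathcal{F}(\rho)$ fails to be invertible, which does not depend on the finite-rank reduction used. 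Analyticity of the global resolvent then follows from uniqueness of inverses where two local expressions coexist, completing alternative (ii).
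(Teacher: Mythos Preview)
The paper does not supply its own proof of this proposition: it is quoted as a known result, with an explicit citation to \cite[Theorem~8.92]{MR2028503} (Renardy--Rogers) immediately preceding the statement. Your argument is the standard textbook proof of the analytic Fredholm theorem (local finite-rank reduction via compactness, determinant as a holomorphic scalar, globalization by a clopen argument on the connected domain), and it is correct. You are also right to flag that the compactness of $\mathcal{F}(\rho)$ is an implicit hypothesis here; the paper's stated version omits it, but the application to the twisted operator $\widetilde{\mathcal{L}}^a(\rho)$ in Subsection~\ref{subsec:fredholm} indeed sits in the compact-perturbation framework where your proof applies.
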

	
	\begin{lemma}
		If $k\in\mathbb{N}$, $a\in(0,a_0)$ and $\beta\in\mathbb{R}$, then $\mathcal{L}^a: H_{\beta}^{k+4}(\mathcal{C}_0,\mathbb{R}^p)\rightarrow H_{\beta}^k(\mathcal{C}_0,\mathbb{R}^p)$ is a bounded linear elliptic self-adjoint operator.
	\end{lemma}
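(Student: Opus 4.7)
The plan is to verify each of the four properties (linearity, boundedness, ellipticity, formal self-adjointness) separately, reading everything off the explicit expression \eqref{linearization} for $\mathcal{L}^a$ together with what we already know about Emden--Fowler solutions.

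\textbf{Linearity and ellipticity} are essentially immediate. Linearity in $\Phi$ is visible term-by-term in \eqref{linearization}: the bi-Laplacian is linear, the coupling $-n\Lambda_i\langle\Lambda,\Phi\rangle v_{a,T}^{2^{**}-2}$ is linear in the $\Phi$-argument, and so is the diagonal potential. For ellipticity, the principal part of $\mathcal{L}^a$ is $\Delta^2_{\rm cyl}\otimes\mathrm{Id}_{\mathbb{R}^p}$ and from the expression \eqref{cylbi-Laplacian} its symbol is $|\xi|^4\,\mathrm{Id}_{\mathbb{R}^p}$ in any local chart on $\mathcal{C}_0$, which is invertible for $\xi\neq0$.

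\textbf{Boundedness} is where the periodicity of $v_{a,T}$ (from Theorem~\ref{thm:frank-konig19}(ii)) enters. First, $\Delta^2_{\rm cyl}$ has constant coefficients in cylindrical coordinates by \eqref{cylbi-Laplacian}, so a direct application of the definition of the weighted norm $\|\cdot\|_{H^k_\beta}$ (since the weight $e^{-2\beta t}$ does not interact with spatial derivatives up to an irrelevant substitution) gives
\begin{equation*}
\|\Delta^2_{\rm cyl}\phi_i\|_{H^k_\beta(\mathcal{C}_0)}\leqslant C\|\phi_i\|_{H^{k+4}_\beta(\mathcal{C}_0)}.
\end{equation*}
Second, $v_{a,T}$ is bounded on $\mathbb{R}$ (it is the $T_a$-periodic solution produced in Theorem~\ref{thm:frank-konig19}(ii) with $0<a\leqslant v_{a,T}\leqslant a_0$), so $v_{a,T}^{2^{**}-2}\in L^\infty(\mathbb{R})$, and the zeroth order matrix-valued multiplier $A_{ij}(t)=-n\Lambda^*_i\Lambda^*_j v_{a,T}^{2^{**}-2}+\tfrac{\widetilde{c}(n)}{n}(nv_{a,T}^{2^{**}-2}+4-n)\delta_{ij}$ has bounded entries. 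Multiplication by a bounded (smooth, periodic) matrix is a bounded operator on every $H^m_\beta$, so this zeroth order piece maps $H^{k+4}_\beta$ continuously into $H^{k+4}_\beta\hookrightarrow H^k_\beta$.

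\textbf{Self-adjointness} is the most substantive point, and I read it as formal (symmetric) self-adjointness with respect to the natural $L^2$-pairing on $\mathcal{C}_0$. For $\Phi,\Psi\in C^\infty_c(\mathcal{C}_0,\mathbb{R}^p)$ I will show
\begin{equation*}
\int_{\mathcal{C}_0}\langle\mathcal{L}^a\Phi,\Psi\rangle\,\ud t\,\ud\theta=\int_{\mathcal{C}_0}\langle\Phi,\mathcal{L}^a\Psi\rangle\,\ud t\,\ud\theta.
\end{equation*}
For the principal part, write $\Delta^2_{\rm cyl}$ via \eqref{cylbi-Laplacian} and integrate by parts twice in each variable; all boundary terms vanish by compact support (on $\mathbb{S}^{n-1}$ there is no boundary at all, and in $t$ we are compactly supported in $(0,\infty)$). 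This shows $\Delta^2_{\rm cyl}$ is formally symmetric. The zeroth order piece is the scalar multiplication by the matrix $A_{ij}(t)$ written above, which is manifestly symmetric in the indices $(i,j)$, hence the associated bilinear form is symmetric in $(\Phi,\Psi)$. The conclusion follows upon taking the appropriate closure/extension to $H^{k+4}_\beta$, which is what \cite{MR1356375} labels ``self-adjoint'' in this context.

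\textbf{Expected obstacle.} The only non-cosmetic point is reconciling the weighted target space $H^k_\beta$ with the symmetry: the weight $e^{-2\beta t}$ does \emph{not} commute with $\partial_t^{(j)}$ for $\beta\neq0$, so one cannot claim self-adjointness with respect to the weighted $L^2_\beta$ pairing directly. The cleanest remedy is to verify symmetry in the unweighted $L^2$ duality on test functions, and then use that the weighted spaces are defined via conjugation by $e^{\beta t}$; this yields the (shifted) symmetric operator needed for the analytic Fredholm machinery of Proposition~\ref{analyticfredholm}.
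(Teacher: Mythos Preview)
Your proof is correct and considerably more thorough than the paper's own argument, which is a one-line remark that ``the symbol of $\mathcal{L}^a$ in cylindrical coordinates is given by $\partial^{(4)}_t+\Delta^2_{\theta}$'' --- i.e.\ the paper records only the principal part for ellipticity and leaves linearity, boundedness, and formal self-adjointness as understood. Your separate verification of each property, and in particular your explicit treatment of the zeroth-order matrix multiplier $A_{ij}(t)$ and the weighted-versus-unweighted pairing issue for self-adjointness, fills in exactly what the paper omits; there is no genuine methodological difference, only a difference in level of detail.
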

	
	\begin{proof}
		It follows since the symbol of $\mathcal{L}^a$ in cylindrical coordinates is given by $\partial^{(4)}_t+\Delta^2_{\theta}$.
	\end{proof}
	
	The main result of this subsection states the invertibility of the linearized operator. 
	Whence, we can use the analytic Fredholm theorem to prove the twisted operator is Fredholm away from a discrete set of poles on the complex plane. 
	
	\begin{proposition}\label{prop:fredholm}
		If $k\in\mathbb{N}$ and $\beta\notin\mathfrak{I}^{a}$, then $\mathcal{L}^a: H_{\beta}^{k+4}(\mathcal{C}_0,\mathbb{R}^p)\rightarrow H_{\beta}^k(\mathcal{C}_0,\mathbb{R}^p)$ is Fredholm.
	\end{proposition}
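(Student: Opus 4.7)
The plan is to reduce the Fredholm property of $\mathcal{L}^a$ on the whole cylinder to the invertibility of the twisted operator family $\widetilde{\mathcal{L}}^a(\rho)$ on the compact torus $\mathbb{T}_a^n$, and then combine this with the discreteness of indicial roots (guaranteed by Proposition~\ref{isolatedindicialroot}) via the analytic Fredholm theorem stated in Proposition~\ref{analyticfredholm}.

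First, I would fix $\beta \notin \mathfrak{I}^a$ and use the integral decomposition from Proposition~\ref{integraldecomposition} together with the conjugation identity \eqref{twistedoperator} to write
\begin{equation*}
\mathcal{L}^a = \mathcal{F}_a^{-1} \circ \left(\int^{\oplus}_{\alpha\in[0,2\pi]} e^{-i\rho}\widetilde{\mathcal{L}}^a(\rho)\,\ud\alpha\right) \circ \mathcal{F}_a,
\end{equation*}
so that invertibility of $\mathcal{L}^a$ on $H_\beta^k$ translates, fiberwise, into invertibility of $\widetilde{\mathcal{L}}^a(\rho)$ acting between Sobolev spaces on the torus, with the real part $\alpha$ integrated over $[0,2\pi]$ and the imaginary part determined by the weight $\beta$ (i.e.\ $\Im\rho = -\beta T_a$). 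By the preceding lemma, $\widetilde{\mathcal{L}}^a(\rho)$ is a bounded elliptic operator with smooth coefficients on the compact manifold $\mathbb{T}_a^n$, hence Fredholm of index zero for each fixed $\rho$; moreover the family $\rho \mapsto \widetilde{\mathcal{L}}^a(\rho)$ depends holomorphically on $\rho$ (the dependence enters only through the exponentials $e^{\pm i\rho t}$ in \eqref{twistedoperator}).

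Next I would fix one point $\rho_0 \in \mathbb{C}$ where $\widetilde{\mathcal{L}}^a(\rho_0)$ is invertible—such a point exists because otherwise the spectrum $\spec(\mathcal{L}^a)=\bigcup_{j,k}\mathfrak{B}_k(a,j)$ would fill the complex plane, contradicting Claim~4 in the proof of Proposition~\ref{isolatedindicialroot}. Writing
\begin{equation*}
\widetilde{\mathcal{L}}^a(\rho) = \widetilde{\mathcal{L}}^a(\rho_0)\bigl(\mathrm{Id} - \mathcal{F}(\rho)\bigr),
\qquad
\mathcal{F}(\rho) := \mathrm{Id} - \widetilde{\mathcal{L}}^a(\rho_0)^{-1}\widetilde{\mathcal{L}}^a(\rho),
\end{equation*}
the map $\rho\mapsto \mathcal{F}(\rho)$ is a holomorphic family of compact perturbations of the identity on $H^k(\mathbb{T}_a^n,\mathbb{C}^p)$ (compactness follows from the Rellich embedding $H^{k+4}\hookrightarrow H^k$ on the compact torus). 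Proposition~\ref{analyticfredholm} then yields a discrete exceptional set $\mathfrak{D}\subset\mathbb{C}$ outside of which $\widetilde{\mathcal{L}}^a(\rho)^{-1}$ exists and is meromorphic.

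The key identification to make is $\mathfrak{D}\cap\mathbb{R} = \mathfrak{I}^a$ (after accounting for the $T_a$-rescaling relating the imaginary part of $\rho$ to the exponential weight $\beta$): indeed, $\rho\in\mathfrak{D}$ precisely when the equation $\widetilde{\mathcal{L}}^a(\rho)\widehat{\Phi}=0$ admits a nontrivial quasi-periodic solution, and by Claim~5 of Proposition~\ref{isolatedindicialroot} this is equivalent to $0\in\mathfrak{B}_k(a,j)$ for some band, i.e.\ to $\Re\rho$ (equivalently the corresponding exponential weight) being an indicial root. Since $\beta\notin\mathfrak{I}^a$, the horizontal line $\{\Im\rho = -\beta T_a\}$ avoids $\mathfrak{D}$, so the contour integral inversion formula from Claim~2 in Proposition~\ref{integraldecomposition} produces a bounded right inverse $\mathcal{G}_\beta: H^k_\beta(\mathcal{C}_0,\mathbb{R}^p)\to H^{k+4}_\beta(\mathcal{C}_0,\mathbb{R}^p)$ for $\mathcal{L}^a$, obtained by integrating $e^{i\rho}\widetilde{\mathcal{L}}^a(\rho)^{-1}$ along this horizontal line. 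The existence of such a right parametrix, combined with the self-adjoint character of $\mathcal{L}^a$ in the unweighted setting and local elliptic regularity, gives a closed range and finite-dimensional kernel, completing the Fredholm statement.

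The main obstacle I anticipate is the rigorous identification $\mathfrak{D}\cap\{\Im\rho=-\beta T_a\}\leftrightarrow\mathfrak{I}^a$, especially keeping careful track of the rescaling between the complex parameter $\rho=\alpha+i\beta_*$ in the Fourier–Laplace picture and the real weight $\beta$ appearing in the Sobolev space $H^k_\beta(\mathcal{C}_0,\mathbb{R}^p)$; a secondary technical point is verifying that the contour integral of the resolvent genuinely lands in the weighted space with the expected norm bound, which requires uniform operator-norm estimates on $\widetilde{\mathcal{L}}^a(\rho)^{-1}$ as $|\Re\rho|\to\infty$ along the chosen horizontal line.
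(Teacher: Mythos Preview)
Your approach is essentially the paper's: conjugate via the Fourier--Laplace transform to the twisted family $\widetilde{\mathcal{L}}^a(\rho)$ on the compact torus, apply the analytic Fredholm theorem to obtain a meromorphic inverse $\widetilde{\mathcal{G}}^a(\rho)$, then integrate back along a horizontal contour to build a right inverse $\mathcal{G}^a$ on the weighted space. Your explicit packaging of $\widetilde{\mathcal{L}}^a(\rho)=\widetilde{\mathcal{L}}^a(\rho_0)(\mathrm{Id}-\mathcal{F}(\rho))$ with $\mathcal{F}(\rho)$ compact via Rellich is a cleaner justification of the hypothesis in Proposition~\ref{analyticfredholm} than the paper gives.

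Two points to tighten. First, your argument for the existence of an invertible $\rho_0$ via Claim~4 of Proposition~\ref{isolatedindicialroot} is indirect and not quite sufficient (that claim concerns bands for $j\geqslant1$, while the $j=0$ band does touch zero). The paper instead takes any \emph{real} $\alpha\in(0,2\pi)$ and shows $\widetilde{\mathcal{L}}^a(\alpha)$ is injective directly: a kernel element would give a bounded quasi-periodic Jacobi field, but by Proposition~\ref{isolatedindicialroot} the only bounded Jacobi field is the periodic $\Phi_{a,0}^+$, which is not quasi-periodic for $\alpha\neq 0$; self-adjointness then gives invertibility. Second, your identification of $\mathfrak{D}$ with the indicial set is muddled---you write $\mathfrak{D}\cap\mathbb{R}=\mathfrak{I}^a$ and speak of $\Re\rho$ being indicial, but in the next sentence correctly use $\Im\rho=-\beta T_a$. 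The paper's convention is $\mathfrak{I}^a=\{\Im(\rho):\rho\in\mathfrak{D}_a\}$; once you fix this bookkeeping the rest of your contour argument goes through unchanged.
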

	
	\begin{proof}
		To apply the analytic Fredholm theorem, we use the twisted operator from \eqref{twistedoperator},
		\begin{equation*}
			\widetilde{\mathcal{L}}^a(\rho):H^{k+4}(\mathbb{T}_a^n,\mathbb{C}^p)\rightarrow H^{k}(\mathbb{T}_a^n,\mathbb{C}^p) \quad \mbox{given by} \quad 
			\widetilde{\mathcal{L}}^a(\rho)(\widehat{\Phi})=e^{i\rho}\mathcal{F}_a\circ\mathcal{L}^a\circ\mathcal{F}_a^{-1}\left(e^{-i\rho t}\widehat{\Phi}\right).
		\end{equation*}
		In what follows, we divide the proof into some claims:
		
		\noindent{\bf Claim 1:} If $\alpha\in(0,2\pi)$, then $\widetilde{\mathcal{L}}^a_{\alpha}$ is Fredholm.
		
		\noindent For each $\alpha\in(0,2\pi)$, the operator $\widetilde{\mathcal{L}}^a(\rho)$ is linear, bounded, elliptic and depends holomorphically on $\rho$. Thus, by Proposition~\ref{analyticfredholm} is either never Fredholm or it is Fredholm for $\rho$ outside a discrete set. 
		We take $z=\alpha\in(0,2 \pi)$ and suppose there exists $\widehat{\Phi}\in H^{k+4}(\mathbb{T}_a^n,\mathbb{R}^p)$ such that $\widetilde{\mathcal{L}}^a(\rho)(\widehat{\Phi})=0$; thus, ${\mathcal{L}}^a(\rho)({\Phi})$, where ${\Phi}=\mathcal{F}_a^{-1}(e^{-i\rho t}\widehat{\Phi})$. 
		Then, $\Phi$ is quasi-periodic, and in particular, $\Phi$ is bounded. However, by Proposition~\ref{isolatedindicialroot}, any bounded Jacobi field is a multiple of $\Phi_{0}^{+}$, which is not quasi-periodic. Hence, $\mathcal{L}^a(\alpha)$ is injective. 
		Finally, since this operator is formally self-adjoint, it follows that $\widetilde{\mathcal{L}}^a(\rho)$ is Fredholm. 
		
		\noindent{\bf Claim 2:} If $a\in(0,a_0)$ and $\beta\in\mathfrak{I}^a$, then there exists $\widetilde{\mathcal{G}}^a(\rho):H^{k}(\mathbb{T}_a^n,\mathbb{C}^p)\rightarrow H^{k+4}(\mathbb{T}_a^n,\mathbb{C}^p)$ such that $\widetilde{\mathcal{G}}^a(\rho)$ is right-inverse for $\widetilde{\mathcal{L}}^a(\rho)$.
		
		\noindent Using Claim 1, we can apply Proposition~\ref{analyticfredholm} to find a discrete set $\mathfrak{D}_a\subset\mathfrak{R}_a$ and a meromorphic operator
		\begin{equation*}
			\widetilde{\mathcal{G}}^a(\rho):H^{k}(\mathbb{T}_a^n,\mathbb{C}^p)\rightarrow H^{k+4}(\mathbb{T}_a^n,\mathbb{C}^p) \quad \mbox{such that} \quad 
			\widehat{\Phi}=\left(\widetilde{\mathcal{G}}^a(\rho)\circ\widetilde{\mathcal{L}}^a(\rho)\right)(\widehat{\Phi}),
		\end{equation*}
		for $\rho\notin\mathfrak{D}_a$. 
		
		\noindent{\bf Claim 3:} If $a\in(0,a_0)$ and $\beta\in\mathfrak{I}^a$, then there exists $\mathcal{G}^a:H_{\beta}^{k}(\mathcal{C}_0,\mathbb{R}^p)\rightarrow H_{\beta}^{k+4}(\mathcal{C}_0,\mathbb{R}^p)$ right-inverse for $\mathcal{L}^a$.
		
		\noindent Indeed, notice that $\mathfrak{I}^a=\{\beta\in\mathbb{R} : \beta=\Im(\rho) \ \mbox{for some} \ \rho\in\mathfrak{D}_a\}$, which provides
		\begin{equation*}
			{\mathcal{G}}^a(\Phi)=\mathcal{F}_{a}^{-1}\left(e^{-i\rho T_a t}\left(\widetilde{\mathcal{G}}^{a}\left(e^{-i\rho T_a t}\left(\mathcal{F}_{a}(\Phi)\right)\right)\right)\right).
		\end{equation*}
		Furthermore, by construction, we obtain that $\widehat{\Phi}={\mathcal{G}}^a(\Phi)\in H_{-\Im(\rho)}^{k+4}\left(\mathcal{C}_0,\mathbb{R}^p\right)$, which by the Fredholm alternative concludes the proof of the claim.
		The last claim proves the proposition.
	\end{proof}
	
	\begin{proposition}
		The set $\mathfrak{I}^{a}$ is discrete.
	\end{proposition}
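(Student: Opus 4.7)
The plan is to deduce discreteness of $\mathfrak{I}^a$ from the discreteness of the singular set $\mathfrak{D}_a \subset \mathfrak{R}_a$ of the twisted operator $\widetilde{\mathcal{L}}^a(\rho)$, combined with a periodicity argument in the real direction of $\rho$. By Proposition~\ref{prop:fredholm} (more precisely, its Claim 2), there exists a discrete set $\mathfrak{D}_a \subset \mathfrak{R}_a$ such that $\widetilde{\mathcal{L}}^a(\rho)$ admits a bounded right-inverse for every $\rho \in \mathfrak{R}_a\setminus\mathfrak{D}_a$, and by the very definition of the indicial roots one has the identification
\begin{equation*}
\mathfrak{I}^a = \{\beta\in\mathbb{R} : \beta = \Im(\rho) \text{ for some } \rho\in\mathfrak{D}_a\}.
\end{equation*}

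Next, I would exploit the periodicity of the twisted operator. Since $\mathcal{L}^a$ has coefficients depending on $v_{a,T}$, which is $T_a$-periodic in $t$, and since the Fourier--Laplace transform was designed exactly to turn this periodicity into a quasi-periodicity condition indexed by $\alpha = \Re(\rho) \in [0,2\pi]$, the family $\rho \mapsto \widetilde{\mathcal{L}}^a(\rho)$ is invariant under the translation $\rho \mapsto \rho + 2\pi$ (compare with Remark~\ref{rem:quasiperiodic}, where $\sigma_k(a,j,\alpha)$ is a $2\pi$-periodic function of $\alpha$). Consequently $\mathfrak{D}_a$ itself is invariant under this horizontal translation in $\mathbb{C}$.

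Fix now an arbitrary bounded interval $[c_1,c_2] \subset \mathbb{R}$ with $[c_1,c_2] \subset \{\beta : \beta < -\widetilde{\beta}\,T_a\}$ (this can always be arranged by shifting the weight $\widetilde{\beta}$, since the weights are not part of the operator itself but only fix the branch in which the Fourier--Laplace inversion is performed, see Subsection~\ref{subsection:spectralanalysis}). The closed rectangle
\begin{equation*}
\mathfrak{K} := \{\rho\in\mathbb{C} : \Re(\rho)\in[0,2\pi], \ \Im(\rho)\in[c_1,c_2]\}
\end{equation*}
is compact in $\mathfrak{R}_a$, and $\mathfrak{D}_a \cap \mathfrak{K}$ is a discrete subset of a compact set, hence finite. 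By the translation invariance of $\mathfrak{D}_a$, the projection
\begin{equation*}
\Im(\mathfrak{D}_a) \cap [c_1,c_2] = \Im(\mathfrak{D}_a \cap \mathfrak{K})
\end{equation*}
is therefore finite as well. Since $[c_1,c_2]$ was arbitrary, the set $\mathfrak{I}^a \subset \mathbb{R}$ has only finitely many elements in every bounded interval, and hence is discrete.

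The main obstacle is the clean verification of the periodicity statement $\widetilde{\mathcal{L}}^a(\rho + 2\pi) = \widetilde{\mathcal{L}}^a(\rho)$ on the torus $\mathbb{T}_a^n$; this is essentially built into the construction \eqref{twistedoperator} but requires one to track the phase factor $e^{i\rho}$ together with the $T_a$-periodicity of $v_{a,T}$ through the Fourier--Laplace transform \eqref{laplace-Fouriertransform}. The remaining ingredient—that a discrete subset of a compact set is finite—is elementary, and the identification $\mathfrak{I}^a = \Im(\mathfrak{D}_a)$ follows by unwinding Definition of indicial roots via the twisted eigenfunctions constructed in the proof of Proposition~\ref{prop:fredholm}.
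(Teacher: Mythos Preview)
Your proposal is correct and follows essentially the same route as the paper: identify $\mathfrak{I}^a$ with the imaginary parts of the poles of the twisted resolvent $\widetilde{\mathcal{G}}^a(\rho)$, use the $2\pi$-periodicity (stated in the paper as unitary equivalence of $\mathcal{L}^a(\rho)$ and $\mathcal{L}^a(\rho+2\pi l)$) to reduce to a fundamental strip, and then invoke discreteness plus compactness to get finitely many poles in each horizontal strip. Your version is simply more explicit about the compact-rectangle step and about the periodicity verification, which the paper takes for granted.
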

	
	\begin{proof}
		Note that each element in $\mathfrak{I}^{a}$ is the imaginary part of a pole to $\widetilde{\mathcal{G}}_{a}$, which by the analytic Fredholm theory is a discrete subset of $\mathbb{C}$. On the other hand, the operator $\mathcal{L}^a(\rho)$ is unitarily equivalent to $\mathcal{L}^a(\rho+2\pi l)$ for each $l\in\mathbb{Z}$; thus, $\rho$ is a pole of ${\mathcal{G}}_{a}$, if and only if $\rho+2 \pi l$ also is for any $l\in\mathbb{Z}$. Therefore, ${\mathcal{G}}_{a}$ can only have finitely many poles in each horizontal strip.
	\end{proof}
	
	\begin{corollary}\label{cor:surjectiveness}
		If $k\in\mathbb{N}$ and $\beta\notin(0,1)$,then:\\
		\noindent {\rm (i)} the operator $\mathcal{L}^a: H^{k+4}_{\beta}(\mathcal{C}_0,\mathbb{R}^p)\rightarrow H_{\beta}^k(\mathcal{C}_0,\mathbb{R}^p)$ is surjective;\\
		\noindent {\rm (ii)} the operator $\mathcal{L}^a: H^{k+4}_{-\beta}(\mathcal{C}_0,\mathbb{R}^p)\rightarrow H_{-\beta}^k(\mathcal{C}_0,\mathbb{R}^p)$ is injective.
	\end{corollary}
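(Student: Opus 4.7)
The plan is to obtain both statements at once from Proposition \ref{prop:fredholm} via duality. Since $\mathcal{L}^a$ is formally self-adjoint and the unweighted $L^2$-pairing identifies $(H^k_\beta)^*\cong H^{-k}_{-\beta}$, the Fredholm alternative shows that the cokernel of $\mathcal{L}^a\colon H^{k+4}_\beta\to H^k_\beta$ is isomorphic --- after promoting distributional solutions of the adjoint equation back to $H^{k+4}_{-\beta}$ via elliptic regularity for the fourth-order operator $\mathcal{L}^a$ --- to the kernel of $\mathcal{L}^a\colon H^{k+4}_{-\beta}\to H^k_{-\beta}$. Hence (i) and (ii) are conjugate faces of the same Fredholm statement, and it suffices to verify the kernel-triviality claim (ii).

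For (ii), I would first invoke Proposition \ref{prop:fredholm} to get the Fredholm property at weight $-\beta$ whenever $-\beta\notin\mathfrak{I}^a$; by the discreteness of $\mathfrak{I}^a$ together with the spectral gap $\mathfrak{I}^a\cap(0,1)=\emptyset$ provided by Proposition \ref{isolatedindicialroot}, this holds throughout the range $\beta\notin(0,1)$ modulo perturbing past isolated indicial roots outside that interval. Next, given any $\Phi\in H^{k+4}_{-\beta}$ with $\mathcal{L}^a(\Phi)=0$, I would Fourier-decompose as in \eqref{Fourierdecomposition}, so that each projected mode $\Phi_j$ solves the fourth-order ODE $\mathcal{L}^a_j(\Phi_j)=0$. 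Proposition \ref{growthpropertiessystem} then classifies these solutions explicitly: at most linearly growing for $j=0$ and strictly exponential with rates $\pm 1$ for $j=1,\dots,n$, with further exponential rates exceeding $1$ for $j>n$.

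The weight condition $\Phi\in H^{k+4}_{-\beta}$ forces each $\Phi_j$ to be integrable against $e^{2\beta t}$, which, combined with the exclusion of the gap interval $(0,1)$ in $\mathfrak{I}^a$, rules out all nontrivial combinations of the geometric Jacobi fields $\phi^\pm_{a,j}$ from Remark \ref{geoemtricjacobifields}, forcing $\Phi_j\equiv 0$ for every $j$ and hence $\Phi\equiv 0$. The main obstacle is the delicate book-keeping across the infinite tower of spherical harmonics: one must ensure that, for $\beta$ in the allowed range, no surviving Jacobi field simultaneously satisfies the PDE and respects the weighted integrability. This is precisely where Proposition \ref{isolatedindicialroot} --- the isolation of the indicial root $0$ together with the absence of any indicial root in $(0,1)$ --- plays the decisive role, by guaranteeing that $\beta\notin(0,1)$ is a genuine Fredholm weight on the correct side of the first spectral band away from zero.
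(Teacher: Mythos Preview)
Your reduction of (i) to (ii) via formal self-adjointness and the duality $(H^k_\beta)^*\cong H^{-k}_{-\beta}$ is exactly what the paper does. The difference is in how (ii) is obtained.

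The paper does not decompose into spherical harmonics at all. Instead it simply recalls that in Claim~1 of the proof of Proposition~\ref{prop:fredholm} the twisted operator $\widetilde{\mathcal{L}}^a(\rho)$ was already shown to be injective for $\rho$ with $-1<\Im(\rho)<0$ (the argument there being that a quasi-periodic element of $\ker\mathcal{L}^a$ would have to be bounded, hence a multiple of $\Phi^+_{a,0}$, which is periodic and therefore not genuinely quasi-periodic). Injectivity of $\mathcal{L}^a$ on $H^{k+4}_{-\beta}$ then follows at once through the Fourier--Laplace correspondence set up in Proposition~\ref{integraldecomposition}. No appeal to Proposition~\ref{growthpropertiessystem} is made; that proposition is proved only later, in \S\ref{subsec:growthproperties}, and itself rests on the Fredholm package being assembled here.

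Your mode-by-mode route has a real gap. For every $j\geqslant 1$ the fourth-order ODE $\mathcal{L}^a_j\phi=0$ has exponentially \emph{decaying} solutions (for $j=1,\dots,n$ one has $\phi^-_{a,j}\sim e^{-t}$ by Remark~\ref{geoemtricjacobifields}, and for $j>n$ there are solutions decaying at rate $\beta_{a,j}>1$). Each of these, multiplied by the corresponding $\chi_j(\theta)$, is a smooth solution of $\mathcal{L}^a\Phi=0$ on $\mathcal{C}_0$ that lies in $H^{k+4}_{-\beta}$ for any $\beta$ below its decay rate. The weight condition alone therefore does \emph{not} force $\Phi_j\equiv 0$; your sentence ``rules out all nontrivial combinations of the geometric Jacobi fields'' is exactly where the argument breaks. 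What actually excludes these decaying modes is the analytic structure coming from the Fourier--Laplace representation (extension by zero, holomorphy of $\widehat{\Phi}(\rho)$ in the relevant strip, absence of poles of $\widetilde{\mathcal{G}}^a(\rho)$ there), which is precisely the content of the paper's twisted-operator argument and is invisible to a bare ODE-plus-weight count on the half-cylinder.
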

	
	\begin{proof}
		In fact, it follows from the proof of Proposition~\ref{prop:fredholm} that $\mathcal{L}^a(\rho)$ is injective for each $\rho\in\mathbb{C}$ with $-1<\Im(\rho)<0$, which implies $\mathcal{L}^a: H_{-\beta}^{k+4}\left(\mathcal{C}_0,\mathbb{R}^p\right)\rightarrow H_{-\beta}^{k}\left(\mathcal{C}_0,\mathbb{R}^p\right)$ is injective, and thus (ii) is proved. 
		Besides, since by duality the operator $\mathcal{L}^a: H_{\beta}^{k+4}\left(\mathcal{C}_0,\mathbb{R}^p\right)\rightarrow H_{\beta}^{k}\left(\mathcal{C}_0,\mathbb{R}^p\right)$ is formally self-adjoint; thus, the surjectiveness follows, and (i) is proved.
	\end{proof}
	
	\subsection{Existence of singular solutions}\label{subsec:existence}
	We prove the existence of solutions to \eqref{oursystem}. 
	We proceed by studying the spectral properties of the linearized operator around an Emden--Fowler solution. 
	Let us remark that by the implicit function theorem, the existence of solutions to \eqref{oursystem} can be obtained by showing that the linearized operator $\mathcal{L}^a$ is Fredholm. 
	We already know that in some cases $\mathcal{L}^a$ does not satisfy this property since its kernel is not closed. To overcome this issue, we introduce the following definition:
	\begin{definition}
		For each $v_{a,T}$, let us consider the {\it deficiency space} generated by the Jacobi fields basis of the linearized operator. Then, \\
		\noindent{\rm (i)} for $j=0$, it follows $D_{a,0}(\mathcal{C}_0,\mathbb{R}^p)=\vspan\{\Phi_{a,0}^{+}, \Phi_{a,0}^{-}\}$;\\
		\noindent{\rm (ii)} for $j\geqslant1$, it follows $D_{a,j}(\mathcal{C}_0,\mathbb{R}^p)=\vspan\{\Phi_{a,j}^{+}, \Phi_{a,j}^{-}, \widetilde{\Phi}_{a,j}^{+}, \widetilde{\Phi}_{a,j}^{-}\}$.
	\end{definition}
	
	The fact that there are only two Jacobi fields in (i) of the last definition is a consequence of Proposition~\ref{isolatedindicialroot}.
	Namely, any zero-frequency Jacobi fields with growth less than exponential (tempered) is generated by the ones obtained by variation of geometric parameters in the Emden--Fowler solution.
	
	Now we can present the main result of the subsection:
	
	\begin{proposition}\label{existence}
		Let $\mathcal{V}_{a,T}$ be an Emden--Fowler solution. Then, 
		
		\noindent{\rm (i)} For $\beta\in(\beta_{a,0},\beta_{a,1})$, $\mathcal{L}^a: C_{\beta}^{4,\zeta}(\mathcal{C}_0,\mathbb{R}^p)\oplus D_{a,0}(\mathcal{C}_0,\mathbb{R}^p)\rightarrow C_{\beta}^{0,\zeta}(\mathcal{C}_0,\mathbb{R}^p)$ is a surjective Fredholm mapping with bounded right-inverse, given by
		\begin{equation*}
			\mathcal{G}_0^a:C_{\beta}^{0,\zeta}(\mathcal{C}_0,\mathbb{R}^p)\rightarrow C_{\beta}^{4,\zeta}(\mathcal{C}_0,\mathbb{R}^p)\oplus D_{a,0}(\mathcal{C}_0,\mathbb{R}^p).
		\end{equation*}
		\noindent{\rm (ii)} For $\beta\in (\beta_{a,1},\beta_{a,2})$, $\mathcal{L}^a: C_{\beta}^{4,\zeta}(\mathcal{C}_0,\mathbb{R}^p)\oplus D_{a,0}(\mathcal{C}_0,\mathbb{R}^p)\oplus D_{a,1}(\mathcal{C}_0)\rightarrow C_{\beta}^{0,\zeta}(\mathcal{C}_0,\mathbb{R}^p)$ is a surjective Fredholm mapping with bounded right-inverse, given by
		\begin{equation*}
			\mathcal{G}_1^a:=C_{\beta}^{0,\zeta}(\mathcal{C}_0,\mathbb{R}^p)\rightarrow C_{\beta}^{4,\zeta}(\mathcal{C}_0,\mathbb{R}^p)\oplus D_{a,0}(\mathcal{C}_0,\mathbb{R}^p)\oplus D_{a,1}(\mathcal{C}_0,\mathbb{R}^p).
		\end{equation*}
	\end{proposition}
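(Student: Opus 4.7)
The plan is to deduce Proposition~\ref{existence} by combining the Fredholm structure of $\mathcal{L}^a$ on weighted Sobolev spaces established in Proposition~\ref{prop:fredholm} with the explicit description of the Jacobi fields furnished by Proposition~\ref{growthpropertiessystem} and the classification of indicial roots from Proposition~\ref{isolatedindicialroot}. The values $\beta_{a,j}$ are exactly the indicial roots of $\mathcal{L}^a$ at the $j$-th Fourier eigenmode, so a weight $\beta$ in an open interval between two consecutive indicial roots avoids the discrete set $\mathfrak{I}^a$, and Corollary~\ref{cor:surjectiveness} gives a well-defined Fredholm operator between the weighted H\"older spaces (by elliptic regularity the Sobolev statement transfers to the H\"older scale).

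First I would address (i). For $\beta \in (\beta_{a,0},\beta_{a,1})$, Corollary~\ref{cor:surjectiveness}(i) yields surjectivity of $\mathcal{L}^a : H^{k+4}_{\beta}(\mathcal{C}_0,\mathbb{R}^p)\to H^{k}_\beta(\mathcal{C}_0,\mathbb{R}^p)$, which by Schauder estimates transfers to the H\"older scale. The kernel is generically nontrivial: given $\Phi \in \ker \mathcal{L}^a$ with $\Phi = \mathcal{O}(e^{-\beta t})$, project onto Fourier eigenmodes via \eqref{Fourierdecomposition}. By Proposition~\ref{isolatedindicialroot}, $0$ is an isolated indicial root at frequency $j=0$, and by Proposition~\ref{growthpropertiessystem}(i) the zero-mode kernel is spanned by $2p$ functions of at most linear growth; the remaining $2p$ elements are exponentially growing/decaying with rates exceeding $\beta_{a,1}$, so they fall outside the weight range considered. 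For $j \ge 1$, Proposition~\ref{growthpropertiessystem}(ii) says the solutions basis is genuinely exponential with rates $\ge \beta_{a,1}$, so no high-frequency Jacobi field lies in the decay class $\mathcal{O}(e^{-\beta t})$. Thus the tempered kernel equals $D_{a,0}$, and enlarging the domain by $D_{a,0}$ produces a surjection whose kernel, being finite-dimensional, splits off to give the bounded right-inverse $\mathcal{G}_0^a$ by the open mapping theorem.

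The argument for (ii) is the same bookkeeping one notch higher. Moving $\beta$ through $\beta_{a,1}$ makes the four geometric Jacobi fields at frequency $j=1$ (translation modes; see Remark~\ref{geoemtricjacobifields}) jump into the tempered class, so the kernel of $\mathcal{L}^a$ on $C^{4,\zeta}_\beta$ acquires the space $D_{a,1}$ in addition to $D_{a,0}$. Since Proposition~\ref{isolatedindicialroot} ensures $\beta_{a,1} < \beta_{a,2}$ strictly and Proposition~\ref{growthpropertiessystem}(ii) ensures no other frequency contributes within this interval, the augmented domain $C^{4,\zeta}_\beta \oplus D_{a,0} \oplus D_{a,1}$ exhausts the kernel. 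Combining with surjectivity from Corollary~\ref{cor:surjectiveness}(i), one produces $\mathcal{G}_1^a$ again by the open mapping theorem.

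The main obstacle I anticipate is the bookkeeping of growth rates: one must verify that each Jacobi field of $D_{a,j}$ lies precisely in the weighted class determined by $(\beta_{a,j-1}, \beta_{a,j+1})$, and that no non-geometric tempered Jacobi field appears in the zero-frequency sector. The first point requires matching the Floquet exponents computed in Section~\ref{sec:linearanalysis} with the growth rates of $\Phi^{\pm}_{a,j}$ and $\widetilde{\Phi}^{\pm}_{a,j}$ as constructed in Remark~\ref{geoemtricjacobifields}; the second is exactly the content of Proposition~\ref{isolatedindicialroot}, which identifies $0$ as an isolated indicial root and thereby forbids any bounded Jacobi field outside the span of $\Phi^{\pm}_{a,0}$. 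Once these inclusions are verified, the rest of the argument is formal Fredholm theory and the implicit function theorem ultimately yields the promised existence statement for singular solutions of \eqref{oursystem}.
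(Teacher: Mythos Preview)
Your overall strategy—combining the Fredholm property of $\mathcal{L}^a$ with the explicit Jacobi-field classification—is reasonable, but the duality logic is inverted and this creates a genuine gap. You invoke Corollary~\ref{cor:surjectiveness}(i) to obtain surjectivity of $\mathcal{L}^a$ on $H^{k+4}_\beta$ for $\beta\in(\beta_{a,0},\beta_{a,1})=(0,1)$, yet that corollary's hypothesis is $\beta\notin(0,1)$. More importantly, you then analyse $\ker\mathcal{L}^a$ on the \emph{decaying} space $C^{4,\zeta}_\beta$ (elements satisfying $\Phi=\mathcal{O}(e^{-\beta t})$) and conclude it equals $D_{a,0}$. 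But the elements of $D_{a,0}$ are bounded or linearly growing by Proposition~\ref{growthpropertiessystem}(i), so they do \emph{not} belong to $C^{4,\zeta}_\beta$ for $\beta>0$; the kernel on the decaying space is in fact trivial. The correct picture is the standard deficiency mechanism: on weight $\beta\in(0,1)$ the operator is injective but has cokernel of dimension $\dim D_{a,0}$ (by self-adjointness and the fact that $D_{a,0}$ is the kernel on the dual weight $-\beta$), and it is precisely by \emph{enlarging the domain} with $D_{a,0}$ that surjectivity is recovered. Your sentence ``enlarging the domain by $D_{a,0}$ produces a surjection'' is therefore the conclusion you need, not something that follows from a surjectivity already in hand.

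The paper's proof takes a different, more constructive route: rather than assembling the result from the abstract Fredholm package, it simply reruns the Fourier--Laplace machinery of Proposition~\ref{prop:fredholm} mode-by-mode, writing the right-inverse explicitly as
\[
\mathcal{G}^a_j(\Phi)=\mathcal{F}_a^{-1}\bigl(e^{-i\rho t}\,\widetilde{\mathcal{G}}^a_j\bigl(e^{-i\rho t}\mathcal{F}_a(\Phi)\bigr)\bigr),
\]
so that the deficiency spaces $D_{a,j}$ arise automatically as residues when the inversion contour is shifted across the poles at the indicial roots (cf.\ Corollary~\ref{cor:asymptoticexpansion}). This bypasses the duality bookkeeping entirely. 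Your approach can be repaired by correctly stating that injectivity (not surjectivity) holds on weight $\beta$, then invoking self-adjointness to identify the cokernel with $D_{a,0}$, and only then adjoining $D_{a,0}$ to achieve surjectivity; but as written the argument does not close.
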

	
	\begin{proof}
		We proceed as in Proposition~\ref{prop:fredholm}. First, we decompose the linearized operator in Fourier modes, and we apply the Laplace--Fourier transform. 
		Then, by conjugation, let us define a family of transformations satisfying the assumptions of Proposition~\ref{analyticfredholm}. We can therefore invert the conjugated operator $\widetilde{\mathcal{L}}_j^a(\rho)$. Afterwards, we reconstruct the functional by undoing the inverse of Fourier--Laplace transform. In other terms, for all $j\in\mathbb{N}$, we take the right-inverse,
		\begin{equation*}
			\mathcal{G}^a_j(\Phi)=\mathcal{F}_{a}^{-1}\left(e^{-i\rho t}\left(\widetilde{\mathcal{G}_j}^{a}\left(e^{-i\rho t}\left(\mathcal{F}_{a}(\Phi)\right)\right)\right)\right);
		\end{equation*}
		this provides the proof of the proposition.
	\end{proof}
	
	\begin{remark}
		The necessity of adding the deficiency spaces $D_{a,j}(\mathcal{C}_0,\mathbb{R}^p)$ comes from a simple form of the linear regularity theorem from R. Mazzeo et al. \cite[Lemma~4.18]{MR1356375} and some ODE theory. In addition, note that if $\beta=\pm\beta_{a,j}$, then $\mathcal{L}^{a}$ does not have closed range. Moreover, we have Schauder estimates in the sense of weighted spaces. More precisely, if $\mathcal{V}$ is solution to the nonhomogeneous problem $\mathcal{L}^{a}\left(\mathcal{V}\right)=\Psi$, then $\mathcal{V}\in C_{\beta}^{4,\zeta}(\mathcal{C}_0,\mathbb{R}^p)$  whenever $\Psi\in C_{\beta}^{0,\zeta}(\mathcal{C}_0,\mathbb{R}^p)$. More generally, it should be possible to find a inverse like
		\begin{equation*}
			\mathcal{G}_j^a:=C_{\beta}^{0,\zeta}(\mathcal{C}_0,\mathbb{R}^p)\rightarrow C_{\beta}^{4,\zeta}(\mathcal{C}_0,\mathbb{R}^p)\bigoplus_{l=0}^{j} D_{a,l}(\mathcal{C}_0,\mathbb{R}^p),
		\end{equation*}
		which would give us the refined information in the sense of \cite{arXiv:1909.10131v1,arXiv:1909.07466v1} $($See \eqref{higherorderasymptotics}$)$.
	\end{remark}
	
	As a consequence of our results, we present the main result of the subsection,
	
	\begin{corollary}
		There exists at least one strongly positive solution $\mathcal{V}$ of \eqref{sphevectfowler}.
	\end{corollary}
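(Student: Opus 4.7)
The plan is to obtain the desired solution as a small perturbation of a scalar Emden--Fowler blow-up, using Proposition~\ref{existence} together with the implicit function theorem. First, I would pick a direction $\Lambda^{*}\in\mathbb{S}^{p-1}_{+,*}$ and a scalar Emden--Fowler solution $v_{a,T}$ from Theorem~\ref{thm:frank-konig19} to form the ansatz $\mathcal{V}_{a,T}:=\Lambda^{*}v_{a,T}$, which by Theorem~\ref{thm:andrade-doo19}(ii) is a strongly positive solution to the limit system on $\mathcal{C}_{\infty}$, and hence on $\mathcal{C}_0$. Writing $\mathcal{V}=\mathcal{V}_{a,T}+\Phi$ and substituting into $\mathcal{N}_{\rm cyl}(\mathcal{V})=0$, the problem reduces to $\mathcal{L}^{a}(\Phi)=-Q_{a,T}(\Phi)$, where
\begin{equation*}
Q_{a,T}(\Phi):=\mathcal{N}_{\rm cyl}(\mathcal{V}_{a,T}+\Phi)-\mathcal{N}_{\rm cyl}(\mathcal{V}_{a,T})-\mathcal{L}^{a}(\Phi)
\end{equation*}
collects the superlinear remainder coming from the Taylor expansion of the Gross--Pitaevskii nonlinearity around $\mathcal{V}_{a,T}$.

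Next, choosing $\beta\in(\beta_{a,0},\beta_{a,1})$, Proposition~\ref{existence}(i) supplies a bounded right-inverse
\begin{equation*}
\mathcal{G}_0^{a}:C_{\beta}^{0,\zeta}(\mathcal{C}_0,\mathbb{R}^p)\rightarrow \mathcal{X}:=C_{\beta}^{4,\zeta}(\mathcal{C}_0,\mathbb{R}^p)\oplus D_{a,0}(\mathcal{C}_0,\mathbb{R}^p),
\end{equation*}
so the problem becomes the fixed-point equation $\Phi=\mathcal{T}(\Phi)$ with $\mathcal{T}(\Phi):=-\mathcal{G}_0^{a}(Q_{a,T}(\Phi))$. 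Since $\mathcal{V}_{a,T}$ is strongly positive (by Theorem~\ref{thm:andrade-doo19}(ii) its modulus is bounded below by the Fowler parameter $a>0$), the map $\mathcal{U}\mapsto|\mathcal{U}|^{2^{**}-2}u_i$ is smooth in a neighborhood of $\mathcal{V}_{a,T}$, and a direct Taylor expansion yields the quadratic bounds
\begin{equation*}
\|Q_{a,T}(\Phi)\|_{C_{\beta}^{0,\zeta}}\leqslant C\|\Phi\|_{\mathcal{X}}^{2}, \quad \|Q_{a,T}(\Phi_1)-Q_{a,T}(\Phi_2)\|_{C_{\beta}^{0,\zeta}}\leqslant C(\|\Phi_1\|_{\mathcal{X}}+\|\Phi_2\|_{\mathcal{X}})\|\Phi_1-\Phi_2\|_{\mathcal{X}},
\end{equation*}
valid for $\Phi,\Phi_1,\Phi_2$ in a sufficiently small $\mathcal{X}$-ball. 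Composing with $\mathcal{G}_0^{a}$, whose operator norm is finite by Proposition~\ref{existence}, gives that $\mathcal{T}$ is a contraction on a ball $B_r\subset\mathcal{X}$ of small enough radius $r>0$, and Banach's fixed-point theorem produces $\Phi\in B_r$ with $\mathcal{T}(\Phi)=\Phi$. Setting $\mathcal{V}:=\mathcal{V}_{a,T}+\Phi$ then solves \eqref{sphevectfowler}.

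Finally, strong positivity is preserved: since $\Phi\in C_{\beta}^{4,\zeta}(\mathcal{C}_0,\mathbb{R}^p)\oplus D_{a,0}(\mathcal{C}_0,\mathbb{R}^p)$ with $\beta\in(0,1)$, the correction $\Phi$ decays at infinity and is uniformly $C^{0}$-small on $\mathcal{C}_0$ after shrinking $r$, so each component $v_i=\Lambda_i^{*}v_{a,T}+\phi_i>0$ because $\Lambda_i^{*}v_{a,T}\geqslant \Lambda_i^{*}a>0$ uniformly and $\|\phi_i\|_{\infty}<\Lambda_i^{*}a$.

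The main technical obstacle is the derivation of the quadratic estimate on $Q_{a,T}$ in the weighted Hölder norm $C_{\beta}^{0,\zeta}$: because the nonlinearity $|\mathcal{V}|^{2^{**}-2}v_i$ is not polynomial but only smooth away from the origin, one must carefully expand
\begin{equation*}
|\mathcal{V}_{a,T}+\Phi|^{2^{**}-2}(v_i+\phi_i)-|\mathcal{V}_{a,T}|^{2^{**}-2}v_i-(2^{**}-2)|\mathcal{V}_{a,T}|^{2^{**}-4}\langle\mathcal{V}_{a,T},\Phi\rangle v_i-|\mathcal{V}_{a,T}|^{2^{**}-2}\phi_i
\end{equation*}
and control the resulting error in a scale-invariant way on each cylindrical slice, exploiting the uniform lower bound $|\mathcal{V}_{a,T}|\geqslant a|\Lambda^{*}|>0$ and the weighted Hölder regularity of $\Phi$. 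Once this is in place the contraction argument runs routinely, and the decay $\beta>0$ also guarantees that $\mathcal{V}$ represents, via $\mathfrak{F}^{-1}$, a bona fide singular solution of \eqref{oursystem} in $B_1^{*}$.
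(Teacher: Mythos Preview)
Your argument is internally consistent and follows the route the paper itself hints at (linear analysis plus implicit function theorem), but it is doing far more work than the statement requires, and in fact your contraction collapses to the trivial fixed point. Since $\mathcal{N}_{\rm cyl}(\mathcal{V}_{a,T})=0$ already, one has $Q_{a,T}(0)=0$ and hence $\mathcal{T}(0)=-\mathcal{G}_0^{a}(0)=0$; the unique fixed point in your small ball is $\Phi=0$, so the ``new'' solution you produce is just $\mathcal{V}=\mathcal{V}_{a,T}$ again. In other words, the ansatz $\mathcal{V}_{a,T}=\Lambda^{*}v_{a,T}$ with $\Lambda^{*}\in\mathbb{S}^{p-1}_{+,*}$ is \emph{itself} a strongly positive solution of \eqref{sphevectfowler} on $\mathcal{C}_0$: since $|\Lambda^{*}|=1$ one has $|\mathcal{V}_{a,T}|=v_{a,T}$, and the radial reduction of $\Delta^{2}_{\rm cyl}$ together with \eqref{fowler4order} gives the equation componentwise, while $\Lambda^{*}_i>0$ and $v_{a,T}>0$ give strong positivity. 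This is all the corollary asserts.

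The paper does not supply a proof either; it records the corollary as an immediate by-product of the section. The Fredholm/right-inverse machinery of Proposition~\ref{existence} is what one would invoke to build solutions with \emph{prescribed} non-Emden--Fowler data on $\{0\}\times\mathbb{S}^{n-1}$ (and this is presumably the intended content of the remark in the introduction about existence via the implicit function theorem), but the bare existence statement as written needs none of it.
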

	
	Another application is the following improved regularity theorem for solutions to \eqref{oursystem} in cylindrical coordinates:
	
	\begin{corollary}\label{iteration}
		Let $\mathcal{V}$ be a strongly positive solution to $\mathcal{L}^{a}\left(\mathcal{V}\right)=\Psi$. Assume that $\mathcal{V}\in C_{\widetilde{\beta}}^{4,\zeta}(\mathcal{C}_0,\mathbb{R}^p)$ and $\Psi\in C_{\widehat{\beta}}^{0,\zeta}(\mathcal{C}_0,\mathbb{R}^p)$. Hence, it holds\\
		\noindent{\rm (i)} If $0<\widetilde{\beta}<\widehat{\beta}<1$, then $\mathcal{V}\in C_{\beta_2}^{4,\zeta}(\mathcal{C}_0,\mathbb{R}^p)$;\\
		\noindent{\rm (ii)} If $0<\widetilde{\beta}<1<\widehat{\beta}<\beta_{a,2}$, then $\mathcal{V}\in C_{\widehat{\beta}}^{4,\zeta}(\mathcal{C}_0,\mathbb{R}^p)\oplus D_{a,1}(\mathcal{C}_0,\mathbb{R}^p)$.
	\end{corollary}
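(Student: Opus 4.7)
The plan is a standard bootstrap argument based on the surjective right-inverses constructed in Proposition~\ref{existence}, combined with the growth dichotomy for Jacobi fields in Proposition~\ref{growthpropertiessystem} and the isolatedness of the zero indicial root from Proposition~\ref{isolatedindicialroot}. In each case the idea is the same: since $\Psi$ decays faster than what the regularity of $\mathcal{V}$ gives a priori, we first construct a reference solution $\mathcal{V}_0$ with the optimal decay in an augmented space, then identify $\mathcal{W}:=\mathcal{V}-\mathcal{V}_0\in\ker\mathcal{L}^a$ and squeeze it using the two-sided information available on $\mathcal{V}$ and $\mathcal{V}_0$. The spectral decomposition~\eqref{Fourierdecomposition} and Proposition~\ref{growthpropertiessystem} tell us that any Jacobi field splits as a zero-frequency piece in $D_{a,0}$ (at most linear growth) plus higher-frequency pieces which grow or decay exponentially at rates $\beta_{a,j}\geqslant\beta_{a,1}$; this complete description of $\ker\mathcal{L}^a$ is what powers the argument.

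In case~(i), the strict inequality $\widehat{\beta}<1$ places $\widehat{\beta}\in(\beta_{a,0},\beta_{a,1})$, so Proposition~\ref{existence}(i) produces $\mathcal{V}_0\in C^{4,\zeta}_{\widehat{\beta}}(\mathcal{C}_0,\mathbb{R}^p)\oplus D_{a,0}(\mathcal{C}_0,\mathbb{R}^p)$ with $\mathcal{L}^a(\mathcal{V}_0)=\Psi$. Since $\mathcal{V}$ is at most exponentially decaying and $\mathcal{V}_0$ at most linearly growing, the Jacobi field $\mathcal{W}$ is at most linearly growing, which by Proposition~\ref{growthpropertiessystem} forces its nonzero-frequency Fourier modes to be the exponentially decaying branches, hence $\mathcal{W}\in D_{a,0}(\mathcal{C}_0,\mathbb{R}^p)\oplus C^{4,\zeta}_{\beta_{a,1}}(\mathcal{C}_0,\mathbb{R}^p)$. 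Assembling the pieces places $\mathcal{V}$ in $C^{4,\zeta}_{\widehat{\beta}}(\mathcal{C}_0,\mathbb{R}^p)\oplus D_{a,0}(\mathcal{C}_0,\mathbb{R}^p)$; but the hypothesis $\mathcal{V}\in C^{4,\zeta}_{\widetilde{\beta}}$ with $\widetilde{\beta}>0$ means $\mathcal{V}$ genuinely decays, so the $D_{a,0}$ component must vanish, giving $\mathcal{V}\in C^{4,\zeta}_{\widehat{\beta}}(\mathcal{C}_0,\mathbb{R}^p)$ as claimed. Case~(ii) is analogous: now $\widehat{\beta}\in(\beta_{a,1},\beta_{a,2})$ and Proposition~\ref{existence}(ii) furnishes $\mathcal{V}_0\in C^{4,\zeta}_{\widehat{\beta}}\oplus D_{a,0}\oplus D_{a,1}$ solving the equation. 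The difference $\mathcal{W}$ now may grow as fast as the $D_{a,1}$ branches of the first spectral band permit, and frequency-by-frequency analysis with the gap $\beta_{a,2}>\widehat{\beta}$ gives $\mathcal{W}\in D_{a,0}\oplus D_{a,1}\oplus C^{4,\zeta}_{\beta_{a,2}}$. Once again, the true decay of $\mathcal{V}$ at rate $\widetilde{\beta}>0$ eliminates the $D_{a,0}$ contribution, leaving $\mathcal{V}\in C^{4,\zeta}_{\widehat{\beta}}(\mathcal{C}_0,\mathbb{R}^p)\oplus D_{a,1}(\mathcal{C}_0,\mathbb{R}^p)$.

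The main obstacle I expect is ensuring, in the Fourier-mode bookkeeping, that the exponentially growing branches among the high-frequency Jacobi fields do not contaminate $\mathcal{W}$. This is where the gap structure of the spectrum $\operatorname{spec}(\mathcal{L}^a)$ established in Proposition~\ref{isolatedindicialroot} is crucial: the strict separation $\beta_{a,j+1}>\beta_{a,j}$ of consecutive indicial roots, together with the hypothesis that $\widehat{\beta}$ sits strictly inside the corresponding gap, is precisely what prevents the proof from losing control across bands. A secondary technical point is the uniqueness of the decomposition $C^{4,\zeta}_{\widehat{\beta}}\oplus D_{a,0}$ (respectively $\oplus D_{a,1}$), which follows from Corollary~\ref{cor:surjectiveness}(ii) applied on the conjugate weighted space, and the Schauder estimates on $\mathcal{C}_0$ that allow us to freely pass between weighted Sobolev and Hölder formulations during the argument.
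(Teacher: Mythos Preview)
Your proposal is correct and follows essentially the same approach as the paper: use the right-inverse from Proposition~\ref{existence} to produce a reference solution in the augmented space, subtract to land in $\ker\mathcal{L}^a$, analyze the kernel element via the growth dichotomy of Proposition~\ref{growthpropertiessystem}, and use the a priori decay $\mathcal{V}\in C^{4,\zeta}_{\widetilde\beta}$ to kill the $D_{a,0}$ contribution. Your write-up is in fact more explicit than the paper's about the Fourier-mode bookkeeping that rules out the exponentially growing high-frequency branches.
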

	
	\begin{proof}
		First, we use the right-inverse operator $\mathcal{G}_0^a$ in Proposition~\ref{existence} to obtain 
		\begin{equation*}
			\widetilde{\mathcal{V}}+c\Phi^{+}_{a,0}=\mathcal{G}_0^a\left(\Psi\right)\in C^{4,\zeta}_{\beta}(\mathcal{C}_0,\mathbb{R}^p)\oplus D_{a,1}(\mathcal{C}_0,\mathbb{R}^p)
		\end{equation*}
		is also solution to $\mathcal{G}_0^a(\mathcal{V})=\Psi$, which implies that $\widehat{\mathcal{V}}=\mathcal{V}-\widetilde{\mathcal{V}}$ satisfies $\mathcal{G}_0^a(\widehat{\mathcal{V}})=0$. Then, $\widehat{\mathcal{V}}$ is exponentially decaying, that is, $\widehat{\mathcal{V}}\in C^{4,\zeta}_{1}(\mathcal{C}_0,\mathbb{R}^p)$. Finally, $\mathcal{V}\in C^{4,\zeta}_{\beta_2}(\mathcal{C}_0,\mathbb{R}^p)$ since $\mathcal{V}=\widehat{\mathcal{V}}+\widetilde{\mathcal{V}}$, which finishes the proof of (i). The proof of (ii) follows by the same argument, so that we omit it here.
	\end{proof}
	
	\subsection{Growth properties for the Jacobi fields}\label{subsec:growthproperties}
	In this part, we apply the spectral analysis developed before to investigate the growth/decay rate in which the Jacobi fields on the kernel of the linearized operator grow/decay. 
	This analysis is a fundamental part of the convergence technique we perform in the next section. 
	
	Let us begin with some considerations concerning the scalar case $p=1$.
	First, by Theorem \ref{thm:frank-konig19}, the operator \eqref{jacobiequationproj} has periodic coefficients. 
	Second, by Proposition~\ref{isolatedindicialroot}, we can use classical Floquet theory (or Boch wave theory) to study the asymptotic behavior of the Jacobi fields on the projection over $V_j$ (for more details, see \cite[Theorem~5.1]{MR0069338}). 
	To apply Floquet theory, we transform the fourth order operator \eqref{jacobiequationproj} into a first order operator on $\mathbb{R}^4$. 
	More precisely, defining $X=(\phi,\phi^{(1)},\phi^{(2)},\phi^{(3)})$, we conclude that the fourth order equation $\mathcal{L}_j^{a}(\phi)=0$ is equivalent to the first order system $X'(t)=N_{a,j,n}(t)X(t)$. Here
	\begin{equation*}
		N_{a,j,n}(t)=
		\left[{\begin{array}{cccc}
				0 & 1 & 0 & 0 \\
				0 & 0 & 1 & 0 \\
				0 & 0 & 0 & 1 \\
				0 & 0 & -B_{j,n} & +C_{a,j,n}(t)\\
		\end{array} } \right],
	\end{equation*}
	where 
	\begin{equation*}
		B_{j,n}:=K_2(n)+2\lambda\quad \mbox{and} \quad C_{a,j,n}(t)=K_0(n)+\lambda_j(\lambda_j+J_0)-\widetilde{c}(n)v_{a,T}(t)^{2^{**}-2}.
	\end{equation*}
	Notice that $N_{a,j,n}(t)$ is a $T_a$-periodic matrix.
	Hence, the {\it monodromy matrix} associate to this ODE system with periodic coefficients is given by $M_{a,j,n}(t)=\exp{\int_{0}^{t}{A}(\tau)d\tau}$. Finally, we define the {\it Floquet exponents}, denoted by $\widetilde{\mathfrak{I}}_j^a$,
	as the complex frequencies associated to the eigenvectors of $M_{a,j,n}(t)$, which forms a four-dimensional basis for the kernel of $\mathcal{L}^a_j$. 
	Using Abel's identity, we get that $N_{a,j,n}(t)$ is constant, which yields
	\begin{equation*}
		\det(M_{a,j,n}(t))=\exp{\int_{0}^{T}\trace{N}_{a,j,n}(\tau)d\tau}=\exp\left(-{\int_{0}^{T}C_{a,j,n}(\tau)d\tau}\right)=1.
	\end{equation*}
	Since $N_{a,j,n}(t)$ has real coefficients, all its eigenvalues are pairs of complex conjugates. 
	Equivalently, $\widetilde{\mathfrak{I}}_j^a=\{\pm\rho_{a,j},\pm\bar{\rho}_{a,j}\}$, where $\rho_{a,j}=\alpha_{a,j}+i\beta_{a,j}$ and $\widetilde{\rho}_{a,j}=\widetilde{\alpha}_{a,j}-i{\beta}_{a,j}$. Then, the set of indicial roots of $\mathcal{L}_j^{a}$ are given by $\mathfrak{I}_j^a=\{-\beta_{a,j},\beta_{a,j}\}$. Moreover, for any $\phi\in \ker(\mathcal{L}_j^{a})$, we have
	\begin{equation*}
		\phi(t)=b_1\phi_{a,j}^{+}(t)+b_2\phi_{a,j}^{-}(t)+b_3\widetilde{\phi}_{a,j}^{+}(t)+b_4\widetilde{\phi}_{a,j}^{-}(t),
	\end{equation*}
	where $\phi^{\pm}_{a,j}(t)=e^{\pm\rho_{a,j} t}$ and $\widetilde{\phi}_{a,j}^{\pm}(t)=e^{\pm\widetilde{\rho}_{a,j} t}$.
	Hence, the exponential decay/growth rate of the Jacobi fields is controlled by $|\beta_{a,j}|$. Therefore, the asymptotic properties of $\mathcal{L}_j^{a}$ are obtained by the study of $\mathfrak{I}^{a}$. Let us remember that in the definition of indicial roots, we are not considering the multiplicity, that is, $\beta_{a,2}$ stands for all the low-frequency ($j=1$) Jacobi fields.
	
	In the next lemma, we give some structure to the set of indicial roots of $\mathcal{L}^a$.
	
	\begin{lemma}\label{indicialset}
		For any $a\in[0,a_0]$, it follows\\
		\noindent{\rm (i)} If $j=0$, then $0\in\mathfrak{I}_{0}^a$.\\
		\noindent{\rm (ii)} If $j=1$, then $\{-1,1\}\subset\mathfrak{I}_{1}^a$.\\
		\noindent{\rm (iii)} If $j>1$, then $\min_{j>1}\mathfrak{I}_{j}^a>1$.\\
		Moreover, $\mathfrak{I}^a$ is a discrete set, namely,
		\begin{equation*}
			\mathfrak{I}^a=\{\dots,-\beta_{a,2},-1,0,1,-\beta_{a,2},\dots\}.
		\end{equation*}
		In particular, the indicial root $0$ is isolated. 
	\end{lemma}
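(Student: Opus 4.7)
The plan is to combine three ingredients already established in the excerpt: (a) the explicit construction of geometric Jacobi fields arising from parameter variations of Emden--Fowler families in Remark \ref{geoemtricjacobifields}; (b) the spectral band estimates proved in Claims 3--5 of Proposition \ref{isolatedindicialroot}; and (c) the Floquet-theoretic setup preceding the lemma, which identifies $\mathfrak{I}_j^a$ with the real parts of the eigenvalues of the monodromy matrix $M_{a,j,n}(T_a)$.

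For (i), I will take $\phi^+_{a,0}(t)=\partial_T\big|_{T=0}v_{a,T}(t)$, which by construction is $T_a$-periodic and non-trivial. Differentiating the identity $\mathcal{N}(v_{a,T})=0$ in the parameter $T$ produces $\mathcal{L}^a_0(\phi^+_{a,0})=0$. Being bounded (and even periodic), this solution has vanishing exponential growth rate, which by Definition of indicial roots forces $0\in\mathfrak{I}_0^a$. For (ii), I invoke the explicit expansions
\begin{equation*}
\phi_{a,1}^{\pm}(t,\theta)=e^{\pm t}\bigl(\pm v_a^{(1)}+\gamma v_a\bigr)+\mathcal{O}(1)\qquad\text{as }t\to\infty,
\end{equation*}
obtained in Remark \ref{geoemtricjacobifields} by differentiating the translation-invariance and the Kelvin-invariance of $v_{a,T}$ in the direction of the spherical harmonic $\chi_1(\theta)=\langle \mathbf{e}_j,\theta\rangle$. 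These lie in $\ker(\mathcal{L}_1^a)$ and witness the exact exponential rates $\pm 1$, so $\{-1,1\}\subset\mathfrak{I}_1^a$.

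For (iii), I combine Claim 3 of Proposition \ref{isolatedindicialroot}, namely $\sigma_k(a,j,\alpha)>\sigma_0(a,0,\alpha)+J_0\lambda_j+\lambda_j^2$, with the monotonicity $\lambda_j\geqslant 2n$ for $j>n$ and $\lambda_j=n-1$ for $1\leqslant j\leqslant n$. Since by the Floquet correspondence an indicial root of modulus $\beta$ corresponds to a value of the twist parameter $\rho=\alpha+i\beta$ at which $\widetilde{\mathcal{L}}^a_j(\rho)$ fails to be injective (equivalently, the shifted spectral band $\mathfrak{B}_k(a,j)$ passes through the origin), a uniform lower bound on $\mathfrak{B}_k(a,j)$ translates into a strictly bigger minimal indicial root. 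The spectral estimate gives exactly such a bound that grows like $\lambda_j^2$, hence the minimal $|\beta_{a,j}|$ for $j>1$ must exceed the value $1$ reached at $j=1$, and only finitely many indices need be inspected by hand before the asymptotic bound takes over. Finally, discreteness of $\mathfrak{I}^a$ is precisely the proposition proved immediately after Proposition \ref{prop:fredholm}, while the isolation of $0\in\mathfrak{I}^a$ is the content of Proposition \ref{isolatedindicialroot}.

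The main obstacle I anticipate is the Floquet-to-indicial translation in step (iii): one needs to verify that the gap established for the spectral bands of the self-adjoint operator $\widetilde{\mathcal{L}}^a_{j,\alpha}$ on the quasi-periodic space actually excludes $\beta=\pm 1$ from $\mathfrak{I}_j^a$ for $j\geqslant 2$, rather than merely pushing some generic root further out. To close this, I will argue by contradiction: if a Jacobi field with growth rate $1$ existed for $j\geqslant 2$, then after the twist $\widehat{\Phi}=e^{-\rho t}\Phi$ with $\Im\rho=1$ one would obtain a non-trivial element of $\ker\widetilde{\mathcal{L}}^a_j(\rho)$, forcing $0\in\mathfrak{B}_k(a,j)$ for some $k$, contradicting the uniform lower bound in Claim 3. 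The remaining statements follow by assembling the three cases.
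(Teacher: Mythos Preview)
Your treatment of parts (i) and (ii) is essentially the same as the paper's: both use the geometric Jacobi fields $\phi^+_{a,0}=\partial_T v_{a,T}$ and $\phi^\pm_{a,1}=e^{\pm t}(\pm v_a^{(1)}+\gamma v_a)$ from Remark~\ref{geoemtricjacobifields} to exhibit the indicial roots $0$ and $\pm 1$. The paper additionally treats the endpoint cases $a=0$ and $a=a_0$ separately, where $v_a$ is constant and the indicial roots are read off from an explicit quartic characteristic equation; you implicitly fold these into the general argument, which is fine for (i) and (ii).

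For part (iii), however, your argument has a genuine gap at precisely the point you flag as the ``main obstacle''. The spectral bands $\mathfrak{B}_k(a,j)$ are defined as the eigenvalues of the \emph{self-adjoint} restrictions $\mathcal{L}^a_{j,\alpha}$ with \emph{real} twist parameter $\alpha\in[0,2\pi]$; Claim~5 of Proposition~\ref{isolatedindicialroot} says $0\in\mathfrak{B}_k(a,j)$ if and only if $\mathcal{L}^a_j\phi=0$ admits a \emph{quasi-periodic} (hence bounded, indicial root $0$) solution. Your contradiction argument assumes instead that a Jacobi field with exponential rate $\beta=1$---i.e., a kernel element of $\widetilde{\mathcal{L}}^a_j(\alpha+i)$ with $\Im\rho=1$---would force $0\in\mathfrak{B}_k(a,j)$. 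This is false: for $\Im\rho\neq 0$ the twisted operator is no longer self-adjoint, its kernel is not detected by the real-parameter spectral bands, and the parenthetical ``equivalently, the shifted spectral band passes through the origin'' does not hold. A lower bound on $\sigma_0(a,j,\alpha)$ over $\alpha\in\mathbb{R}$ only tells you $0\notin\mathfrak{I}^a_j$; it does not by itself exclude $1\in\mathfrak{I}^a_j$.

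The paper closes (iii) differently. It first computes the indicial roots explicitly for the constant-coefficient endpoints $a=0$ and $a=a_0$ (Steps~1 and~2), verifying directly that $\beta_{a,j}>1$ for $j>1$ in those cases. For the intermediate range $a\in(0,a_0)$ it then invokes a comparison principle for the periodic operator $\mathcal{L}^a_j$ to obtain the monotonicity $\beta_{a,0}\leqslant\beta_{a,1}\leqslant\beta_{a,2}\leqslant\cdots$, which together with $\beta_{a,1}=1$ yields $\beta_{a,j}>1$ for $j>1$. If you want to salvage your approach, you would need to replace the band-to-indicial translation by an argument that actually tracks the poles of $\widetilde{\mathcal{G}}^a_j(\rho)$ in the complex strip $|\Im\rho|\leqslant 1$, which is closer to the analytic Fredholm machinery of Subsection~\ref{subsec:fredholm} than to the real-band estimates you cite.
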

	
	\begin{proof}
		Initially, let us divide the proof into three cases steps, namely, $a=0$, $a=a_0$ and $a\in(0,a_0)$.  In the first two ones, by Theorem~\ref{thm:frank-konig19}, we know that $v_{a,T}$ is constant; thus, the indicial exponents are the solutions to a fourth order characteristic equation. 
		In this fashion, let us also introduce the following notation for the discriminant of this indicial equation,
		\begin{equation*}
			D(a,n,j):=B_{j,n}^2-4C_{a,j,n}.
		\end{equation*}
		
		We also divide each step into three cases with respect the Fourier eigenmodes, namely $j=0$ (zero-frequency), $j=1$ (low-frequency) and $j>1$ (high-frequency). 
		
		\noindent{\bf Step 1:} (spherical solution) $a=0$ .
		
		\noindent When $a=0$, we have that $v_{a,T}\equiv0$, then the linearized operator becomes
		\begin{equation*}
			\mathcal{L}^{0}_j(\phi)=\phi^{(4)}-(K_2+2\lambda_j)\phi^{(2)}+\left(K_0+\lambda_j^2+\lambda_jJ_0\right)\phi.
		\end{equation*}
		Therefore, we shall compute the roots of the fourth order characteristic equation
		\begin{equation*}
			\rho^{4}-B_{j,n}\rho^{2}-C(0,j,n)\rho=0,
		\end{equation*}
		
		\noindent{\bf zero-frequency:} $j=0$, $\mathfrak{m}_0=1$ and $\lambda_0=0$.
		
		\noindent The operator has the following expression
		\begin{equation*}
			\mathcal{L}^{0}_0(\phi)=\phi^{(4)}-K_2\phi^{(2)}+K_0\phi.
		\end{equation*}
		Notice that when $D(a,n,j)>0$, then $\beta_{a,j}=0$. More generally, the sign of $D(a,n,j)$ controls the nature of the complex roots. It is straightforward to check that the indicial roots of this operator are 
		\begin{equation*}
			\rho_{0,0}=\frac{n}{2} \quad \mbox{and} \quad \widetilde{\rho}_{0,0}=\frac{n-4}{2}.
		\end{equation*}
		
		\noindent{\bf low-frequency:} $j=1$, $\mathfrak{m}_1=n$ and $\lambda_1=\cdots\lambda_n=n-1$.
		
		\noindent Here we obtain,
		\begin{equation*}
			\mathcal{L}^{0}_1(\phi)=\phi^{(4)}-B_{1,n}\phi^{(2)}+C(0,1,n)\phi,
		\end{equation*}
		and the indicial roots are given by 
		\begin{equation*}
			\rho_{0,1}=\frac{1}{2}(n+2) \quad \mbox{and} \quad \widetilde{\rho}_{0,1}=\frac{1}{2}(n-2).
		\end{equation*}
		
		\noindent{\bf high-frequency:} $j>1$, $\mathfrak{m}_j>n$ and $\lambda_j=\ell(n-2+\ell)$, for some $\ell\in\mathbb{N}$.
		
		\noindent Here we obtain,
		\begin{equation*}
			\mathcal{L}^{0}_j(\phi)=\phi^{(4)}-B_{j,n}\phi^{(2)}+C(0,j,n)\phi,
		\end{equation*}
		and the indicial roots are given by  
		\begin{equation*}
			\rho_{0,\ell}=\frac{1}{2}\left(2+\sqrt{D(0,\ell,n)}\right) \quad \mbox{and} \quad \frac{1}{2}\left(2-\sqrt{D(0,\ell,n)}\right),
		\end{equation*}
		where $D(0,\ell,n)=n^2-4n+4+4\ell(n+\ell-2)$. Notice that $D(0,\ell,n)>0$ for $\ell>1$. Using a direct argument, we can check that $\Im\rho_{0,j}>\Im\rho_{0,1}$ and $\Im\rho_{0,j}>\Im\widetilde{\rho}_{0,1}$ for all $j>1$, which by the last case concludes the proof of Step 1.
		
		\noindent{\bf Step 2:} (cylindrical solution) $a=a_0$.
		
		\noindent
		Since $v_{a_0,T}\equiv a_0=[n(n-4)/(n^2-4)]^{n-4/8}$, we proceed identically as in the last step. First, we have
		\begin{equation*}
			\mathcal{L}^{a_0}_j(\phi)=\phi^{(4)}-(K_2+2\lambda_j)\phi^{(2)}+\left(K_0+\lambda_j^2+\lambda_jJ_0-\widetilde{c}(n)a_0^{2^{**}-2}\right)\phi.
		\end{equation*}
		As before, we shall divide our approach as follows\\
		\noindent{\bf zero-frequency:} $j=0$, $\mathfrak{m}_0=1$ and $\lambda_0=0$.
		\begin{equation*}
			\rho_{a_0,0}=\frac{1}{2}\sqrt{n^2-4n+8+\sqrt{D(a_0,0,n)}} \quad \mbox{and} \quad \widetilde{\rho}_{a_0,0}=\frac{1}{2}\sqrt{n^2-4n+8-\sqrt{D(a_0,0,n)}},
		\end{equation*}
		where $D(a_0,0,n)=n^4-64n+64$.
		
		\noindent{\bf low-frequency:} $j=1$, $\mathfrak{m}_1=n$ and $\lambda_1=\cdots\lambda_n=n-1$.
		\begin{equation*}
			\rho_{a_0,1}=\sqrt{\frac{n^2+2}{2}} \quad \mbox{and} \quad \widetilde{\rho}_{a_0,1}=1.
		\end{equation*}
		
		\noindent{\bf high-frequency:} $j>1$, $\mathfrak{m}_j>n$ and $\lambda_j=\ell(n-2+\ell)$, for some $\ell\in\mathbb{N}$. 
		\begin{equation*}
			\rho_{a_0,j}=\frac{1}{4}\sqrt{{[(n+2(\ell-1))]^2}+\sqrt{D(a_0,\ell,n)}} \ \mbox{and} \ \widetilde{\rho}_{a_0,j}=\frac{1}{4}\sqrt{{[(n+2(\ell-1))]^2}-\sqrt{D(a_0,\ell,n)}},
		\end{equation*}
		where $D(a_0,\ell,n)=n^4+64(\ell-1)(n+\ell-1)$.
		
		\noindent{\bf Step 3:} (Emden--Fowler solution) $a\in(0,a_0)$.
		
		\noindent This is the most delicate case since $v_{a,T}$ is periodic, so the zeroth-order term in the operator $\mathcal{L}^a_j$ is also $T_a$-periodic. In this case, it is not possible to compute the Floquet exponents explicitly. Nonetheless, we can show that they are strictly bigger than one when $j>1$. 
		
		\noindent{\bf zero-frequency:} $j=0$, $\mathfrak{m}_0=1$ and $\lambda_0=0$ . 
		
		\noindent By Remark~\ref{geoemtricjacobifields}, it follows that $\phi^+_{a,0}$ is bounded and $\phi^-_{a,0}$ is linearly growing, then $0\in\mathfrak{I}^a$ with multiplicity $2$.
		
		\noindent{\bf low-frequency:} $j=1$, $\mathfrak{m}_1=n$ and $\lambda_1=\cdots\lambda_n=n-1$.
		
		\noindent Again using Remark~\ref{geoemtricjacobifields}, it follows  $\phi^\pm_{a,1}=\cdots=\phi^\pm_{a,n}$ is exponentially growing/decaying, then $\{-1,1\}\subset\mathfrak{I}^a$.
		
		\noindent{\bf high-frequency:} More generally,
		note that the indicial roots form an increasing sequence,
		\begin{equation*}
			\beta_{a,0}\leqslant \beta_{a,1}\leqslant\cdots\leqslant \beta_{a,j}\leqslant \beta_{a,j+1}\rightarrow\infty \quad \mbox{as} \quad j\rightarrow\infty,
		\end{equation*}
		which is a consequence of a comparison principle on $a\in(0,a_0)$ for the linearized operator in cylindrical coordinates.
	\end{proof}
	
	\begin{lemma}
		The indicial set $\mathfrak{I}_{j}^a$ is a discrete. Moreover,
		\begin{equation*}
			\mathfrak{I}_{j}^a=\{\dots,-\beta_{a,2},-1,0,1,\beta_{a,2},\dots\}.
		\end{equation*}
		In particular, the indicial root $0$ is isolated. 
	\end{lemma}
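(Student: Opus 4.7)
The plan is to reduce the system case to the scalar situation already handled in Lemma~\ref{indicialset} by exploiting the one-dimensional structure of the blow-up limit in its coupling direction. By Theorem~\ref{thm:andrade-doo19}(ii), every Emden--Fowler solution takes the form $\mathcal{V}_{a,T}=\Lambda^{*}v_{a,T}$ with $\Lambda^{*}\in\mathbb{S}^{p-1}_{+,*}$, so I would decompose every Jacobi field as $\Phi=\phi^{\parallel}\Lambda^{*}+\Phi^{\perp}$ with $\langle\Phi^{\perp},\Lambda^{*}\rangle\equiv 0$. Contracting $\mathcal{L}^{a}(\Phi)=0$ with $\Lambda^{*}$ and using \eqref{linearization} together with $|\Lambda^{*}|=1$ gives a scalar equation for $\phi^{\parallel}$ which coincides with the scalar linearization of Lemma~\ref{indicialset}. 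Projecting instead onto $(\Lambda^{*})^{\perp}$ annihilates the coupling $n\Lambda_{i}^{*}\langle\Lambda^{*},\Phi\rangle v_{a,T}^{2^{**}-2}$, so each component of $\Phi^{\perp}$ obeys a decoupled fourth-order scalar equation with the shifted zeroth-order coefficient $\tfrac{\widetilde c(n)}{n}(nv_{a,T}^{2^{**}-2}+4-n)$ in place of $\widetilde c(n)v_{a,T}^{2^{**}-2}$.

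For each of the two reduced scalar operators I would then carry out the Fourier-eigenmode plus Floquet-theory analysis from the proof of Lemma~\ref{indicialset}. Projecting on the spherical harmonic eigenspaces $V_{j}$ converts each equation into a fourth-order ODE with $T_{a}$-periodic coefficients; at the endpoints $a=0$ (spherical limit) and $a=a_{0}$ (cylindrical limit) the coefficients are constant and the Floquet exponents are just the roots of an explicit quartic in $\rho$. For intermediate $a\in(0,a_{0})$, continuity and monotonicity of the spectral bands in $a$ keep the indicial roots on the same bands. The parallel sector recovers exactly the roots in Lemma~\ref{indicialset}: $0$ in the zero-frequency mode (produced by the geometric Jacobi fields of Remark~\ref{geoemtricjacobifields}), $\pm 1$ in the low-frequency mode, and roots of absolute value strictly greater than one in every higher mode. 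The perpendicular sector is then shown to contribute the same list.

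The main obstacle is controlling the perpendicular sector, where the absence of the coupling term modifies the potential by the dimensional shift $\widetilde c(n)(4-n)/n$. I would verify that this shift does not introduce any new indicial root inside $(-1,1)\setminus\{0\}$ by combining the explicit computations at the endpoints $a=0,a_{0}$ with the Rayleigh-quotient estimates used in the proof of Proposition~\ref{isolatedindicialroot}: the spectral gap established there for the full operator $\mathcal{L}^{a}$ already forces $0$ to sit on the boundary of a nondegenerate band and to be isolated in $\mathfrak{I}^{a}$. Discreteness of $\mathfrak{I}_{j}^{a}$ then follows from Proposition~\ref{prop:fredholm} via the analytic Fredholm theorem: the right-inverse $\widetilde{\mathcal{G}}^{a}(\rho)$ is meromorphic in $\rho$ with only finitely many poles in each horizontal strip, so its poles, hence the elements of $\mathfrak{I}_{j}^{a}$, form a discrete subset of $\mathbb{R}$, completing the proof.
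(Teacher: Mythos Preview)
The paper dispatches this lemma in a single sentence by citing Proposition~\ref{isolatedindicialroot}, which already establishes that $0$ is isolated in $\mathfrak{I}^{a}$ for the full system operator $\mathcal{L}^{a}$; together with Lemma~\ref{indicialset} this gives the stated structure of the indicial set.

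Your parallel/perpendicular decomposition relative to $\Lambda^{*}$ is correct, and the parallel component does reduce cleanly to the scalar operator treated in Lemma~\ref{indicialset}. The difficulty is that the perpendicular operator carries the potential $c(n)v_{a,T}^{2^{**}-2}$ rather than $\widetilde c(n)v_{a,T}^{2^{**}-2}$, so its Floquet exponents are genuinely different from the scalar ones (for instance, at $a=a_{0}$ the perpendicular zero-mode characteristic polynomial degenerates to $\rho^{4}-K_{2}\rho^{2}=0$, yielding $\{0,0,\pm\sqrt{K_{2}}\}$, not the roots of Step~2 in Lemma~\ref{indicialset}). Your assertion that the perpendicular sector ``contributes the same list'' therefore requires its own argument. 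You recognize this and propose to close it by invoking the spectral-gap estimates of Proposition~\ref{isolatedindicialroot}---but that proposition is stated and proved for the full $\mathcal{L}^{a}$ (its proof already reduces internally to $p=1$ via Theorem~\ref{thm:andrade-doo19}(ii)) and delivers the isolation of $0$ without any decomposition. So your route is not incorrect, but the sector splitting does no independent work: the moment you appeal to Proposition~\ref{isolatedindicialroot} you have exactly the paper's proof, and everything preceding it is a detour.
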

	
	\begin{proof}
		It follows directly by Proposition~\ref{isolatedindicialroot}.
	\end{proof}
	
	Notice that Lemma~\ref{indicialset} only provides exponential growth/decay for the Jacobi fields. Nevertheless, to apply Simon's technique, we need something slightly more robust. Namely, for $j=0$, we must show that the Jacobi fields are either periodic (bounded) or linearly growing. For the first two Jacobi fields $\phi^+_{a,0}$ and $\phi^-_{a,0}$, this follows because they arise respectively as the variation of the necksize and translation parameters that appear in the classification for the Emden--Fowler solutions. The difficulty here is to show that they generate the zero-frequency space.
	We overcome this issue, observing that by the direct computation in Lemma~\ref{indicialset}, we know that $0\in\mathfrak{I}^a_0$ with multiplicity two.  
	Next, we proceed as in \cite[Proposition~4.14]{MR1712628} to prove the following asymptotic expansion
	
	\begin{proposition}
		Let $\psi\in\ C_{0}^{\infty}\left(\mathcal{C}_0\right)$, $\beta\in(0,1)$ and $\phi\in H_{-\beta}^{4}\left(\mathcal{C}_0\right)$ satisfying $\mathcal{L}^a(\phi)=\psi$. Then, $\phi$ has an asymptotic expansion $\phi=\sum_{j\in\mathbb{N}}\phi_{j}$ with $\mathcal{L}^a\left(\phi_{j}\right)=0$ and $\phi_{j} \in H_{-\beta}^{4}\left(\mathcal{C}_0\right)$ for any $\beta<\beta_{a,j}$.
	\end{proposition}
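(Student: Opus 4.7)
The plan is to combine the Fourier--Laplace inversion machinery developed in Subsection~\ref{subsection:spectralanalysis} with a contour-shift residue argument, in the spirit of \cite[Proposition~4.14]{MR1712628}.

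First, I would apply $\mathcal{F}_a$ to $\mathcal{L}^a(\phi)=\psi$ and, after the conjugation producing the twisted operator, obtain $\widetilde{\mathcal{L}}^a(\rho)\widehat{\phi}(\rho)=\widehat{\psi}(\rho)$. Since $\psi\in C_0^\infty(\mathcal{C}_0)$ has compact support, its transform $\widehat{\psi}$ is entire in $\rho$. Because $\phi\in H^4_{-\beta}(\mathcal{C}_0)$, the transform $\widehat{\phi}$ is a priori holomorphic on a horizontal strip dictated by the decay rate of $\phi$ (Claim~1 of Proposition~\ref{integraldecomposition}). By Proposition~\ref{prop:fredholm} together with the discreteness of $\mathfrak{I}^a$ established at the end of Subsection~\ref{subsec:fredholm}, the twisted resolvent $\widetilde{\mathcal{G}}^a(\rho)=\widetilde{\mathcal{L}}^a(\rho)^{-1}$ extends as a meromorphic operator-valued function on all of $\mathbb{C}$ with poles contained in the lift of $\mathfrak{I}^a$. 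Consequently, $\widehat{\phi}(\rho)=\widetilde{\mathcal{G}}^a(\rho)\widehat{\psi}(\rho)$ extends meromorphically to $\mathbb{C}$ with poles only at the indicial heights.

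Next, I would invert via the integral representation of Claim~2 of Proposition~\ref{integraldecomposition}, parametrized along a horizontal line in the initial strip of holomorphy. Progressively deforming this line across each indicial height $\beta_{a,j}$ and applying the residue theorem yields a decomposition
\begin{equation*}
\phi(t,\theta)=\sum_{j\,:\,\beta_{a,j}<\beta'}\phi_j(t,\theta)+\mathcal{R}_{\beta'}(t,\theta),
\end{equation*}
where each $\phi_j$ is $2\pi i$ times the sum of residues over the $2\pi$-periodic lattice of poles at height $\beta_{a,j}$, and $\mathcal{R}_{\beta'}$ is the contour integral on the line shifted just beyond the $j$-th indicial height. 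Inspecting the Laurent expansion of $\widetilde{\mathcal{G}}^a(\rho)$ near each pole and undoing the transform show that $\mathcal{L}^a(\phi_j)=0$, so $\phi_j$ is a Jacobi field. The Parseval--Plancherel identity in Claim~3 of Proposition~\ref{integraldecomposition} forces $\phi_j\in H^4_{-\beta}(\mathcal{C}_0)$ for every $\beta<\beta_{a,j}$, while $\mathcal{R}_{\beta'}\in H^4_{-\beta'}(\mathcal{C}_0)$ for any $\beta'$ strictly between consecutive indicial roots. Letting $\beta'$ sweep up the discrete ladder $\{\beta_{a,j}\}$ produces the claimed asymptotic series.

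The principal obstacle is justifying the successive horizontal contour shifts. One needs uniform operator-norm bounds on $\widetilde{\mathcal{G}}^a(\alpha+i\beta)$ as $|\alpha|\to\infty$ along any horizontal strip avoiding a fixed neighbourhood of the pole set, so that the lateral pieces of the closed rectangular contour vanish and the exchange with the infinite sum defining $\mathcal{F}_a$ is legitimate. Such high-frequency estimates come from the fact that, on each torus $\mathbb{T}_a^n$, the leading symbol of $\widetilde{\mathcal{L}}^a(\rho)$ is $|\xi|^4$ and the $\rho$-dependent lower-order terms are relatively compact; combined with the Rellich argument underlying Proposition~\ref{prop:fredholm}, this yields a resolvent bound of the form $\|\widetilde{\mathcal{G}}^a(\rho)\|_{H^k\to H^{k+4}}=\mathcal{O}(|\Re(\rho)|^{-1})$ outside the pole neighbourhoods. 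Together with the finiteness of $\mathfrak{D}_a$ inside each horizontal strip, this secures the deformation and identifies the residue at each height $\beta_{a,j}$ with a tempered Jacobi field spanned by the deficiency space $D_{a,j}$ introduced in Subsection~\ref{subsec:existence}.
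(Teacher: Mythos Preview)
Your approach is essentially the paper's: Fourier--Laplace transform, meromorphic extension of the twisted resolvent $\widetilde{\mathcal{G}}^a(\rho)$, and a contour shift across successive indicial heights, with each residue producing a Jacobi field $\phi_j$. The paper carries this out in exactly two steps---first sliding the contour within $(0,1)$ where there are no poles, then across the pole at height $1$ into $(1,\beta_{a,2})$---and then remarks that the process iterates.

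Where you go astray is in naming the ``principal obstacle''. The inversion formula in Claim~2 of Proposition~\ref{integraldecomposition} is an integral over $\alpha\in[0,2\pi]$, not over all of $\mathbb{R}$. Consequently there is no need for large-$|\alpha|$ resolvent estimates; the rectangular contour you want to close has vertical sides at $\alpha=0$ and $\alpha=2\pi$, and these cancel exactly because $\widetilde{\mathcal{L}}^a(\rho)$---and hence $\widetilde{\mathcal{G}}^a(\rho)$---is $2\pi$-periodic in $\rho$ (this periodicity is used explicitly in the proof that $\mathfrak{I}^a$ is discrete). The paper states this directly: ``by periodicity does not take into account the vertical sides of the rectangle $[\beta,\beta'']\times[0,2\pi]$''. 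Your proposed remedy, a bound $\|\widetilde{\mathcal{G}}^a(\rho)\|=\mathcal{O}(|\Re(\rho)|^{-1})$, would in fact be impossible precisely because of that periodicity. So the obstacle you flagged is a phantom, and the fix you sketched for it could not work; fortunately neither is needed, and once you replace the infinite horizontal line by the finite segment $[0,2\pi]$ the argument goes through as you outlined.
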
    
	
	\begin{proof}
		We divide the proof into some steps.
		
		\noindent{\bf Step 1:} For $\beta<1$, it follows $\phi\in H_{-\beta}^{k+4}\left(\mathcal{C}_0\right)$.
		
		\noindent Indeed, take $\rho\in\mathbb{C}$ with $0<\beta<\Im(\rho)$ and consider the transformed equation
		\begin{equation}\label{inversedequation}
			\widetilde{\mathcal{L}}^a(\rho)(\widehat{\phi})=\widetilde{\psi},
		\end{equation}
		where $\widetilde{\phi}=e^{i\rho t}\widehat{\phi}$ and $\widetilde{\psi}=e^{i\rho t}\widehat{\psi}$.
		By applying the inverse operator $\widetilde{\mathcal{G}}^a(\rho)$ in both sides of \eqref{inversedequation}, we get that $\widehat{\phi}=\widetilde{\mathcal{L}}^a(\rho)(\widetilde{\psi})$.
		Then, since $\psi\in C^{\infty}_0(\mathcal{C}_0)$, it follows that $\widetilde{\psi}(\rho)$ is an entire function on $\rho$ and smooth on $(t,\theta)$. Notice that $\widetilde{\phi}$ is analytic on the half-plane $\Im(\rho)>\beta$, since the poles of $\mathcal{G}^a(\rho)$ coincide with the zeros of $\widetilde{\psi}$. Finally, take $\beta^{\prime}\in(\beta,1)$ and since $\mathcal{G}^a(\rho)$ has no poles in $\Im(\rho)\in(\beta^{\prime},\beta)$, by the Cauchy formula, we can define the contour integral $\mathcal{F}_a^{-1}$ up to $\Im(\rho)$.
		
		\noindent{\bf Step 2:} For each $\beta\in(0,1)$, there exist $\beta^{\prime\prime}\in(1,\beta_{a,2})$, $\phi^{\prime\prime}\in H_{-\beta^{\prime\prime}}^{k+4}\left(\mathcal{C}_0\right)$, and $\phi^\prime\in H_{-\beta}^{k+4}\left(\mathcal{C}_0\right)$ with $\mathcal{L}^a(\phi^{\prime})=0$ satisfying $\phi=\phi^{\prime}+\phi^{\prime\prime}$.
		
		\noindent Choose $\beta^{\prime\prime}\in(1,\beta_{a,2})$ and $\rho^{\prime\prime}$ such that $\Im(\rho^{\prime\prime})=\beta^{\prime\prime}$. Now let us define $\widetilde{\phi}^{\prime\prime}=\widetilde{\mathcal{G}}^a(\rho^{\prime\prime})$. Finally, we apply the inverse $\mathcal{F}^{-1}_a$ on the two contour lines $\Im(\rho)=\beta$ and $\Im(\rho)=\beta^{\prime\prime}$, which by periodicity does not takes into account the vertical sides of the rectangle $[\beta,\beta^{\prime\prime}]\times[0,2\pi]\subset\mathbb{C}$. In fact, $\widetilde{\phi}-\widetilde{\phi}^{\prime\prime}=\widetilde{\mathcal{G}}^a(\rho)-\widetilde{\mathcal{G}}^a(\rho^{\prime\prime})$ is the residue of a meromorphic function with pole at $-i$.
		
		We can continue this process by shifting the contour integral to the other poles in the strip.
	\end{proof}
	
	\begin{corollary}\label{cor:asymptoticexpansion}
		Let $\beta\in(0,1)$, $\psi\in C_{c}^{\infty}\left(\mathcal{C}_0\right)\cap L_{-\beta}^{2}\left(\mathcal{C}_0\right)$ and
		$\phi\in H_{-\beta}^{4}\left(\mathcal{C}_0\right)$ satisfying $\mathcal{L}^a(\phi)=\psi$. 
		Then, there exist $\phi^{\prime}\in H_{-\beta}^{4}\left(\mathcal{C}_0\right)$ and
		$\phi^{\prime\prime}\in D_{a,0}\left(\mathcal{C}_0\right)$ such that $\phi=\phi^{\prime}+\phi^{\prime\prime}$.
	\end{corollary}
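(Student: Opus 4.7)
The strategy is to derive the decomposition as an immediate consequence of the asymptotic expansion in the preceding Proposition, using the structural information about $\mathfrak{I}^{a}$ recorded in Lemma~\ref{indicialset}. That Proposition produces an expansion $\phi = \sum_{j \in \mathbb{N}} \phi_{j}$ into Jacobi fields grouped by Fourier eigenmode, whose decay rates are controlled by the indicial roots $\beta_{a,j}$. The corollary amounts to isolating the $j=0$ piece of this expansion and absorbing the rest into a single remainder that still lives in the original weighted space.

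Concretely, I would set $\phi'' := \phi_{0}$. Since $\phi_{0}$ is a zero-frequency Jacobi field, i.e., a solution of $\mathcal{L}^{a}_{0}\phi_{0} = 0$, Proposition~\ref{growthpropertiessystem}(i) identifies it as a linear combination of the basis elements $\Phi_{a,0}^{\pm}$ of $\ker(\mathcal{L}^{a}_{0})$, which is precisely the definition of $D_{a,0}(\mathcal{C}_{0})$; hence $\phi'' \in D_{a,0}(\mathcal{C}_{0})$. Define $\phi' := \phi - \phi'' = \sum_{j \geq 1} \phi_{j}$. By Lemma~\ref{indicialset}, $\beta_{a,1} = 1$ and $\beta_{a,j} > 1$ for every $j \geq 2$; combined with the Proposition's assertion that $\phi_{j} \in H^{4}_{-\beta'}(\mathcal{C}_{0})$ for every $\beta' < \beta_{a,j}$, this shows that each $\phi_{j}$ with $j \geq 1$ lies in $H^{4}_{-\beta}(\mathcal{C}_{0})$ for our chosen $\beta \in (0,1)$.

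The main obstacle is to justify rigorously that the tail $\sum_{j \geq 1} \phi_{j}$ converges in the $H^{4}_{-\beta}(\mathcal{C}_{0})$-norm, rather than only in an asymptotic sense. I would handle this through the Fourier--Laplace inversion of Subsection~\ref{subsection:spectralanalysis}: because $\psi \in C_{c}^{\infty}(\mathcal{C}_{0})$, its transform $\widehat{\psi}$ is entire, and the identity $\widehat{\phi} = \widetilde{\mathcal{G}}^{a}(\rho)(\widehat{\psi})$ coming from Proposition~\ref{prop:fredholm} displays $\widehat{\phi}$ as a meromorphic function of $\rho$ with poles only at the discrete set corresponding to $\mathfrak{I}^{a}$. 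By Proposition~\ref{isolatedindicialroot} the pole at $\rho = 0$ is isolated, so the inversion contour can be deformed past this pole down to a contour sitting above the next pole, which lies at the indicial root $1$. The residue calculation at $\rho = 0$ produces exactly $\phi'' \in D_{a,0}(\mathcal{C}_{0})$, while the integral along the deformed contour defines $\phi'$ as a single element of $H^{4}_{-\beta}(\mathcal{C}_{0})$ whose norm is controlled by $\|\psi\|_{L^{2}_{-\beta}(\mathcal{C}_{0})}$ via the Parseval--Plancherel identity of Claim~3 in Proposition~\ref{integraldecomposition}. In this way the formal grouping of the asymptotic expansion is realized as a bona fide decomposition in the weighted Sobolev space, and no infinite tail has to be summed by hand.
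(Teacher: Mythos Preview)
Your approach is correct and mirrors exactly what the paper intends: the Corollary is stated without proof because it is the first step of the contour-shift procedure in the preceding Proposition (its Step~2), and your third paragraph reproduces that argument faithfully---deform the inversion contour past the isolated pole at the indicial root $0$, collect the residue as $\phi''\in D_{a,0}$, and read off $\phi'\in H^{4}_{-\beta}$ from the Parseval--Plancherel identity on the new contour.

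One point to clean up: in your second paragraph you invoke Proposition~\ref{growthpropertiessystem}(i) to place $\phi_0$ in $D_{a,0}$, but in the paper's logical order Corollary~\ref{cor:asymptoticexpansion} is used to prove Corollary~\ref{scalargrowthproperties}(i), which is in turn the scalar input to Proposition~\ref{growthpropertiessystem}. So that citation is circular. The fix is already in your hands: the residue at $\rho=0$ automatically lands in the finite-dimensional null space identified by the analytic Fredholm theorem (Proposition~\ref{analyticfredholm}) and Proposition~\ref{isolatedindicialroot}, and the remark following the Definition of $D_{a,0}$ records that this null space is precisely $D_{a,0}$. Cite those instead and the argument stands on its own. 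Also note that the Proposition only asserts $\mathcal{L}^{a}(\phi_0)=0$, not $\mathcal{L}^{a}_{0}(\phi_0)=0$; the identification of $\phi_0$ with a purely zero-frequency object comes from the residue picture, not from the bare statement of the expansion.
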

	
	\begin{corollary}\label{scalargrowthproperties}
		The following properties hold for the projected scalar linearized operator:\\
		\noindent{\rm (i)} Assume $j=0$, then the homogeneous equation $\mathcal{L}^a_0(\phi)=0$ has a solutions basis with two elements, which are either bounded or at most linearly growing as $t\rightarrow\infty$;\\
		\noindent{\rm (ii)} Assume $j\geqslant1$, then the homogeneous equation $\mathcal{L}^a_j(\phi)=0$ has a solutions basis with four elements, which are exponentially growing/decaying as $t\rightarrow\infty$.
	\end{corollary}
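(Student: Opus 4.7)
The plan is to combine classical Floquet theory for the periodic fourth order ODE $\mathcal{L}^a_j(\phi)=0$ with the spectral information already established in Proposition~\ref{isolatedindicialroot}, Lemma~\ref{indicialset}, and the explicit geometric Jacobi fields recorded in Remark~\ref{geoemtricjacobifields}. For each fixed $j$ the equation is a fourth order linear ODE whose coefficients depend on $t$ only through the $T_a$-periodic factor $v_{a,T}^{2^{**}-2}$; writing it as a first order system $X'(t)=N_{a,j,n}(t)X(t)$ with $X=(\phi,\phi^{(1)},\phi^{(2)},\phi^{(3)})$ and monodromy matrix $M_{a,j,n}$, Floquet theory produces a fundamental system $\{\Phi_k\}_{k=1}^{4}$ of the form $\Phi_k(t)=e^{\rho_{a,j,k}t}p_{a,j,k}(t,\log t)$ where $p_{a,j,k}$ is $T_a$-periodic in its first argument and polynomial in the second (the logarithmic factor only appears inside Jordan blocks of multiplicity $>1$). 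The $\rho_{a,j,k}$ are the Floquet exponents; their real parts are exactly the indicial roots in $\mathfrak{I}^a_j$.

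For part~(i), I single out the Floquet directions with $\Re(\rho_{a,0,k})=0$. By Proposition~\ref{isolatedindicialroot} and Lemma~\ref{indicialset}(i), $0\in\mathfrak{I}^a_0$ is isolated with algebraic multiplicity exactly two, so precisely two of the four Floquet exponents lie on the imaginary axis and the remaining two have strictly nonzero real part. The two purely imaginary directions are accounted for by the explicit geometric Jacobi fields of Remark~\ref{geoemtricjacobifields}: $\phi^+_{a,0}=\partial_T v_{a,T}$ is $T_a$-periodic and therefore bounded, while $\phi^-_{a,0}=\partial_a v_{a,T}$ carries a linear-in-$t$ contribution coming from the $a$-dependence of the period $T_a$ (this is the classical Delaunay-type phenomenon, inherited from the $2\times 2$ Jordan block at the zero Floquet multiplier). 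These two solutions form a basis of the subspace of $\ker \mathcal{L}^a_0$ consisting of solutions whose growth as $t\to\infty$ is at most linear, which is the basis the statement asserts. The remaining two Floquet solutions grow or decay exponentially and are excluded from this basis.

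For part~(ii), I invoke Step 3 of Claim~4 in the proof of Proposition~\ref{isolatedindicialroot} together with Lemma~\ref{indicialset}(ii)--(iii): for every $j\geqslant 1$ each spectral band $\mathfrak{B}_k(a,j)$ stays strictly positive, equivalently $0\notin\mathfrak{I}^a_j$, and in fact the closest indicial root satisfies $|\beta_{a,j}|\geqslant 1$ with equality only at $j=1$ (realized by the geometric Jacobi fields $\phi^\pm_{a,j}$ of Remark~\ref{geoemtricjacobifields}). Consequently all four Floquet exponents $\rho_{a,j,k}$ have nonzero real part, and since there is no zero-exponent Jordan block the four corresponding Floquet solutions are genuinely exponentially growing or decaying. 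They provide the desired four-element basis of $\ker \mathcal{L}^a_j$.

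The principal obstacle is the dimension count in~(i): I must exclude the possibility that the zero Floquet multiplier produces a Jordan block of size three or four, which would contaminate the basis with quadratically or cubically growing solutions. This is where Proposition~\ref{isolatedindicialroot} is indispensable, as it forces the algebraic multiplicity of $0$ in $\mathfrak{I}^a_0$ to be exactly two, matching the two geometric Jacobi fields $\phi^\pm_{a,0}$ produced explicitly. The Jordan structure at $0$ therefore consists of a single $2\times 2$ block, which is precisely what yields the dichotomy bounded/at most linearly growing. A secondary point is to observe that, since the Fourier decomposition in Subsection~\ref{subsec:jacobifields} diagonalizes $\mathcal{L}^a$ into the $\mathcal{L}^a_j$, there is no mixing between the $j=0$ projection and the exponentially growing high-frequency modes, so the count is purely a statement about the one-dimensional ODE.
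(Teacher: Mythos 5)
Your overall Floquet/monodromy strategy is the one the paper itself sketches in Subsection~\ref{subsec:growthproperties} (the matrix $N_{a,j,n}$, the monodromy matrix $M_{a,j,n}$, Abel's identity $\det M_{a,j,n}=1$, Floquet exponents paired by conjugation), and for part~(ii) your citation of Lemma~\ref{indicialset} matches the paper's one-line proof exactly. For part~(i), however, the paper abandons the direct Floquet route and instead proves the two-element basis via the Fourier--Laplace / analytic-Fredholm machinery: the unnumbered asymptotic-expansion proposition (contour-shifting past $\rho=0$) feeding into Corollary~\ref{cor:asymptoticexpansion}, which decomposes any tempered zero-frequency solution of $\mathcal{L}^a(\phi)=\psi$ into a remainder in $H^4_{-\beta}$ plus a piece lying in the two-dimensional deficiency space $D_{a,0}=\vspan\{\Phi^+_{a,0},\Phi^-_{a,0}\}$. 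The paper's proof of Corollary~\ref{scalargrowthproperties}(i) is literally just a citation of that corollary.

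The gap in your argument is in the dimension count for $j=0$. You correctly identify that the crux is ruling out a Jordan block of size $3$ or $4$ at the zero Floquet exponent (which would produce quadratically or cubically growing zero-frequency Jacobi fields in addition to $\phi^{\pm}_{a,0}$). But you attribute this to Proposition~\ref{isolatedindicialroot} plus Lemma~\ref{indicialset}(i), and neither statement asserts what you need. Proposition~\ref{isolatedindicialroot} only says $0$ is an \emph{isolated point} of $\mathfrak{I}^a$; if all four Floquet exponents for $j=0$ had vanishing real part (either a $4\times4$ Jordan block at multiplier $1$, or the remaining reciprocal pair $\mu_3,\mu_4=\mu_3^{-1}=\overline{\mu_3}$ landing elsewhere on the unit circle, both of which are consistent with $\det M_{a,0,n}=1$), then $0$ would still be the unique and hence trivially isolated indicial root at frequency zero. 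Lemma~\ref{indicialset}(i) only records $0\in\mathfrak{I}^a_0$, and its proof exhibits $\phi^+_{a,0}$ and $\phi^-_{a,0}$, which establishes multiplicity $\geqslant 2$ but not $=2$. To close the gap you must either (a) carry out the missing Jordan-block argument, for instance combining $\det M_{a,0,n}=1$ with the strict spectral-band estimate $\sigma_0(a,0,0)<0$ from Step~1 of Claim~4 and the positivity $\mathfrak{B}_k(a,0)\subset(0,\infty)$ for $k\geqslant 3$ from Step~2 to pin down exactly which bands contain $0$, or (b) route through Corollary~\ref{cor:asymptoticexpansion} as the paper does, so that the residue of $\widetilde{\mathcal{G}}^a(\rho)$ at the pole $\rho=0$ is identified with $D_{a,0}$. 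As written, the claim "algebraic multiplicity exactly two" is an unsupported assertion, not a consequence of the cited propositions.
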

	
	\begin{proof}
		For (i), we use Corollary~\ref{cor:asymptoticexpansion}.  
		Notice that (ii) follows directly from Lemma~\ref{indicialset}.
	\end{proof}
	
	When $p>1$, we can use a similar strategy as before to study the solutions to \eqref{jacobisystem}. For $p=1$, we have constructed a Jacobi field basis with four elements (two in the zero-frequency case). Now we must find a base with $4p$ elements ($2p$ in the zero-frequency case), sharing the same growth properties in Corollary~\ref{scalargrowthproperties}. 
	
	\begin{proof}[Proof of Proposition~\ref{growthpropertiessystem}]
		First, notice that by Theorem \ref{thm:andrade-doo19}, there exist $a\in[0,a_0]$, $T\in[0,T_a]$ and $\Lambda^*\in\mathbb{S}_+^{p-1}$ only provides $p+1$ families of solutions given by 
		\begin{equation*}
			T\mapsto\Lambda^* v_{a}(t+T), \quad a\mapsto\Lambda^* v_{a,T}(t), \quad  \mbox{and} \quad \theta\mapsto\Lambda^*(\theta)v_{a,T}(t),
		\end{equation*} 
		which by differentiation gives rise to some elements of the basis. 
		\begin{equation*}
			\Lambda^*\partial_T\big|_{T=0} v_{a,T}(t), \quad \Lambda^*\frac{1}{a}\partial_a\big|_{a=0} v_{a,T}(t), \quad \mbox{and} \quad \partial_{\theta_i}\Lambda^*(\theta)v_{a,T}(t),
		\end{equation*}
		for $i=1,\dots,p-1$. 
		Second, to construct all the Jacobi fields basis, let us consider $\{{\bf e}_i\}_{i\in I}\subset\mathbb{S}^{p-1}$ a linearly independent set in $\mathbb{R}^p$ with ${\bf e}_1=\Lambda^*$, which provides four families with $p$ Jacobi fields each given by
		\begin{equation*}
			\Phi^{+}_{a,j,i}={\bf e}_i\phi^+_{a,j}, \quad \Phi^{-}_{a,j,i}={\bf e}_i\phi^-_{a,j}, \quad \widetilde{\Phi}^{+}_{a,j,i}={\bf e}_i\widetilde{\phi}^+_{a,j}, \quad \mbox{and} \quad \widetilde{\Phi}^-_{a,j,i}={\bf e}_i\widetilde{\phi}^-_{a,j}.
		\end{equation*}
		Then, using Theorem~\ref{thm:andrade-doo19}, it is easy to check that $\mathcal{B}^a_j=\cup_{i\in I}\{\Phi^{\pm}_{a,j,i},\widetilde{\Phi}^{\pm}_{a,j,i}\}$ is a basis to the kernel of $\mathcal{L}^a_j$ with $4p$ elements for each $j\geqslant1$, and $\mathcal{B}^a_0=\cup_{i\in I}\{\Phi^{\pm}_{a,0,i}\}$ a basis with $2p$ elements when $j=0$.
	\end{proof}
	
	\section{Qualitative properties and a priori estimates}\label{sec:qualitativeproperties}
	This section is devoted to prove Proposition~\ref{estimates}. More precisely, we show that solutions to \eqref{oursystem} are asymptotic radially symmetric and satisfy an upper and lower bound estimate near the isolated singularity. 
	Our strategy is to convert our system of differential equations into a system of integral equations.
	Then, we use the Kelvin transform to perform a moving sphere technique and the Pohozaev invariant together with a barrier construction to obtain the a priori estimates.
	Here we are inspired in some techniques from \cite{arxiv:1901.01678} (see also \cite{MR3626036,MR3694645,arXiv:1810.02752v6}).
	The main difference to the proofs in \cite{arxiv:1901.01678} is that we need to deal with many components of System \eqref{oursystem}, coupled by the Gross--Pitaevskii nonlinearity.
	
	\subsection{Integral representation formulas}
	Now we use a Green identity to transform the fourth order differential system  \eqref{oursystem} into an integral system.
	In this way, we can avoid using the classical form of the maximum principle, and a sliding method is available \cite{arxiv:1901.01678,MR2055032,MR3558255}, which will be used to classify solutions. 	
	Besides, in this setting it is also possible to prove regularity through a barrier construction.
	
	For $n\geqslant3$, the following expression for the Green function of the Laplacian in the unit ball is well-known.
	\begin{equation*}
		G_{1}(x, y)=\frac{1}{(n-2) \omega_{n-1}}\left(|x-y|^{2-n}-\left|\frac{x}{|x|}- x|y|\right|^{2-n}\right),
	\end{equation*}
	where $\omega_{n-1}$ is the surface area of the Euclidean unit sphere. 
	In addition, for any $u \in C^{2}\left(B_{1}\right) \cap C\left(\bar{B}_{1}\right)$, the next decomposition holds,
	\begin{equation*}
		u(x)=-\int_{B_{1}} G_{1}(x, y)\Delta u(y) \mathrm{d} y+\int_{\partial B_{1}} H_{1}(x, y) u(y) \mathrm{d} \sigma_{y},
	\end{equation*}
	where
	\begin{equation*}
		H_{1}(x, y)=-{\partial_{v_{y}}} G_{1}(x, y)=\frac{1-|x|^{2}}{\omega_{n-1}|x-y|^{n}} \quad \mbox{for} \quad x \in B_{1} \quad \mbox{and} \quad y \in \partial B_{1},
	\end{equation*}
	with ${v_{y}}$ the outward normal vector at $y$.
	
	Similarly, in the fourth order case with $n\geqslant5$, for any $u\in C^{4}\left(B_{1}\right)\cap C^{2}\left(\bar{B}_{1}\right)$, it follows
	\begin{equation*}
		u(x)=\int_{B_{1}} G_{2}(x, y)\Delta^{2} u(y) \ud y+\int_{\partial B_{1}} H_{1}(x, y)u(y) \ud \sigma_{y}-\int_{\partial B_{1}} H_{2}(x, y)\Delta u(y) \ud \sigma_{y},
	\end{equation*}
	where
	\begin{equation*}
		G_{2}(x, y)=\int_{B_{1} \times B_{1}} G_{1}\left(x, y_{1}\right) G_{1}\left(y_{1}, y\right) \ud y_{1}
	\end{equation*}
	and
	\begin{equation*}
		H_{2}(x, y)=\int_{B_{1} \times B_{1}} G_{1}\left(x, y_{1}\right) H_{1}\left(y_1, y\right) \mathrm{d} y_{1}.
	\end{equation*}
	By a direct computation, we have
	\begin{equation}\label{greenfunction}
		G_{2}(x, y)=C(n, 2)|x-y|^{4-n}-A(x, y),
	\end{equation}
	where $C(n, 2)=\frac{\Gamma(n-4)}{2^{4} \pi^{n / 2} \Gamma(2)}$, $A:B_1\times B_1\rightarrow\mathbb{R}$ is a smooth map and $H_{i}(x, y) \geqslant 0$ for $i=1,2$.
	
	In the next lemma, we provide 
	basic integrability for singular solutions to \eqref{oursystem} when $R<\infty$ and $s>1$, which yields a weaker formulation to \eqref{oursystem}. Here, the proof is similar to the one in \cite[Lemma~3.1]{MR4123335}. Nevertheless, we included the proof for the sake of completeness.
	
	\begin{lemma}\label{lm:integrability}
		Let $s\in(1,\infty)$ and $\mathcal{U}$ be a nonnegative singular solution to \eqref{oursystem}. Then, $\mathcal{U} \in L^{s}\left(B_{1},\mathbb{R}^{p}\right)$. 
		In particular, if $s\in(2_{**},\infty)$, then $\mathcal{U}$ is a
		distribution solution to \begin{equation}\label{subcriticalsystem}
			\Delta^2u_i=f^s_i(\mathcal{U}) \quad {\rm in} \quad B^*_{1},
		\end{equation}
		where $f^s_i(\mathcal{U}):=c(n,s)|\mathcal{U}|^{s-1}u_i$.
	\end{lemma}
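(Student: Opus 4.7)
The argument rests on the Green representation for the bi-Laplacian combined with a Hardy--Littlewood--Sobolev bootstrap on the integrability exponent. Let me fix $r\in(0,1)$ and work on the punctured ball $B_r^\ast$. Applying the fourth-order Green identity on $B_r$ to each component gives, for $x\in B_r^\ast$,
$$
u_i(x)=\int_{B_r}G_{2,r}(x,y)\,\Delta^2u_i(y)\,dy+\int_{\partial B_r}H_1(x,y)u_i(y)\,d\sigma_y-\int_{\partial B_r}H_2(x,y)\Delta u_i(y)\,d\sigma_y.
$$
Because the singularity is confined to the origin, $u_i$ is smooth in a neighborhood of $\partial B_r$ and the two boundary terms are harmonic on $B_r$ and uniformly bounded on $B_{r/2}$. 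Substituting the equation and using the decomposition \eqref{greenfunction} together with $G_{2,r}(x,y)\leq C|x-y|^{4-n}$ yields the pointwise majorization
$$
0\leq u_i(x)\leq C\int_{B_r}|x-y|^{4-n}\,|\mathcal{U}(y)|^{2^{**}-2}u_i(y)\,dy+C_0 \qquad\text{on } B_{r/2}^\ast.
$$

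Next, I would bootstrap integrability via Hardy--Littlewood--Sobolev. Assuming inductively that $\mathcal{U}\in L^{q_k}(B_r,\mathbb{R}^p)$, the nonlinear source $|\mathcal{U}|^{2^{**}-1}$ belongs to $L^{q_k/(2^{**}-1)}$, and the Riesz potential of order $4$ raises integrability to $L^{q_{k+1}}$ with $1/q_{k+1}=(2^{**}-1)/q_k-4/n$, whenever this quantity is positive. The recursion produces a strictly increasing sequence of admissible exponents, so that after finitely many iterations one reaches any prescribed $s$ in the stated range. The base case $q_0>1$ is obtained from a crude Fatou/Pizzetti-type estimate exploiting the fact that $u_i\geq0$ and $\Delta^2 u_i\geq0$ in $B_1^\ast$, which controls the spherical averages of $u_i$ and places $u_i$ in $L^{q_0}_{\rm loc}(B_1)$ for some $q_0>1$ without invoking any sharp pointwise bound.

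Finally, for the distributional formulation, once $\mathcal{U}\in L^s(B_1,\mathbb{R}^p)$ with $s>2_{**}$, the nonlinear source $|\mathcal{U}|^{s-1}u_i$ lies in $L^1_{\rm loc}(B_1)$. Given $\phi\in C_c^\infty(B_1)$, choose a cutoff $\eta_\varepsilon\in C_c^\infty(B_1\setminus\overline{B_{\varepsilon/2}})$ equal to one outside $B_\varepsilon$. Classical integration by parts on the support of $\eta_\varepsilon\phi$, where $\mathcal{U}$ is $C^4$, yields the identity
$$
\int u_i\,\Delta^2(\eta_\varepsilon\phi)\,dx=\int c(n,s)|\mathcal{U}|^{s-1}u_i\,\eta_\varepsilon\phi\,dx,
$$
and the desired weak formulation follows by dominated convergence as $\varepsilon\to0$, the error terms involving derivatives of $\eta_\varepsilon$ being controlled by $|\Delta^{2}\eta_\varepsilon|\lesssim\varepsilon^{-4}\mathbf{1}_{B_\varepsilon\setminus B_{\varepsilon/2}}$ combined with the $L^1$ integrability of $u_i$ near the origin.

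I expect the main obstacle to be the base case of the bootstrap: extracting a first $L^{q_0}$ estimate on $\mathcal{U}$ near the isolated singularity without invoking the sharp upper bound \eqref{estimatesorigin}, which is precisely the target of Proposition~\ref{estimates} proved later. The natural way out is a removable-singularity type argument applied to the nonnegative distribution $\Delta^2 u_i$, showing that $u_i$ extends through the origin as a distribution on $B_1$ modulo a point-mass correction of finite order; once this initial integrability is secured, the Riesz-potential iteration closes and both parts of the lemma follow.
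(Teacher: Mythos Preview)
Your approach differs substantially from the paper's and has two genuine gaps.

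\textbf{First gap: circularity in the Green representation.} You begin by writing the bi-Laplacian Green identity on the full ball $B_r$, but $u_i$ is singular at the origin, so this identity is not available until you already know enough integrability of $\Delta^2 u_i = c(n)|\mathcal U|^{2^{**}-2}u_i$ across the origin. In the paper this representation is precisely the content of the \emph{next} lemma (Lemma~\ref{integrability}), and its proof explicitly uses the $L^s$ bound you are trying to establish here. Your proposed fix---a vague removable-singularity argument for the distribution $\Delta^2 u_i$---does not resolve this, because controlling a biharmonic distribution modulo point masses still requires a priori knowledge that the source term is integrable. The base case $\mathcal U\in L^{q_0}$ with $q_0>1$ is not a technicality: it is the entire content of the lemma, and your bootstrap never gets off the ground.

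\textbf{Second gap: $L^1$ is not enough for the error term.} In the distributional part you claim the commutator terms $\int u_i\,\Delta^2\!\big((\eta_\varepsilon-1)\phi\big)\,dx$ vanish because $|\Delta^2\eta_\varepsilon|\lesssim\varepsilon^{-4}\mathbf{1}_{B_\varepsilon\setminus B_{\varepsilon/2}}$ and $u_i\in L^1$. But $L^1$ alone gives only $\varepsilon^{-4}\int_{B_{2\varepsilon}}u_i$, which need not tend to zero. The paper controls this term via H\"older against the $L^s$ norm established in the first part: $\varepsilon^{-4}\varepsilon^{n(1-1/s)}\big(\int_{\{\varepsilon\leq|x|\leq 2\varepsilon\}}|\mathcal U|^{s}\big)^{1/s}\to 0$.

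\textbf{What the paper actually does.} The proof avoids both issues by a direct self-bounding cutoff argument in the spirit of Gidas--Spruck. One multiplies the equation by $\xi_\varepsilon=\eta_\varepsilon^{m}$ with $m$ chosen so that $|\Delta^2\xi_\varepsilon|\leq C\varepsilon^{-4}\xi_\varepsilon^{1/s}\chi_{\{\varepsilon\leq|x|\leq 2\varepsilon\}}$, integrates by parts on $B_r$, and applies H\"older to obtain
\[
\int_{B_r}|\mathcal U|^{s-1}u_i\,\xi_\varepsilon\,dx\;\leq\;\int_{\partial B_r}\partial_\nu\Delta u_i\,d\sigma\;+\;C\Big(\int_{\{\varepsilon\leq|x|\leq 2\varepsilon\}}|\mathcal U|^{s-1}u_i\,\xi_\varepsilon\,dx\Big)^{1/s}.
\]
Since the right-hand side is bounded by a constant plus a sublinear power of the left-hand side, the integral is uniformly bounded in $\varepsilon$, and Fatou gives $|\mathcal U|\in L^{s}(B_r)$ directly---no bootstrap, no Green kernel, no base case needed. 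The distributional statement then follows by the cutoff argument, using this $L^s$ bound (not merely $L^1$) to kill the commutator.
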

	
	\begin{proof}
		For any $0<\varepsilon \ll 1$, let us consider $\eta_{\varepsilon}\in C^{\infty}\left(\mathbb{R}^{n}\right)$ with $0\leqslant\eta_{\varepsilon}\leqslant1$ satisfying
		\begin{equation}\label{cutoff}
			\eta_{\varepsilon}(x)=
			\begin{cases}
				0, & \mbox{if} \ |x| \leqslant \varepsilon\\
				1, & \mbox{if} \ |x| \geqslant 2 \varepsilon,
			\end{cases}
		\end{equation}
		and $|D^{(j)} \eta_{\varepsilon}(x)| \leqslant C \varepsilon^{-j}$ for $j\geqslant1$.
		Define $\xi_{\varepsilon}=\left(\eta_{\varepsilon}\right)^{m}$, where  $m=s\gamma(s)$. Multiplying \eqref{oursystem} by $\xi_{\varepsilon}$, and integrating by parts in $B_{r}$ with $r\in(1/2,1)$, we obtain
		\begin{equation*}
			\int_{B_{r}} |\mathcal{U}|^{s-1}u_i \xi_{\varepsilon}\ud x =\int_{\partial B_{r}} {\partial_\nu} \Delta u_i\ud\sigma_r+\int_{B_{r}} u_i \Delta^{2} \xi_{\varepsilon}\ud x \quad \mbox{for all} \quad i\in I.
		\end{equation*}
		On the other hand, there exists $C>0$ such that
		\begin{equation*}
			\left|\Delta^{2} \xi_{\varepsilon}\right| \leqslant C \varepsilon^{-4}\left(\eta_{\varepsilon}\right)^{m-4} \chi_{\{\varepsilon \leqslant|x| \leqslant 2 \varepsilon\}}=C \varepsilon^{-4}\left(\xi_{\varepsilon}\right)^{1/s} \chi_{\{\varepsilon \leqslant|x| \leqslant 2 \varepsilon\}},
		\end{equation*}
		which, by H\"{o}lder's inequality, gives us
		\begin{align*}
			\left|\int_{B_{r}} u_i \Delta^{2} \xi_{\varepsilon}\ud x\right|
			&\leqslant C \varepsilon^{-4} \int_{\{\varepsilon \leqslant|x| \leqslant 2 \varepsilon\}} u_i\xi_{\varepsilon}^{1/s}\ud x \\
			& \leqslant C \varepsilon^{-4} \varepsilon^{n(1-1/s)}\left(\int_{\{\varepsilon \leqslant|x| \leqslant 2 \varepsilon\}} |\mathcal{U}|^{s-1}u_i \xi_{\varepsilon}\ud x\right)^{1/s} \\
			& \leqslant C\left(\int_{\{\varepsilon \leqslant|x| \leqslant 2 \varepsilon\}} |\mathcal{U}|^{s-1}u_i \xi_{\varepsilon}\ud x\right)^{1/s}.
		\end{align*}
		Thus, it follows
		\begin{equation*}
			\int_{B_{r}} |\mathcal{U}|^{s-1}u_i \xi_{\varepsilon}\ud x \leqslant \int_{\partial B_{r}} {\partial_\nu} \Delta u_i\ud\sigma_r+C\left(\int_{\{\varepsilon \leqslant|x| \leqslant 2 \varepsilon\}} |\mathcal{U}|^{s-1}u_i \xi_{\varepsilon}\ud x\right)^{1/s},
		\end{equation*}
		which provides a constant $C>0$ (independent of $\varepsilon$) such that
		\begin{equation*}
			\int_{B_{r}}|\mathcal{U}|^{s-1}u_i \xi_{\varepsilon}\ud x \leqslant C.
		\end{equation*}
		Now letting $\varepsilon \rightarrow 0$, since $u_i\leqslant|\mathcal{U}|$, we conclude that $u_i \in L^{s}\left(B_{r}\right)$ for all $i\in I$ and the integrability follows.
		
		We are left to show that $\mathcal{U}$ is a distribution solution to \eqref{oursystem}. For any $\Phi\in$ $C_{c}^{\infty}\left(B_{1},\mathbb{R}^p\right)$, we multiply \eqref{oursystem} by $\widetilde{\Phi}=\eta_{\varepsilon} \Phi$, where $\eta_{\varepsilon}$ is given by \eqref{cutoff}. Then, using that $|\mathcal{U}|\in L^{s}\left(B_{r}\right)$ and integrating by parts twice, we get
		\begin{equation}\label{yang1}
			\int_{B_{1}} \langle \mathcal{U},\Delta^{2}\left(\eta_{\varepsilon}\Phi\right)\rangle\ud x=\int_{B_{1}} \langle|\mathcal{U}|^{s-1}\mathcal{U},\eta_{\varepsilon}\Phi\rangle\ud x.
		\end{equation}
		By a direct computation, we find that $\Delta^{2}\left(\eta_{\varepsilon} \phi_i\right)=\eta_{\varepsilon} \Delta^{2}\phi_i+\varsigma^{\varepsilon}_i$, where
		\begin{equation*}
			\varsigma^{\varepsilon}_i=4 \langle\nabla \eta_{\varepsilon},\nabla \Delta\phi_i\rangle+2\Delta \eta_{\varepsilon} \Delta\phi_i+4 \Delta\eta_{\varepsilon}\Delta\phi_i+4\langle\nabla \Delta \eta_{\varepsilon},\nabla \phi_i\rangle+\phi_i\Delta^{2} \eta_{\varepsilon}.
		\end{equation*}
		Furthermore, using H\"{o}lder's inequality again, we find
		\begin{align*}
			\left|\int_{B_{1}} \langle \mathcal{U},\Psi_{\varepsilon}\rangle\ud x\right| \leqslant C\left(\int_{\{\varepsilon\leqslant|x| \leqslant 2 \varepsilon\}} |\mathcal{U}|^{s-1}u_i\ud x\right)^{1/s} \leqslant C\left(\int_{\{\varepsilon\leqslant|x| \leqslant 2 \varepsilon\}} |\mathcal{U}|^{s}\ud x\right)^{1/s} \rightarrow 0 
		\end{align*}
		as $\varepsilon \rightarrow 0$, where $\Psi_{\varepsilon}=(\varsigma^{\varepsilon}_1,\dots,\varsigma^{\varepsilon}_p)\in C_c^{\infty}(B_1,\mathbb{R}^p)$.
		Finally, letting $\varepsilon \rightarrow 0$ in \eqref{yang1}, and applying the dominated convergence theorem the proof follows.
	\end{proof}
	
	In the following lemma, we employ some ideas due to L. Caffarelli et al. \cite{MR982351} and L. Sun and J. Xiong \cite{MR3558255}. 
	
	\begin{lemma}\label{integrability}
		Let $s\in(2_{**},\infty)$ and  $\mathcal{U}\in C^{4}(\bar{B}_1^*,\mathbb{R}^p)\cap L^{1}(B_{1},\mathbb{R}^p)$ be a nonnegative singular
		solution to \eqref{subcriticalsystem}.
		 Then, $|x|^{-q} u_i^{s}\in L^{1}\left(B_{1}\right)$ for any $q<n-{4s}/{(s-1)}$. Moreover,
		\begin{equation*}
			u_i(x)=\int_{B_{1}} G_{2}(x, y)\Delta^{2} u_i(y) \ud y+\int_{\partial B_{1}} H_{1}(x, y)u_i(y) \ud \sigma_{y}-\int_{\partial B_{1}} H_{2}(x, y)\Delta u_i(y) \ud \sigma_{y}.
		\end{equation*}
	\end{lemma}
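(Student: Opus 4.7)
The plan is to prove the lemma in two stages: first the weighted integrability by adapting the test-function argument of Lemma~\ref{lm:integrability} to a singular weight, then the Green representation formula as a consequence via classical biharmonic potential theory.

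For the integrability I would test \eqref{subcriticalsystem} against $\varphi_\varepsilon(x)=|x|^{-q}\xi_\varepsilon(x)$ with $\xi_\varepsilon=(\eta_\varepsilon)^{m}$, where $\eta_\varepsilon$ is the cutoff from \eqref{cutoff} and $m$ is taken sufficiently large (so that $(\eta_\varepsilon)^{m-4}\leqslant\xi_\varepsilon^{1/s}$). Integration by parts on $B_r$ with $r\in(1/2,1)$ produces three kinds of terms: boundary integrals on $\partial B_r$, uniformly bounded by the regularity hypothesis $\mathcal{U}\in C^{4}(\bar{B}_1^{*},\mathbb{R}^p)$; terms supported on the annulus $\{\varepsilon\leqslant|x|\leqslant 2\varepsilon\}$ coming from derivatives of $\xi_\varepsilon$, handled exactly as in Lemma~\ref{lm:integrability}; and the main bulk contribution $\int_{B_r}u_i\xi_\varepsilon\Delta^{2}(|x|^{-q})\ud x$. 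Since $\Delta^{2}(|x|^{-q})=C(n,q)|x|^{-q-4}$ for $x\neq 0$, H\"older's inequality with the conjugate pair $(s,s/(s-1))$ gives
\begin{equation*}
\left|\int_{B_r}u_i\xi_\varepsilon\Delta^{2}(|x|^{-q})\ud x\right|\leqslant C\left(\int_{B_r}|x|^{-q}u_i^{s}\xi_\varepsilon\ud x\right)^{1/s}\left(\int_{B_r}|x|^{-q-\frac{4s}{s-1}}\ud x\right)^{\frac{s-1}{s}}.
\end{equation*}
The second factor is finite precisely when $q<n-4s/(s-1)$, so using $|\mathcal{U}|^{s-1}u_i\geqslant u_i^{s}$ I absorb the first factor into the left-hand side, then send $\varepsilon\to 0$ to conclude the weighted integrability.

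With the bound in hand (applied with $q=0$, admissible since $s>2_{**}$ forces $n-4s/(s-1)>0$, yielding $\Delta^{2}u_i\in L^{1}(B_1)$), the right-hand side $w_i$ of the representation formula becomes well-defined. Classical potential theory for the biharmonic operator with Navier data shows that $w_i$ satisfies $\Delta^{2}w_i=\Delta^{2}u_i$ in $\mathcal{D}'(B_1)$ together with $w_i|_{\partial B_1}=u_i$ and $\Delta w_i|_{\partial B_1}=\Delta u_i$. Therefore $h_i:=u_i-w_i$ is biharmonic on $B_1^{*}$, lies in $L^{1}(B_1)$, and has vanishing Navier boundary data on $\partial B_1$. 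Its distributional bi-Laplacian on $B_1$ is supported at the origin, hence a finite combination of derivatives of $\delta_0$; since derivatives of order $\geqslant 1$ of the fundamental solution $c_n|x|^{4-n}$ fail to be locally integrable, and the surviving $\delta_0$ contribution is ruled out by the zero Navier data together with the representation \eqref{greenfunction}, one concludes $h_i\equiv 0$. The main obstacle will be the bookkeeping in the weighted test-function step: the cross terms in $\Delta^{2}(|x|^{-q}\xi_\varepsilon)$ couple the singular weight with the cutoff, and the sharp exponent $4s/(s-1)$ in H\"older is precisely what makes the absorption go through at the threshold $q<n-4s/(s-1)$; the removable-singularity step is then a relatively standard consequence of $L^{1}$ integrability and Navier uniqueness for biharmonic distributions.
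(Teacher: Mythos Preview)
Your weighted-integrability argument is a legitimate alternative to the paper's and is in fact tidier: the paper bootstraps, testing against $\eta(\varepsilon^{-1}|x|)\,|x|^{4-q}$ and iterating the admissible exponent $q_k=[n(s-1)-4]\sum_{i=1}^{k}s^{-i}$ up to the limit $n-4s/(s-1)$, whereas you reach the sharp threshold in one pass by testing against $|x|^{-q}\xi_\varepsilon$ and using H\"older with conjugate exponents $(s,s/(s-1))$. The cross terms you flag are indeed harmless---on the annulus $\{\varepsilon\leqslant|x|\leqslant2\varepsilon\}$ they all scale like $\varepsilon^{-q-4}$, and the same H\"older splitting shows they are controlled by $(\int u_i^{s}|x|^{-q}\xi_\varepsilon)^{1/s}$ with a coefficient $\varepsilon^{(s-1)(n-q-4s/(s-1))/s}$ that stays bounded precisely for $q<n-4s/(s-1)$.

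The removable-singularity step, however, has a genuine gap. You argue that $\Delta^{2}h_i$ is a finite combination $\sum c_\alpha\partial^\alpha\delta_0$ and then claim that ``derivatives of order $\geqslant1$ of the fundamental solution $c_n|x|^{4-n}$ fail to be locally integrable''. This is false: $|\partial^\alpha|x|^{4-n}|\lesssim|x|^{4-n-|\alpha|}$, which lies in $L^{1}(B_1)$ for every $|\alpha|\leqslant3$ (the borderline is $|\alpha|=4$). So the $L^{1}$ hypothesis rules out nothing below order four, and in particular does not exclude $h_i=cG_2(\cdot,0)$, which has exactly the Navier data you are relying on. The paper closes this gap differently: it exploits that the Riesz potential $|x|^{4-n}$ is of weak type $(1,2^{**}/2)$ to show that both $\widehat{u}_i$ and $u_i$---hence $h_i$---satisfy the \emph{vanishing} weak-$L^{n/(n-4)}$ bound $\mathcal{L}^n(\{|h_i|>M\}\cap B_\rho)\leqslant\varepsilon M^{-n/(n-4)}$ for $\rho$ small; this is exactly the borderline space in which $|x|^{4-n}$ sits with a fixed nonzero constant, so the vanishing condition kills all point-supported contributions. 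The conclusion $\Delta^{2}h_i=0$ in $B_1$ then follows from the B\^ocher theorem for polyharmonic functions \cite{MR1820695}, after which the maximum principle with Navier data finishes. To repair your argument you need either this weak-$L^{n/(n-4)}$ step or an equivalent quantitative input beyond mere $L^{1}$ membership.
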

	
	\begin{proof}
		First, by Lemma~\ref{lm:integrability}, we have that $|\mathcal{U}|\in L^{s}\left(B_{1},\mathbb{R}^{p}\right)$ and $\mathcal{U}$ is a distribuitional solution in $B_1$.
		Let $\eta\in C^{\infty}(\mathbb{R})$ such that $\eta(t)=0$ for $t\leqslant 1$, $\eta(t)=1$ for $t \geqslant 2$ and $0 \leqslant\eta\leqslant 1$ for $1\leqslant t\leqslant 2$. For small $\varepsilon>0$, plugging $\eta\left(\varepsilon^{-1}|x|\right)|x|^{4-q}$ into \eqref{subcriticalsystem}, and using integration by parts, it holds
		\begin{equation}\label{integrality1}
			\int_{B_{1}} f^s_i(\mathcal{U})\eta\left(\varepsilon^{-1}|x|\right)|x|^{4-q}\ud x=\int_{B_{1}} u_i\Delta^{2}\left(\eta\left(\varepsilon^{-1}|x|\right)|x|^{4-q}\right)\ud x+\int_{\partial B_{1}} G(u_i) \ud \sigma_y,
		\end{equation}
		where $G(u_i)$ involves $u_i$ and its derivatives up to third order. Taking $q=q_{0}=0$ in \eqref{integrality1}, we have that  $\int_{B_{1}} u_i^{s}|x|^{4{\left(-q_{1}-4/s\right)}}\ud x<\infty$ as $\varepsilon \rightarrow 0$, since by hypothesis $u_i \in L^{1}\left(B_{1}\right)$ for all $i\in I$. Moreover, by H\"{o}lder inequality, if $0<q_{1}<[n(s-1)-4]/s$, we have
		\begin{align*}
			\int_{B_{1}} u_i|x|^{-q_{1}}\ud x
			&=\int_{B_{1}} u_i|x|^{4/s}|x|^{\left(-q_{1}-4/s\right)} \ud x\leqslant\left(\int_{B_{1}} u_i^{s}|x|^{4}\ud x\right)^{1/s}\left(\int_{B_{1}}|x|^{-\frac{4+s q_{1}}{s-1}}\ud x\right)^{(s-1)/s}<\infty.&
		\end{align*}
		Using that $q=q_{1}$ in \eqref{integrality1} and taking the limit $\varepsilon\rightarrow0$, we obtain $\int_{B_{1}} u^{s}|x|^{4-q_{1}}\ud x<\infty$. By the same argument, we find  $\int_{B_{1}} u|x|^{-q_{2}}\ud x<\infty$, if $q_{2}<[n(s-1)-4](s^{-1}+s^{-2})$.
		Iterating this procedure, we get
		\begin{equation*}
			\int_{B_{1}} u_i|x|^{-q_{k}}\ud x<\infty \quad \mbox{and} \quad \int_{B_{1}} u_i^{s}|x|^{4-q_{k}}\ud x<\infty,
		\end{equation*}
		if $0<q_{k}:=[n(s-1)-4]\left(\sum_{i=1}^{k}s^{-i}\right)$ for $k\in\mathbb{N}$, which proves the first statement.
		
		Next, let us consider
		\begin{equation*}
			\widehat{u}_i(x)=\int_{B_{1}} G_{2}(x, y)\Delta^{2} u_i(y) \ud y+\int_{\partial B_{1}} H_{1}(x, y)u_i(y) \ud \sigma_{y}-\int_{\partial B_{1}} H_{2}(x, y)\Delta u_i(y) \ud \sigma_{y}.
		\end{equation*}
		and $h_i=u_i-\widehat{u}_i$ for $i\in I$ . 
		Thus, $\Delta^{2} h_i=0$ in $B^*_{1}$ because $u_i^{s} \in L^{1}\left(B_{1}\right)$ and $\widehat{u}_i\in L_{{\rm weak}}^{2^{**}/2}\left(B_{1}\right)\cap L^{1}\left(B_{1}\right)$. 
		Furthermore, since the Riesz potential $|x|^{4-n}$ is of weak type $\left(1,2^{**}/2\right)$ (see \cite[Chapter~9]{MR1814364}), for every $0<\varepsilon\ll1$, we can choose $\rho>0$
		such that $\int_{B_{2\rho}} f_i(\mathcal{U}(x))\ud x<\varepsilon$, which implies that for all $M\gg1$,
		\begin{align*}
			&\mathcal{L}^n\left(\{x \in B_{\rho} : |\widehat{u}_i(x)|>M\}\right)&\\ &\leqslant\mathcal{L}^n\left(\left\{x \in B_{\rho} : \int_{B_{2\rho}} G_{2}(x, y) f_i(\mathcal{U}(y))\ud y>\mu / 2\right\}\right)\leqslant\varepsilon M^{-2^{**}/2}.&
		\end{align*}
		Hence, $h_i\in L_{{\rm weak}}^{2^{**}/2}\left(B_{1}\right)\cap L^{1}\left(B_{1}\right)$ for all $i\in I$ and for every $0<\varepsilon\ll1$, there exists $\rho>0$ such that for all
		sufficiently large $\mu\gg1$, it holds
		\begin{align*}
			&\mathcal{L}^n\left(\{x \in B_{\rho}:|h_i(x)|>M\}\right)&\\
			&\leqslant\mathcal{L}^n\left(\{x \in B_{\rho}:|u_i(x)|>M/2|\}\right)+\mathcal{L}^n\left(\{x \in B_{\rho}:|\widehat{u}_i(x)|>M/2\}\right)\leqslant\varepsilon M
			^{-2^{**}/2}.&
		\end{align*}
		Using the B\^ocher theorem for biharmonic functions from \cite{MR1820695}, we obtain that $\Delta^2 h_i=0$ in $B_{1}$. Finally, since $h_i=\Delta h_i=0$ on $\partial B_{1}$, by the maximum principle, we have that $h_i \equiv 0$, and thus $u_i=\widehat{u}_i$, which finishes the proof of the second part.
	\end{proof}
	
	\begin{remark}
		For any $\mathcal{U} \in C^{2}\left(B^*_{1},\mathbb{R}^p\right)$ be a nonnegative superharmonic $p$-map, it follows $\mathcal{U}\in L^{1}\left(B_{1/2},\mathbb{R}^p\right)$. Indeed, for $0<r<1$ and $i\in I$, we have $\left(r^{n-1} \overline{u}_i^{(1)}\right)^{(1)}\leqslant0$,
		where $\overline{u}_i(r)=\avint_{\partial B_{1}} u_i(r \theta)\ud\theta$, which by a direct integration implies that
		\begin{equation*}
			\overline{u}_i(r)\leqslant C\left(r^{2-n}+1\right).
		\end{equation*} 
	\end{remark}
	
	In the next proposition, we use the Green identity to convert \eqref{oursystem} into an integral system, which is the main result of this section.
	Here the superharmonicity condition is assumed.
	
	\begin{proposition}\label{lm:integralrepresentation}
		Let $s\in(1,2^{**}-1]$ and $\mathcal{U}$ be a nonnegative superharmonic solution to \eqref{oursystem}. Then, there exists $r_0>0$ such that
		\begin{equation}\label{integralsystem}
			u_i(x)=\int_{B_{r_0}}|x-y|^{4-n}f^s_i(\mathcal{U}) \ud y+\psi_i(x) \quad {\rm in} \quad B_R^*,
		\end{equation}
		where $\psi_i>0$ satisfies $\Delta^{2} \psi_i=0$ in $B_{r_0}$. Moreover, one can find a constant $C(\widetilde{r})>0$ such that
		\begin{equation}\label{logestimate}
			\|\nabla \ln \psi_i\|_{C^{0}\left(B_{\widetilde{r}}\right)} \leqslant C\left(\widetilde{r}\right) \quad {\rm for \ all} \quad i\in I \quad {\rm and} \quad 0<\widetilde{r}<r_0.
		\end{equation}  
	\end{proposition}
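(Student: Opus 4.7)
The plan is to convert \eqref{oursystem} into an integral equation via a Green identity on a small ball and then isolate the bilaplacian fundamental solution. First I would fix $r_0\in(0,1)$ and use that the superharmonicity hypothesis, together with the remark following Lemma~\ref{integrability}, gives $u_i\in L^1_{\loc}(B_1)$. This allows me to invoke (a scaled version of) Lemma~\ref{integrability} on $B_{r_0}$ and write
\[
u_i(x)=\int_{B_{r_0}} G_2^{r_0}(x,y)\,f^s_i(\mathcal U(y))\,\ud y+\Phi_i(x),
\]
where $G_2^{r_0}$ is the Navier Green function of $\Delta^2$ on $B_{r_0}$ and $\Phi_i(x):=\int_{\partial B_{r_0}}H_1^{r_0}(x,y)u_i\,\ud\sigma_y-\int_{\partial B_{r_0}}H_2^{r_0}(x,y)\Delta u_i\,\ud\sigma_y$ is biharmonic in $B_{r_0}$ and strictly positive, since $u_i>0$, $-\Delta u_i>0$ on $\partial B_{r_0}$ and $H_1^{r_0},H_2^{r_0}>0$.

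Next I would use the decomposition $G_2^{r_0}(x,y)=C(n,2)|x-y|^{4-n}-A^{r_0}(x,y)$ analogous to \eqref{greenfunction}, where $A^{r_0}(\cdot,y)$ is smooth on $\bar B_{r_0}$ and biharmonic in its first variable, to split off the Riesz kernel and arrive at \eqref{integralsystem} with the explicit form
\[
\psi_i(x)=\Phi_i(x)-\int_{B_{r_0}} A^{r_0}(x,y)\,f^s_i(\mathcal U(y))\,\ud y,
\]
which is biharmonic in $B_{r_0}$ as a sum of three biharmonic terms, so $\Delta^2\psi_i=0$.

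The strict positivity $\psi_i>0$ is the crux, and I would argue it via a cascaded maximum principle. Setting $w_i:=-\Delta\psi_i$, applying $-\Delta$ to \eqref{integralsystem} identifies $w_i$ as the harmonic correction that makes $-\Delta u_i$ equal to the Newtonian potential of $f^s_i$ plus a harmonic function; the positivity of the boundary data $u_i,-\Delta u_i>0$ on $\partial B_{r_0}$, combined with the Boggio-type positivity of $G_2^{r_0}$ on the ball, then forces $w_i\geqslant 0$ in $B_{r_0}$. In turn $\psi_i$ is superharmonic in $B_{r_0}$, and the strong maximum principle applied to $\psi_i$ with strictly positive boundary data (for $r_0$ sufficiently small) yields $\psi_i>0$.

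Finally, estimate \eqref{logestimate} is routine: biharmonicity gives $\psi_i\in C^\infty(B_{r_0})$, and strict positivity together with compactness of $\bar B_{\widetilde r}\subset B_{r_0}$ produces uniform bounds $0<c(\widetilde r)\leqslant\psi_i(x)\leqslant C(\widetilde r)$ and $|\nabla\psi_i(x)|\leqslant C(\widetilde r)$ for $x\in B_{\widetilde r}$, whence $\|\nabla\ln\psi_i\|_{C^0(B_{\widetilde r})}\leqslant C(\widetilde r)$. The principal technical obstacle is the positivity step: one must control the boundary traces of $A^{r_0}$ and $\Delta_x A^{r_0}$ on $\partial B_{r_0}$ (which follow from the Navier conditions of $G_2^{r_0}$ and the harmonicity of $|x-y|^{4-n}$ in $x\neq y$) and then execute two successive maximum principle steps; any circularity with the lower bound in Proposition~\ref{estimates} can be avoided by an iterative choice of shrinking radii $r_0$ that uses only the superharmonicity of $u_i$.
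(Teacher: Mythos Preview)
Your overall architecture is right---Green representation, peel off the Riesz kernel, verify biharmonicity and positivity, then gradient bounds---but the positivity step has a real gap, and the paper handles it by a different (and simpler) mechanism.

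Concretely: with your choice $\psi_i=\Phi_i-\int_{B_{r_0}}A^{r_0}(x,y)f^s_i\,\ud y$, one computes $-\Delta_xA^{r_0}(x,y)=\Gamma_1(x,y)-G_1^{r_0}(x,y)\geqslant0$ (the regular part of the Laplace Green function on $B_{r_0}$), so that
\[
-\Delta\psi_i \;=\; -\Delta\Phi_i \;-\; \int_{B_{r_0}}\bigl[\Gamma_1(x,y)-G_1^{r_0}(x,y)\bigr]f^s_i(y)\,\ud y .
\]
The first term is positive, but the second term being subtracted is also nonnegative, so $w_i=-\Delta\psi_i$ need not be $\geqslant0$ either on $\partial B_{r_0}$ or in the interior. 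Boggio positivity of $G_2^{r_0}$ does not help here: it tells you $G_2^{r_0}\geqslant0$, not that the regular part $A^{r_0}$ or its Laplacian has a favourable sign. Your cascaded maximum principle therefore does not close. Shrinking $r_0$ does not help either, because $A^{r_0}$ scales with $r_0$ (on $\partial B_{r_0}$ one has $A^{r_0}(x,y)=C(n,2)|x-y|^{4-n}$), so $\int_{B_{r_0}}A^{r_0}f^s_i$ is not small.

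The paper's trick is to run the Green representation on the \emph{fixed} ball $B_1$, so that the regular part $A(x,y)$ in \eqref{greenfunction} is a fixed smooth function bounded on $B_1\times B_1$. Then one uses only $f^s_i\in L^1(B_1)$ (Lemma~\ref{integrability}) and the strict positivity $c_{1i}:=\min_{\partial B_1}u_i>0$ (maximum principle via $-\Delta u_i>0$) to choose $r_0<1/4$ with $\int_{B_{r_0}}|A(x,y)|f^s_i\,\ud y\leqslant c_{1i}/2$ for $x\in B_{r_0}$. With this choice, $\psi_i$ is defined as everything except the Riesz integral over $B_{r_0}$, namely
\[
\psi_i(x)=-\int_{B_{r_0}}A(x,y)f^s_i\,\ud y+\int_{B_1\setminus B_{r_0}}G_2(x,y)f^s_i\,\ud y+\int_{\partial B_1}H_1u_i\,\ud\sigma-\int_{\partial B_1}H_2\Delta u_i\,\ud\sigma,
\]
and one reads off $\psi_i\geqslant -c_{1i}/2+\int_{\partial B_1}H_1u_i\geqslant -c_{1i}/2+c_{1i}=c_{1i}/2>0$ directly, using $G_2,H_1,H_2\geqslant0$ and $-\Delta u_i>0$. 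No maximum principle on $\psi_i$ is needed; the lower bound is quantitative. Your final paragraph on \eqref{logestimate} is fine once this lower bound is in hand.
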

	
	\begin{proof}
		Using that $-\Delta u_i>0$ in $B^*_{1}$ and $u_i>0$ in $\bar{B}_{1}$, it follows from the maximum principle that $c_{1i}:=\inf _{B_{1}} u_i=\min _{\partial B_{1}} u_i>0 $. In addition, by Lemma~\ref{integrability}, we get that $f^s_i(\mathcal{U}) \in L^{1}\left(B_{1}\right)$, which implies that there exists $r_0<1/4$ satisfying the following inequality
		\begin{equation*}
			\int_{B_{r_0}}|A(x, y)|f^s_i(\mathcal{U})\ud y\leqslant\frac{c_{1i}}{2} \quad \mbox{for} \quad x\in B_{r_0},
		\end{equation*}
		where $A(x, y)$ is given by \eqref{greenfunction}. Hence, for $x \in B_{r_0}$, we get
		\begin{align*} 
			\psi_i(x)=-& \int_{B_{r_0}} A(x, y)f^s_i(\mathcal{U})\ud y+\int_{B_{1}\setminus B_{r_0}} G_{2}(x, y) f^s_i(\mathcal{U}) \mathrm{d} y&\\
			&+\int_{\partial B_{1}} H_{1}(x, y)u_i(y)\ud\sigma_{y}-\int_{\partial B_{1}} H_{2}(x, y)\Delta u_i(y)\ud\sigma_{y}&\\ 
			&\geqslant-\frac{c_{1i}}{2}+\int_{\partial B_{1}} H_{1}(x, y) u_i(y) \ud\sigma_{y}&\\
			&\geqslant-\frac{c_{1i}}{2}+\inf _{B_{1}} u_i=\frac{c_{1i}}{2}.
		\end{align*}
		By hypothesis $\psi_i$ is biharmonic, then a removable singularity theorem, and elliptic regularity shows that $\psi_i\in C^{\infty}(B_{r_0})$ for all $i\in I$, which provides that $|\nabla \psi_i|\leqslant C_i(\widetilde{r})$ in $B_{\widetilde{r}}$ for all $\widetilde{r}<r_0$ and $i\in I$, where $C_i\left(\widetilde{r}\right)>0$ depends only on $n,r_0-\widetilde{r}$, and in the $L^{1}$ norm of $f^s_i(\mathcal{U})$. Consequently,
		\begin{equation*}
			\|\nabla \ln \psi_i\|_{C^{0}\left(B_{\widetilde{r}}\right)} \leqslant 2\frac{C\left(\widetilde{r}\right)}{c_{1i}} \quad \mbox{for} \quad i\in I,
		\end{equation*}
		which finishes the proof.
	\end{proof}
	
	\subsection{Moving spheres technique and the upper bound estimate}
	The objective here is to prove the upper bound estimate in Proposition~\ref{estimates}. In this fashion, we use the integral form of the moving spheres technique combined with a classical blow-up argument from \cite{MR2055032}. 
	First, we recall the result from \cite[Lemma 3.1]{arxiv:1901.01678}. 
	
	\begin{lemma}\label{movingpsheresineq}
		Let $s=2^{**}-1$ and $\mathcal{U}$ be a strongly positive solution to \eqref{integralsystem}. 
		For any $x \in B_{1}$, $z \in B_{2}\setminus\left(\{0\}\cup B_{\mu}(x)\right)$ and $\mu<1$, it follows that $u_i(z)-(u_i)_{x, \mu}(z)>0$ for $i\in I$.
	\end{lemma}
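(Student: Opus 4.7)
The plan is to exploit the integral representation from Proposition~\ref{lm:integralrepresentation} to reduce the pointwise inequality to a kernel--positivity statement, adapting the scalar scheme of \cite[Lemma 3.1]{arxiv:1901.01678} to the Gross--Pitaevskii coupling. First, I would derive the analogous integral representation for $(u_i)_{x,\mu}(z)=K_{x,\mu}(z)^{n-4}u_i(\mathcal{I}_{x,\mu}(z))$ by evaluating \eqref{integralsystem} at $\mathcal{I}_{x,\mu}(z)$, multiplying by $K_{x,\mu}(z)^{n-4}$, and performing the change of variables $y\mapsto\mathcal{I}_{x,\mu}(\tilde y)$ (Jacobian $K_{x,\mu}(\tilde y)^{2n}$). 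Combining the conformal invariance
\[
K_{x,\mu}(z)^{n-4}K_{x,\mu}(y)^{n-4}\,|\mathcal{I}_{x,\mu}(z)-\mathcal{I}_{x,\mu}(y)|^{4-n}=|z-y|^{4-n}
\]
with the critical--exponent identity $(n-4)(s-1)=8$---which implies $K_{x,\mu}(y)^{n+4}f^{s}_i(\mathcal{U}(\mathcal{I}_{x,\mu}(y)))=f^{s}_i(\mathcal{U}_{x,\mu}(y))$---yields
\[
(u_i)_{x,\mu}(z)=\int_{\mathcal{I}_{x,\mu}(B_{r_0})}|z-y|^{4-n}f^{s}_i(\mathcal{U}_{x,\mu}(y))\,\ud y+(\psi_i)_{x,\mu}(z).
\]

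Next, I would subtract this identity from \eqref{integralsystem}, split $B_{r_0}=(B_{r_0}\cap B_\mu(x))\cup(B_{r_0}\setminus B_\mu(x))$, and use $y\mapsto\mathcal{I}_{x,\mu}(y)$ to pull the exterior contributions to integrals over $B_\mu(x)$. The resulting expression pairs each integrand against the comparison kernel $K_{x,\mu}(y)^{n-4}|z-\mathcal{I}_{x,\mu}(y)|^{4-n}-|z-y|^{4-n}$, whose definite sign on $\{|y-x|<\mu<|z-x|\}$ is supplied by the elementary identity
\[
|z-y|^{2}-\bigl(|y-x|/\mu\bigr)^{2}|z-\mathcal{I}_{x,\mu}(y)|^{2}=\mu^{-2}\bigl(\mu^{2}-|y-x|^{2}\bigr)\bigl(|z-x|^{2}-\mu^{2}\bigr),
\]
together with the monotonicity of $r\mapsto r^{4-n}$. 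Up to the harmonic correction $\psi_i(z)-(\psi_i)_{x,\mu}(z)$, the sign of $u_i(z)-(u_i)_{x,\mu}(z)$ is then governed by that of $f^{s}_i(\mathcal{U}(y))-f^{s}_i(\mathcal{U}_{x,\mu}(y))$ integrated over $B_\mu(x)\cap B_{r_0}$.

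The main obstacle, absent from the scalar setting of \cite{arxiv:1901.01678}, is the system coupling: $f^{s}_i(\mathcal{U})=c(n)|\mathcal{U}|^{s-1}u_i$ depends on every component $u_j$, so the sign of the nonlinear bracket is not intrinsic to a single index $i$. I would handle this via a simultaneous bootstrap on all $p$ components, using that $r\mapsto r^{s-1}$ is increasing, so that the componentwise comparison $u_j(y)\geqslant(u_j)_{x,\mu}(y)$ on $B_\mu(x)$ for every $j\in I$ forces $f^{s}_i(\mathcal{U}(y))\geqslant f^{s}_i(\mathcal{U}_{x,\mu}(y))$ for every $i\in I$. The base of the bootstrap is secured by the strict positivity $\psi_i(z)-(\psi_i)_{x,\mu}(z)>0$, which follows from $\psi_i>0$ (Proposition~\ref{lm:integralrepresentation}), the bound $K_{x,\mu}(z)^{n-4}<1$ on $\{|z-x|>\mu\}$, and the $C^{1}$-control \eqref{logestimate}; a continuation-in-$\mu$ argument then propagates the strict inequality up to $\mu=1$ for all components simultaneously.
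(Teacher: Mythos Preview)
Your derivation of the integral representation for the Kelvin transform and of the comparison--kernel positivity is the same computation the paper carries out (it cites the change--of--variable identities from \cite[p.~162]{MR2055032} rather than rederiving them, and arrives at the difference formula with the kernel $E(x,y,z,\mu)$). The paper's proof, however, stops there: it records only the identity
\[
u_i(z)-(u_i)_{x,\mu}(z)=\int_{|y-x|\geqslant\mu}E(x,y,z,\mu)\bigl[f_i(\mathcal U)-f_i(\mathcal U_{x,\mu})\bigr]\,\ud y+\bigl[\psi_i(z)-(\psi_i)_{x,\mu}(z)\bigr]
\]
together with $E>0$, and runs no bootstrap or continuation. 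What you propose is substantially more than what the paper actually proves under this lemma; the genuine moving--spheres continuation is deferred to Proposition~\ref{integralmovingspheres} and to the asymptotic--symmetry argument, where the identity above is the input.

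Your additional continuation step has a real gap. The inequality $\psi_i(z)>(\psi_i)_{x,\mu}(z)$ does \emph{not} follow from $\psi_i>0$, $K_{x,\mu}(z)^{n-4}<1$, and \eqref{logestimate} for \emph{all} $\mu<1$: since $(\psi_i)_{x,\mu}(z)=K_{x,\mu}(z)^{n-4}\psi_i(\mathcal I_{x,\mu}(z))$ and nothing prevents $\psi_i$ from being larger at the reflected point, the log--gradient bound only yields the $\psi$--comparison for \emph{small} $\mu$ (this is exactly how the paper uses it later, in \eqref{upperbound7}). Without the $\psi$--inequality holding uniformly along the continuation, the simultaneous componentwise bootstrap you describe cannot close, and the argument does not propagate up to $\mu$ near $1$. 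In the paper, the range of admissible $\mu$ is obtained not through this lemma but downstream, by combining the identity with blow--up analysis and the $J(\mu,u_i,y)$ lower bounds.
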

	
	\begin{proof}
		If $\mathcal{U}$ is a strongly positive solution to \eqref{integralsystem}, then, replacing $u_i(x)$ by $r^{\gamma}u_i(r x)$ for $r={1}/{2}$, we may consider the equation defined in $B^*_{2}$ for convenience, 
		\begin{equation}\label{rescaledintegralsystem}
			u_i(x)=\int_{B_{2}}|x-y|^{4-n}f_i(\mathcal{U}(y))\ud y+\psi_i(x) \quad \mbox{in} \quad B^*_{2}
		\end{equation}
		such that $u_i\in C\left(B^*_{2}\right) \cap L^{\frac{n+4}{n-4}}\left(B_{2}\right)$ and    $|\nabla \ln \psi_i| \leqslant C_{0} $ in $B_{3/2}$. Extending $u_i$ to be identically zero outside $B_{2}$, we have
		\begin{equation*}
			u_i(x)=\int_{\mathbb{R}^n} |x-y|^{4-n}f_i(\mathcal{U}(y)) \ud y+\psi_i(x) \quad \mbox{in} \quad B^*_{2}.
		\end{equation*}
		Using the identities in \cite[page 162]{MR2055032}, one has 
		\begin{equation}\label{identity1}
			\left(\frac{\mu}{|z-x|}\right)^{n-4} \int_{|y-x| \geqslant\mu} {\left|\mathcal{I}_{x,\mu}(z)-y\right|^{n-4}}{f_i(\mathcal{U}(y))} \ud y=\int_{|y-x| \leqslant \mu} \left|z-y\right|^{n-4}{f_i(\mathcal{U}(z))}\ud y
		\end{equation}
		and
		\begin{equation}\label{identity2}
			\left(\frac{\mu}{|z-x|}\right)^{n-4} \int_{|y-x|\leqslant\mu} {\left|\mathcal{I}_{x,\mu}(z)-y\right|^{n-4}}{f_i(\mathcal{U}(y))} \ud y=\int_{|y-x|\geqslant \mu} \left|z-y\right|^{n-4}{f_i(\mathcal{U}(y))}\ud y,
		\end{equation}
		which implies
		\begin{equation*}
			{(u_i)}_{x, \mu}(z)=\int_{\mathbb{R}^{n}} \left|z-y\right|^{n-4}{f_i(\mathcal{U}(y))}\ud y+{(\psi_i)}_{x,\mu}(z) \quad \mbox{for} \quad z\in \mathcal{I}_{x,\mu}(B_{2}).
		\end{equation*}
		Consequently, for any $x \in B_{1}$ and $\mu<1$ we have 
		\begin{equation*}
			u_i(z)-(u_i)_{x, \mu}(z)=\int_{|y-x|\geqslant\mu} E(x,y,\mu,z)\left[f_i(\mathcal{U})-f_i(\mathcal{U}_{x,\mu})\right]\ud y+\left[(\psi_i)_{x, \mu}(z)-\psi_i(z)\right],
		\end{equation*}
		for $z\in B^*_{2}\cup B_{\mu}(x)$, where
		\begin{equation*}
			E(x,y,z,\mu)={|z-y|^{4-n}}-\left(\frac{|z-x|}{\mu}\right)^{4-n} {\left|\mathcal{I}_{x,\mu}(z)-y\right|^{4-n}}
		\end{equation*}
		is used to estimate the difference between a $p$-map $\mathcal{U}$ and its Kelvin transform $\mathcal{U}_{x_0,\mu}$. Finally, it is straightforward to check that $E(x,y,z,\mu)>0$ for all $|z-x|>\mu>0$, which concludes the proof of the lemma.
	\end{proof}
	
	The last lemma will be useful in the proof of our initial estimate. For this, we use a contradiction argument based on the blow-up classification.
	
	\begin{proposition}\label{integralmovingspheres}
		Let $s=2^{**}-1$, and $\mathcal{U}\in C\left(B^*_{2},\mathbb{R}^p\right)\cap L^{2^{**}-1}\left(B_{2},\mathbb{R}^p\right)$ be a strongly positive solution to \eqref{rescaledintegralsystem}. Suppose that $\psi_i\in C^{1}\left(B_{2}\right)$ is a positive function satisfying \eqref{logestimate} for any $i\in I$. Then,
		\begin{equation*}
			\limsup _{|x| \rightarrow 0}|x|^{-\gamma} |\mathcal{U}(x)|<\infty.
		\end{equation*}
	\end{proposition}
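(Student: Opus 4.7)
My plan is to argue by contradiction through a blow-up analysis. Suppose instead that $\limsup_{|x|\to 0}|x|^{\gamma}|\mathcal{U}(x)|=\infty$ (this is the bound matching the Emden--Fowler profile in Proposition~\ref{estimates}), so there exists $x_k\in B_2^*$ with $|x_k|\to 0$ and $|x_k|^{\gamma}|\mathcal{U}(x_k)|\to\infty$. By a standard doubling/selection as in \cite{MR2055032}, one may assume after passing to a subsequence that $|\mathcal{U}|$ does not grow too quickly on balls around $x_k$ of size $\lambda_k|x_k|$, where $\lambda_k=|\mathcal{U}(x_k)|^{-1/\gamma}\to 0$ with $\lambda_k/|x_k|\to 0$. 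I then introduce the critical rescaling
\begin{equation*}
\widetilde{\mathcal{U}}_k(y)=\lambda_k^{\gamma}\,\mathcal{U}(x_k+\lambda_k y),
\end{equation*}
which by conformal invariance satisfies the rescaled version of \eqref{rescaledintegralsystem} on a ball $B_{R_k}$ with $R_k\to\infty$, together with $|\widetilde{\mathcal{U}}_k(0)|=1$ and a uniform bound on every compact set of $\mathbb{R}^n$ coming from the doubling choice.

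Next I would pass to the limit. After changing variables in \eqref{rescaledintegralsystem}, the error term $\lambda_k^{\gamma}\psi_i(x_k+\lambda_k y)$ vanishes because $\psi_i$ is smooth and bounded near the origin, and the Riesz part keeps the same form
\begin{equation*}
\widetilde u_{k,i}(y)=\int_{B_{R_k}}|y-w|^{4-n}f_i(\widetilde{\mathcal{U}}_k(w))\,\mathrm{d}w+o(1).
\end{equation*}
Combining the uniform integrability from Lemmas~\ref{lm:integrability} and \ref{integrability} with the local bound on $\widetilde{\mathcal{U}}_k$ and standard biharmonic regularity yields a $C^{4}_{\loc}(\mathbb{R}^n,\mathbb{R}^p)$ subsequential limit $\widetilde{\mathcal{U}}_\infty$ that is a strongly positive entire solution of the limit system \eqref{vectlimitequation}. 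Because the rescaled singular point $-x_k/\lambda_k$ escapes to infinity, the origin is removable for $\widetilde{\mathcal{U}}_\infty$, and Theorem~\ref{thm:andrade-doo19}(i) forces $\widetilde{\mathcal{U}}_\infty=\Lambda u_{x_0,\mu}$ for some $\Lambda\in\mathbb{S}^{p-1}_+$, $x_0\in\mathbb{R}^n$, $\mu>0$; in particular $\widetilde{\mathcal{U}}_\infty\not\equiv 0$.

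To close, I would propagate Lemma~\ref{movingpsheresineq} to the limit. For any fixed $x^*\in\mathbb{R}^n$ and $\mu^*>0$, the choice $x=x_k+\lambda_k x^*\in B_1$ and $\mu=\lambda_k\mu^*<1$ is admissible for large $k$, and rewriting the strict inequality $u_i(z)>(u_i)_{x,\mu}(z)$ in the variable $y=(z-x_k)/\lambda_k$ yields the Kelvin-monotonicity $\widetilde u_{k,i}(y)>(\widetilde u_{k,i})_{x^*,\mu^*}(y)$ on the admissible region. Passing to the limit preserves the non-strict inequality
\begin{equation*}
\widetilde u_{\infty,i}(y)\geqslant (\widetilde u_{\infty,i})_{x^*,\mu^*}(y)\qquad\text{for every }x^*\in\mathbb{R}^n,\ \mu^*>0,\ |y-x^*|>\mu^*.
\end{equation*}
So the moving-sphere method starting at every point with every radius never terminates. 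By the well-known rigidity of this situation (see, e.g., \cite{MR2055032}), this forces each component $\widetilde u_{\infty,i}$ to be identically constant, which is incompatible with being a nonzero solution of the critical fourth order equation on $\mathbb{R}^n$, contradicting $|\widetilde{\mathcal{U}}_\infty(0)|=1$.

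The main obstacle I foresee is making the limit step rigorous: the rescaled Riesz kernel integrates over a ball of radius $R_k\to\infty$, so I will need uniform tail control (provided by the integrability estimate $|x|^{-q}u_i^s\in L^1$ of Lemma~\ref{integrability} combined with the doubling growth bound on $\widetilde{\mathcal{U}}_k$) to legitimately swap limit and integral and identify $\widetilde{\mathcal{U}}_\infty$ with an entire solution. The moving-spheres rigidity in the last step is essentially a consequence of the fact that Lemma~\ref{movingpsheresineq} works from \emph{every} starting point with \emph{every} radius, which becomes a symmetry statement at every scale after blow-up.
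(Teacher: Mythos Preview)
Your blow-up architecture matches the paper's exactly: contradict the bound, run a doubling/selection to rescale around near-maxima, show the rescaled sequence converges to an entire nonsingular solution, identify the limit as a bubble via Theorem~\ref{thm:andrade-doo19}(i), and then derive a contradiction from a Kelvin--monotonicity rigidity. So the overall strategy is right.

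The gap is in how you obtain the Kelvin monotonicity for the limit. You invoke Lemma~\ref{movingpsheresineq} as if it already delivers $u_i(z)>(u_i)_{x,\mu}(z)$ for \emph{every} $x\in B_1$ and \emph{every} $0<\mu<1$, then merely rescale and pass to the limit. But look at what the proof of that lemma actually establishes: only the integral identity
\[
u_i(z)-(u_i)_{x,\mu}(z)=\int_{|y-x|\geqslant\mu} E(x,y,z,\mu)\bigl[f_i(\mathcal{U})-f_i(\mathcal{U}_{x,\mu})\bigr]\,\ud y+\bigl[\psi_i(z)-(\psi_i)_{x,\mu}(z)\bigr]
\]
together with $E>0$. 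Neither the nonlinear integrand nor the $\psi$-difference has a sign a priori, so the inequality for arbitrary $\mu$ is not proved there. Consistent with this, every time the paper actually uses Lemma~\ref{movingpsheresineq} in the proof of Proposition~\ref{integralmovingspheres} it extracts only a \emph{starting} radius (``there exists $r_0>0$ such that for all $0<\mu\leqslant r_0$ \ldots'').

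The substantive work you are skipping is the moving-spheres \emph{continuation} carried out on the rescaled sequence $w_{ki}$ (Claim~1, Steps~1--3). There the key new input is the quantitative lower bound on the tail term $J(\mu,w_{ki},y)$, of order $u_i(\bar x_k)^{-1}(|y-x|-\mu)$, coming from the region $\Omega_k\setminus\widetilde\Omega_k$; this strict positivity is precisely what pushes the critical radius $\mu^*$ up to an arbitrary $\mu_0$ and hence yields $(w_{0i})_{x,\mu}\leqslant w_{0i}$ for every $\mu>0$ after letting $k\to\infty$. Without that argument (or an independent proof that Lemma~\ref{movingpsheresineq} holds in its literally stated strength, which would amount to the same continuation one level up), your passage to the limit is unjustified. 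The tail-control issue you flag for the Riesz convolution is comparatively routine; the real obstacle is the continuation step.
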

	
	\begin{proof}
		Let us assume by contradiction that there exist $i\in I$ and $\left\{x_{k}\right\}_{k\in\mathbb{N}}\subset B_{2}$ such that $\lim_{\rightarrow\infty}|x_k|=0$ and $\left|x_{k}\right|^{\gamma} u_i\left(x_{k}\right)\rightarrow\infty$ as $k\rightarrow\infty$.
		For $\left|x-x_{k}\right| \leqslant1/2{\left|x_{k}\right|}$, let us define
		\begin{equation*}
			\widetilde{u}_{ki}(x):=\left(\frac{\left|x_{k}\right|}{2}-\left|x-x_{k}\right|\right)^{\gamma} u_i(x).
		\end{equation*}
		Hence, using that $u_i$ is positive and continuous in $\bar{B}_{\left|x_{k}\right|/2}\left(x_{k}\right)$, there exists a maximum point $\bar{x}_{k} \in B_{\left|x_{k}\right|/2}\left(x_{k}\right)$ of     $\widetilde{u}_{ki}$, that is,
		\begin{equation*}
			\widetilde{u}_{ki}\left(\bar{x}_{k}\right)=\max _{\left|x-x_{k}\right|\leqslant{\left|x_{k}\right|/2}}\widetilde{u}_{ki}(x)>0.
		\end{equation*}
		Taking $2\mu_{k}:={\left|x_{k}\right|/2}-\left|\bar{x}_{k}-x_{k}\right|>0$, we get
		\begin{equation}\label{upbound1}
			0<2\mu_{k}\leqslant \frac{\left|x_{k}\right|}{2} \quad \mbox{and} \quad \frac{\left|x_{k}\right|}{2}-\left|x-x_{k}\right|\geqslant \mu_{k} \quad \mbox{for} \left|x-\bar{x}_{k}\right|\leqslant\mu_{k}.
		\end{equation}
		Moreover, it follows that $2^{\gamma} u_i\left(\bar{x}_{k}\right)\geqslant u_i(x)$ for $\left|x-\bar{x}_{k}\right|\leqslant\mu_{k}$ and
		\begin{equation}\label{upbound2}
			\left(2 \mu_{k}\right)^{\gamma} u_i\left(\bar{x}_{k}\right)=\widetilde{u}_{ki}\left(\bar{x}_{k}\right)\geqslant \widetilde{u}_{ki}\left(x_{k}\right)={2}^{-\gamma}{\left|x_{k}\right|^{\gamma}} u_i\left(x_{k}\right) \rightarrow \infty \quad \mbox{as} \quad k\rightarrow \infty.
		\end{equation}
		We consider
		\begin{equation*}
			w_{ki}(y)={u_i\left(\bar{x}_{k}\right)^{-1}} u\left(\bar{x}_{k}+{y}{u_i\left(\bar{x}_{k}\right)^{-\gamma^{-1}}}\right) \ \mbox{and} \ h_{ki}(y)={u_i\left(\bar{x}_{k}\right)^{-1}} \psi_i\left(\bar{x}_{k}+{y}{u_i\left(\bar{x}_{k}\right)^{-\gamma^{-1}}}\right) \ \mbox{in} \ \Omega_{k},
		\end{equation*}
		where
		\begin{equation*}
			\Omega_{k}=\left\{y \in \mathbb{R}^{n}: \bar{x}_{k}+{y}{u_i\left(\bar{x}_{k}\right)^{-\gamma^{-1}}}\in B^*_{2}\right\}.
		\end{equation*}
		Now, extending $w_{ki}$ to be zero outside of $\Omega_{k}$ and using Proposition~\ref{lm:integralrepresentation}, we get
		\begin{equation}\label{auxequation}
			w_{ki}(y)=\int_{\mathbb{R}^{n}} f_i(\mathcal{W}_k)|y-x|^{4-n} \ud x+h_{ki}(y) \quad \mbox{for} \quad y \in \Omega_{k}
		\end{equation}
		and $w_{ki}(0)=1$ for $k\in\mathbb{N}$, where $\mathcal{W}_k={\bf e_1}w_{ki}$. Moreover, from \eqref{upbound1} and \eqref{upbound2}, it holds
		\begin{equation*}
			\left\|h_{ki}\right\|_{C^{1}\left(\Omega_{k}\right)}\rightarrow 0 \quad \mbox{and} \quad w_{ki}(y)\leqslant 2^{\gamma} \quad \mbox{in} \quad B_{R_{ki}},
		\end{equation*}
		where $R_{ki}:=\mu_{ki} u_i\left(\bar{x}_{k}\right)^{\gamma^{-1}} \rightarrow \infty$ as $k\rightarrow\infty$.
		Using the regularity results in \cite{MR2055032}, one can find $w_{0i}>0$ such that $w_{ki} \rightarrow w_{0i}$ as $k\rightarrow\infty$ in $C_{\loc}^{4,\zeta}\left(\mathbb{R}^{n}\right)$ for some $\zeta\in(0,1)$, where $w_{0i}>0$ satisfies
		\begin{equation*}
			w_{0i}(y)=c(n)\int_{\mathbb{R}^{n}} |y-x|^{4-n} w_{0i}^{2^{**}-1}\ud x \quad \mbox{in} \quad  \mathbb{R}^{n}.
		\end{equation*}
		or, equivalently $\Delta^2w_{0i}=f_i(w_{0})$ in $\mathbb{R}^{n}$. Furthermore, by construction, we conclude $w_{0i}(0)=1$, which by Theorem~\ref{thm:andrade-doo19} (i), implies that there exist $\mu>0$ and $y_{0} \in \mathbb{R}^{n}$ such that 
		\begin{equation}\label{buble}
			w_{0i}(y)=\left(\frac{2\mu}{1+\mu^{2}\left|y-y_{0}\right|^{2}}\right)^{\gamma}.
		\end{equation}    
		In the next claim, we use the last classification formula to apply the moving spheres technique.
		
		\noindent{\bf Claim 1:} For any $\mu>0$, it holds that $(w_{0i})_{x,\mu}(y) \leqslant w_{0i}(y)$ for $|y-x|\geqslant\mu$.
		
		\noindent Indeed, for a fixed $\mu_{0}>0$, we have that $0<\mu_{0}<{R_{k}}/{10}$ when $k\gg1$. We also consider
		\begin{equation*}
			\widetilde{\Omega}_{k}:=\left\{y \in \mathbb{R}^{n}: \bar{x}_{k}+{y}{u\left(\bar{x}_{k}\right)^{-\gamma^{-1}}} \in B^*_{1}\right\} \subset \subset {\Omega}_{k}.
		\end{equation*}
		Now let us divide the proof of the claim into three step as follows
		
		\noindent{\bf Step 1:} For $k\gg1$, it holds that $\left(w_{ki}\right)_{x,\mu_{0}}(y) \leqslant w_{ki}(y)$ for $y\in\widetilde{\Omega}_{k}$ such that $|y|\geqslant\mu_{0}$.
		
		\noindent In fact, by Lemma~\ref{movingpsheresineq}, there exists $\bar{r}>0$ such that for all $0<\mu\leqslant\bar{r}$ and $\bar{x}\in B_{1/100}$,
		\begin{equation}\label{upperbound7}
			\left(\frac{\mu}{|y|}\right)^{n-4} \psi_{ki}\left(\mathcal{I}_{0,\mu}(y)+\bar{x}\right) \leqslant \psi_{ki}(y+\bar{x}) \quad \mbox{for} \quad |y|\geqslant \mu \quad \mbox{and} \quad y \in B_{149/100}.
		\end{equation}
		Let $k\gg1$ be sufficiently large such that $\mu_{0} u\left(\bar{x}_{k}\right)^{(4-n)^{-1}}<\bar{r}$. Hence, for any $0<\mu\leqslant\mu_{0}$, it holds
		\begin{equation}\label{inequality}
			\left(\psi_{ki}\right)_{x, \mu}(y) \leqslant \psi_{ki}(y) \quad \mbox{in} \quad \widetilde{\Omega}_{k}\setminus B_{\mu},
		\end{equation}
		which by passing to the limit as $k\rightarrow\infty$ concludes the proof of Step 1.
		
		\noindent{\bf Step 2:}  There exists $\mu_{1}>0$, independent of $k$, such that $\left(w_{ki}\right)_{x, \mu}(y)\leqslant w_{ki}(y)$ in $\widetilde{\Omega}_{k}\setminus B_{\mu}$ for $0<\mu<\mu_{1}$.
		
		\noindent As matter of fact, since $w_{ki}\rightarrow w_{0i}$ as $k\rightarrow\infty$ in $C^4$-topology and $w_{0i}$ is given by \eqref{buble} we get that there exists $C_0>0$ satisfying $w_{ki}\geqslant C_{0}>0$ on $B_{1}$ for $k\gg1$. On the other hand, by \eqref{auxequation} and standard regularity results, it follows that $\left|D^{(j)}w_{ki}\right|\leqslant C_{0}<\infty$ on $B_{1}$ for $j=1,2,3,4$. Using Lemma~\ref{movingpsheresineq}, there exists $r_{0}>0$, not depending on $k\gg1$, such that for all $0<\mu\leqslant r_{0}$, it holds
		\begin{equation}\label{movingspheres1}
			\left(w_{ki}\right)_{x,\mu}(y)<w_{ki}(y) \quad \mbox{for} \quad 0<\mu<|y|\leqslant r_{0}.
		\end{equation}
		Again, since $w_{ki}\geqslant C_{0}>0$ on $B_{1}$ for $k\gg1$, there exists $C>0$ satisfying
		\begin{equation*}
			w_{ki}(x) \geqslant C_{0}^{2^{**}-1} \int_{B_{1}}|x-y|^{4-n}\ud y\geqslant \frac{1}{C}(1+|x|)^{4-n} \quad \mbox{in} \quad \Omega_{k}.
		\end{equation*}
		Therefore, one can find $0<\mu_{1}\leqslant r_{0}\ll1$ sufficiently small such that
		for all $0<\mu<\mu_{1}$, we have
		\begin{equation*}
			\left(w_{ki}\right)_{\mu}(y)\leqslant\left(\frac{\mu_{1}}{|y|}\right)^{n-4} \max _{B_{r_{0}(x)}} w_{ki} \leqslant C\left(\frac{\mu_{1}}{|y|}\right)^{n-4} \leqslant w_{ki}(y) \quad \mbox{for} \quad y\in \Omega_{k} \quad \mbox{and} \quad |y|\geqslant r_{0}, 
		\end{equation*}
		which in combination with \eqref{movingspheres1} proves Step 2.
		
		For $\mu_{0}>0$ fixed before, let us define,
		\begin{equation*}
			\mu^*:=\sup\left\{0<\mu\leqslant\mu_{0} : \left(w_{ki}\right)_{x,\mu}(y)\leqslant w_{ki}(y), \ y \in \widetilde{\Omega}_{k} \ \mbox{with} \ |y-x_{0}|\geqslant\mu \ \mbox{and} \ 0<\mu<\mu_0\right\}.
		\end{equation*}
		
		\noindent{\bf Step 3:} For $k\gg1$, it holds that $\mu^*=\mu_{0}$.
		
		\noindent Indeed, using \eqref{identity1}, \eqref{identity2} and \eqref{inequality}, it follows
		\begin{align}\label{upperbound4}
			\nonumber
			&w_{ki}(y)-\left(w_{ki}\right)_{0,\mu}(y)&\\
			&=\int_{\mathbb{R}^n\setminus B_{\mu}} E(0,y,z,\mu)\left[w_{ki}(z)^{2^{**}-1}-\left(w_{ki}\right)_{0,\mu}(z)^{2^{**}-1}\right]\ud z+\left[\psi_{ki}(y)-\left(\psi_{ki}\right)_{0,\mu}(y)\right]&\\\nonumber
			&\geqslant\int_{\widetilde{\Omega}_{k}\setminus B_{\mu}} E(0,y,z,\mu)\left[w_{ki}(z)^{2^{**}-1}-\left(w_{ki}\right)_{0,\mu}(z)^{2^{**}-1}\right]\ud z+J\left(\mu, w_{ki}, y\right),&
		\end{align}
		for any $\mu^*\leqslant\mu\leqslant\mu^*+{1}/{2}$ and $y\in\widetilde{\Omega}_{k}$ with $|y|>\mu$, where
		\begin{align*}
			J\left(\mu, w_{ki}, y\right)
			&=\int_{\mathbb{R}^{n}\setminus\widetilde{\Omega}_{k}} E(0,y,z,\mu)\left[w_{ki}(z)^{2^{**}-1}-\left(w_{ki}\right)_{0,\mu}(z)^{2^{**}-1}\right]\ud z&\\
			&=\int_{\Omega_{k}\setminus\widetilde{\Omega}_{k}} E(0,y,z,\mu)\left[w_{ki}(z)^{2^{**}-1}-\left(w_{ki}\right)_{0,\mu}(z)^{2^{**}-1}\right]\ud z&\\
			&-\int_{\mathbb{R}^{n}\setminus\Omega_{k}} E(0,y,z,\mu)\left(w_{ki}\right)_{0,\mu}(z)^{2^{**}-1}\ud z.&
		\end{align*}
		For $z\in\mathbb{R}^{n}\setminus\widetilde{\Omega}_{k}$ and $\mu^*\leqslant\mu\leqslant\mu^*+1,$ we obtain that $|z|\geqslant{1}/{2} u\left(\bar{x}_{k}\right)^{-\gamma^{-1}}$ and thus
		\begin{equation*}
			\left(w_{ki}\right)_{0,\mu}(z)\leqslant\left(\frac{\mu}{|z|}\right)^{n-4} \max_{B_{\mu^*+1}} w_{ki}\leqslant C u_i\left(\bar{x}_{k}\right)^{-2}.
		\end{equation*}
		In addition, since $u_i\geqslant C_{0}>0$ in $B_{2}\setminus B_{1/2}$, by the definition of $w_{ki},$ we have
		\begin{equation*}
			w_{ki}(y)\geqslant \frac{C_{0}}{u_i\left(\bar{x}_{k}\right)} \quad \mbox{in} \quad \Omega_{k}\setminus\widetilde{\Omega}_{k},
		\end{equation*}
		which in turns yields
		\begin{align}\label{upperbound2}
			J\left(\mu, w_{ki}, y\right)
			&\geqslant\frac{1}{2}\left(\frac{C_{0}}{u_i\left(\bar{x}_{k}\right)}\right)^{2^{**}-1} \int_{\Omega_{k}\setminus\widetilde{\Omega}_{k}} E(0,y,z,\mu)\ud z-C\int_{\mathbb{R}^n\setminus\Omega_{k}} E(0,y,z,\mu)\left(\frac{\mu}{|z|}\right)^{n+4} \mathrm{d} z&\\\nonumber
			&\geqslant
			\left\{\begin{array}{ll}{\frac{1}{C}(|y|-\mu) u_i\left(\bar{x}_{k}\right)^{-1}}, & {\mbox{if} \quad \mu\leqslant|y|\leqslant\mu^*+1} \\ {\frac{1}{C} u_i\left(\bar{x}_{k}\right)^{-1}}, & {\mbox{if} \quad |y|>\mu^*+1 \quad \mbox{and} \quad y \in \widetilde{\Omega}_{k}}.
			\end{array}\right.
		\end{align}
		Indeed, since $E(0,y,z,\mu)=0$ for $|y|=\mu$, and
		\begin{equation*}
			\left.y\nabla_{y}\cdot E(0,y,z,\mu)\right|_{|y|=\mu}=(n-4)|y-z|^{4-n-2}\left(|z|^{2}-|y|^{2}\right)>0,
		\end{equation*}
		for $|z|\geqslant\mu^*+2$, and using the positivity and smoothness of $E$, there exists $0<\delta_{1}\leqslant\delta_{2}<\infty$ satisfying
		\begin{equation}\label{upperbound3}
			{\delta_{1}}{|y-z|^{4-n}}(|y|-\mu)\leqslant E(0,y,z,\mu)\leqslant{\delta_{2}}{|y-z|^{4-n}}(|y|-\mu),
		\end{equation}
		for $\mu^*\leqslant\mu\leqslant|y|\leqslant\mu^*+1$, $\mu^*+2\leqslant|z|\leqslant R<\infty$. Furthermore, if $R\gg1$ is large, it follows that
		$0<C_{0}\leqslant y\cdot\nabla_{y}\left(|y-z|^{n-4} E(0,y,z,\mu)\right)\leqslant C_{0}<\infty$ for all $|z|\geqslant\mu$, $\mu^*\leqslant\mu\leqslant|y|\leqslant\mu^*+1$.
		Thus, \eqref{upperbound3} holds for $\mu^*\leqslant\mu\leqslant|y|\leqslant\mu^*+1$ and $|z|\geqslant R$. Besides, by the definition of $E(0,y,z,\mu)$, there exists $0<\delta_{3}\leqslant1$ such that 
		\begin{equation}\label{upperbound6}
			{\delta_{3}}{|y-z|^{4-n}}\leqslant E(0,y,z,\mu)\leqslant{|y-z|^{4-n}},
		\end{equation}
		for $|y|\geqslant\mu^*+1$ and $|z|\geqslant\mu^*+2$.
		Therefore, for $k\gg1$, $\mu\leqslant|y|\leqslant\mu^*+1$, we find
		\begin{align*}
			J\left(\mu, w_{ki}, y\right)
			&\geqslant\frac{1}{2}\left(\frac{C_{0}}{u_i\left(\bar{x}_{k}\right)}\right)^{2^{**}-1} \int_{\Omega_{k}\setminus\widetilde{\Omega}_k}{\delta_{1}}{|y-z|^{4-n}}(|y|-\mu)\ud z&\\
			&-C\int_{\mathbb{R}^n\setminus\Omega_{k}}{\delta_{2}}{|y-z|^{4-n}}(|y|-\mu)\left(\frac{\mu}{|z|}\right)^{n+4}\ud z&\\ 
			&\geqslant\frac{1}{C_{1}}(|y|-\mu)u_i\left(\bar{x}_{k}\right)^{-1}-\frac{1}{C_{2}}(|y|-\mu) u_i\left(\bar{x}_{k}\right)^{-2^{**}}&\\ 
			&\geqslant\frac{1}{2 C_{1}}(|y|-\mu) u_i\left(\bar{x}_{k}\right)^{-1}.
		\end{align*}
		Similarly, for $|y|\geqslant\mu^*+1$ and $y\in\widetilde{\Omega}_k$, since $u_i\left(\bar{x}_{k}\right)\rightarrow\infty$ as $k\rightarrow\infty$, there exist $C_{1},C_{2},C_{3},C_{4}>0$ satisfying
		\begin{equation*}
			J\left(\mu, w_{ki}, y\right)\geqslant\frac{1}{C_{3}} u_i\left(\bar{x}_{k}\right)^{-1}-\frac{1}{C_{4}} u_i\left(\bar{x}_{k}\right)^{-2^{**}}\geqslant\frac{1}{2 C_{3}} u_i\left(\bar{x}_{k}\right)^{-1},
		\end{equation*}
		which verifies \eqref{upperbound2}.
		
		Next, by \eqref{upperbound4} and \eqref{upperbound2}, there exists $\varepsilon_{1}\in(0,1/2)$, depending on $k$, such that for $|y|\geqslant\mu^*+1$, it holds
		\begin{equation*}
			w_{ki}(y)-\left(w_{ki}\right)_{0,\mu^*}(y)\geqslant{\varepsilon_{1}}{|y|^{4-n}} \quad \mbox{in} \quad \widetilde{\Omega}_k.
		\end{equation*}
		Using the above inequality and the formula for $\left(w_{ki}\right)_{0,\mu},$ there exists $0<\varepsilon_{2}<\varepsilon_{1}\ll1$ such that for $|y|\geqslant\mu^*+1$, $\mu^*\leqslant \mu\leqslant\mu^*+\varepsilon_{2}$, we get
		\begin{align}\label{upperbound5}
			w_{ki}(y)-\left(w_{ki}\right)_{0,\mu}(y)\geqslant{\varepsilon_{1}}{|y|^{4-n}}+\left[\left(w_{ki}\right)_{0,\mu^*}(y)-\left(w_{ki}\right)_{\mu}(y)\right]\geqslant\frac{\varepsilon_{1}}{2}{|y|^{4-n}}.
		\end{align}
		For $\varepsilon\in\left(0,\varepsilon_{3}\right]$ that will be chosen later, by \eqref{upperbound4} and \eqref{upperbound2} combined with the inequality, we obtain
		\begin{equation*}
			\left|\left(w_{ki}\right)(z)^{2^{**}-1}-\left(w_{ki}\right)_{0,\mu}(z)^{2^{**}-1}\right|\leqslant C(|z|-\mu),
		\end{equation*}
		we get
		\begin{align*}
			w_{ki}(y)-\left(w_{ki}\right)_{0,\mu}(y)
			&\geqslant\int_{\mu\leqslant|z|\leqslant\mu^*+1}E(0,y,z,\mu)\left(w_{ki}(z)^{2^{**}-1}-\left(w_{ki}\right)_{0,\mu}(z)^{2^{**}-1}\right)\ud z&\\ 
			&+\int_{\mu^*+2 \leqslant|z|\leqslant\mu^*+3} E(0,y,z,\mu)\left(w_{ki}(z)^{2^{**}-1}-\left(w_{ki}\right)_{0,\mu}(z)^{2^{**}-1}\right)\ud z&\\ 
			&\geqslant -C\int_{\mu\leqslant|z|\leqslant\mu+\varepsilon}E(0,y,z,\mu)(|z|-\mu)\ud z&\\
			&+\int_{\mu+\varepsilon\leqslant|z|\leqslant\mu^*+1} E(0,y,z,\mu)\left(w_{ki}(z)^{2^{**}-1}-\left(w_{ki}\right)_{0,\mu}(z)^{2^{**}-1}\right)\ud z&\\ 
			&+\int_{\mu^*+2\leqslant|z|\leqslant\mu^*+3} E(0,y,z,\mu)\left(w_{ki}(z)^{2^{**}-1}-\left(w_{ki}\right)_{0,\mu}(z)^{2^{**}-1}\right) \ud z,
		\end{align*}
		for $\mu^*\leqslant\mu\leqslant\mu^*+\varepsilon$ and $\mu\leqslant|y|\leqslant\mu^*+1$.    From \eqref{upperbound5}, there exists $\delta_{5}>0$ such that for $\mu^*+2\leqslant|z|\leqslant\mu^*+3$, it follows 
		\begin{equation*}
			w_{ki}(z)^{2^{**}-1}-\left(w_{ki}\right)_{0,\mu}(z)^{2^{**}-1}\geqslant\delta_{5}.
		\end{equation*}
		Moreover, since $\|w_{ki}\|_{C^{1}\left(B_{2}\right)}\leqslant C$ (independent of $k$), there exists some constant $C>0$, not depending on $\varepsilon$, such that for $\mu^*\leqslant\mu\leqslant\mu^*+\varepsilon$, we get
		\begin{equation*}
			\left|\left(w_{ki}\right)_{0,\mu^*}(z)^{2^{**}-1}-\left(w_{ki}\right)_{0,\mu}(z)^{2^{**}-1}\right|\leqslant C(\mu-\mu^*)\leqslant C\varepsilon, 
		\end{equation*}
		for $\mu\leqslant|z|\leqslant\mu^*+1$. Also, for $\mu\leqslant|y|\leqslant\mu^*+1$, we find 
		\begin{align*}
			\int_{\mu+\varepsilon\leqslant|z| \leqslant\mu^*+1} E(0,y,z,\mu)\ud z 
			&\leqslant\left|\int_{\mu+\varepsilon\leqslant|z|\leqslant\mu^*+1}\left({|y-z|^{4-n}}-|\mathcal{I}_{0,\mu}(y)-z|^{4-n}\right)\ud z\right|&\\ 
			&+\int_{\mu+\varepsilon \leqslant|z| \leqslant\mu^*+1}\left|\left(\frac{\mu}{|y|}\right)^{n-4}-1\right|{\left|\mathcal{I}_{0,\mu}(y)-z\right|^{n-4}}\ud z&\\
			&\leqslant C\left(\varepsilon^{3}+|\ln \varepsilon|+1\right)(|y|-\mu)&
		\end{align*}
		and
		\begin{align*}
			\int_{\mu\leqslant|z|\leqslant\mu+\varepsilon} E(0,y,z,\mu)(|z|-\mu)\ud z 
			&\leqslant \left|\int_{\mu\leqslant|z|\leqslant\mu+\varepsilon}\left(\frac{|z|-\mu}{|y-z|^{n-4}}-\frac{|z|-\mu}{\left|\mathcal{I}_{0,\mu}(y)-z\right|^{n-4}}\right)\ud z\right|&\\
			&+\varepsilon \int_{\mu\leqslant|z|\leqslant\mu+\varepsilon}\left|\left(\frac{\mu}{|y|}\right)^{n-4}-1\right|{\left|\mathcal{I}_{0,\mu}(y)-z\right|^{4-n}}\ud z&\\
			&\leqslant I+C \varepsilon(|y|-\mu),&
		\end{align*}
		where, for $|y|\geqslant\mu+10\varepsilon$, we arrive at
		\begin{align*}
			I&=\left|\int_{\mu\leqslant|z|\leqslant\mu+\varepsilon}\left(\frac{|z|-\mu}{|y-z|^{n-4}}-\frac{|z|-\mu}{\left|\mathcal{I}_{0,\mu}(y)-z\right|^{n-4}}\right)\ud z\right|&\\ 
			&\leqslant C\varepsilon\left(\varepsilon^{3}+|\ln \varepsilon|+1\right)(|y|-\mu).&
		\end{align*}
		On the other hand, for $\mu\leqslant|y|\leqslant\mu+10\varepsilon$, it follows
		\begin{align*}
			I
			&\leqslant\left|\int_{\mu\leqslant|z|\leqslant\mu+10(|y|-\mu)}
			\left(\frac{|z|-\mu}{|y-z|^{n-4}}-\frac{|z|-\mu}{\left|\mathcal{I}_{0,\mu}(y)-z\right|^{n-4}}\right)\ud z
			\right|&\\ 
			&+\left|\int_{\mu+10(|y|-\mu)\leqslant|z|\leqslant \mu+\varepsilon}\left(\frac{|z|-\mu}{|y-z|^{n-4}}-\frac{|z|-\mu}{\left|\mathcal{I}_{0,\mu}(y)-z\right|^{n-4}}\right)\ud z\right|&\\
			&\leqslant C(|y|-\mu)\int_{\mu\leqslant|z|\leqslant \mu+10(|y|-\mu)}\left(\frac{1}{|y-z|^{n-4}}+\frac{1}{\left|\mathcal{I}_{0,\mu}(y)-z\right|^{n-4}}\right)\ud z&\\ 
			&+C\left|y-\mathcal{I}_{0,\mu}(y)\right|\int_{\mu+10(|y|-\mu)\leqslant|z|\leqslant\mu+\varepsilon} \frac{|z|-\mu}{|y-z|^{n-3}}\ud z&\\
			&\leqslant C(|y|-\mu) \sup _{\widetilde{z}\in\mathbb{R}^{n}} \int_{\mu \leqslant|z| \leqslant \mu+100\varepsilon}{|\widetilde{z}-z|^{n-4}}\ud z&\\
			&\leqslant C(|y|-\mu)\varepsilon^{4/n}.&
		\end{align*}
		Finally, using \eqref{upperbound6}, for $\mu<|y|\leqslant\mu^*+1$ and $0<\varepsilon\ll1$ sufficiently small, it holds
		\begin{align*}
			{w_{ki}(y)-\left(w_{ki}\right)_{0,\mu}(y)}
			&\geqslant-C\varepsilon^{\frac{4}{n}}(|y|-\mu)+\delta_{1}\delta_{5}(|y|-\mu)\int_{\mu+2\leqslant|z|\leqslant\mu^*+3}|y-z|^{4-n}\ud z&\\ 
			&\geqslant\left(\delta_{1} \delta_{5} c-C \varepsilon^{\frac{4}{n}}\right)(|y|-\mu) \geqslant0,&
		\end{align*}
		which together with \eqref{upperbound5} contradicts the definition of $\mu^*>0$, if $\mu^*<\mu_{0}$. Hence, the proof of Step 3 is finished, and so Claim 1. Besides, this is also a contradiction with \eqref{buble} that concludes the proof of the proposition.   
	\end{proof}
	
	\begin{corollary}\label{upperbound}
		Let $\mathcal{U}$ be a strongly positive singular solution to \eqref{oursystem}. Then, there exists $C_2>0$ satisfying $|\mathcal{U}(x)|\leqslant C_2|x|^{-\gamma}$ for $0<|x|<1/2$.
	\end{corollary}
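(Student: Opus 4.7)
The plan is to deduce the global estimate on the punctured half-ball directly from the preceding machinery, namely Propositions~\ref{lm:integralrepresentation} and \ref{integralmovingspheres}. The key observation is that Proposition~\ref{integralmovingspheres} already delivers a local upper bound of Fowler type near the singularity, so the remaining task is essentially to piece together the behavior near the origin with the behavior on an annular region bounded away from it.

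First, I would apply Proposition~\ref{lm:integralrepresentation} (with $s=2^{**}-1$) to $\mathcal{U}$, obtaining some $r_0>0$ and the integral representation \eqref{integralsystem}. After the usual rescaling $x\mapsto (r_0/2) x$ (which is invariant up to the Fowler factor $r^{\gamma}$), one may assume $\mathcal{U}\in C(B_2^*,\mathbb{R}^p)\cap L^{2^{**}-1}(B_2,\mathbb{R}^p)$ satisfies \eqref{rescaledintegralsystem} together with the logarithmic gradient bound \eqref{logestimate} on the smooth biharmonic remainder $\psi_i$. The hypotheses of Proposition~\ref{integralmovingspheres} are then met componentwise, and applying it yields
\begin{equation*}
\limsup_{|x|\to 0}\,|x|^{\gamma}|\mathcal{U}(x)|<\infty.
\end{equation*}

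Unwinding this lim\,sup, I would choose $\delta\in(0,1/2)$ and a constant $C'>0$ such that $|\mathcal{U}(x)|\leqslant C'|x|^{-\gamma}$ for every $0<|x|<\delta$. On the complementary annular region $\delta\leqslant|x|\leqslant 1/2$, each component $u_i$ is a continuous (indeed $C^{4,\zeta}$) function on a compact set, so $|\mathcal{U}|$ attains a finite maximum $M$ there; since $|x|^{-\gamma}$ is bounded below by $2^{\gamma}>0$ on that annulus, the inequality $|\mathcal{U}(x)|\leqslant M\leqslant M\delta^{\gamma}|x|^{-\gamma}$ holds trivially. Taking $C_2:=\max\{C',M\delta^{\gamma}\}$ then gives the desired estimate on all of $B_{1/2}^*$.

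The only mildly subtle point is checking that the hypothesis ``$\limsup$'' of Proposition~\ref{integralmovingspheres} translates to a uniform control in a \emph{whole} punctured neighborhood rather than along a sequence. This is automatic because the statement $\limsup_{|x|\to 0}f(x)<\infty$ for $f(x)=|x|^{\gamma}|\mathcal{U}(x)|$ means exactly that there exist $\delta>0$ and $C'>0$ with $f(x)\leqslant C'$ for $0<|x|<\delta$; no further argument is needed. Hence no genuine obstacle remains, and the corollary follows by this routine patching.
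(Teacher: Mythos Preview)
Your proposal is correct and follows exactly the route the paper intends (the corollary is stated without proof, immediately after Proposition~\ref{integralmovingspheres}): pass to the integral representation via Proposition~\ref{lm:integralrepresentation}, apply Proposition~\ref{integralmovingspheres} to get the $\limsup$ bound near the origin, and patch with continuity on the compact annulus. One trivial arithmetic slip: on $\delta\leqslant|x|\leqslant 1/2$ the inequality $M\leqslant M\delta^{\gamma}|x|^{-\gamma}$ goes the wrong way (since $|x|\geqslant\delta$ there); the correct constant is $M(1/2)^{\gamma}=M\,2^{-\gamma}$, using the lower bound $|x|^{-\gamma}\geqslant 2^{\gamma}$ that you already recorded.
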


	\subsection{Asymptotic radial symmetry and the Harnack inequality}
	Here, we prove the convergence of singular solutions to \eqref{oursystem} to its spherical average, which is defined by $\overline{\mathcal{U}}(x)=\avint_{\partial B_{1}} \mathcal{U}(r \theta)\ud\theta$. Notice that this asymptotic approximation implies that solutions must become radially symmetric close to the origin; this property is called {\it asymptotic radial symmetry}.
	
	\begin{proposition}
		Let $\mathcal{U}$ be a strongly positive singular solution to \eqref{oursystem}. Then, $\mathcal{U}$ is radially symmetric and 
		\begin{equation*}
			|\mathcal{U}(x)|=(1+\mathcal{O}(|x|))|\overline{\mathcal{U}}|(x) \quad {\rm as} \quad x\rightarrow0,
		\end{equation*}
		where $\overline{\mathcal{U}}$ is the spherical average of $\mathcal{U}$.
	\end{proposition}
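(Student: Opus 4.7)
The plan is to adapt the integral-form moving spheres method of Jin--Xiong (Theorem~\ref{thm:kin-xiong19}) to the coupled system, using as inputs the integral representation from Proposition~\ref{lm:integralrepresentation} and the upper bound $|\mathcal{U}(x)|\leqslant C_2|x|^{-\gamma}$ of Corollary~\ref{upperbound}. The statement on radial symmetry is to be read in the asymptotic sense delivered by the $(1+\mathcal{O}(|x|))$ expansion, and this is exactly what the moving spheres argument produces.

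First, I would fix $x_0\in B^*_{r_0/4}$ and, for each component $u_i$, introduce its Kelvin transform $(u_i)_{x_0,\mu}$ through $\partial B_\mu(x_0)$. Set
\begin{equation*}
\mu^{*}(x_0):=\sup\Bigl\{\mu\in\bigl(0,\tfrac{|x_0|}{2}\bigr]:(u_i)_{x_0,\nu}(y)\leqslant u_i(y),\ \forall\,|y-x_0|\geqslant\nu,\ i\in I,\ \nu\in(0,\mu]\Bigr\}.
\end{equation*}
Using the identities \eqref{identity1}--\eqref{identity2}, the inequality $(\psi_i)_{x_0,\mu}\leqslant\psi_i$ (as in \eqref{upperbound7}), and the componentwise monotonicity of the Gross--Pitaevskii nonlinearity (when $w_i\leqslant v_i$ for every $i\in I$ then $|\mathcal{W}|\leqslant|\mathcal{V}|$ and consequently $f_i(\mathcal{W})\leqslant f_i(\mathcal{V})$), I would run the contradiction-at-criticality argument from Proposition~\ref{integralmovingspheres} on all components simultaneously to show $\mu^{*}(x_0)=|x_0|/2$ for $|x_0|$ small. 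The sharp upper bound $|\mathcal{U}(y)|\leqslant C|y|^{-\gamma}$ is what makes the remainder terms in that comparison absorbable.

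Second, I would convert Kelvin dominance into an oscillation estimate. For $0<r\ll r_0$ and two unit vectors $\sigma_1,\sigma_2\in\mathbb{S}^{n-1}$, pick $x_0$ with $|x_0|\sim r$ so that $\mathcal{I}_{x_0,\mu}(r\sigma_1)=r\sigma_2$ for some admissible $\mu\leqslant|x_0|/2$. The bound $(u_i)_{x_0,\mu}(r\sigma_2)\leqslant u_i(r\sigma_2)$ rewrites as
\begin{equation*}
K_{x_0,\mu}(r\sigma_2)^{n-4}u_i(r\sigma_1)\leqslant u_i(r\sigma_2),\qquad K_{x_0,\mu}(r\sigma_2)^{n-4}=1+\mathcal{O}(r),
\end{equation*}
and swapping the roles of $\sigma_1,\sigma_2$ (which is legitimate because $\mu^{*}$ is saturated in both directions) gives $|u_i(r\sigma_1)-u_i(r\sigma_2)|=\mathcal{O}(r)\overline{u}_i(r)$ for each $i\in I$. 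Squaring and summing over $i$, then comparing with the spherical average of $|\mathcal{U}|^2$, yields $|\mathcal{U}(x)|=(1+\mathcal{O}(|x|))|\overline{\mathcal{U}}|(x)$.

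The main obstacle is the coupling: in the scalar Jin--Xiong setting the argument is purely algebraic thanks to the monotonicity $u\mapsto u^{2^{**}-1}$, whereas here $f_i(\mathcal{U})=c(n)|\mathcal{U}|^{2^{**}-2}u_i$ entangles all $p$ components, so one must slide all $u_i$ at once and track the sign of the common factor $|\mathcal{U}|^{2^{**}-2}-|\mathcal{U}_{x_0,\mu}|^{2^{**}-2}$ simultaneously with each $u_i-(u_i)_{x_0,\mu}$. The vector version of Lemma~\ref{movingpsheresineq} gives the componentwise start, and strong positivity ensures that the ordering of $|\mathcal{U}|$ follows once every $u_i$ is ordered; so the critical-radius analysis of Proposition~\ref{integralmovingspheres} can be replayed with $p$ coupled difference functions instead of one, which is the technical heart of the proof.
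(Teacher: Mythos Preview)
Your overall strategy---componentwise moving spheres on the integral system, pushing the critical radius $\mu^{*}(x_0)$ to its maximal value, then extracting an oscillation bound from the Kelvin inequality---is exactly the route the paper takes. However, two quantitative choices in your outline would not deliver the $(1+\mathcal{O}(|x|))$ conclusion.

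First, you cap $\mu$ at $|x_0|/2$, whereas the paper (and the argument itself) needs $\mu^{*}(x_0)=|x_0|$. Second, and relatedly, in the oscillation step you propose $|x_0|\sim r$. To compare $u_i(r\sigma_1)$ with $u_i(r\sigma_2)$ via $\mathcal{I}_{x_0,\mu}$ you must have $x_0$, $r\sigma_1$, $r\sigma_2$ collinear with $\mu^{2}=|x_0-r\sigma_1|\,|x_0-r\sigma_2|$; the Kelvin factor is then $\sqrt{|x_0-r\sigma_1|/|x_0-r\sigma_2|}$, which equals $1+\mathcal{O}(r)$ only when $|x_0-r\sigma_j|$ is of order a \emph{fixed} constant $\varepsilon$, not of order $r$. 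In that regime $\mu\approx|x_0|$, so the restriction $\mu\leqslant|x_0|/2$ is fatal: with $|x_0|\sim r$ and $\mu\leqslant|x_0|/2$ the Kelvin factor is merely $1+\mathcal{O}(1)$, yielding a Harnack inequality but not asymptotic radial symmetry. The paper's construction takes $x_{3i}=x_{1i}+\tfrac{\varepsilon}{4}\,\tfrac{x_{1i}-x_{2i}}{|x_{1i}-x_{2i}|}$ (so $|x_{3i}|\sim\varepsilon$) and $\mu=\bigl(\tfrac{\varepsilon}{4}(|x_{1i}-x_{2i}|+\tfrac{\varepsilon}{4})\bigr)^{1/2}$, which lies just below $|x_{3i}|$ and gives the ratio $\bigl(1+8r\varepsilon^{-1}\bigr)^{\gamma}=1+\mathcal{O}(r)$.

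Your treatment of the coupling is correct and matches the paper: one runs the sliding simultaneously on all components, uses that $u_i\geqslant(u_i)_{x_0,\mu}$ for every $i$ forces $|\mathcal{U}|\geqslant|\mathcal{U}_{x_0,\mu}|$ and hence $f_i(\mathcal{U})\geqslant f_i(\mathcal{U}_{x_0,\mu})$, and the $J$-term analysis from Proposition~\ref{integralmovingspheres} goes through without change.
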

	
	\begin{proof}
		First, we prove the following claim:
		
		\noindent{\bf Claim 1:} There exists $0<\varepsilon<\min\{1 /10,\bar{r}\}$ such that
		\begin{equation*}
			|\mathcal{U}_{x,\mu}(y)|\leqslant|\mathcal{U}(y)| \quad  \mbox{in} \quad B_{1}(x)\setminus B_{\mu}(x), 
		\end{equation*}
		for $0<\mu<|x|<\varepsilon$, where $\bar{r}$ is such that \eqref{upperbound7} holds for all $0<\mu\leqslant\bar{r}$.
		
		\noindent We divide the proof of the claim into two steps as follows:
		
		\noindent{\bf Step 1:} The critical parameter 
		\begin{equation*}
			\mu^*(x):=\sup\left\{0<\mu\leqslant|x| : |\mathcal{U}_{x,\mu}(y)|\leqslant|\mathcal{U}(y)| \ \mbox{for} \ y\in B_{2}\setminus\left(B_{\mu}(x)\cup\{0\}\right) \ \mbox{and} \ 0<\mu<\mu^*\right\}
		\end{equation*}
		is well-defined and positive.
		
		\noindent Indeed, using Lemma~\ref{movingpsheresineq}, for every $x\in B^*_{1/10}$ one can find $0<r_{x}<|x|$ such
		that for all $0<\mu\leqslant r_{x}$, it follows $|\mathcal{U}_{x,\mu}(y)|\leqslant|\mathcal{U}(y)|$ for $0<\mu<|y-x| \leqslant r_{x}$.
		Moreover, as a consequence of \eqref{rescaledintegralsystem}, we get
		\begin{equation}\label{ineq1}
			|\mathcal{U}(x)|\geqslant 4^{4-n}\int_{B_{2}} |f_i(\mathcal{U})|(y)\ud y:=C_{0}>0,
		\end{equation}
		which implies that there exists $0<\mu_{1}\ll r_{x}$ such that, for every $0<\mu\leqslant\mu_{1}$, it holds
		\begin{equation}\label{ineq2}
			|\mathcal{U}_{x,\mu}(y)|\leqslant|\mathcal{U}(y)| \quad \mbox{for} \quad y\in B_{2}\setminus\left(B_{r_{x}}(x) \cup\{0\}\right).
		\end{equation} 
		Hence, as a combination of \eqref{ineq1} and \eqref{ineq2}, it follows the proof of Step 1.
		
		\noindent{\bf Step 2:} There exists $\varepsilon>0$ such that $\mu^*=|x|$ for all $|x|\leqslant \varepsilon$, $\mu^*\leqslant\mu<|x|\leqslant\bar{r}$, and $y\in B_{1}$, we get 
		\begin{equation*}
			u_i(y)-(u_i)_{x, \mu}(y)\geqslant\int_{B_{1}\setminus B_{\mu}(x)} E(0,y,z,\mu)\left[f_i(\mathcal{U}(z))-f_i(\mathcal{U}_{x, \mu}(z))\right]\ud z+J(\mu, u, y), 
		\end{equation*}
		for any $i\in I$, where
		\begin{align*}
			J(\mu, u_i, y)=& \int_{B_{2}\setminus B_{1}} E(x,y,z,\mu)\left[f_i(\mathcal{U}(z))-f_i(\mathcal{U}_{x, \mu}(z))\right]\ud z-\int_{\mathbb{R}^n\setminus B_2} E(x,y,z,\mu)f_i(\mathcal{U}_{x, \mu}(z))\ud z.
		\end{align*}
		
		\noindent In fact, for $y\in\mathbb{R}^n\setminus B_1$ and $\mu<|x|<\varepsilon<1/10$, we have 
		\begin{equation*}
			\mathcal{I}_{x,\mu}(y)=\left|x+\frac{\mu^{2}(y-x)}{|y-x|^{2}}\right|\geqslant|x|-\frac{10}{9}\mu^{2} \geqslant|x|-\frac{10}{9}|x|^{2}\geqslant\frac{8}{9}|x|.
		\end{equation*}
		Using Proposition~\ref{movingpsheresineq}, there exists $C_0,C_1>0$ such that $|\mathcal{U}|\left(\mathcal{I}_{x,\mu}(y)\right)\leqslant C|x|^{-\gamma}$, which provides for all $y\in\mathbb{R}^n\setminus B_1$,
		\begin{align*}
			|\mathcal{U}_{x,\mu}(y)|=\left(\frac{\mu}{|y-x|}\right)^{n-4} |\mathcal{U}\left(\mathcal{I}_{x,\mu}(y)\right)|\leqslant C\mu^{n-4}|x|^{-\gamma}\leqslant C|x|^{\gamma}\leqslant C\varepsilon^{\gamma}.
		\end{align*}
		By \eqref{ineq1}, we find $|\mathcal{U}_{x, \mu}(y)|<|\mathcal{U}(y)|$ for $y\in B_{2}\setminus B_{1}$.
		In addition, by \eqref{upperbound2}, \eqref{ineq1} and \eqref{ineq2}, there exist $C_{1}>0$, independent of $x$, such that
		\begin{align*}
			&J(\mu, |\mathcal{U}|, y)&\\
			&\geqslant \int_{B_2\setminus B_1}E(x, y, z, \mu)\left(C_{0}^{2^{**}-1}-C^{2^{**}-1} \varepsilon^{n+4}\right)\ud z&\\
			&-C\int_{\mathbb{R}^n\setminus B_2}E(x, y, z, \mu)\left(|x-z|^{4-n}|x|^{\gamma}\right)^{2^{**}-1}\ud z&\\
			&\geqslant\frac{1}{2}C_{0}^{2^{**}-1}\int_{B_2\setminus B_1}E(x, y, z, \mu)\ud z-\varepsilon^{\frac{n+4}{2}}C\int_{\mathbb{R}^n\setminus B_2}E(x, y, z, \mu)|x-z|^{n+4}\ud z&\\
			&\geqslant\frac{1}{2}C_{0}^{2^{**}-1}\int_{B_{19/10}\setminus B_{11/10}}E(0, y-x, z, \mu)\ud z-\varepsilon^{\frac{n+4}{2}}C\int_{\mathbb{R}^n\setminus B_{19/10}}E(0, y-x, z, \mu)|x-z|^{n+4}\ud z&\\
			&\geqslant \frac{1}{C_{1}}(|y-x|-\mu),&
		\end{align*}
		for $y \in B_{1} \backslash\left(B_{\mu}(x) \cup\{0\}\right)$ and  $0<\varepsilon\ll1$. Eventually, if $\mu^*<|x|$, by Lemma~\ref{movingpsheresineq}, we get a contradiction with the definition of $\mu^*$. Hence, Step 2 is proved and so Claim 1.
		
		Finally, for any $i\in I$, choose $0<r_i<\varepsilon^{2}$ and $x_{1i}, x_{2i}\in\partial B_{r_i}$ satisfying
		\begin{equation*}
			u_i\left(x_{1i}\right)=\max_{\partial B_{r_i}} u_i \quad \mbox{and} \quad u_i\left(x_{2i}\right)=\min_{\partial B_{r_i}} u_i.
		\end{equation*}
		Now choosing
		\begin{equation*}
			x_{3i}=x_{1i}+\frac{\varepsilon_i\left(x_{1i}-x_{2i}\right)}{4\left|x_{1i}-x_{2i}\right|} \quad \mbox{and} \quad \mu=\sqrt{\frac{\varepsilon_i}{4}\left(\left|x_{1i}-x_{2i}\right|+\frac{\varepsilon_i}{4}\right)},
		\end{equation*}
		it follows from Claim 1 that
		$(u_i)_{x_{3i},\mu}\left(x_{2i}\right)\leqslant u_i\left(x_{2i}\right)$.
		Furthermore, we have
		\begin{align*}
			(u_i)_{x_{3i},\mu}\left(x_{2i}\right) &=\left(\frac{\mu}{\left|x_{1i}-x_{2i}\right|+{\varepsilon_i}/4}\right)^{n-4} u_i\left(x_{1i}\right)&\\ 
			&=\left(\frac{1}{4\left|x_{1i}-x_{2i}\right|\varepsilon_i^{-1}+1}\right)^{\frac{n-4}{2}} u_i\left(x_{1i}\right)&\\ &\geqslant\left(\frac{1}{8r\varepsilon_i^{-1}+1}\right)^{\gamma} u_i\left(x_{1i}\right),&
		\end{align*}
		which implies 
		\begin{equation*}
			\max_{\partial B_{r_i}} u_i\leqslant\left({8r_i}{\varepsilon^{-1}}+1\right)^{\gamma}\min_{\partial B_{r_i}}u_i,
		\end{equation*}
		and this proves the proposition.
	\end{proof}
	
	As a consequence of the upper estimate, we prove the following Harnack inequality for solutions to \eqref{oursystem}, whose scalar version can be found in \cite[Theorem~3.6]{MR2240050}.
	
	\begin{corollary}\label{harnack}
		Let $\mathcal{U}$ be a strongly positive solution to \eqref{oursystem}. Then, there exists $C>0$ such that 
		\begin{equation*}
			\max_{|x|=r}|\mathcal{U}|\leqslant C\min_{|x|=r}|\mathcal{U}| \quad {\rm for} \quad 0<r<1/4.
		\end{equation*}
		Moreover, $|D^{(j)}\mathcal{U}|\leqslant C|x|^{-j}|\mathcal{U}|$ for $j=1,2,3,4$.
	\end{corollary}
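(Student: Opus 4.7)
The plan is to derive both statements from a single rescaling argument that combines the upper bound of Corollary~\ref{upperbound}, the asymptotic radial symmetry just established, and standard interior regularity for the biharmonic operator. For $0<r<1/4$, I would introduce the rescaled $p$-map $\mathcal{V}_r(y):=r^{\gamma}\mathcal{U}(ry)$ defined on the annulus $A:=B_{2}\setminus\bar{B}_{1/2}$. By the conformal invariance observed in Subsection~\ref{sec:kelvintransform}, $\mathcal{V}_r$ solves $\Delta^2(v_r)_i=c(n)|\mathcal{V}_r|^{2^{**}-2}(v_r)_i$ in $A$. Corollary~\ref{upperbound} translates into $|\mathcal{V}_r(y)|\leqslant 2^{\gamma}C_2$ for $y\in A$, uniformly in $r$.

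With the nonlinearity uniformly bounded in $L^{\infty}(A)$, standard $L^p$ interior estimates for $\Delta^2$ followed by Schauder bootstrap yield $\|\mathcal{V}_r\|_{C^{4,\zeta}(A')}\leqslant C$ on a slightly smaller annulus $A':=B_{3/2}\setminus \bar{B}_{2/3}$, with $C$ independent of $r$. Unscaling immediately gives $|D^{(j)}\mathcal{U}(x)|\leqslant Cr^{-\gamma-j}$ for $|x|=r$ and $j=1,2,3,4$, which is the correct scale for the derivative bound, modulo comparing $r^{-\gamma}$ with $|\mathcal{U}(x)|$.

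For the Harnack inequality itself, I would invoke the asymptotic radial symmetry proved just above: $|\mathcal{U}(x)|=(1+\mathcal{O}(|x|))|\overline{\mathcal{U}}|(|x|)$ as $x\to 0$. This forces $|\mathcal{U}(x_1)|/|\mathcal{U}(x_2)|\to 1$ whenever $|x_1|=|x_2|=r\to 0$, so Harnack holds with a constant arbitrarily close to $1$ on some range $r\in(0,\delta]$. On the complementary compact range $r\in[\delta,1/4)$, the quotient $\max_{|x|=r}|\mathcal{U}|/\min_{|x|=r}|\mathcal{U}|$ is a continuous function of $r$ (by smoothness and strong positivity of $\mathcal{U}$ away from the singularity) and therefore uniformly bounded. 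Combining the two ranges yields Harnack with a single constant $C>0$. The derivative bound then follows from $|D^{(j)}\mathcal{U}(x)|\leqslant Cr^{-\gamma-j}\leqslant Cr^{-j}\max_{|x|=r}|\mathcal{U}|\leqslant Cr^{-j}\min_{|x|=r}|\mathcal{U}|\leqslant C|x|^{-j}|\mathcal{U}(x)|$.

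The main obstacle I expect is the fourth-order nature of \eqref{oursystem}: there is no classical maximum-principle-based Harnack inequality to lean on, and in principle one would have to develop a Moser-type iteration or an integral-representation Harnack directly. The present route sidesteps this entirely, because the asymptotic radial symmetry, already obtained via the integral representation and moving spheres in the preceding proposition, supplies the quasi-radial information needed for Harnack essentially for free; the role of the biharmonic regularity theory is only to upgrade an $L^{\infty}$ bound on $\mathcal{V}_r$ to a $C^{4,\zeta}$ bound on a smaller annulus.
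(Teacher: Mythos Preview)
Your Harnack argument is correct and takes a genuinely different route from the paper. The paper works with the integral representation \eqref{integralsystem}: it rescales, splits the Riesz potential into a local piece and a far-away piece $(\varrho_i)_x$, observes that the far-away piece and $\widetilde{\psi}_i$ satisfy Harnack by direct kernel comparison, and then invokes the Harnack theorem of \cite[Theorem~2.3]{MR2055032} for integral equations plus a covering argument. This actually yields Harnack on dyadic \emph{annuli}, $\sup_{1/2\leqslant|y|\leqslant 3/2}\widetilde{u}_i\leqslant C\inf_{1/2\leqslant|y|\leqslant 3/2}\widetilde{u}_i$, which is stronger than the sphere version stated. Your route via asymptotic radial symmetry plus compactness away from the origin is more elementary and delivers the sphere Harnack directly, but it does not by itself give the annular Harnack.

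That distinction is exactly where your derivative-estimate chain breaks. The inequality $Cr^{-\gamma-j}\leqslant Cr^{-j}\max_{|x|=r}|\mathcal{U}|$ requires $\max_{|x|=r}|\mathcal{U}|\geqslant c\,r^{-\gamma}$, which is the lower bound of Corollary~\ref{lowerbound}. That lower bound is proved only \emph{after} Corollary~\ref{harnack} and its proof uses Corollary~\ref{harnack} (via Lemma~\ref{liminflimsup}), so invoking it here is circular. The paper sidesteps this because the annular Harnack from the first part, combined with the linear Schauder bound $|D^{(j)}\mathcal{V}_r|\leqslant C\|\mathcal{V}_r\|_{L^\infty(A)}$ (the potential $|\mathcal{V}_r|^{2^{**}-2}$ is uniformly bounded by the upper estimate), gives $|D^{(j)}\mathcal{V}_r(y)|\leqslant C\sup_A|\mathcal{V}_r|\leqslant C'\inf_A|\mathcal{V}_r|\leqslant C'|\mathcal{V}_r(y)|$, and this rescales to the stated bound with no appeal to a lower estimate. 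To repair your argument you would need to upgrade your sphere Harnack to an annulus Harnack before tackling the derivatives; radial symmetry alone does not compare $|\overline{\mathcal{U}}|(r)$ with $|\overline{\mathcal{U}}|(2r)$.
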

	
	\begin{proof}
		For any $i\in I$, let us define $\widetilde{u}_i(y)=r^{\gamma}u_i(ry)$. Thus, 
		\begin{equation}\label{harnack1}
			\widetilde{u}_i(y)=\int_{B_{2}} |y-z|^{4-n}f_i(\widetilde{\mathcal{U}})\ud z+\widetilde{h}_{i}(y),
		\end{equation}
		where $\widetilde{\psi}_{i}(y)=r^{\gamma} \psi_i(ry)$. By Proposition~\ref{upperbound}, there exists $C_2>0$, such that $\widetilde{u}_i\leqslant C_2$ in $B_{2}\setminus B_{1/10}$. Taking $|x|=1$, let us consider
		\begin{equation*}
			(\varrho_i)_{x}(y)=\int_{B_{2/r}(x)\setminus B_{9/10}(x)}|y-z|^{4-n}f_i(\widetilde{\mathcal{U}})\ud z.
		\end{equation*}    
		Hence, for any $y_{1}, y_{2} \in B_{1 / 2}(x),$ we have
		\begin{align*} 
			(\varrho_i)_{x}(y_{1})\leqslant C\int_{B_{2/r}(x)\setminus B_{9/10}(x)} |y-z|^{4-n}f_i(\widetilde{\mathcal{U}})\ud z\leqslant C (\varrho_i)_{x}(y_{2}),
		\end{align*}
		which implies that $\varrho_i$ satisfies the Harnack inequality in $B_{1/2}(x)$. On the other hand, $\psi_i$ also satisfies the Harnack inequality in $B_{1/2}(x)$ and
		\begin{equation*}
			\widetilde{u}_i(y)=\int_{B_{9/10}(x)}|y-z|^{4-n}f_i(\widetilde{\mathcal{U}})\ud z+(\varrho_i)_{x}(y)+\widetilde{h}_{i}(y) \quad \mbox{in} \quad B_{1/2}(x).
		\end{equation*}
		Now by \cite[Theorem~2.3]{MR2055032} we have that $\sup_{B_{1/2}(x)}\widetilde{u}_i\leqslant C \inf _{B_{1/2}(x)}\widetilde{u}_i$, which by covering argument provides
		\begin{equation*}
			\sup_{1/2\leqslant|y|\leqslant 3/2}\widetilde{u}_i\leqslant C \inf _{1/2\leqslant|y|\leqslant 3/2}\widetilde{u}_i,
		\end{equation*}
		and, by rescaling back to $u$, the proof of the first part follows.
		
		Next, for any fixed $x$ and $i\in I$, let $r=|x|$ and $\widetilde{u}_i(y)=r^{\gamma}u_i(ry)$. Thus, $\widetilde{\mathcal{U}}_i$ satisfies \eqref{harnack1} and, by Proposition~\ref{upperbound}, it holds that $\widetilde{u}_i\leqslant C_2$ in $B_{3/2}\setminus B_{1/2}$. Finally, using the local estimates from \cite[Section~2.1]{MR2055032} and the smoothness of $\psi_i$, one can find $C>0$ satisfying $\left|D^{(j)}\widetilde{u}_i(x)\right|\leqslant C$ for $|x|=1$ and $j=1,2,3,4$. Whence, by scaling back to $u_i$, the proof is concluded.
	\end{proof}
	
	\subsection{Removable singularity classification and the lower bound estimate}
	Next, we use the Pohozaev invariant, the Harnack inequality, and a barrier argument for providing a removable classification result, which implies the lower bound estimate in Proposition~\ref{estimates}.
	
	\begin{lemma}\label{liminflimsup}
		Let $s=2^{**}-1$ and $\mathcal{U}\in C\left(B^*_{2},\mathbb{R}^p\right)\cap L^{2^{**}-1}\left(B_{2},\mathbb{R}^p\right)$ be a strongly positive solution to \eqref{rescaledintegralsystem}. Assume $\psi_i\in C^{\infty}(B_{1})$ for all $i\in I$. If $\limsup _{|x|\rightarrow 0} |\mathcal{U}(x)|=\infty$, then $\liminf _{|x|\rightarrow 0} |\mathcal{U}(x)|=\infty$.
	\end{lemma}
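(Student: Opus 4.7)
The plan is to prove the stronger statement that the full limit $\lim_{|x|\to 0}|\mathcal{U}(x)|$ exists in $(0,+\infty]$; once that is known, the hypothesis $\limsup = +\infty$ automatically forces the limit (and hence $\liminf$) to equal $+\infty$. The two ingredients are the monotonicity of the spherical mean of each superharmonic component, and the asymptotic radial symmetry proposition established immediately above.

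First, for each $i\in I$, I would show that the spherical mean $\overline{u}_i(r) := \avint_{\partial B_r} u_i\,d\sigma$ is non-increasing in $r$ on some interval $(0,r_0)$. Integrating $\Delta u_i < 0$ over the annulus $B_r\setminus B_\epsilon$ and applying the divergence theorem gives
\begin{equation*}
\omega_{n-1}r^{n-1}\overline{u}_i'(r) = \int_{\partial B_r}\partial_\nu u_i\,d\sigma = \int_{B_r\setminus B_\epsilon}\Delta u_i\,dx + \int_{\partial B_\epsilon}\partial_{\nu_\epsilon} u_i\,d\sigma.
\end{equation*}
The pointwise upper bound $|\mathcal{U}(x)|\leqslant C|x|^{-\gamma}$ from Corollary \ref{upperbound} combined with the gradient estimate $|\nabla u_i|\leqslant C|x|^{-1}|\mathcal{U}|$ from Corollary \ref{harnack} yields $|\partial_r u_i|\leqslant C|x|^{-\gamma-1}$. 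Since $n-\gamma-2 = n/2$, the boundary term on $\partial B_\epsilon$ is of order $O(\epsilon^{n/2})$ and vanishes as $\epsilon\to 0^+$. The same estimates imply $\Delta u_i\in L^1_{\rm loc}(B_{r_0})$, so the volume term converges to $\int_{B_r}\Delta u_i\,dx\leqslant 0$. We conclude $\overline{u}_i'(r)\leqslant 0$, so $\overline{u}_i(r)$ is monotone as $r\to 0^+$ and admits a limit $L_i\in(0,+\infty]$.

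Second, summing over $i\in I$, the quantity $|\overline{\mathcal{U}}|(r)^2 = \sum_{i\in I}\overline{u}_i(r)^2$ is also monotone in $r$ and converges to $\sum_{i\in I} L_i^2 \in (0,+\infty]$ as $r\to 0^+$. By the asymptotic radial symmetry proposition preceding this lemma,
\begin{equation*}
|\mathcal{U}(x)| = (1+\mathcal{O}(|x|))|\overline{\mathcal{U}}|(|x|) \quad\text{as } x\to 0,
\end{equation*}
so $\lim_{|x|\to 0}|\mathcal{U}(x)|$ exists in $(0,+\infty]$ and equals $\lim_{r\to 0^+}|\overline{\mathcal{U}}|(r)$. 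Since by hypothesis $\limsup_{|x|\to 0}|\mathcal{U}(x)| = +\infty$, this common value is $+\infty$, and therefore $\liminf_{|x|\to 0}|\mathcal{U}(x)|=+\infty$ as well.

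The only technical subtlety is the justification of the limit $\epsilon\to 0^+$ in the divergence identity: one must show that the flux $\int_{\partial B_\epsilon}\partial_\nu u_i\,d\sigma$ vanishes in the limit. This is precisely where the sharp a priori upper bound of Corollary \ref{upperbound} and the derivative estimate of Corollary \ref{harnack} are essential; without them, the boundary contribution could in principle carry a non-trivial mass coming from the singularity, and the superharmonicity alone would not suffice to conclude monotonicity of the averages.
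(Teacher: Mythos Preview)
Your argument is correct and proves something slightly stronger than needed: you show that $\lim_{|x|\to 0}|\mathcal{U}(x)|$ exists in $(0,+\infty]$, not merely that $\liminf=\limsup$ under the blow-up hypothesis. The monotonicity of $\overline{u}_i$ is justified exactly as you say, and the flux estimate $\int_{\partial B_\epsilon}\partial_\nu u_i=\mathcal{O}(\epsilon^{n/2})$ is the key point; your closing remark identifies this correctly. One small comment: the lemma's stated hypotheses give only $-\Delta(u_i-\psi_i)\geqslant 0$ (from the integral representation), not $-\Delta u_i>0$; you can either invoke the ambient superharmonicity assumption of the section or run your monotonicity argument for $\overline{u_i-\psi_i}$ and add back the smooth bounded $\psi_i$ at the end.

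The paper's proof is quite different and more elementary. It picks a sequence $r_k\downarrow 0$ along which $u_i(x_k)\to\infty$, uses the Harnack inequality (Corollary~\ref{harnack}) to get $\inf_{\partial B_{r_k}}u_i\to\infty$, and then applies the minimum principle for the superharmonic function $u_i-\psi_i$ on each annulus $B_{r_k}\setminus B_{r_{k+1}}$: the minimum is attained on the boundary spheres, both of which tend to infinity, so the interior minima do too. This avoids any flux computation or appeal to asymptotic radial symmetry; in exchange it gives only the qualitative statement $\liminf=+\infty$, not the existence of the limit. Your route is heavier in that it leans on the asymptotic radial symmetry proposition and the gradient bounds, but it yields a cleaner quantitative conclusion.
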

	
	\begin{proof}
		Let us consider $\{x_{k}\}_{k\in\mathbb{N}}\subset B_1$ satisfying $r_{k}=|x_{k}|\rightarrow0$ and $u_i\left(x_{k}\right)\rightarrow\infty$ as $k\rightarrow\infty$. By the Harnack inequality, we have $\inf_{\partial B_{r_{k}}}u_i\rightarrow\infty$. Thus, we obtain $-\Delta(u_i-\psi_i)\geqslant0$ in $B^*_{2}$. Hence, since $\psi_i\in C^{\infty}(B_1)$ for all $i\in I$, it follows that $\min_{B_{r_j}\setminus B_{r_{j+1}}}u_i(x)\rightarrow\infty$ as $j\rightarrow\infty$. Therefore, we conclude
		\begin{equation*}
			\min_{B_{r_{k}}\setminus B_{r_{k+1}}}(u_i-\psi_i)=\min_{\partial B_{r_{k}}\cup\partial B_{r_{k+1}}}(u_i-\psi_i),
		\end{equation*}    
		which proves the lemma.
	\end{proof}
	
	The next lemma is the main result of this subsection.
	
	\begin{lemma}
		Let $s=2^{**}-1$ and $\mathcal{U}\in C\left(B^*_{2},\mathbb{R}^p\right)\cap L^{2^{**}-1}\left(B_{2},\mathbb{R}^p\right)$ be a strongly positive solution to \eqref{rescaledintegralsystem}. If $\lim _{|x| \rightarrow0}|x|^{\gamma} |\mathcal{U}(x)|=0$, then $|\mathcal{U}|$ can be extended as a continuous function to the whole $B_1$.
	\end{lemma}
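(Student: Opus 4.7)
My approach is to establish that $|\mathcal{U}|$ is locally bounded near the origin, from which the continuous extension to $B_1$ will follow via the removable-singularity theorem for biharmonic functions (as invoked in the proof of Lemma~\ref{integrability}, citing \cite{MR1820695}) combined with standard Schauder theory applied to $\Delta^{2}u_{i}=c(n)|\mathcal{U}|^{2^{**}-2}u_{i}$.

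The strategy is a two-stage bootstrap of the integral representation from Proposition~\ref{lm:integralrepresentation},
\begin{equation*}
u_{i}(x)=c(n)\int_{B_{r_0}}|x-y|^{4-n}|\mathcal{U}(y)|^{2^{**}-2}u_{i}(y)\,\ud y+\psi_{i}(x),
\end{equation*}
with $\psi_{i}\in C^{\infty}(B_{r_0})$ bounded. Stage one promotes the qualitative decay $\omega(r):=\sup_{0<|y|\leq r}|y|^{\gamma}|\mathcal{U}(y)|\to 0$ into a quantitative polynomial bound $|\mathcal{U}(x)|\leq C|x|^{-\alpha}$ for some $\alpha<\gamma$. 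A three-region decomposition of the Riesz integral at radii $|y|\leq|x|/2$, $|x|/2\leq|y|\leq 2|x|$, and $2|x|\leq|y|\leq r_0$, combined with the universal scaling bound $\int_{\mathbb{R}^{n}}|e-z|^{4-n}|z|^{-(n+4)/2}\,\ud z<\infty$ (valid for $n\geq 5$) and the weighted integrability $|y|^{-q}|\mathcal{U}|^{s}\in L^{1}(B_1)$ for $q<\gamma$ from Lemma~\ref{integrability}, yields a self-improvement inequality for $\omega$ that can be iterated to produce the desired polynomial rate.

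Stage two is the standard exponent bootstrap: starting from $|\mathcal{U}(x)|\leq A|x|^{-\alpha}$ with $\alpha\in(4(n-4)/(n+4),\gamma)$, the Riesz potential identity $\int|x-y|^{4-n}|y|^{-\alpha(2^{**}-1)}\,\ud y=C|x|^{4-\alpha(2^{**}-1)}$ produces $|\mathcal{U}(x)|\leq A'|x|^{-\alpha'}$ with $\alpha'=\alpha(2^{**}-1)-4<\alpha$. Iterating finitely many times drives $\alpha$ below $4(n-4)/(n+4)$, at which point the convolution kernel $|y|^{-\alpha(2^{**}-1)}$ is locally integrable at the origin and $|\mathcal{U}|\in L^{\infty}$ near $0$.

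The main obstacle is stage one: the outer-annulus contribution $\int_{2|x|\leq|y|\leq r_0}|x-y|^{4-n}\eta(|y|)^{2^{**}-1}|y|^{-(n+4)/2}\,\ud y$ estimates naively to $C\eta(r_0)^{2^{**}-1}|x|^{-\gamma}$, which merely reduces the leading coefficient without changing the exponent. Closing the gap to extract a definite polynomial rate requires an auxiliary input: either a Pohozaev-type argument using the Harnack derivative estimates of Corollary~\ref{harnack} (which imply $|D^{(j)}\mathcal{U}|=o(|x|^{-\gamma-j})$, so that the Pohozaev integrand on $\partial B_r$ is $o(r^{1-n})$ and hence $\mathcal{P}_{\mathrm{sph}}(\mathcal{U})=0$; combined with the classification in Theorem~\ref{thm:andrade-doo19} this excludes an Emden--Fowler-type singular profile since the latter has $\mathcal{P}_{\mathrm{sph}}<0$ by Remark~\ref{negativepohozaev}), or a Gronwall-type closure exploiting the weighted $L^{1}$ bound in Lemma~\ref{integrability} to gain a definite exponent improvement at one step of the iteration; either route is the technically most delicate component of the proof.
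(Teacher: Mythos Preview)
Your proposal has a genuine gap in stage one that you yourself identify but do not close. The transition from the qualitative hypothesis $|x|^{\gamma}|\mathcal{U}(x)|\to 0$ to a quantitative bound $|\mathcal{U}(x)|\leq C|x|^{-\alpha}$ with $\alpha<\gamma$ is precisely the heart of the matter, and neither of your two suggested closures works as stated. The Pohozaev route you sketch establishes $\mathcal{P}_{\rm sph}(\mathcal{U})=0$, but invoking Theorem~\ref{thm:andrade-doo19} is illegitimate here: that classification concerns global solutions on $\mathbb{R}^{n}\setminus\{0\}$, not local solutions on $B_1^*$, and in any case the removable-singularity characterization via the Pohozaev sign (Corollary~\ref{removable}) is proved \emph{downstream} of the present lemma, so using it would be circular. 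The Gronwall alternative is too vague to assess; the outer-annulus contribution really does only shrink the coefficient, and iterating that gives $\omega(r)\to 0$ faster without ever improving the exponent.

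The paper's proof avoids this obstacle entirely by a barrier argument in the spirit of \cite{MR2055032}. One fixes $\kappa\in(0,\gamma)$ and posits the comparison function $\varsigma_i(x)=M|x|^{-\kappa}+\varepsilon|x|^{4-n-\kappa}$ on $B_\rho^*$, matched to $u_i$ outside. The smallness of $\delta$ (from $u_i\leq\delta|x|^{-\gamma}$ on $B_\rho^*$) makes the integral operator $\Phi\mapsto\int_{B_\rho}|x-y|^{4-n}u_i^{2^{**}-2}\Phi\,\ud y$ a strict contraction on $\varsigma_i$, while the outer piece is controlled by $\max_{\partial B_\rho}u_i$. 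A sliding argument (consider the infimal $\bar\tau>1$ with $\bar\tau\varsigma_i\geq u_i$ and derive a contradiction at a touching point) then yields $u_i\leq\varsigma_i$ directly. Sending $\varepsilon\to 0$ gives $u_i\leq M|x|^{-\kappa}$ in one stroke, with no bootstrap needed to reach the sub-critical exponent; integrability $|\mathcal{U}|^{2^{**}-2}\in L^{s}$ for some $s>n/4$ and elliptic regularity then finish. The barrier's second term $\varepsilon|x|^{4-n-\kappa}$ is what makes the touching argument work near the origin, and this is exactly the missing ingredient in your iteration scheme.
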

	
	\begin{proof}
		Let us consider the barrier functions from \cite{MR2055032}. For any $i\in I$ and $\delta>0$, we choose $0<\rho\ll 1$ such that $u_i(x)\leqslant\delta|x|^{-\gamma}$ in $B^*_{\rho}$.
		Fixing $\varepsilon>0$, $\kappa\in\left(0, \gamma\right)$ and $M\gg1$ to be chosen later, we define
		\begin{equation*}
			\varsigma_i(x)=
			\begin{cases}
				{M|x|^{-\kappa}+\varepsilon|x|^{4-n-\kappa},} & \mbox{if} \ {0<|x|<\rho}\\ 
				{u_i(x)},& \mbox{if} \ {\rho<|x|<2}.
			\end{cases}
		\end{equation*}
		Notice that for every $0<\kappa<n-4$ and $0<|x|<2$, one can use a change a variables to find $C>0$ such that
		\begin{align*}
			\int_{\mathbb{R}^{n}}{|x-y|^{4-n}|y|^{-4-\kappa}}\ud y 
			&=|x|^{4-n}\int_{\mathbb{R}^{n}}{\left||x|^{-1}x-|x|^{-1}y\right|^{4-n}|y|^{\-\kappa-4}}\ud y&\\
			&=|x|^{-\kappa+4}\int_{\mathbb{R}^{n}} {\left||x|^{-1}x-z\right|^{4-n}|z|^{\-\kappa-4}}\ud z&\\ 
			&\leqslant C\left(\frac{1}{n-4-\kappa}+\frac{1}{\kappa}+1\right)|x|^{-\kappa},
		\end{align*}
		which yields for $0<|x|<2$ and $0<\delta\ll1$,
		\begin{align*}
			\int_{B_{\rho}}{u_i^{2^{**}-2}(y)\varsigma_i(y)}{|x-y|^{4-n}}\ud y
			&\leqslant \delta^{2^{**}-2} \int_{\mathbb{R}^{n}}{\varsigma_i(y)}{|x-y|^{n-4}|y|^{-4}}\ud y&\\
			&\leqslant C\delta^{2^{**}-2} \varsigma_i(x)&\\ 
			&<\frac{1}{2} \varsigma_i(x).&
		\end{align*}
		Moreover, for $0<|x|<\rho$ and $\bar{x}=\rho x|x|^{-1}$, we get
		\begin{align*}
			\int_{B_{2}\setminus B_{\rho}}{u_i^{2^{**}-2}(y)\varsigma_i(y)}{|x-y|^{4-n}}\ud y
			&=\int_{B_{2}\setminus B_{\rho}}\frac{|\bar{x}-y|^{n-4}}{|x-y|^{n-4}}\frac{u_i^{2^{**}-1}(y)}{|\bar{x}-y|^{n-4}}\ud y&\\
			&\leqslant 2^{n-4}\int_{B_{2}\setminus B_{\rho}}\frac{u_i^{2^{**}-1}(y)}{|\bar{x}-y|^{n-4}}\ud y&\\ 
			&\leqslant 2^{n-4} u_i(\bar{x})&\\
			&\leqslant 2^{n-4}\max_{\partial B_{\rho}} u_i.
		\end{align*}
		The last inequality implies that for $0<|x|<\tau$ and $M\geqslant\max_{\partial B_{\rho}}u_i$,
		\begin{equation*}
			\varsigma_i(x)+\int_{B_{2}}\frac{u_i^{2^{**}-2}(y)\varsigma_i(y)}{|x-y|^{4-n}}\ud y \leqslant \varsigma_i(x)+2^{n-4}\max_{\partial B_{\rho}}u+\frac{1}{2}\varsigma_i(x)<\varsigma_i(x).
		\end{equation*}
		
		In the next claim, we show that $\varsigma_i$ can be taken indeed as a barrier for any $u_i$.
		
		\noindent{\bf Claim 1:} For any $i\in I$, it holds that $u_i(x)\leqslant \varsigma_i(x)$ in $B^*_{\rho}$. 
		
		\noindent Indeed, assume it does not hold. Since $u_i(x)\leqslant\delta|x|^{-\gamma}$ in $B^*_{\rho}$, by the definition of $\varsigma_i$, there exists $\bar{\tau} \in(0, \rho)$, depending on $\varepsilon$, such that $\varsigma_i\geqslant u_i$ in $B^*_{\widetilde{\rho}}$ and $\varsigma_i>u_i$ near $\partial B_{\rho}$. Let us consider, 
		\begin{equation*}
			\bar{\tau}:=\inf\left\{\tau>1 : \tau\psi_{i}>u_i \ \mbox{in} \ B^*_{\rho}\right\}.
		\end{equation*}
		Then, we have that $\bar{\tau}\in(1,\infty)$ and there exists $\bar{x}\in B_{\rho} \setminus\bar{B}_{\widetilde{\tau}}$ such that $\bar{\tau}\varsigma_i(\bar{x})=u_i(\bar{x})$. Furthermore, for    $0<|x|<\tau$, it follows 
		\begin{equation*}
			\bar{\tau}\varsigma_i(x)\geqslant\int_{B_{2}}{u_i^{2^{**}-2}(y)\bar{\tau} \varsigma_i(y)}{|x-y|^{4-n}}\ud y+\bar{\tau}\varsigma_i(x)\geqslant\int_{B_{2}}{u_i^{2^{**}-2}(y)\bar{\tau}\varsigma_i(y)}{|x-y|^{4-n}}\ud y+\varsigma_i(x),
		\end{equation*}
		which gives us,
		\begin{equation*}
			\bar{\tau}\varsigma_i(x)-u_i(x)\geqslant\int_{B_{2}}{u_i^{2^{**}-2}(y)(\bar{\tau} \varsigma_i(y)-u_i(y))}{|x-y|^{4-n}}\ud y.
		\end{equation*}
		Finally, by evaluating the last inequality at $\bar{x}\in B_{\rho}\setminus\bar{B}_{\widetilde{\tau}}$, we get a contradiction and the claim is proved.
		
		As a consequence of the claim, we get that
		$u_i(x)\leqslant\varsigma_i(x)\leqslant M|x|^{-\kappa}+\varepsilon|x|^{4-n-\kappa}$  in $B^*_{\rho}$,
		which, by passing to the limit as $\varepsilon\rightarrow0$, implies that $u_i^{2^{**}-2}\in L^{s}(B^*_{\rho})$ for some $s>{n}/{4}$ and any $i\in I$. 
		Hence, using standard elliptic regularity, the proof of the lemma follows.
	\end{proof}
	
	\begin{lemma}
		Let $s=2^{**}-1$ and $\mathcal{U}\in C\left(B^*_{2}\right)\cap L^{2^{**}-1}\left(B_{2}\right)$ be a strongly positive solution to \eqref{oursystem}. Assume $\psi_i\in C^{\infty}(B_{1})$ is a positive function in $\mathbb{R}^{n}$ satisfying $\Delta^{2} \psi_i=0$ in $B_{2}$ for all $i\in I$. If $\liminf _{|x|\rightarrow 0}|x|^{\gamma} |\mathcal{U}(x)|=0$, then $\lim _{|x|\rightarrow 0}|x|^{\gamma} |\mathcal{U}(x)|=0$.
	\end{lemma}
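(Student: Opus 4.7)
The plan is to argue by contradiction: assume that $L := \limsup_{|x|\to 0}|x|^\gamma|\mathcal{U}(x)|$ is positive (it is finite by Corollary \ref{upperbound}), and extract a contradiction via the scale invariance of the spherical Pohozaev integral combined with the classification of global limit solutions in Theorem \ref{thm:andrade-doo19}.

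The first step is to show $\mathcal{P}_{\rm sph}(\mathcal{U})=0$. Pick $x_k\to 0$ with $|x_k|^\gamma|\mathcal{U}(x_k)|\to 0$. By the Harnack inequality (Corollary \ref{harnack}) and the accompanying derivative bound $|D^{(j)}\mathcal{U}(x)|\leq C|x|^{-j}|\mathcal{U}(x)|$, every term in the integrand of \eqref{pohozaevsphericalautonomous} evaluated on $\partial B_{|x_k|}$ is controlled by a bounded factor times $|x|^\gamma|\mathcal{U}(x)|$ (the nonlinear contribution $r|x|^n|\mathcal{U}|^{2^{**}}$ being tamed by the critical identity $n=2^{**}\gamma$). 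Hence $\mathcal{P}_{\rm sph}(|x_k|,\mathcal{U})\to 0$, and conservation forces $\mathcal{P}_{\rm sph}(\mathcal{U})=0$.

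Next, choose $y_k\to 0$ with $|y_k|^\gamma|\mathcal{U}(y_k)|\to L$ and consider the rescaling $\mathcal{V}_k(z):=|y_k|^\gamma\mathcal{U}(|y_k|z)$. The uniform bound $|\mathcal{V}_k(z)|\leq C_2|z|^{-\gamma}$ and the interior elliptic estimates from Proposition \ref{lm:integralrepresentation} let us extract a subsequence $\mathcal{V}_k\to\mathcal{V}_0$ in $C^4_{\rm loc}(\mathbb{R}^n\setminus\{0\})$, where $\mathcal{V}_0$ solves \eqref{vectlimitequation} and $|\mathcal{V}_0(\theta_0)|=L>0$ for $\theta_0=\lim y_k/|y_k|$; moreover $\mathcal{P}_{\rm sph}(\mathcal{V}_0)=\mathcal{P}_{\rm sph}(\mathcal{U})=0$ by scale invariance of the Pohozaev integral. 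Since Emden--Fowler solutions have strictly negative Pohozaev invariant (Remark \ref{negativepohozaev}), Theorem \ref{thm:andrade-doo19} forces $\mathcal{V}_0$ to be a fourth order spherical solution $\Lambda u_{x_0,\mu}$, and the asymptotic radial symmetry $|\mathcal{U}(x)|=(1+\mathcal{O}(|x|))|\overline{\mathcal{U}}|(|x|)$ transfers to the blow-up to pin the center at the origin, yielding $\mathcal{V}_0=\Lambda u_{0,\mu}$ which extends smoothly to all of $\mathbb{R}^n$.

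To close the argument, I transfer to the Emden--Fowler variable $v(t):=e^{-\gamma t}|\overline{\mathcal{U}}|(e^{-t})$. Asymptotic radial symmetry together with the Harnack derivative bounds reduces the system to the scalar ODE \eqref{fowler4order} modulo errors vanishing as $t\to\infty$, and the conserved Hamiltonian $\mathcal{H}(v)$ equals $\omega_{n-1}^{-1}\mathcal{P}_{\rm sph}(\mathcal{U})=0$ in the limit. A phase-space analysis on the level set $\{\mathcal{H}=0\}$ shows that the only bounded positive orbits are $v\equiv 0$ and the homoclinic-to-zero spherical profile $v_{\rm sph}(t-t_0)=\cosh^{-\gamma}(t-t_0)$, both of which decay to $0$ at $+\infty$; hence $v(t)\to 0$, which combined with Harnack gives $\lim_{|x|\to 0}|x|^\gamma|\mathcal{U}(x)|=0$, contradicting $L>0$. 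The chief technical obstacle is rigorously handling the nonautonomous correction terms coming from asymptotic radial symmetry: one must upgrade their pointwise smallness to a phase-space convergence statement for the full trajectory, for which the spectral gap and discreteness of indicial roots around the homoclinic orbit established in Section \ref{sec:linearanalysis} provide the appropriate linearized framework.
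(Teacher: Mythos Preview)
Your first step, showing $\mathcal{P}_{\rm sph}(\mathcal{U})=0$ via the Harnack derivative estimates along a sequence where $|x_k|^\gamma|\mathcal{U}(x_k)|\to 0$, is correct and matches the paper's Claim~2. Your second step, extracting a blow-up limit $\mathcal{V}_0$ along a sequence realizing the positive $\limsup$ and identifying it as a spherical solution centered at the origin, is also essentially sound. The genuine gap is the third step.

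The phase-space argument does not close. Knowing that subsequential blow-up limits are spherical solutions $\Lambda u_{0,\mu}$ is not a contradiction: for each $L\in(0,1]$ there is a $\mu$ with $u_{0,\mu}|_{\partial B_1}=L$, so nothing prevents different scales from yielding different $\mu$'s. You then try to force $v(t)\to 0$ by arguing that $v$ lies approximately on the zero level set of $\mathcal{H}$, but ``approximately'' is the whole problem. The angular error terms from asymptotic radial symmetry are only $\mathcal{O}(|x|)$ pointwise, and you would need to show they cannot push the orbit off the homoclinic connection over an infinite time horizon; this is a genuine stability question for a degenerate (homoclinic) orbit and is not settled by the tools you cite. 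In particular, Section~\ref{sec:linearanalysis} linearizes around the \emph{periodic} Emden--Fowler solutions $v_{a}$ with $a\in(0,a_0)$, not around the homoclinic $v_{\rm sph}$; the Floquet/band-structure analysis there does not transfer to the homoclinic case. There is also a secondary issue: $|\overline{\mathcal{U}}|$ does not satisfy a clean scalar ODE unless the components are already proportional, which you only know for limit profiles.

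The paper avoids all of this by a different rescaling and a different choice of sequence. Because $\liminf r^\gamma\bar u_i(r)=0$ while $\limsup>0$, the function $r\mapsto r^\gamma\bar u_i(r)$ has local minima $r_k\to 0$ with $r_k^\gamma\bar u_i(r_k)\to 0$. One then normalizes by $u_i(r_ke_n)^{-1}$ rather than $r_k^\gamma$, so that the nonlinear term carries the vanishing prefactor $(r_k^\gamma u_i(r_ke_n))^{2^{**}-2}$ and disappears in the limit. The limit $\varphi_\infty$ is therefore \emph{biharmonic} on $\mathbb{R}^n\setminus\{0\}$, and the two pinning conditions $\varphi_\infty(e_n)=1$ and $\frac{d}{dr}\big(r^\gamma\bar\varphi_\infty\big)\big|_{r=1}=0$ (the latter coming precisely from the local minimum choice) force $\varphi_\infty(y)=\tfrac{1}{2}(|y|^{4-n}+1)$. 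Plugging this explicit biharmonic function into the Pohozaev boundary functional $q$ and comparing with the $\mu\to 0$ degeneration of the spherical family yields a nonzero residue $(n-4)\int_{\partial B_1}\partial_\nu\Delta(|x|^{4-n})\,d\sigma\neq 0$, contradicting $\mathcal{P}_{\rm sph}(\mathcal{U})=0$. The local-minimum trick and the passage to a biharmonic (rather than nonlinear) limit are the two ideas your outline is missing.
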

	
	\begin{proof}
		Assume by contradiction there exists $C>0$ such that $\lim\sup_{x\rightarrow0}|x|^{\gamma}|\mathcal{U}(x)|=C>0$; thus, from Lemma~\ref{liminflimsup}, we get that $\liminf_{|x|\rightarrow 0} |\mathcal{U}(x)|=\infty$. Using the assumption and the Harnack inequality in Lemma~\ref{harnack}, there exists $\{r_{k}\}_{k\in\mathbb{N}}$ such that $r_k\rightarrow 0$ and $r_{k}^{\gamma}\bar{u}_i\left(r_{k}\right)\rightarrow0$ as $k\rightarrow\infty$. As well as, $r_{k}$ is a local minimum point of $r^{\gamma} \bar{u}_i(r)$. Furthermore, let us define
		\begin{equation*}
			\varphi_{ki}(y)=\frac{u_i\left(r_{k}y\right)}{u_i\left(r_{k} {\bf e}_{n}\right)},
		\end{equation*}
		which in combination with \eqref{rescaledintegralsystem} provides,
		\begin{equation*}
			\varphi_{ki}(y)=\int_{B_{2/r_{k}}}{\left(r_{k}^{\gamma} u\left(r_{k}{\bf e}_{n}\right)\right)^{2^{**}-2}\varphi_{ki}(\eta)^{2^{**}-1}}{|y-z|^{4-n}}\ud z+\psi_{ki}(y) \quad \mbox{in} \quad B^*_{2/r_{k}},
		\end{equation*}
		where $\psi_{ki}(y)=u\left(r_{k} {\bf e}_{n}\right)^{-1} \psi_i\left(r_{k} y\right)$.
		
		\noindent{\bf Claim 1:} For any $i\in I$, it follows that $\lim_{k\rightarrow\infty}\varphi_{ki}(y)={1}/{2}(|y|^{4-n}+1)$ in $C_{\loc}^{2}\left(\mathbb{R}^{n} \setminus\{0\}\right)$.
		
		\noindent In fact, since $u_i\left(r_{k} {\bf e}_{n}\right)\rightarrow \infty$, we have that $\psi_{ki}(y)\rightarrow 0$ as $k\rightarrow\infty$ in $C^{n}_{\loc}\left(\mathbb{R}^{n}\right)$. Next, using the Harnack inequality, we obtain that $r_{k}^{\gamma}u_i\left(r_{k} {\bf e}_{n}\right) \rightarrow0$, and $\varphi_{ki}$ is locally uniformly bounded in $B^*_{2/ r_{k}}$. Hence, 
		\begin{equation*}
			\lim_{k\rightarrow\infty}\left(r_{k}^{\gamma}u_i\left(r_{k}{\bf e}_{n}\right)\right)^{2^{**}-2}\varphi_{ki}(y)^{2^{**}-1}=0 \quad \mbox{in} \quad C_{\loc}^{n}\left(\mathbb{R}^{n}\setminus\{0\}\right).
		\end{equation*}
		Thus, for any $\tau>1$, $0<|y|<\tau$ and $0<\varepsilon<|y|/100$, up to subsequences, it follows
		\begin{align*}
			&\lim _{k \rightarrow \infty} \int_{B_{\tau}}{\left(r_{i}^{\gamma} u\left(r_{k}{\bf e}_{n}\right)\right)^{2^{**}-2} \varphi_{ki}(z)^{2^{**}-1}}{|y-z|^{4-n}}\ud z&\\
			&=\lim_{k\rightarrow\infty}\int_{B_{\varepsilon}}{\left(r_{k}^{\gamma} u\left(r_{k} {\bf e}_{n}\right)\right)^{2^{**}-2}\varphi_{ki}(z)^{2^{**}-1}}{|y-z|^{4-n}}\ud z&\\
			&=|y|^{4-n}(1+\mathcal{O}(\varepsilon))\lim_{k\rightarrow\infty} \int_{B_{\varepsilon}}\left(r_{k}^{\gamma}u\left(r_{k}{\bf e}_{n}\right)\right)^{2^{**}-2}\varphi_{ki}(z)^{2^{**}-1}\ud z.&
		\end{align*}
		By sending $\varepsilon \rightarrow 0$, we have
		\begin{equation*}
			\lim _{k \rightarrow \infty} \int_{B_{\tau}}{\left(r_{k}^{\gamma} u\left(r_{k}{\bf e}_{n}\right)\right)^{2^{**}-2} \varphi_{ki}(z)^{2^{**}-1}}{|y-z|^{4-n}}\ud z=A|y|^{4-n},
		\end{equation*}
		for some $A\geqslant 0$. Moreover, since the left-hand side of the last equation is locally uniformly bounded in $C_{\loc}^{n+1}\left(B_{\tau}\right)$, for any $i\in I$, there exists $\varrho_i\in C^{2}\left(B_{\tau}\right)$  satisfying
		\begin{equation*}
			\lim_{k\rightarrow\infty}\int_{B_{2}\setminus B_{\tau}}{\left(r_{k}^{\gamma}u_i\left(r_{k} {\bf e}_{n}\right)\right)^{2^{**}-2}\varphi_{ki}(z)^{2^{**}-1}}{|y-z|^{4-n}}\ud z= \varrho_i(y)\geqslant 0 \quad \mbox{in} \quad C_{\loc}^{n}\left(B_{\tau}\right).
		\end{equation*}
		In addition, for any fixed $R\gg1$ and $y\in B_{\tau}$, we have
		\begin{equation*}
			\lim_{k\rightarrow\infty}\int_{t \leqslant|y| \leqslant R} {\left(r_{k}^{\gamma}u_i\left(r_{k} {\bf e}_{n}\right)\right)^{2^{**}-2}\varphi_{ki}(z)^{2^{**}-1}}{|y-z|^{4-n}}\ud z=0,
		\end{equation*}
		and for any $y_1, y_2\in B_{\tau}$, we obtain
		\begin{align*}
			&\int_{B_{2/r_{k}}\setminus B_{R}}{\left(r_{k}^{\gamma}u_i\left(r_{k} {\bf e}_{n}\right)\right)^{2^{**}-2}\varphi_{ki}(z)^{2^{**}-1}}{|y_1-z|^{4-n}}\ud z&\\
			&\leqslant\left(\frac{R+\tau}{R-\tau}\right)^{n-4}\int_{B_{2/r_{k}}\setminus B_{R}}{\left(r_{k}^{\gamma}u_i\left(r_{k} {\bf e}_{n}\right)\right)^{2^{**}-2}\varphi_{ki}(z)^{2^{**}-1}}{|y_2-z|^{4-n}}\ud z.&
		\end{align*}
		Therefore, it follows 
		\begin{equation*}
			\varrho_i\left(y_1\right)\leqslant\left(\frac{R+\tau}{R-\tau}\right)^{n-4} \varrho_i\left(y_2\right),
		\end{equation*}
		which, by passing to the limit as $R \rightarrow \infty$ and exchanging the roles of $y_1$ and $y_2$, implies $\varrho_i\left(y_2\right)=\varrho_i\left(y_1\right)$. Whence, $\varrho_i(y) \equiv \varrho_i(0)$ for all $y \in B_{\tau}$ and $i\in I$. Since $\varphi_{ki}$ is locally uniformly bounded in $B^*_{2/r_{k}}$, we have that it is locally uniformly bounded in $C^{n+1}(B^*_{2/r_{k}})$. Hence, up to subsequence, it follows that $\varphi_{ki}\rightarrow\varrho_i$ as $k \rightarrow\infty$ in $C_{\loc}^{n}\left(\mathbb{R}^{n}\setminus\{0\}\right)$, for some $\varrho_i$, which yields the following
		\begin{equation*}
			\lim_{k\rightarrow\infty}\varphi_{ki}(y)={A}{|y|^{4-n}}+\psi_i(0) \quad \mbox{in} \quad C_{\loc}^{n}\left(\mathbb{R}^{n}\setminus\{0\}\right).
		\end{equation*}
		Using that $\varphi_{ki}\left({\bf e}_{n}\right)=1$ and
		\begin{equation*}
			\frac{\ud}{\ud r}\left\{r^{\gamma} \bar{\varphi}_{ki}(r)\right\}\big|_{r=1}=r_{k}^{-\gamma+1}u_{i}\left(r_{k} {\bf e}_{n}\right)^{-1}\frac{\ud}{\ud r}\left\{r^{\gamma} \bar{u}_i(r)\right\}\Big|_{r=r_{k}}=0,
		\end{equation*}
		by taking the limit $k\rightarrow\infty$, it follows that $A=\psi_i(0)=1/2$, which proves the claim.
		
		In the next claim, we obtain some information about the limit of the Pohozaev invariant, which is used to generate a contradiction.
		
		\noindent{\bf Claim 2:} $\lim_{k\rightarrow\infty}\mathcal{P}_{\rm sph}\left(r_k,\mathcal{U}\right)=0.$
		
		\noindent In fact, for any $i\in I$, let us consider
		\begin{equation*}
			\widetilde{u}_i(x)=\int_{B_{2}}f_i(\mathcal{U}){|x-y|^{4-n}}\ud y+\psi_i(x) \quad \mbox{in} \quad \mathbb{R}^{n}\setminus\{0\},
		\end{equation*}
		which provides that $\widetilde{u}_i=u_i$ in $B_{2}\setminus\{0\}$, and
		\begin{equation*}
			\widetilde{u}_i(x)=\int_{B_{2}}f_i(\widetilde{\mathcal{U}}){|x-y|^{4-n}}\ud y+\psi_i(x) \quad \mbox{in} \quad \mathbb{R}^{n}\setminus\{0\}.
		\end{equation*}
		Consequently, using that $\Delta^{2} \psi_i=0$ in $B_{2}$ for any $i\in I$, it follows that $\Delta^{2}\widetilde{u}_i=f_i(\mathcal{U})$ in $B^*_{2}$.
		On the other hand, we know that $\mathcal{P}_{\rm sph}\left(r_k,\mathcal{U}\right)$ is a constant on $r$. Moreover, since $\left|D^{(j)}\varphi_{ki}\right|\leqslant C$ near $\partial B_{1}$ and
		$r_{k}^{\gamma} u\left(r_{k} {\bf e}_{n}\right)=o(1)$ as $k\rightarrow\infty$, we have
		$\left|D^{(j)} u_i(x)\right|\leqslant Cr_{k}^{-j} u\left(r_{k} {\bf e}_{n}\right)=o(1)r_{k}^{-\gamma-k}$ for all $|x|=r_{k}$ and $j=1,2,3,4$, which proves the second claim. 
		
		Hence, using Claim 2, it holds that $\mathcal{P}_{\rm sph}\left(r_{k},\mathcal{U}\right)=0$ for $k\in\mathbb{N}$. Thus, by \eqref{pohozaevsphericalautonomous}, we get the following identity
		\begin{equation*}
			\sum_{i=1}^p\int_{\partial B_{1}} q\left(\varphi_{ki}(x), \varphi_{ki}(x)\right)\ud x+\widehat{c}(n)\left(r_{k}^{\gamma} u_i\left(r_{i} e_{n}\right)\right)^{2^{**}-2}\sum_{i=1}^p\int_{\partial B_{1}} |\varphi_{ki}(x)|^{2^{**}}\ud x=0,
		\end{equation*}
		where we recall that $q$ is defined by \eqref{pohozaeverrorfunction}.
		Next, sending $k\rightarrow\infty$, and doing some manipulation, we obtain
		\begin{equation*}
			\int_{\partial B_{1}} q\left({|x|^{4-n}}+1,{|x|^{4-n}}+1\right)\ud x=0.
		\end{equation*}
		On the other hand, by Theorem~\ref{thm:andrade-doo19} (i), we know that  $\mathcal{U}_{0,\mu}(x)=\left(\frac{2\mu}{1+|x|^{2}\mu^{2}}\right)^{\gamma}$ satisfies the limit blow-up system,
		\begin{equation*}
			\Delta^{2} \mathcal{U}_{0,\mu}=c(n) f_i(\mathcal{U}_{0,\mu}) \quad \mbox{in} \quad \mathbb{R}^{n},
		\end{equation*}
		which implies that for any $\mu>0$, we get that $\mathcal{P}_{\rm sph}\left(\mathcal{U}_{0,\mu}, 1\right)=\lim_{r\rightarrow\infty} \mathcal{P}_{\rm sph}\left(\mathcal{U}_{0,\mu}, r\right)=0$.
		Hence, we find
		\begin{equation*}
			\sum_{i=1}^p\int_{\partial B_{1}} q\left(\mu^{-\gamma}(u_i)_{0,\mu}, \mu^{-\gamma}(u_i)_{0,\mu}\right)\ud x+ \widehat{c}(n)\mu^{4-n}\sum_{i=1}^p\int_{\partial B_{1}} (u_i)_{0,\mu}^{2^{**}}\ud x=0,
		\end{equation*}
		which, by taking the limit as $\mu\rightarrow 0$ provides
		\begin{align*}
			0&=\int_{\partial B_{1}} q\left({|x|^{4-n}}+1, {|x|^{4-n}}+1\right)\ud\sigma-\int_{\partial B_{1}} q\left({|x|^{4-n}}, {|x|^{4-n}}\right)\ud\sigma&\\
			&=(n-4)\int_{\partial B_{1}}\partial_{\nu}\Delta\left(|x|^{4-n}\right)\ud\sigma\neq0,& 
		\end{align*}
		which is a contradiction. This concludes the proof of the proposition.
	\end{proof}
	
	Consequently, using the last lemma and the barrier construction, we can present the removable singularity theorem.
	
	\begin{corollary}\label{removable}
		Let $\mathcal{U}$ be a strongly positive solution to \eqref{oursystem}. Then, $\mathcal{P}_{\rm sph}(\mathcal{U})\leqslant0$ and $\mathcal{P}_{\rm sph}(\mathcal{U})=0$, if, and only if, $\mathcal{U}$ has a removable singularity at the origin.
	\end{corollary}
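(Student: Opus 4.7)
The strategy is to combine a blow-down analysis with the $r$-invariance and scale invariance of $\mathcal{P}_{\rm sph}$, splitting according to whether the origin is removable. If the singularity is removable, elliptic regularity for the biharmonic system promotes $\mathcal{U}$ to a $C^{4,\zeta}$ solution across $0$, so each $u_i$ and its derivatives up to order three are uniformly bounded near the origin. The explicit formula \eqref{pohozaevsphericalautonomous} then gives $|\mathcal{P}_{\rm sph}(r,\mathcal{U})| = O(r^{n-1})$ as $r\to 0^+$; combined with the $r$-independence of $\mathcal{P}_{\rm sph}$, this forces $\mathcal{P}_{\rm sph}(\mathcal{U})=0$ and supplies the reverse implication of the equivalence.

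For the converse, assume the singularity is non-removable. The contrapositive of the previous lemma yields $\liminf_{|x|\to 0}|x|^\gamma|\mathcal{U}(x)|>0$, which together with the Harnack estimate (Corollary~\ref{harnack}) and the upper bound (Corollary~\ref{upperbound}) produces a two-sided estimate $C_1|x|^{-\gamma}\leqslant|\mathcal{U}(x)|\leqslant C_2|x|^{-\gamma}$ near $0$. I will then perform the blow-down $\mathcal{U}^\lambda(x):=\lambda^\gamma\mathcal{U}(\lambda x)$ with $\lambda\to 0^+$; by conformal invariance each $\mathcal{U}^\lambda$ solves \eqref{oursystem} on $B_{1/\lambda}^*$ and inherits the same two-sided bound on every compact annular region. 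The integral representation \eqref{integralsystem} together with interior Schauder theory gives uniform $C^{4,\zeta}_{\rm loc}(\mathbb{R}^n\setminus\{0\})$ bounds, so along a subsequence $\mathcal{U}^{\lambda_k}\to \mathcal{U}^\infty$, a strongly positive superharmonic singular solution of the blow-up limit system \eqref{vectlimitequation}. Theorem~\ref{thm:andrade-doo19}(ii) then identifies $\mathcal{U}^\infty = \Lambda^* u_{a,T}$, and Remark~\ref{negativepohozaev} yields $\mathcal{P}_{\rm sph}(\mathcal{U}^\infty)<0$. Since $\mathcal{P}_{\rm sph}$ is scale invariant, $\mathcal{P}_{\rm sph}(\mathcal{U}^\lambda)=\mathcal{P}_{\rm sph}(\mathcal{U})$ for every $\lambda>0$, and passing to the limit of the integrand on the fixed sphere $\partial B_1$ using the $C^{4,\zeta}$-convergence gives $\mathcal{P}_{\rm sph}(\mathcal{U})=\mathcal{P}_{\rm sph}(\mathcal{U}^\infty)<0$. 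Together with the removable case this proves $\mathcal{P}_{\rm sph}(\mathcal{U})\leqslant 0$, with equality precisely when the singularity is removable.

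The main technical obstacle is the convergence step for the Pohozaev integrand on $\partial B_1$, since $q(u_i,u_i)$ involves derivatives up to third order. Here the integral splitting \eqref{integralsystem} is decisive: it isolates the singular Riesz-potential piece (whose derivatives are controlled uniformly by the upper bound through standard kernel estimates and Calder\'on--Zygmund theory) from the smooth biharmonic correction $\psi_i$, whose rescaling $\lambda^\gamma\psi_i(\lambda\,\cdot\,)$ degenerates smoothly as $\lambda\to 0^+$. A parallel point is ensuring that the limit $\mathcal{U}^\infty$ is genuinely singular at the origin, which follows from the lower bound $|\mathcal{U}(x)|\geqslant C_1|x|^{-\gamma}$ being stable under the scale-invariant blow-down; without it, Theorem~\ref{thm:andrade-doo19}(i) could only yield a spherical limit with vanishing Pohozaev invariant, and the contradiction argument would collapse.
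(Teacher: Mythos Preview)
Your proof is correct and follows essentially the approach the paper intends. The paper does not spell out an explicit argument for Corollary~\ref{removable}, stating only that it follows from the preceding lemmas and the barrier construction; your blow-down argument via $\mathcal{U}^\lambda(x)=\lambda^\gamma\mathcal{U}(\lambda x)$, together with the scale invariance of $\mathcal{P}_{\rm sph}$ and the classification in Theorem~\ref{thm:andrade-doo19}(ii), is exactly the natural completion and mirrors the slide-back technique the paper itself deploys in Section~\ref{subsec:simpleconvergence}.
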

	
	Ultimately, as a by-product of the last lemma and the removable singularity classification, we also have the lower bound estimate.
	
	\begin{corollary}\label{lowerbound}
		Let $\mathcal{U}$ be a strongly positive singular solution to \eqref{oursystem}. Then, there exists $C_1>0$ such that $    C_1|x|^{-\gamma}\leqslant |\mathcal{U}(x)|$ for $0<|x|<1/2$.
	\end{corollary}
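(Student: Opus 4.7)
My plan is to argue by contradiction, chaining together the removable singularity results already established in this section. Suppose, toward a contradiction, that no constant $C_1>0$ exists satisfying $C_1|x|^{-\gamma}\leqslant|\mathcal{U}(x)|$ for $0<|x|<1/2$. Then there must be a sequence $\{x_k\}_{k\in\mathbb{N}}\subset B_{1/2}^{*}$ with $|x_k|\to 0$ and $|x_k|^{\gamma}|\mathcal{U}(x_k)|\to 0$. In particular, this gives $\liminf_{|x|\to 0}|x|^{\gamma}|\mathcal{U}(x)|=0$.

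Next, I apply the penultimate lemma of this subsection: the hypothesis $\liminf_{|x|\to 0}|x|^{\gamma}|\mathcal{U}(x)|=0$ is upgraded to the full limit $\lim_{|x|\to 0}|x|^{\gamma}|\mathcal{U}(x)|=0$. With this sharp decay in hand, the barrier construction from the preceding lemma applies componentwise: one builds a supersolution $\varsigma_i(x)=M|x|^{-\kappa}+\varepsilon|x|^{4-n-\kappa}$ dominating $u_i$, which forces $u_i^{2^{**}-2}\in L^{s}$ for some $s>n/4$, and standard elliptic regularity then extends each $u_i$ (and hence $|\mathcal{U}|$) continuously across the origin to all of $B_1$. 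Equivalently, the origin is a removable singularity for $|\mathcal{U}|$, in direct contradiction with the standing hypothesis that $\mathcal{U}$ is a singular solution.

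Alternatively, one can package the argument through the Pohozaev invariant: Corollary~\ref{removable} asserts $\mathcal{P}_{\rm sph}(\mathcal{U})<0$ whenever the singularity is non-removable, whereas the blow-up analysis in the penultimate lemma, applied at minima $r_k$ of $r^{\gamma}\bar{u}_i(r)$ and combined with Harnack (Corollary~\ref{harnack}), forces $\mathcal{P}_{\rm sph}(\mathcal{U})=0$ when $\liminf_{|x|\to 0}|x|^{\gamma}|\mathcal{U}(x)|=0$. The only subtle point is that the failure of the lower bound is stated pointwise along $\{x_k\}$, but Harnack's inequality $\max_{|x|=r}|\mathcal{U}|\leqslant C\min_{|x|=r}|\mathcal{U}|$ propagates the pointwise smallness to smallness of the spherical infimum, which is exactly the hypothesis needed to invoke the preceding lemmas. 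Since the heavy lifting has already been carried out, the corollary follows at once as the contrapositive of this chain of implications.
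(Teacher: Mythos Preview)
Your proposal is correct and matches the paper's intended argument: the corollary is stated there as an immediate by-product of the preceding lemmas and the removable singularity classification, and your contradiction chain (no lower bound $\Rightarrow$ $\liminf_{|x|\to 0}|x|^{\gamma}|\mathcal{U}(x)|=0$ $\Rightarrow$ $\lim=0$ $\Rightarrow$ removable $\Rightarrow$ contradiction) is exactly that by-product spelled out. The Harnack step you flag is already absorbed into those lemmas, so you need not invoke it separately here.
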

	
	\begin{proof}[Proof of Proposition~\ref{estimates}]
		It is a consequence of Corollaries~\ref{upperbound} and \ref{lowerbound}.
	\end{proof}
	
	\section{Local asymptotic behavior}\label{sec:convergence}
	In this section, we present the proof of Proposition~\ref{convergence}, which, as a by-product, provides the proof of Theorem~\ref{theorem1} and Theorem~\ref{theorem1}'. 
	To this end, we use 
	the growth properties of the Jacobi fields in Proposition~\ref{growthpropertiessystem} and the a priori estimates in Proposition~\ref{estimates}. 
	First, inspired by \cite{MR2737708}, we summarize the Simon's convergence technique: 
	
	\noindent (a) There exist $C_1,C_2>0$ such that any solution to \eqref{sphevectfowler} satisfies the uniform estimate 
	\begin{equation*}
		C_1\leqslant |\mathcal{V}(t,\theta)|\leqslant C_2;
	\end{equation*}
	\noindent (b) If $\tau_k\rightarrow\infty$ and $\mathcal{V}_k(t,\theta):=\mathcal{V}(t+\tau_k,\theta)$. Then, the slide back sequence $\{\mathcal{V}_k\}_{k\in\mathbb{N}}$ converges uniformly on compact sets to a bounded solution $\mathcal{V}_{\infty}$ of \eqref{fowlersystem};\\
	\noindent (c) Any angular derivative $|\partial_{\theta}\mathcal{V}(t,\theta)|$ converges to $0$ as $t\rightarrow\infty$;\\
	\noindent (d) There exists $S>0$ such that for any infinitesimal rotation $\partial_{\theta}$ of $\mathbb{S}^{n-1}$, and for any $\tau_k\rightarrow\infty$, if we set $A_k=\sup_{t\geqslant0}|\partial_{\theta}\mathcal{V}(t,\theta)|$, and if $|\partial_{\theta}\mathcal{V}_k(\tau_k,\theta)|=A_k$ for some $(\tau_k,\theta_k)\in \mathcal{C}_0$, then $s_k\leqslant S$;\\
	\noindent (e) $|\partial_{\theta}\mathcal{V}(t,\theta)|$ converges to $0$ exponentially as $t\rightarrow\infty$, as well as $|\mathcal{V}(t,\theta)-\overline{\mathcal{V}}(t)|$, where $\overline{\mathcal{V}}$ is a spherical average of $\mathcal{V}$;\\
	\noindent (f) There exists a bounded solution $\mathcal{V}_{a,T}$ of \eqref{fowlersystem} and $\sigma\geqslant0$ such that $\mathcal{V}(t,\theta)$ converges to $\mathcal{V}(t+\sigma)$ exponentially as $t\rightarrow\infty$;
	
	\begin{remark}
		Using Theorem~\ref{thm:andrade-doo19} and Propositions~\ref{growthpropertiessystem} and \ref{estimates} most of the steps above follows the same lines of \cite{MR1666838,MR4002167}. 
		Regardless, we include all the proofs here for the convenience of the reader. 
		The main difference is the number of Jacobi fields to analyze; for the second order equation, we have two linearly independent Jacobi fields, one that grows unbounded and the other that is exponentially decreasing.
		In contrast, in our case, we have four linearly independent Jacobi fields that behave in the same way; that is, two of them grow unbounded, and the others are exponentially decaying. 
		Surprisingly, this Jacobi field basis turns out to have only two elements on the zero frequency case. 
	\end{remark}
	
	\subsection{Simple convergence}\label{subsec:simpleconvergence}
	Here we prove a result that is equivalent to Proposition~\ref{convergence} written in cylindrical coordinates.
	
	\begin{proposition}
		Let $\mathcal{V}$ be a strongly positive singular solution to \eqref{sphevectfowler} satisfying \eqref{sharpasymp}. Then, there exists $\beta_0^*>0$ and an Emden--Fowler solution $\mathcal{V}_{a,T}$ such that 
		\begin{equation}\label{asymptoticcyl}
			\mathcal{V}(t)=(1+\mathcal{O}(e^{\beta^*_0 t}))\mathcal{V}_{a,T}(t) \quad {\rm as} \quad t\rightarrow\infty.
		\end{equation}
	\end{proposition}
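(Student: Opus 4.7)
The plan is to implement the Simon sliding-back technique in the cylindrical formulation, bootstrapping from uniform compactness to exponential convergence via the spectral information gathered in Section~\ref{sec:linearanalysis}. First, applying $\mathfrak{F}$ to the estimate \eqref{estimatesorigin} of Proposition~\ref{estimates} and using the explicit form of the Emden--Fowler change of variables yields two constants $C_1,C_2>0$ such that $C_1\leqslant|\mathcal{V}(t,\theta)|\leqslant C_2$ uniformly on $\mathcal{C}_0$. Combining this with the Harnack inequality (Corollary~\ref{harnack}) and standard interior Schauder estimates for $\Delta^2_{\rm cyl}$, the translates $\mathcal{V}_k(t,\theta):=\mathcal{V}(t+\tau_k,\theta)$ associated to any sequence $\tau_k\to\infty$ form a bounded family in $C^{4,\zeta}_{\loc}(\mathcal{C}_{\infty},\mathbb{R}^p)$. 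By Arzel\`a--Ascoli and a diagonal extraction, a subsequence converges in $C^{4,\zeta}_{\loc}$ to a strongly positive entire solution $\mathcal{V}_{\infty}$ of \eqref{sphevectfowler} on $\mathcal{C}_{\infty}$.

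Next, I would identify each accumulation point. Since $|\mathcal{V}_{\infty}|$ stays bounded away from $0$ and $\infty$, pulling back through $\mathfrak{F}^{-1}$ produces a nonnegative solution of \eqref{vectlimitequation} with a non-removable singularity at the origin. Theorem~\ref{thm:andrade-doo19}(ii) then forces $\mathcal{V}_{\infty}=\Lambda^* v_{a,T}$ for some $\Lambda^*\in\mathbb{S}^{p-1}_{+,*}$, $a\in(0,a_0]$ and phase $T\in[0,T_a]$. A standard contradiction/compactness argument (if $|\nabla_\theta\mathcal{V}(t_k,\theta_k)|\not\to 0$ along some $t_k\to\infty$, then the limit would depend on $\theta$, contradicting radial symmetry of $\mathcal{V}_{\infty}$) shows that $|\nabla_\theta\mathcal{V}(t,\cdot)|_{C^0(\mathbb{S}^{n-1})}\to 0$ as $t\to\infty$. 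A further compactness step rules out the degenerate cases $a=0$ (by the lower bound $|\mathcal{V}|\geqslant C_1$) and $a=a_0$ only if $\mathcal{V}$ is already cylindrical, so after this reduction we may assume the limit is a genuine Delaunay-type solution.

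I would then set $\mathcal{W}(t,\theta):=\mathcal{V}(t,\theta)-\mathcal{V}_{a,T}(t)$ for a candidate limit pair $(a,T)$ and expand $\mathcal{N}_{\rm cyl}(\mathcal{V}_{a,T}+\mathcal{W})=0$ to obtain
\begin{equation*}
\mathcal{L}^a(\mathcal{W})=Q(\mathcal{W}),\qquad |Q(\mathcal{W})|\leqslant C|\mathcal{W}|^2,
\end{equation*}
where $Q$ is the quadratic remainder of the Gross--Pitaevskii nonlinearity around $\mathcal{V}_{a,T}$. Decomposing $\mathcal{W}$ via the Fourier eigenmode projections $\pi_j$ from Subsection~\ref{subsec:jacobifields}, the spectral band analysis in Proposition~\ref{isolatedindicialroot} and Lemma~\ref{indicialset} tells me that for $j\geqslant 1$ every element of $\ker\mathcal{L}^a_j$ is a combination of Jacobi fields decaying or growing at exponential rates bounded below by~$1$. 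Applying the right-inverse produced by Proposition~\ref{existence}, together with the iteration principle of Corollary~\ref{iteration}, I can feed the initial slow decay of $\mathcal{W}$ into the equation and bootstrap: if $\mathcal{W}\in C^{4,\zeta}_{\widetilde\beta}$ then $Q(\mathcal{W})\in C^{0,\zeta}_{2\widetilde\beta}$, and the Fredholm inverse upgrades the decay until we reach any $\beta^*_0<1$ strictly below the first positive indicial root in $\mathfrak{I}^a\setminus\{0\}$.

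The main obstacle is the zero-frequency piece $\pi_0[\mathcal{W}]$: by Proposition~\ref{growthpropertiessystem}(i), $\ker\mathcal{L}^a_0$ is $2p$-dimensional with generators that are either bounded or only linearly growing, so the naive iteration fails there. To handle this I would exploit that every such Jacobi field is generated by a geometric variation in the $(a,T,\Lambda^*)$ parameters of the family $\mathcal{V}_{a,T}$ (Remark~\ref{geoemtricjacobifields}), which is the key output of the fact that $0\in\mathfrak{I}^a$ is isolated (Proposition~\ref{isolatedindicialroot}). Concretely, the angular-decay step forces $\pi_0[\mathcal{W}]\to 0$, and the slide-back limit at $t=\tau_k\to\infty$ pins down both $\Lambda^*$ and $a$; then I readjust the phase $T$ at each scale via an implicit-function argument on the two-parameter family $\{v_{a,T}\}$ to kill the surviving bounded and linearly growing zero modes, so the modified remainder lies in the exponentially decaying complement where the Fredholm inverse of $\mathcal{L}^a$ is bounded. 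A uniqueness argument for the limit $(a,T)$ (again using the discreteness of $\mathfrak{I}^a$) then closes the argument and yields the full exponential rate \eqref{asymptoticcyl}.
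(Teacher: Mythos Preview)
Your outline follows the same compactness-then-linearization scheme as the paper, and the treatment of the nonzero Fourier modes via the Fredholm inverse from Proposition~\ref{existence} and Corollary~\ref{iteration} is a legitimate route. The gap is in how you dispose of the zero-frequency obstruction, because you never invoke the Pohozaev invariant $\mathcal{P}_{\rm cyl}$, and in the paper's argument it is doing the essential work at exactly that point.

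It enters twice. First, since $\mathcal{P}_{\rm cyl}(\mathcal{V}_k)=\mathcal{P}_{\rm cyl}(\mathcal{V})$ for every translate, passing to the limit gives $\mathcal{P}_{\rm cyl}(\mathcal{V}_\infty)=\mathcal{P}_{\rm cyl}(\mathcal{V})$; because the Pohozaev energy of an Emden--Fowler solution determines the necksize, this pins down $a$ \emph{uniquely} (the paper's Claim~2). Your statement that ``the slide-back limit pins down $a$'' only gives subsequential identification; nothing you wrote excludes different subsequences limiting to different necksizes. Second, and this is the real issue, in the rescaled expansion $\Phi^*=b_1\Phi^+_{a,0}+b_2\Phi^-_{a,0}+\check\Phi$ the paper kills the coefficient $b_2$ of the linearly growing mode by comparing $\mathcal{P}_{\rm cyl}(\mathcal{V}_k)$ with $\mathcal{P}_{\rm cyl}(\mathcal{V}_{a,T})$ at order $\eta_k$: a nonzero $b_2$ would perturb the conserved quantity at first order, which is impossible. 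Your proposal to ``readjust the phase $T$ at each scale'' only generates the \emph{bounded} Jacobi field $\partial_T v_{a,T}$, so it absorbs $b_1$ but cannot touch $b_2$. Absorbing $b_2$ would require varying $a$, yet $a$ is precisely the parameter you need to prove is fixed; if you let it float at each scale you owe a separate argument that the sequence of necksizes converges, and that argument is again the Pohozaev identity. Finally, the appeal to ``discreteness of $\mathfrak{I}^a$'' for uniqueness of $(a,T)$ is misplaced: discreteness controls the decay rates of the non-geometric modes, not the selection among the geometric parameters. Without the conserved Pohozaev functional (or an equivalent monotone quantity) the zero-mode step does not close.
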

	
	\begin{proof}
		Initially, by Remark~\ref{negativepohozaev}, the origin is a non-removable singularity. Thus, using Corollary~\ref{removable}, we have that $\mathcal{P}_{\rm sph}(\mathcal{U})<0$. Consider $\mathcal{V}=\mathfrak{F}(\mathcal{U})$ and $\{\tau_k\}_{k\in\mathbb{N}}$ such that $\tau_k\rightarrow\infty$ as $k\rightarrow\infty$. Let us define the sequence of translations,
		\begin{equation*}
			\mathcal{V}_k(t,\theta)=\mathcal{V}(t+\tau_k,\theta) \quad \mbox{defined in} \quad \mathcal{C}_{\tau_k}:=(-\tau_k,\infty)\times\mathbb{S}^{n-1}.
		\end{equation*}
		Again, applying the estimates in  Proposition~\ref{estimates}, we get 
		\begin{equation}\label{cylindricalestimates}
			C_1\leqslant |\mathcal{V}_k(t,\theta)|\leqslant C_2.
		\end{equation}
		By \eqref{cylindricalestimates}, we find $\{\mathcal{V}_k\}_{k\in\mathbb{N}}$ is uniformly bounded in $C^{4,\zeta}_{\loc}(\mathcal{C}_0,\mathbb{R}^p)$ for some $\zeta\in(0,1)$. Hence, by standard elliptic regularity, there exists a limit solution $\mathcal{V}_{\infty}\in C^{4,\zeta}_{\loc}(\mathbb{R},\mathbb{R}^p)$ such that, up to subsequence, $\mathcal{V}_k\rightarrow \mathcal{V}_{\infty}$, and $\mathcal{V}_{\infty}$ satisfies \eqref{sphevectfowler}.
		Thus, by Theorem~\ref{thm:andrade-doo19} (ii), $\mathcal{V}_{\infty}$ is an Emden--Fowler solution, that is, there exist $a\in(0,a_0)$ and $T\in(0,T_a)$ such that $\mathcal{V}_{\infty}=\mathcal{V}_{a,T}$ and does not depend on the variable $\theta$. 
		
		\noindent{\bf Claim 1:} The following elliptic estimates hold:\\
		\noindent{(i)} $\mathcal{V}_k(t,\theta)=\overline{\mathcal{V}}_k(t)(1+o(1))$;\\
		\noindent{(ii)} $\nabla \mathcal{V}_k(t,\theta)=-\overline{\mathcal{V}}^{(1)}_k(t)(1+o(1))$;\\
		\noindent{(iii)} $\Delta \mathcal{V}_k(t,\theta)=\overline{\mathcal{V}}^{(2)}_k(t)(1+o(1))$;\\
		\noindent{(iv)} $\nabla^{3/2}\mathcal{V}_k(t,\theta)=-\overline{\mathcal{V}}^{(3)}_k(t)(1+o(1))$.
		
		\noindent Indeed, if (i) is not valid, there would exist $\bar{\varepsilon}>0$ and $\tau_k\rightarrow\infty$, $\theta_k\rightarrow\infty$ such that 
		\begin{equation*}
			\left|\frac{\mathcal{V}_k(\tau_k,\theta_k)}{\overline{\mathcal{V}}_k(\tau_k)}-1\right|\geqslant\bar{\varepsilon},
		\end{equation*}
		which is a contradiction since $\mathcal{V}_k\rightarrow \mathcal{V}_{\infty}$ and $\mathcal{V}_{\infty}$ is radially symmetric. The same argument holds for (ii), (iii) and (iv). This estimate implies (b), that is, any angular derivative $|\partial_{\theta}\mathcal{V}_k|$ converges uniformly to zero.
		
		\noindent{\bf Claim 2:} The necksize of $\mathcal{V}_{\infty}$ does not depend on $k\in\mathbb{N}$.
		
		\noindent In fact, this is a consequence of the following identity
		\begin{equation*}
			\mathcal{P}_{\rm cyl}(\mathcal{V}_{\infty}):={\mathcal{P}}_{\rm cyl}(0,\mathcal{V}_{\infty})=\lim_{k\rightarrow\infty}{\mathcal{P}}_{\rm cyl}(0,\mathcal{V}_k)=\lim_{k\rightarrow\infty}{\mathcal{P}}_{\rm cyl}(\tau_k,\mathcal{V})=\mathcal{P}_{\rm cyl}(\mathcal{V}).
		\end{equation*}
		Consequently, we have that for each $\{\tau_k\}_{k\in\mathbb{N}}$, the correspondent sequence $\{\mathcal{V}_k\}_{k\in\mathbb{N}}$ converges to $\mathcal{V}_{a,T}:=\Lambda^*v_{a,T}$ as $k\rightarrow\infty$, where $T$ does not depend on $k$.
		
		In the next claim, we prove (c), (d), (e), and (f).
		
		\noindent{\bf Claim 3:} There exist $\sigma\in\mathbb{R}$ and $\beta^*_0>0$ such that
		\begin{equation*}
			\left|\mathcal{V}_{\sigma}(t,\theta)-\mathcal{V}_{a,T}(t)\right|\leqslant Ce^{\beta^*_0 t} \quad \mbox{on} \quad \mathcal{C}_0.
		\end{equation*}
		
		\noindent As a matter of fact, we divide the rescaling argument into three steps as follows.
		
		First, let $T_a\in\mathbb{R}$ be the fundamental period of the Emden--Fowler solution $v_{a,T}$ and define $A_{\tau}=\sup_{t\geqslant0}|\partial_{\theta}\mathcal{V}_{\tau}|$.
		Since $|\partial_{\theta}\mathcal{V}_{\tau}|$ converges uniformly to zero as $t\rightarrow\infty$, we have $A_{\tau}<\infty$.
		
		\noindent{\bf Step 1:} For every $c>0$, there exists an integer $N>0$ such that, for any $\tau>0$ either:\\
		\noindent{(i)} $A_{\tau}\leqslant ce^{-2\tau}$, or\\
		\noindent{(ii)} $A_{\tau}$ is attained at some point in $\overline{\mathcal{C}}_{0,I_N}:=I_N\times\mathbb{S}^{n-1}$, where $I_N=[0,NT_a]$.
		
		\noindent Suppose that the claim is not true. Then, there exists $C>0$ and $\tau_k,\theta_k\rightarrow\infty$ such that $|\partial_{\theta}\mathcal{V}_{\tau}(s_k,\theta_k)|=A_{\tau_k}$ and $A_{\tau_k}>Ce^{-2\tau_k}$ as $k\rightarrow\infty$. We define $\widetilde{\mathcal{V}}_k(t,\theta)=\mathcal{V}_k(t+s_k,\theta)$ and $\Phi_k=A^{-1}_{\tau_k}\partial_{\theta}\widetilde{\mathcal{V}}_k$. In addition, we have that $|\Phi_k|\leqslant1$ and satisfy the nonlinear system $\mathcal{N}(\widetilde{\mathcal{V}}_k)=0$, which by differentiation with respect to $\theta$ implies that $\mathcal{L}^a\left(\Phi_k\right)=0$. Now, using standard elliptic regularity, we can extract a subsequence $\{\Phi_k\}_{k\in\mathbb{N}}$ which converges in compact subsets to a nontrivial and bounded Jacobi field satisfying $\mathcal{L}^a\left(\Phi\right)=0$. This is a contradiction since $\Phi$ has no zero eigencomponent relative to $\Delta_{\theta}$ and thus is unbounded. This proves Step 1.
		
		Assuming that $\mathcal{V}_k(t,\theta)$ converges to $\mathcal{V}_a(t+T)$ as $k\rightarrow\infty$, let us define 
		\begin{equation*}
			\mathcal{W}_k(t,\theta)=\mathcal{V}_k(t,\theta)-\mathcal{V}_a(t+T), \quad \eta_k=\varrho\max_{I_N}|\mathcal{W}_k| \quad \mbox{and} \quad \Phi_k=\eta_{k}^{-1}\mathcal{V}_k,
		\end{equation*}
		where $\varrho>0$ will be chosen later and satisfies $|\Phi_k|\leqslant \varrho^{-1}$ in $I_N$. Then, by Theorem~\ref{thm:andrade-doo19}, it follows 
		\begin{equation}\label{princeton}
			\Delta^2_{\rm cyl}\mathcal{W}_k-\left[f_i(\mathcal{V}_k)-\Lambda^*v_{a,T}^{2^{**}-1}\right]=0,
		\end{equation}
		where $\Lambda^*=(\Lambda^*_1,\dots,\Lambda^*_p)$ and
		\begin{equation*}
			|\mathcal{V}_k|^{2^{**}-2}v_{ki}-\Lambda^*v_{a,T}^{2^{**}-1}=|\mathcal{V}_k|^{2^{**}-2}v_{ki}+
			\Lambda_i^*v_{a,T}\frac{|\mathcal{V}_k|^{2^{**}-2}-v_{a,T}^{2^{**}-2}}{|\mathcal{V}_k|^{2}-v_{a,T}^{2}}\sum_{j\in I}\left(v_{kj}+\Lambda^*_iv_{a,T}\right).
		\end{equation*}
		Multiplying \eqref{princeton} by $\eta_k^{-1}$ and taking the limit as $k\rightarrow\infty$ we get $\mathcal{L}^a\left(\Phi^{*}\right)=0$, where $\Phi^{*}=\lim_{k\rightarrow\infty}\Phi_k$.
		
		\noindent{\bf Step 2:} The Jacobi filed $\Phi^{*}$ is bounded for all $t\geqslant0$.
		
		\noindent Using Proposition~\ref{growthpropertiessystem} and the Fourier decomposition \eqref{Fourierdecomposition}, we get
		\begin{equation*}
			\Phi^{*}=b_1\Phi_{a,0}^{+}+b_2\Phi_{a,0}^{-}+\check{\Phi},
		\end{equation*} 
		where $\check{\Phi}$ is the projection on the subspace generated by the eigenfunctions associated to the nonzero eigenvalues of $\Delta_{\theta}$.
		Using Proposition~\ref{growthpropertiessystem}, we claim that $\check{\Phi}$ is bounded. Indeed, we need to verify that $\partial_{\theta}\check{\Phi}=\partial_{\theta}\Phi$ is bounded for $t\geqslant0$. 
		In this fashion, we have that $\partial_{\theta}\Phi=\lim_{k\rightarrow\infty}\eta_k\partial_{\theta}\mathcal{V}_k$. Furthermore, if $\partial_{\theta}\Phi$ is zero the result follows. Then, we suppose that $\partial_{\theta}\Phi$ is nontrivial. In this case, if (i) of Step 1 happens, we get
		\begin{equation*}
			\sup_{t\geqslant0}\left(\eta_k^{-1}|\partial_{\theta}\mathcal{V}_k|\right)\leqslant C\eta_k^{-1}e^{-2\tau_k}\leqslant C.
		\end{equation*}
		On the other hand, if (ii) of Step 1 happens, since $\eta_k^{-1}|\partial_{\theta}\mathcal{V}_k|$ converges in the $C^4$-topology, we have
		\begin{equation*}
			\sup_{t\geqslant0}\left(\eta_k^{-1}|\partial_{\theta}\mathcal{V}_k|\right)\leqslant \sup_{I_N}\left(\eta_k^{-1}|\partial_{\theta}\mathcal{V}_k|\right)\leqslant C.
		\end{equation*}
		The last two inequalities imply the boundedness of $\check{\Phi}$. 
		
		To finish the proof of Step 2, we must show that $b_2=0$. Indeed, since $\Phi_k=\eta_k^{-1}\mathcal{W}_k\rightarrow\Phi$ as $k\rightarrow\infty$, we obtain
		\begin{align*}
			\mathcal{V}_k&=\mathcal{V}_{a,T}+\eta_k\Phi^{*}+o(\eta_k)&\\
			&=\mathcal{V}_{a,T}+\eta_k(b_1\Phi_{a,0}^{+}+b_2\Phi_{a,0}^{-}+\check{\Phi})+o(\eta_k).&
		\end{align*}
		On the other hand, 
		\begin{equation*}
			\mathcal{P}_{\rm cyl}(0,\mathcal{V}_k)=\mathcal{P}_{\rm cyl}(\tau_k,\mathcal{V})=\mathcal{P}_{\rm cyl}(\mathcal{V})+\mathcal{O}(e^{-2\tau_k})=\mathcal{P}_{\rm cyl}(T,\mathcal{V}_a)+\mathcal{O}(e^{-2\tau_k}).
		\end{equation*}
		Then, if $b_2\neq0$, we would have a contradiction, since $\eta_k^{-1}e^{-2\tau_k}=o(1)$ as $k\rightarrow\infty$ and the two sides of the last equality would differ for sufficiently large $k$.
		
		Let us define
		\begin{equation*}
			\mathcal{W}_{\tau}(t,\theta)=\mathcal{V}(t+\tau,\theta)-\mathcal{V}_a(t+T) \quad \mbox{and} \quad \eta(\tau)=\varrho\max_{I_N}|\mathcal{W}_\tau|,
		\end{equation*}
		where $I_N$ is defined in Step 1 and $\varrho>0$ will again be chosen later. For a fixed $C_1>0$, we have the following:
		
		\noindent{\bf Step 3:} Assume that $N,\varrho,\tau\gg1$ and $0<\eta\ll1$. Then, there exists $|\delta|\leqslant C_1\eta(\tau)$ such that
		\begin{equation}\label{correction}
			2\eta(\tau+NT_a+\delta)\leqslant\eta(\tau).
		\end{equation}
		
		\noindent Suppose that \eqref{correction} does not hold. Then, there would exist some $\tau_k\rightarrow\infty$ such that $\eta(\tau_k)\rightarrow0$ and for $s>0$ satisfying $|s|\leqslant C_1\eta(\tau_k)$ we have $\eta(\tau_k+NT_a+s)>1/2\eta(\tau_k)$. Similarly to the previous step, let us define $\Phi_k=\eta(\tau_k)^{-1}\mathcal{W}_{\tau_k}$; thus by Step 2, we can suppose that $\{\Phi_k\}_{k\in\mathbb{N}}$ converges to a bounded Jacobi Field $\Phi^{*}$, which provides
		\begin{equation}\label{havard}
			\Phi^{*}=b_1\Phi_{a,0}^{+}+\check{\Phi},
		\end{equation}
		where $\check{\Phi}$ has exponential decay. Since $|\check{\Phi}|<\varrho^{-1}$ on $I_N$, we get that $b_1$ is uniformly bounded and independent of $\tau_k>0$. Moreover, we know that $\Phi_{a,0}^{+}=\partial_{a}\mathcal{V}_{a,T}$ is bounded and $\Phi_{a,0}^{-}=\partial_{\theta}\mathcal{V}_{a,T}$ is linearly growing.
		Setting $s_k=-\eta(\tau_k)b_1$, we can choose $C_1\gg1$ sufficiently large such that $|s_k|<|C_1\eta(\tau_k)|$. Hence, for $t\in[0,2NT_a]$, we get
		\begin{align*}
			\mathcal{W}_{\tau_k+s_k}(t,\theta)&=\mathcal{V}(t+\tau_k-\eta(\tau_k)b_1,\theta)-\mathcal{V}_{a,T}(t)&\\
			&=\mathcal{V}_{\tau_k}(t-\eta(\tau_k)b_1,\theta)-\mathcal{V}_{a,T}(t-\eta(\tau_k)b_1)-\eta(\tau_k)b_1\frac{\mathcal{V}_{a,T}(t-\eta(\tau_k)b_1)-\mathcal{V}_{a,T}(t)}{-\eta(\tau_k)b_1}&\\
			&=\eta(\tau_k)\Phi_k(t-\eta(\tau_k)b_1,\theta)-\eta(\tau_k)b_1\Phi^{+}_{a,0}+o(\eta(\tau_k))&\\
			&=\mathcal{W}_{\tau_k}(t,\theta)-\eta(\tau_k)b_1\Phi^{+}_{a,0}+o(\eta(\tau_k)).&
		\end{align*}
		Therefore, by \eqref{havard}, we obtain
		\begin{equation*}
			\mathcal{W}_{\tau_k+s_k}=\eta(\tau_k)\check{\Phi}+o(\eta(\tau_k)) \quad \mbox{on} \quad [0,2NT_a],
		\end{equation*}
		which implies
		\begin{equation*}
			\max_{I_N}|\mathcal{W}_{\tau_k+s_k+NT_a}|=\max_{[NT_a,2NT_a]}|\mathcal{W}_{\tau_k+s_k}|\leqslant\eta_{\tau_k}\max_{I_N}(|\check{\Phi})+o(\eta(\tau_k)).
		\end{equation*}
		Then, since $|\check{\Phi}|$ decreases exponentially in a fixed rate, one can choose $N,\varrho\gg1$ sufficiently large satisfying
		$\max_{I_N}|w_{\tau_k+s_k+NT_a}|\leqslant2^{-1}\eta(\tau_k)$, which  implies $\eta(\tau+NT_a+s)\leqslant\eta(\tau)$. 
		This is a contradiction, and this step is proved.
		
		Now we use Step 3 to construct a sequence that converges to the correct translation parameter. 
		
		\noindent{\bf Step 4:} There exists $\sigma>0$ such that $|\mathcal{W}_{\sigma}(t,\theta)|$ converges exponentially to $0$ as $t\rightarrow\infty$.
		
		\noindent First, choose $\tau_0,N\gg1$ such that Step 3 is satisfied and $C_1\eta(\tau_0)\leqslant 2^{-1}NT_a$. Set $s_0=-\eta(\tau_0)b_1$ as above; thus $|s_0|\leqslant C_1\eta(\tau_0)\leqslant 2^{-1}NT_a$. Let us define inductively three sequences:
		\begin{align*}
			&\sigma_k=\tau_0+\sum_{i=0}^{k-1}s_i,&\\
			&\tau_k=\tau_{k-1}+s_{k-1}+NT_a=\sigma_k+kNT_a,&\\
			&s_k=-\eta(\tau_k)b_1.&
		\end{align*}
		Notice that by induction, it follows that $\eta(\tau_k)\leqslant2^{-k}\eta(\tau_0)$ and $|s_k|<2^{k-1}NT_a$. Then, there exists $\sigma=\lim_{k\rightarrow\infty}\sigma_k\leqslant\tau_0+NT_a$ and so $\tau_k\rightarrow\infty$ as $k\rightarrow\infty$. 
		Now choosing $k\in\mathbb{N}$ such that $t=kNT_a+[t]$ with $[t]\in I_N$, we can write
		\begin{align*}
			\mathcal{W}_{\sigma}(t,\theta)&=\mathcal{V}(t+\sigma,\theta)-\Lambda^*v_{a,T}&\\
			&=\mathcal{V}(t+\sigma,\theta)-\mathcal{V}(t+\sigma_k,\theta)+\mathcal{V}(t+\sigma_k,\theta)-\Lambda^*v_{a,T}(t).&
		\end{align*}
		In addition, since $\partial_t\mathcal{V}$ is uniformly bounded, we get
		\begin{equation*}
			\mathcal{V}(t+\sigma,\theta)-\mathcal{V}(t+\sigma_k,\theta)=\partial_{t}\mathcal{V}(t_0)\sum_{i=k}^{\infty}s_i=\mathcal{O}(2^{-k}),
		\end{equation*}
		for some $t_0>0$ and $\mathcal{V}(t+\sigma_k,\theta)-\mathcal{V}_{a,T}(t)=\mathcal{V}(\tau_k+[t],\theta)-\mathcal{V}_{a,T}([t])=\mathcal{W}([t],\theta)$, which provides
		\begin{equation*}
			\mathcal{W}_{\sigma}(t,\theta)=\mathcal{W}_{\tau_k}([t],\theta)+\mathcal{O}(2^{-k}).
		\end{equation*}
		Therefore, using that $b\max_{I_N}|\mathcal{W}_{\tau_k}(t,\theta)|=\eta(\tau_k)\leqslant2^{-k}\eta(\tau_0)$, we obtain that $|\mathcal{W}_{\sigma}(t,\theta)|=\mathcal{O}(2^{-k})$ as $k\rightarrow\infty$, or in terms of $t=kNT_a+[t]$,
		\begin{equation*}
			|\mathcal{W}_{\sigma}(t,\theta)|\leqslant C_1e^{-\frac{\ln 2}{NT_a}t},
		\end{equation*}
		which, by taking $\beta_0^*=-{\ln 2}/{NT_a}$, concludes the proof of Step 4 .
		
		Finally, we observe that Claim 3 directly implies \eqref{asymptoticcyl}.
	\end{proof}
	
	\begin{proof}[Proof of Proposition~\ref{convergence}]
		It follows by using Propositions~\ref{estimates}, and undoing the cylindrical transformation in \eqref{asymptoticcyl}.
	\end{proof}
	
	\subsection{Deformed Emden--Fowler solutions}\label{subsec:defomerdfowler}
	Before we present the proof of Theorem~\ref{theorem1}, we follow \cite{MR1666838} to introduce the family of deformed Emden--Fowler solutions. 
	We construct this $2n$-parameter family of solutions generated by the Emden--Fowler solution.
	
	The idea is to use the pullback of a composition of three conformal transformations, and it is described as follows.
	First, when $p=1$, let us consider an Emden--Fowler solution with $T=0$, given by $u_{a,0}(x)=|x|^{-\gamma}v_a(-\ln|x|)$; thus, using an inversion about the unit sphere, we obtain that $\widetilde{u}_{a,0}(x)=|x|^{-\gamma}v_a(\ln|x|)$. Second, we perform an Euclidean translation about $x_0\in\mathbb{R}^{n}\setminus\{0\}$ to get
	\begin{equation*}
		\widetilde{u}_{a,0,x_0}(x)=|x|x|^{-2}-x_0|^{-\gamma}v_a(\ln|x|x|^{-2}-x_0|).
	\end{equation*}
	Applying another inversion, we find
	\begin{equation*}
		u_{a,0,x_0}(x)=|x|^{-\gamma}|\theta-x_0|x||^{-\gamma}v_a(-\ln|x|+\ln|\theta-x_0x|),
	\end{equation*}
	where $\theta=x|x|^{-1}$. 
	Moreover, in cylindrical coordinates, we have
	\begin{equation}\label{asymptoticsjacobifields1}
		v_{a,0,x_0}(t,\theta)=|\theta-x_0e^{-t}|^{-\gamma}v_a(t+\ln|\theta-x_0e^{-t}|),
	\end{equation}
	which has a singularity at $t=\ln a$ and $\theta=a/|a|$.
	Finally, taking a time translation $T\in(0,T_a)$, we construct the families $u_{a,T,x_0}$ and $v_{a,T,x_0}$.
	The parameters $x_0\in\mathbb{R}^n$ and $T\in(0,T_a)$ correspond to conformal motions, whereas the {\it Fowler parameter} $a\in(0,a_0)$ does not.
	
	\begin{definition}\label{def:deformedfowler}
		For $p>1$, let us define the {\it deformed $($vectorial$)$ Emden--Fowler solution} by 
		\begin{equation*}
			\mathcal{U}_{a,T,x_0}=\Lambda^*u_{a,T,x_0} \quad \mbox{and} \quad \mathcal{V}_{a,T,x_0}=\Lambda^*v_{a,T,x_0},
		\end{equation*} 
		where $\Lambda^*\in\mathbb{S}_{+,*}^{p-1}$ and $u_{a,T,x_0}$ and $v_{a,T,x_0}$ are scalar Emden--Fowler solutions.
	\end{definition}
	
	\begin{proposition}
		For any $a\in(0,a_0)$ and $x_0\in\mathbb{R}^n$, we have 
		\begin{equation*}
			\mathcal{U}_{a,0,x_0}(x)=(1+\mathcal{O}(|x|))\mathcal{U}_{a,0}(|x|) \quad {\rm as} \quad x\rightarrow0.
		\end{equation*}
	\end{proposition}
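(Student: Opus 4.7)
The plan is to reduce to the scalar case and then carry out a direct Taylor expansion in cylindrical coordinates, starting from the explicit formula for the deformed Emden--Fowler solution already written down just before the statement.

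First I would observe that by Definition~\ref{def:deformedfowler} one has $\mathcal{U}_{a,0,x_{0}} = \Lambda^{*} u_{a,0,x_{0}}$ and $\mathcal{U}_{a,0} = \Lambda^{*} u_{a,0}$, with the same vector $\Lambda^{*}\in\mathbb{S}^{p-1}_{+,*}$. Since $\Lambda^{*}$ cancels componentwise, it suffices to prove the scalar statement $u_{a,0,x_{0}}(x) = (1+\mathcal{O}(|x|))\,u_{a,0}(|x|)$ as $x\to 0$. This is convenient because the scalar Emden--Fowler solution $v_{a}$ from Theorem~\ref{thm:frank-konig19} satisfies $0<a \le v_{a}(t)$ for all $t\in\mathbb{R}$, so later we will be able to turn an additive error into a multiplicative one.

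Next I would pass to cylindrical coordinates using $\mathfrak{F}$ and work with the formula
\begin{equation*}
v_{a,0,x_{0}}(t,\theta) = |\theta - x_{0}e^{-t}|^{-\gamma}\,v_{a}\bigl(t + \ln|\theta - x_{0}e^{-t}|\bigr),
\end{equation*}
derived in \eqref{asymptoticsjacobifields1}. For $|x|\to 0$, equivalently $t\to\infty$, the vector $x_{0}e^{-t}$ is small uniformly in $\theta\in\mathbb{S}^{n-1}$. I would then carry out two Taylor expansions: first, the binomial expansion
\begin{equation*}
|\theta - x_{0}e^{-t}|^{-\gamma} = 1 + \gamma e^{-t}\langle\theta,x_{0}\rangle + \mathcal{O}(e^{-2t}),
\end{equation*}
and second, combining $\ln|\theta - x_{0}e^{-t}| = -e^{-t}\langle\theta,x_{0}\rangle + \mathcal{O}(e^{-2t})$ with the smoothness of $v_{a}$,
\begin{equation*}
v_{a}\bigl(t + \ln|\theta - x_{0}e^{-t}|\bigr) = v_{a}(t) - e^{-t}\langle\theta,x_{0}\rangle\,v_{a}^{(1)}(t) + \mathcal{O}(e^{-2t}).
\end{equation*}
Multiplying and collecting terms yields $v_{a,0,x_{0}}(t,\theta) = v_{a}(t) + \mathcal{O}(e^{-t})$, with a remainder uniform in $\theta$. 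Dividing by $v_{a}(t) \ge a > 0$ converts this additive error into a multiplicative factor $(1+\mathcal{O}(e^{-t}))$, and undoing the cylindrical transformation (multiplying by $|x|^{-\gamma}$ and substituting $e^{-t}=|x|$) gives the claimed $u_{a,0,x_{0}}(x) = (1+\mathcal{O}(|x|))\,u_{a,0}(|x|)$.

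The only point requiring care is uniformity of the error estimates in $\theta \in\mathbb{S}^{n-1}$ and the fact that $v_{a,0,x_{0}}$ has a singularity (it is a singular Emden--Fowler solution, after all), located at $(t,\theta)=(\ln|x_{0}|,\,x_{0}/|x_{0}|)$. Since we only consider $|x|<|x_{0}|/2$, i.e.\ $t>\ln(2/|x_{0}|)$, the remainders in both Taylor expansions are bounded by constants depending only on $|x_{0}|$, $\gamma$, and $\|v_{a}\|_{C^{1}(\mathbb{R})}$ (finite by $T_{a}$-periodicity). I do not foresee a genuine obstacle here; the statement is really a compatibility check for the explicit formula defining the deformed family, and the main job is just writing down the two Taylor expansions carefully with uniform-in-$\theta$ remainder control.
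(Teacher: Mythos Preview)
Your proposal is correct and follows essentially the same approach as the paper: both reduce to the scalar case and carry out the same two Taylor expansions (of the power $|\theta-x_0|x||^{-\gamma}$ and of $v_a$ composed with the logarithmic shift), then combine them. The only cosmetic difference is that you work in cylindrical coordinates $(t,\theta)$ while the paper stays in the original $x$-variable, and you make explicit the step of dividing by $v_a\geqslant a>0$ to pass from an additive to a multiplicative remainder, which the paper leaves implicit.
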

	
	\begin{proof}
		Initially, we take $p=1$ and calculate the Taylor series of $u_{a,0,x_0}$ near $|x|=0$,
		\begin{equation}\label{bus}
			|x|x|^{-1}-x_0|x||^{\gamma}=1+\gamma(x_0\cdot x)+\mathcal{O}(|x|^2).
		\end{equation}
		Similarly,
		\begin{equation*}
			\ln|x|x|^{-1}-x_0|x||=-(x_0\cdot x)+\mathcal{O}(|x|^2),
		\end{equation*}
		which implies
		\begin{equation}\label{train}
			v_a(-\ln|x|-(x_0\cdot x)+\mathcal{O}(|x|^2))=v_a(-\ln|x|)-v_a^{(1)}(-\ln|x|)(x_0\cdot x)+\mathcal{O}(|x|^2).
		\end{equation}
		By combining \eqref{bus} and \eqref{train}, we obtain
		\begin{align*}
			u_{a,0,x_0}(x)
			&=|x|^{-\gamma}\left[v_a(-\ln|x|)+(x_0\cdot x)\left(v_a^{(1)}(-\ln|x|)+\gamma v_a(-\ln|x|)\right)+\mathcal{O}(|x|^2)\right]&\\\nonumber
			&=u_{a,0}(x)+|x|^{-\gamma}(x_0\cdot x)\left(-v_a^{(1)}+\gamma v_a\right)+\mathcal{O}(|x|^{-\gamma-2}),&
		\end{align*}
		which together with Theorem~\ref{thm:andrade-doo19} implies
		\begin{align}\label{matematica}
			\mathcal{U}_{a,0,x_0}(x)=\mathcal{U}_{a,0}(x)+|x|^{-\gamma}(x_0\cdot x)\left(-\mathcal{V}_a^{(1)}+\gamma \mathcal{V}_a\right)+\mathcal{O}(|x|^{-\gamma-2});
		\end{align}
		this concludes the proof of the proposition.
	\end{proof}
	
	\begin{remark}\label{signjacobifields}
		Since the Jacobi fields and the indicial roots of the linearized operator are not counted with multiplicity, these functions are all equal; thus, it is simpler to write $\Phi^{+}_{a,1}=\cdots=\Phi^{+}_{a,n}$ and \eqref{matematica} can be reformulated as
		\begin{align}\label{lastequation}
			\mathcal{U}_{a,0,x_0}(x)&=|x|^{\gamma}\left[v_a\left(-\ln|x|+|x|\left(\sum_{j=1}^{n}x_j\chi_j(\theta)\Phi_{a,j}\right)+\mathcal{O}(|x|^2)\right)\right]&\\\nonumber
			&=|x|^{\gamma}\left[\mathcal{V}_a\left(-\ln|x|+(x_0\cdot x)\Phi^{+}_{a,1}(-\ln|x|)+\mathcal{O}(|x|^2)\right)\right] \quad {\rm as} \quad x\rightarrow0.&
		\end{align}
		In cylindrical coordinates, we can rewrite
		\begin{equation}\label{asymptoticsjacobifields2}
			\mathcal{V}_{a,0,x_0}(t,\theta)=\mathcal{V}_{a}(t)+e^{-t}\langle\theta, a\rangle\left(-\mathcal{V}_a^{(1)}+\gamma \mathcal{V}_a\right)+\mathcal{O}\left(e^{-2t}\right) \quad {\rm as} \quad t\rightarrow\infty.
		\end{equation} 
		Nevertheless, for the translation $\widetilde{\mathcal{V}}_{a,x_0}(t,\theta)=\mathcal{V}_a(t-t_0,\theta)$ with $t_0=-\ln|x_0|$, we have 
		\begin{equation*}
			\widetilde{\mathcal{V}}_{a,x_0}(t,\theta)=e^t\left(-\mathcal{V}_a^{(1)}+\gamma \mathcal{V}_a\right)+\mathcal{O}(1) \quad {\rm as} \quad t\rightarrow\infty.
		\end{equation*}
		Also, notice that if $\langle a, \theta\rangle> 0$, then $|\mathcal{V}_{a,0,x_0}(t,\theta)|>|\mathcal{V}_{a}(t)|$ and $|\overline{\mathcal{V}}_{a,0,x_0}(t,\theta)|>|\overline{\mathcal{V}}_{a}(t)|$. Moreover, the opposite inequality holds when $\langle a,\theta\rangle< 0$.
	\end{remark}
	
	\subsection{Proof of Theorem~\ref{theorem1}}\label{subsec:improvedconvergence}
	To complete our analysis, we present the proof of Theorem~\ref{theorem1}. 
	Our proof is based on the surjectiveness of the linearized operator stated in Proposition~\ref{cor:surjectiveness}. 
	In the next result, we provide a higher order expansion for solutions to \eqref{oursystem} with $R=\infty$ and $p=1$, which can be stated as
	
	\begin{proposition}\label{prop:imporvedconvergence}
		Let $\mathcal{U}$ be a singular solution to \eqref{oursystem}. Then, for any $x_0\in\mathbb{R}^n$ there exists an Emden--Fowler solution $\mathcal{V}_{a,T}$ such that
		\begin{equation}\label{fisica2}
			\mathcal{U}(x)=|x|^{-\gamma}\left[\mathcal{V}_a(-\ln|x|+T)+(x_0\cdot x)\Phi^{+}_{a,1}(-\ln|x|+T)+\mathcal{O}(|x|^{\beta^*_1})\right] \quad {\rm as} \quad x\rightarrow0,
		\end{equation}
		where $\beta^*_1:=\min\{2,\beta_{a,2}\}>0$.
	\end{proposition}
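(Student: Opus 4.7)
The plan is to bootstrap the first-order convergence of Proposition~\ref{convergence} through the weighted Schauder iteration of Corollary~\ref{iteration}, absorbing the emerging low-frequency Jacobi fields into the translation parameter of a deformed Emden--Fowler solution. In cylindrical coordinates, setting $\mathcal{V}=\mathfrak{F}(\mathcal{U})$, Proposition~\ref{convergence} yields $|\mathcal{V}(t,\theta)-\mathcal{V}_{a,T}(t)|=\mathcal{O}(e^{-\beta^*_0 t})$ with $\beta^*_0\in(0,1)$. Let $\mathcal{W}=\mathcal{V}-\mathcal{V}_{a,T}$; a Taylor expansion of the Gross--Pitaevskii nonlinearity about $\mathcal{V}_{a,T}$ produces $\mathcal{L}^a(\mathcal{W})=\mathcal{Q}(\mathcal{W})$ with $|\mathcal{Q}(\mathcal{W})|\lesssim |\mathcal{W}|^2$. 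Hence $\mathcal{Q}(\mathcal{W})\in C^{0,\zeta}_{2\beta^*_0}(\mathcal{C}_0,\mathbb{R}^p)$, and as long as $2\beta^*_0<1$, Corollary~\ref{iteration}(i) applied with the right-inverse $\mathcal{G}^a_0$ upgrades $\mathcal{W}$ to $C^{4,\zeta}_{2\beta^*_0}(\mathcal{C}_0,\mathbb{R}^p)$. Iterating, the decay rate doubles at each stage and eventually crosses the threshold $1$.

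At that transition, Corollary~\ref{iteration}(ii) yields the decomposition
\begin{equation*}
\mathcal{W}=\mathcal{W}^\sharp+\Psi,\quad \mathcal{W}^\sharp\in C^{4,\zeta}_{\widehat{\beta}}(\mathcal{C}_0,\mathbb{R}^p),\quad \Psi\in D_{a,1}(\mathcal{C}_0,\mathbb{R}^p),
\end{equation*}
for some $\widehat{\beta}\in(1,\beta_{a,2})$. Boundedness of $\mathcal{W}$, combined with Proposition~\ref{growthpropertiessystem}(ii) and Lemma~\ref{indicialset}, forces the coefficients of the exponentially growing Jacobi fields $\widetilde{\Phi}^\pm_{a,1,i}$ and $\Phi^-_{a,1,i}$ in $\Psi$ to vanish, leaving only a linear combination $\Psi=\sum_{i=1}^n c_i \Phi^+_{a,1,i}$. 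Reading the tuple $(c_1,\dots,c_n)$ as a vector $x_0\in\mathbb{R}^n$, formula~\eqref{asymptoticsjacobifields2} identifies $\Psi$ precisely with the first-order correction $\mathcal{V}_{a,T,x_0}-\mathcal{V}_{a,T}$ of the deformed Emden--Fowler family. Replacing $\mathcal{V}_{a,T}$ by $\mathcal{V}_{a,T,x_0}$ therefore absorbs $\Psi$ into the main profile, and continuing the bootstrap against this refined reference drives the remainder decay up to $\beta^*_1=\min\{2,\beta_{a,2}\}$, the next indicial exponent beyond $1$ (the ceiling $2$ coming from the quadratic nonlinearity acting on $\mathcal{W}^\sharp$, and $\beta_{a,2}$ being the next linear indicial root from Lemma~\ref{indicialset}).

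The main obstacle will be the exact matching of the $\Phi^+_{a,1,i}$ coefficients in the linear decomposition with the geometric translation parameter $x_0$ of the deformed solution: one must verify through \eqref{asymptoticsjacobifields2} that this geometric correspondence holds to leading order and not merely up to a scalar factor, so that the absorption is free of leftover linear contributions at rate $e^{-t}$. A secondary difficulty lies in controlling the quadratic remainder $\mathcal{Q}(\mathcal{W})$ across the Fourier decomposition~\eqref{Fourierdecomposition}: the nonlinearity couples zero- and low-frequency modes, and one must track that no spurious low-frequency corrections are re-injected during the iteration past $\beta=1$. Once these two points are handled, the scheme terminates cleanly at the claimed indicial exponent $\beta^*_1$, and undoing the cylindrical transformation via $\mathcal{U}=\mathfrak{F}^{-1}(\mathcal{V})$ recovers \eqref{fisica2}.
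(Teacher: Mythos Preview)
Your proposal is correct and follows essentially the same route as the paper: Taylor-expand the nonlinearity about $\mathcal{V}_{a,T}$ to obtain $\mathcal{L}^a(\mathcal{W})=\mathcal{Q}(\mathcal{W})$ with $\mathcal{Q}$ quadratic, then iterate Corollary~\ref{iteration}(i) to double the decay exponent until it crosses $1$, at which point Corollary~\ref{iteration}(ii) produces the $D_{a,1}$ contribution and the bootstrap terminates at $\beta^*_1=\min\{2,\beta_{a,2}\}$. Your write-up is in fact more explicit than the paper's on two points the paper leaves implicit: the elimination of the growing Jacobi fields in $D_{a,1}$ by boundedness of $\mathcal{W}$, and the geometric identification of the surviving coefficients $(c_1,\dots,c_n)$ with the translation parameter $x_0$ via \eqref{asymptoticsjacobifields2}.
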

	
	\begin{proof}
		Again, we start with $p=1$. Using the asymptotics proved in Theorem~\ref{theorem1}, we deduce     
		\begin{equation*}
			u(x)=|x|^{-\gamma}v_a(-\ln|x|)=|x|^{-\gamma}\left[v_a(-\ln|x|+T_a)+w(-\ln|x|)\right],
		\end{equation*}
		where $\phi\in C^{4,\zeta}_{-\beta}(\mathcal{C}_0)$ for some $\zeta\in(0,1)$ and $\beta>0$.
		Moreover, since $v_a$ satisfies \eqref{fowler4order}, that is, $\mathcal{N}(v_a)=0$, we can expand $\mathcal{N}$ in Taylor series around $v_{a,T}$ to get $\mathcal{L}^{a}(\phi)=\psi(\phi)$, where
		\begin{equation*}
			\psi(\phi)=(v_a+\phi)^{2^{**}-1}-v_a^{2^{**}-1}-\widetilde{c}(n)v_a^{2^{**}-2}\phi.
		\end{equation*}
		It is straightforward to see that if $\phi\in C^{m,\zeta}_{-\beta}(\mathcal{C}_0)$, then $\psi(\phi)\in C^{m,\zeta}_{-2\beta}(\mathcal{C}_0)$ for any $m\in\mathbb{N}$. 
		Now, we can run the iterative method. 
		First, assume that $\beta\in (0,1/2)$, then using Claim 1, we obtain $\psi(\phi)\in C^{0,\zeta}_{-2\beta}(\mathcal{C}_0)$. 
		In addition, by (i) of Corollary~\ref{iteration}, we have $\phi\in C^{4,\zeta}_{-2\beta}(\mathcal{C}_0)$ and $\psi(\phi)\in C^{4,\zeta}_{-4\beta}(\mathcal{C}_0)$, which implies $w\in C^{4,\zeta}_{-4\beta}(\mathcal{C}_0)$. 
		After some steps, we conclude that $w\in C^{4,\zeta}_{\beta'}(\mathcal{C}_0)$ for some $\beta'\in(1/2,1)$. Therefore, $\psi(\phi)\in C^{4,\zeta}_{-2\beta'}(\mathcal{C}_0)$ and by (ii) of Corollary~\ref{iteration}, we find that $\phi\in
		C^{4,\zeta}_{-2\beta'}(\mathcal{C}_0)\oplus D_{a,1}(\mathcal{C}_0)$, which provides $\phi\in C^{4,\zeta}_{\beta}(\mathcal{C}_0)$ for $\beta'=\min\{2\beta',\beta_{a,2}\}$. 
		Besides, observe that $\beta'>\beta$ is optimal, which by Theorem~\ref{thm:andrade-doo19} implies \eqref{fisica2}.
	\end{proof}
	
	\begin{proof}[Proof of Theorem~\ref{theorem1}]    
		It follows directly from Proposition~\ref{prop:imporvedconvergence}, because the deformed Emden--Fowler solutions also satisfy the same estimate \eqref{fisica2}, which is a consequence of \eqref{lastequation}.
	\end{proof}
	
	In conclusion, we have the following refined asymptotics:
	
	\begin{corollary}
		Let $\mathcal{V}$ be a strongly positive solution to \eqref{sphevectfowler} and $\mathcal{V}_{a,T}$ an Emden--Fowler solution to \eqref{sphevectfowler}. Then, there exists $\beta^*_1>1$ such that
		\begin{equation*}    
			|\mathcal{V}(t,\theta)-\mathcal{V}_{a,T}(t)-\pi_0\left[\mathcal{V}\right](t,\theta)-\pi_1\left[\mathcal{V}\right](t,\theta)|\leqslant Ce^{-\beta^*_1 t} \quad {\rm for} \quad t>0,
		\end{equation*}
		where $\beta^*_{a,1}=\min\{2,\beta_{a,2}\}$ and $\pi_0,\pi_1$ are the projections on the zero-frequency and low-frequency eigenspaces, respectively.
	\end{corollary}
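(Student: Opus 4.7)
The plan is to combine the refined asymptotic expansion from Proposition~\ref{prop:imporvedconvergence} with the Fourier mode decomposition \eqref{Fourierdecomposition}. First, I would use Proposition~\ref{convergence} to write $\mathcal{V}(t,\theta) = \mathcal{V}_{a,T}(t) + \Phi(t,\theta)$ where $\Phi \in C^{4,\zeta}_{-\beta}(\mathcal{C}_0,\mathbb{R}^p)$ for some initial rate $\beta \in (0,1)$. Taylor-expanding the nonlinear operator $\mathcal{N}_{\rm cyl}$ around $\mathcal{V}_{a,T}$ and using $\mathcal{N}_{\rm cyl}(\mathcal{V}_{a,T}) = 0 = \mathcal{N}_{\rm cyl}(\mathcal{V})$, we obtain $\mathcal{L}^a(\Phi) = \psi(\Phi)$ where $\psi(\Phi) = \mathcal{O}(|\Phi|^2)$ is a quadratic remainder. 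In particular, if $\Phi \in C^{4,\zeta}_{-\beta}(\mathcal{C}_0,\mathbb{R}^p)$, then $\psi(\Phi) \in C^{0,\zeta}_{-2\beta}(\mathcal{C}_0,\mathbb{R}^p)$.

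Second, I would decompose $\Phi = \pi_0[\Phi] + \pi_1[\Phi] + \Phi_{\perp}$ where $\Phi_{\perp} = \sum_{l \geqslant 2}\pi_l[\Phi]$ is the high-frequency part. Since $\mathcal{L}^a$ acts block-diagonally on the spherical harmonic decomposition (it commutes with each projection $\pi_l$), the high-frequency component satisfies $\mathcal{L}^a(\Phi_{\perp}) = \sum_{l \geqslant 2}\pi_l[\psi(\Phi)]$. By Proposition~\ref{growthpropertiessystem} and Lemma~\ref{indicialset}, the indicial roots associated to $\mathcal{L}^a_j$ for $j \geqslant 2$ satisfy $\min_{j\geqslant 2}\mathfrak{I}^a_j = \beta_{a,2} > 1$, so on the high-frequency subspace the linearized operator admits a bounded right-inverse into $C^{4,\zeta}_{\beta}$ for any $\beta < \beta_{a,2}$, by the Fredholm and surjectiveness results from Proposition~\ref{prop:fredholm} and Corollary~\ref{cor:surjectiveness}.

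Third, I would run the bootstrap iteration from the proof of Proposition~\ref{prop:imporvedconvergence}, now applied to $\Phi_{\perp}$. Starting from $\Phi_{\perp} \in C^{4,\zeta}_{-\beta}(\mathcal{C}_0,\mathbb{R}^p)$ we obtain $\psi(\Phi) \in C^{0,\zeta}_{-2\beta}(\mathcal{C}_0,\mathbb{R}^p)$, hence by the restricted right-inverse on high-frequency modes (combined with Corollary~\ref{iteration}), $\Phi_{\perp} \in C^{4,\zeta}_{-\min\{2\beta,\beta_{a,2}\}}(\mathcal{C}_0,\mathbb{R}^p)$. Iterating this finitely many times, the exponent saturates at $\beta^*_1 = \min\{2,\beta_{a,2}\} > 1$, yielding
\begin{equation*}
    |\Phi_{\perp}(t,\theta)| = |\mathcal{V}(t,\theta) - \mathcal{V}_{a,T}(t) - \pi_0[\Phi](t,\theta) - \pi_1[\Phi](t,\theta)| \leqslant Ce^{-\beta^*_1 t},
\end{equation*}
which matches the claimed inequality once one observes that $\pi_0[\mathcal{V}] - \mathcal{V}_{a,T} = \pi_0[\Phi]$ (since $\mathcal{V}_{a,T}$ lies in the zero-frequency eigenspace $V_0$) and $\pi_1[\mathcal{V}] = \pi_1[\Phi]$.

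The main obstacle is the nonlinear coupling: the quadratic term $\psi(\Phi)$ mixes Fourier modes, so $\pi_l[\psi(\Phi)]$ for $l \geqslant 2$ depends on all of $\Phi$, not just $\Phi_{\perp}$. This is handled by the fact that the full perturbation $\Phi$ already decays at some positive rate by Proposition~\ref{convergence}, so $\psi(\Phi)$ decays at twice that rate; iterating the weighted Schauder estimate then bootstraps the decay up to $\beta^*_1$, which is exactly the threshold at which the spectral gap (the absence of indicial roots in $(1,\beta_{a,2})$) prevents further improvement without adding deformation parameters.
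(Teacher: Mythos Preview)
Your proposal is correct and follows essentially the same approach as the paper: the corollary is stated there without proof, as an immediate reformulation in cylindrical coordinates of Proposition~\ref{prop:imporvedconvergence}, whose bootstrap argument (Taylor-expand $\mathcal{N}_{\rm cyl}$ around $\mathcal{V}_{a,T}$, use that the quadratic remainder doubles the decay rate, and iterate via Corollary~\ref{iteration} until the spectral gap at $\beta_{a,2}$ is reached) is exactly what you spell out. One small caution: your final reconciliation sentence is not quite right as written, since $\pi_0[\mathcal{V}]=\mathcal{V}_{a,T}+\pi_0[\Phi]$ means the literal expression in the statement equals $\Phi_{\perp}-\mathcal{V}_{a,T}$ rather than $\Phi_{\perp}$; the intended reading (consistent with Proposition~\ref{prop:imporvedconvergence}) is with $\pi_0[\Phi]=\pi_0[\mathcal{V}]-\mathcal{V}_{a,T}$ in place of $\pi_0[\mathcal{V}]$, and your argument proves exactly that.
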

	
	\section{Final considerations}\label{problemssection}
	In this section, we make some further comments about possible natural extensions for the asymptotics in Theorems~\ref{theorem1} and \ref{theorem1'}\textcolor{blue}{'}.\\
	
	\noindent {\bf Higher order refined asymptotics.} In the same spirit of \cite{arXiv:1909.10131v1,arXiv:1909.07466v1}, we believe that the following higher order asymptotics must hold for solutions to \eqref{sphevectfowler},
	\begin{equation}\label{higherorderasymptotics}
	\left|\mathcal{V}(t,\theta)-\mathcal{V}_{a,T}(t)-\Lambda^*\sum_{\ell=1}^{m}\sum_{j=0}^{m-1}b_{\ell j}(t,\theta)t^je^{-\beta_{a,j}}\right|\leqslant Ct^me^{-\beta_{a,m} t} \quad \mbox{for} \quad t>0,
	\end{equation}
	where $-\beta_{a,m}$ is the $m$-th indicial roots the powers $t^j$ for $j=1,\ldots,m$ correspond to the higher algebraic multiplicity.\\
	
	\noindent {\bf Geometric fourth order systems.} Besides, notice that \eqref{oursystem} is a particular case of the following fourth order geometric system, 
	\begin{equation}\label{geometricsystem}
	P_gu_i=c(n)|\mathcal{U}|^{2^{**}-2}u_i \quad \mbox{in} \quad (B_1^*,g) \quad \mbox{for} \quad i\in I.
	\end{equation}
	Here $B_1^n(0)\subset\mathbb{R}^n$ with $n\geqslant5$, $B_1^*=B_1^n(0)\setminus\{0\}$ is the punctured ball, $g$ is a smooth metric on $B_1^n(0)$, $\mathcal{U}=(u_1,\dots,u_p)$ is a $p$-map with Euclidean norm denoted by $|\mathcal{U}|$, $f_i(\mathcal{U})=c(n)|\mathcal{U}|^{2^{**}-2}u_i$ is the Gross--Pitaevskii nonlinear strong coupling term with $2^{**}=2n/(n-4)$ the critical Sobolev exponent, $c(n)$ is a normalizing constant and $P_g$ is the fourth order operator given by
	\begin{equation*}
	P_gu_i=\Delta_{g}^{2} u_i-\sum_{j=1}^{p}\dive\left(\frac{(n-2)^2+4}{2(n-1)(n-2)} R_{g}g-\frac{4}{n-2}\Ric_g\right)\ud u_j+\frac{n-4}{2}\sum_{j=1}^{p}Q_{ij}(x)u_j,
	\end{equation*}
	where $Q_{ij}(x)\in {\rm Sym}_p(\mathbb{R})$, $\Ric_g$ is the Ricci tensor and $R_g$ is the scalar curvature of $(B_1^*,g)$. Notice that when $p=1$ and $Q_{11}=Q_g$, the $Q$-curvature, the system \eqref{geometricsystem} is the celebrated singular $Q$-curvature equation \eqref{qcurvature}. Then, a natural problem is to extend the asymptotics in Theorem~\ref{theorem1} to the inhomogeneous System \eqref{geometricsystem}. 
	It is appealing to find some hypotheses on $Q_{ij}(x)\in M_{p}^s(\mathbb{R})$ and bounds on the dimension such that \eqref{asymptotics} still holds for solutions to the geometric system \eqref{geometricsystem}. This would be a extension of the results in \cite[Theorem~1]{MR2393072} and \cite[Theorem~1.3]{MR4002167}.\\

	\noindent {\bf Low-dimensional $Q$-curvature equation.} 
	For the sake of completeness, it would be also interesting to study the lower-dimensional cases associated to \eqref{geometricsystem}, at least for the scalar case $p=1$. 
	First, it also makes sense to study the three-dimensional case of the $Q$-curvature problem. The difference is that the nonlinear coupling term on the right-hand-side has a negative power,
	\begin{equation}\label{3qcurvature}
	P_gu=c_3u^{-7} \quad \mbox{in} \quad B_1^*.
	\end{equation}
	The objective is to find the local asymptotic behavior for solutions to \eqref{3qcurvature} near the isolated singularity \cite{MR4029726,MR3669775,MR3465087,MR3962197}.
	
	Second, one can also consider the four-dimensional $Q$-curvature equation. It is well-known that due to the Trudinger--Moser inequality that the critical nonlinearity is of the form,
	\begin{equation}\label{4qcurvature}
	P_gu=c_4e^{4u} \quad \mbox{in} \quad B_1^*,
	\end{equation}
	this is also called the Liouville equation.
	The objective would be to establish qualitative properties for solution to \eqref{4qcurvature}, such as asymptotic radial symmetry and local behavior \cite{MR1611691,MR3604948,MR3615163,arXiv:1804.09261v2,hyder-mancini-martinazzi}.
	Let us remark that the second order case is still open.\\
		
	After this manuscript was finished, we learned that J. Ratzkin \cite{arxiv.2001.07984} had obtained a similar result to Corollary~\ref{theorem2}.\\
	
	\begin{acknowledgement}
		The paper was completed while the first-named author was visiting the Department of Mathematics at Princeton University, whose hospitality he gratefully acknowledges. 
		He would also like to express his gratitude to Professor Fernando Cod\'a Marques.
	\end{acknowledgement}
	

\end{document}